\documentclass[number]{elsarticle}
\usepackage{enumitem}
\usepackage{subcaption}

\usepackage{amsmath}
\usepackage{amsthm}
\usepackage{mathtools}

\theoremstyle{definition}
\newtheorem{remark}{Remark}[section]
\newtheorem{example}[remark]{Example}
\newtheorem{definition}[remark]{Definition}
\theoremstyle{plain}
\newtheorem{theorem}[remark]{Theorem}
\newtheorem{proposition}[remark]{Proposition}
\newtheorem{lemma}[remark]{Lemma}
\newtheorem{corollary}[remark]{Corollary}

\usepackage{hyperref}
\hypersetup{colorlinks=true}

\let\digamma\relax

\usepackage{tikz}
\usepackage{varwidth}
\usepackage{pgfplots}
\pgfplotsset{compat=1.18}
\pgfkeys{/pgf/number format/1000 sep={}}
\usepackage{tikz-3dplot}
\usetikzlibrary{
	backgrounds,
	positioning,
	decorations.markings,
	decorations.pathmorphing,
	arrows.meta,bending,
	fit,
	knots,
	hobby,
	decorations.pathreplacing,
	shapes.geometric,
	calc,
	patterns
}
\usepackage{quiver}
\tikzcdset{scale cd/.style={every label/.append style={scale=#1}, cells={nodes={scale=#1}}}} 

\tikzset{
  curarrow/.style={
  rounded corners=8pt,
  execute at begin to={every node/.style={fill=red}},
    to path={-- ([xshift=-50pt]\tikztostart.center)
    |- (#1) node[fill=white] {$\scriptstyle \partial_*$}
    -| ([xshift=50pt]\tikztotarget.center)
    -- (\tikztotarget)}
    }
}

\tikzset{
 gedge/.style={
    thick,
    postaction={decorate,
    decoration={markings,
      mark=at position 0.59 with {\arrow{>}}}
      }
    }
}

\newcommand{\lineone}{
\tikz[xscale=.3, baseline=-.5ex]{
    \node (A) at (0,0) {\(\bullet\)};
    \node (B) at (2,0) {\(\bullet\)};
    \draw[gedge] (A.center) -- (B.center);
    }
}

\usepackage{manfnt}        
\usepackage{stmaryrd}      
\usepackage{ifthen}
\usepackage{xargs}
\usepackage{xcolor}

\newcommand{\FM}{Freyd–Mitchell}
\newcommand{\EZ}{Eilenberg–Zilber}
\newcommand{\MV}{Mayer–Vietoris}

\newcommand\look[1]{{\bfseries #1}}

\DeclareMathOperator{\Hom}{Hom}

\DeclareMathOperator{\Tor}{Tor}
\DeclareMathOperator{\End}{End}

\DeclareMathOperator{\id}{id}
\DeclareMathOperator{\Id}{Id}

\DeclareMathOperator{\Ob}{Ob}
\DeclareMathOperator*{\colimb}{colim}
\DeclareMathOperator{\colim}{colim}
\DeclareMathOperator{\Pro}{Pro}
\DeclareMathOperator{\Ind}{Ind}
\DeclareMathOperator{\swap}{swap}

\newcommand{\Repr}{\mathbb{R}\mathrm{epr}}

\newcommand{\D}{\mathbb{D}}

\newcommand{\cat}{\mathrm{Cat}}

\newcommand{\Cub}{\mathrm{Cub}}

\newcommand{\set}{\mathrm{Set}}
\newcommand{\ring}{\mathrm{Ring}}
\newcommand{\alg}{\mathrm{Alg}}
\newcommand{\mmod}{\mathrm{Mod}}
\newcommand{\DDD}{\mathcal{D}}
\newcommand{\Ch}{\mathrm{Ch}}
\newcommand{\coCh}{\mathrm{coCh}}

\newcommand{\Bimod}{{{}_R\mathbb{M}\mathrm{od}_R}}
\newcommand{\BimodZ}{{{}_{\Z}\mathbb{M}\mathrm{od}_{\Z}}}
\newcommand{\BimodP}{{{}_{\End(P)}\mathbb{M}\mathrm{od}_{\End(P)}}}
\newcommand{\BimodUI}{{{}_{\End(U_I)}\mathbb{M}\mathrm{od}_{\End(U_I)}}}

\newcommand{\N}{\mathbb{N}}
\newcommand{\Z}{\mathbb{Z}}
\newcommand{\R}{\mathbb{R}}
\newcommand{\CC}{\mathbb{C}}
\newcommand{\AAA}{\mathbb{A}}
\newcommand{\BBB}{\mathbb{B}}
\newcommand{\A}{\mathcal{A}}
\newcommand{\B}{\mathcal{B}}
\newcommand{\C}{\mathcal{C}}
\newcommand{\DD}{\mathcal{D}}

\newcommand{\Dist}{\mathbb{D}\mathrm{ist}}
\newcommand{\rmod}{R\text{-}\mathrm{Mod}}
\newcommand{\ct}{c}
\newcommand{\cct}{cc}
\newcommand{\cart}{cart}
\newcommand{\cocart}{cocart}
\newcommand{\act}{\cdot}

\newcommand{\tobar}{\mathrel{\mkern3mu\vcenter{\hbox{$\scriptscriptstyle+$}}%
                    \mkern-12mu{\to}}}
\newcommandx{\yaHelper}[2][1=\empty]{%
    \ifthenelse{\equal{#1}{\empty}}%
        {\ensuremath{\scriptstyle{#2}}}%
        {\raisebox{#1}[0pt][0pt]{\ensuremath{\scriptstyle{#2}}}}%
}
\newcommand{\cc}[2]{\mathrel{\smash{\xRightarrow[\protect{\yaHelper[1pt]{#1}}]{#2}}}}
\newcommand{\ccs}[2]{\cc{#1}{#2}\vphantom{\xRightarrow[\cdot]{\cdot}}}

\newcommand{\lus}[1]{\;\!\!{}_{#1}\:\!\!}
\newcommand{\lex}[1]{{}^{#1}\!}

\DeclarePairedDelimiter\fact{\lvert}{\rvert}
\DeclarePairedDelimiter\rea{\lvert}{\rvert}

\newcommand{\HM}{H}

\newtheorem*{def31}{Definition \ref{def:afb}}
\newtheorem*{def41}{Definition \ref{def:chaincomplex}}
\newtheorem*{def46}{Definition \ref{def:homology}}
\newtheorem{thm54}{Theorem \ref{th:exact_seq_rel_hom}}
\newtheorem*{thm57}{Theorem \ref{th:mayer_vietoris}}
\newtheorem*{thm59}{Theorem \ref{th:kunneth}}
\newtheorem*{def626}{Definition \ref{def:modulelike}}
\newtheorem*{thm645}{Theorem \ref{thm:embedding}}

\newcommand\K{\overrightarrow{K}}

\begin{document}

\begin{frontmatter}

\title{Homological Algebra in Abelian Framed Bicategories :  Exact Sequences and Embedding Theorems}

\author[1]{Augustin Albert\corref{cor1}}
\ead{augustin.albert@polytechnique.edu}

\author[1]{J\'er\'emy Dubut\fnref{fn2}}
\ead{jeremy.dubut@polytechnique.edu}

\author[1]{Eric Goubault}
\ead{eric.goubault@polytechnique.edu}

\cortext[cor1]{Corresponding author}

\fntext[fn2]{Partially funded by the Academic and Research Chair “Architecture des Systèmes Complexes” Dassault Aviation, Naval Group, Dassault Systèmes, KNDS France, Agence de l’Innovation de Défense, Institut Polytechnique de Paris}

\affiliation[1]{organization={
LIX, CNRS, École polytechnique, Institut Polytechnique de Paris}, city={Palaiseau}, 
country={France}
}

\begin{abstract}
We introduce abelian framed bicategories, which are particular framed bicategories that are locally abelian, and show that they are suitable for developing homology and cohomology theories for directed structures. This means in particular that similar exact sequences as the relative homology and \MV{} long exact sequences can be shown to hold. Also, for closed monoidal abelian framed bicategories, Künneth theorem holds as well. Finally, we prove embedding theorems similar to the Gabriel and \FM{} theorems, for particular abelian framed bicategories, allowing to see those as bicategories of bimodules over algebras. This naturally links to the original motivation of this work, which was to generalize directed homology developed in the abelian framed bicategory of bimodules over (path) algebras. 
\end{abstract}

\begin{keyword}
\MSC{18G90} \sep Homological algebra \sep
Framed bicategories \sep Directed topology \sep Embedding theorems \sep Acyclic model theorem
\end{keyword}

\end{frontmatter}

\section{Introduction}
Designing suitable homology and homotopy theories for objects which are directed by nature, such as directed spaces, is a notorious difficult problem~\cite{goubault1995geometrie,grandis2004,fahrenberg2004directed,patchkoria2006,dubut2017}. The non-reversibility of time in those models makes the needed algebra much more complex than in the classical case: to faithfully abstract this non-reversibility in time, the algebraic structures employed cannot be reversible (like abelian groups in classical algebraic topology, or ordered homology groups in~\cite{grandis2004}) or even cancellative (like Patchkoria's theory in cancellative monoids~\cite{patchkoria2006}), because non-homotopic directed paths can become homotopic when extended, see Figure~\ref{fig:matchbox}.
\begin{figure}[ht]
    \centering
     \begin{tikzpicture}[auto,scale = .8, baseline=($0.2*(C) + (A)$)]
 		\node at (-2,-0.5) {};
		\coordinate (A) at (0,0);
		\coordinate (B) at (2,0.9);
		\coordinate (C) at (-1.3,1.8);
		\coordinate (D) at (0.7,2.6);
		\coordinate (A') at (0,1);
		\coordinate (B') at (2,1.9);
		\coordinate (C') at (-1.3,2.8);
		\coordinate (D') at (0.7,3.6);
         \draw[fill=gray, opacity=0.2] 
    ($0.2*(B) + (A)$) -- 
    ($0.2*(B) + (B)$) -- 
    ($0.2*(B) + (B')$) -- 
    ($0.2*(B) + (A')$) -- 
    ($0.2*(B) + (A)$);
     \draw[fill=gray, opacity=0.2] 
    ($1.4*(C) + 0.2*(B) + (A)$) -- 
    ($1.4*(C) + 0.2*(B) + (B)$) -- 
    ($1.4*(C) + 0.2*(B) + (B')$) -- 
    ($1.4*(C) + 0.2*(B) + (A')$) -- 
    ($1.4*(C) + 0.2*(B) + (A)$);
        \draw[fill=gray, opacity=0.2] 
    ($1.4*(B) + 0.2*(C) + (A)$) -- 
    ($1.4*(B) + 0.2*(C) + (C)$) -- 
    ($1.4*(B) + 0.2*(C) + (C')$) -- 
    ($1.4*(B) + 0.2*(C) + (A')$) -- 
    ($1.4*(B) + 0.2*(C) + (A)$);
    \draw[fill=gray, opacity=0.2] 
    ($0.2*(C) + (A)$) -- 
    ($0.2*(C) + (C)$) -- 
    ($0.2*(C) + (C')$) -- 
    ($0.2*(C) + (A')$) -- 
    ($0.2*(C) + (A)$);
     \draw[fill=gray, opacity=0.2] 
    ($0.2*(D) + (A')$) -- 
    ($0.2*(D) + (B')$) -- 
    ($0.2*(D) + (D')$) -- 
    ($0.2*(D) + (C')$) -- 
    ($0.2*(D) + (A')$);
		\draw[->, green, ] ($0.2*(B) + (A)$) -- ($0.2*(B) + (B)$);
		\draw[->, blue, ] ($0.2*(C) + (A)$) -- ($0.2*(C) + (C)$);
		\draw[->, green, ] ($1.4*(B) + 0.2*(C) + (A)$) -- ($1.4*(B) + 0.2*(C) + (C)$);
		\draw[->, blue, ] ($1.4*(C) + 0.2*(B) + (A)$) -- ($1.4*(C) + 0.2*(B) + (B)$);
		\draw[->, red,  ] ($1.4*(C) + 0.2*(B) + (B)$) -- 
    ($1.4*(C) + 0.2*(B) + (B')$);
    \draw[->, red,  ] ($1.4*(B) + 0.2*(C) + (C)$) -- 
    ($1.4*(B) + 0.2*(C) + (C')$);
        \end{tikzpicture}
        ~
     \begin{tikzpicture}[auto,scale = .8, baseline=(A)]
 		\node at (-2,-0.5) {};
		\coordinate (A) at (0,0);
		\coordinate (B) at (2,0.9);
		\coordinate (C) at (-1.3,1.8);
		\coordinate (D) at (0.7,2.6);
		\coordinate (A') at (0,1);
		\coordinate (B') at (2,1.9);
		\coordinate (C') at (-1.3,2.8);
		\coordinate (D') at (0.7,3.6);
        \draw [fill = gray,opacity = 0.22] (A) -- (B) -- (B') -- (A') -- (A);
		\draw [fill = gray,opacity = 0.22] (A) -- (C) -- (C') -- (A') -- (A);
		\draw [fill = gray,opacity = 0.2] (A') -- (B') -- (D') -- (C') -- (A');
        \draw[->, green, ] (A) -- (B);
		\draw[->, blue, ] (A) -- (C);
		\draw[->, green, ] (B) -- (D);
		\draw[->, blue, ] (C) -- (D);
		\draw[->, red,  ] (D) -- (D');
\end{tikzpicture}
    \caption{Non-cancellative behavior in Fahrenberg's matchbox~\cite{fahrenberg2004directed}. The blue and green directed 
    paths are not dihomotopic because any homotopy would have to go through the 
    upper face and so through undirected paths. However, they become 
    dihomotopic after extending them with the red directed path.}
    \label{fig:matchbox}
\end{figure}
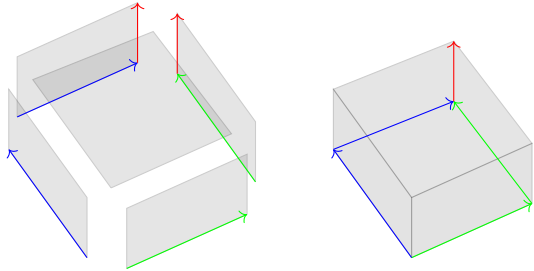
This means we have to move away from classical abelian categories as a suitable algebraic framework. Still, some form of local abelianity is needed: directed spaces are $(\infty,1)$-categories by nature (see~\cite{dubut2016directed}), because if paths are not invertible, (higher) homotopies are. Consequently, over the non-invertible low-dimensional data lies a more classical, fully invertible world where abelian structures can be used. That is the direction some took to develop non-cancellative homology theories for directed spaces, such as homology as functor from a category of traces to the singular homology of trace spaces for directed spaces \cite{dubut2015natural} or homology bimodules over a path algebra for precubical sets \cite{goubault2025directed}.
Those theories have a similar flavor: they look at the classical homology modules of trace spaces between two points and describe how those modules evolve by pre- and post-composing with additional traces. Written differently, they make traces act on the left and on the right of an object in an abelian category. Following this idea, we introduce abelian framed bicategories as a new suitable framework for developing homology theories for directed structures.
Framed bicategories, introduced by Shulman in~\cite{shulman2007framed}, are a way to categorify the way two structures, the coefficients, act (one on the left, one on the right) on another structure and how those actions evolve by changing the coefficients by extension and restriction. They are given by two bifibrations $B \to C$ interacting with each other (understand that $C$ is the category of coefficients while $B$ is a category of structures on which coefficients can act), one for each action, and for which the effect of restriction (resp. the extension) of coefficients is given by cartesian (resp. cocartesian) liftings. Then, abelian framed bicategories are framed bicategories for which the fibers above pairs of coefficients (that is the subcategories of structures on which fixed coefficients act) are abelian and for which the bicategorical structure preserves enough of the abelian structure. The archetypal examples of such are categories of bimodules over algebras, such as the ones used in~\cite{goubault2025directed} to develop the homology of precubical sets.
In order to validate our postulate, we then develop homological algebra in abelian framed categories, proving variants of standard theorems related to exact sequences, such as long exact sequences for relative homology, \MV{} theorem, and the Künneth formula.
Finally, we prove that under some conditions, an abelian framed bicategories is essentially just a category of bimodules over algebras, reminiscent of Gabriel's and \FM{}'s embedding theorems~\cite{gabriel1972unzerlegbare,mitchell1964full}.

\paragraph{Contents of the paper and main results}
In Section~\ref{section:fb}, 
we give all the necessary background material about framed bicategories, as introduced in e.g. \cite{shulman2007framed}, 
especially basic definitions of double categories, framed bicategories, and 
their restriction and (co)extensions of coefficients. We also 
introduce several examples used throughout the paper: bimodules over various 
structures (algebras, absorption monoids) and distributors.
We then define the main object of study here, 
abelian framed bicategories, in Section~\ref{section:afb}:
\begin{def31}
An \look{abelian framed bicategory} is a framed bicategory $\D$ that is locally abelian, i.e. all local categories $\DD(A, B)$ are abelian, and such that horizontal composition yields additive functors: $$\DD(A,B) \times \DD(B,C) \to \DD(A,C), (M,N) \mapsto M \odot N$$ for all objects $A,B,C \in \D_0$. 
\end{def31}
In such abelian framed bicategories, we can define chain and cochain complexes, and then 
homology and cohomology, see Section~\ref{section:homology}: 
\begin{def41}
A \look{chain complex} in $\AAA$ is a chain complex in a local abelian category of $\AAA$, i.e., a chain complex in $\A(A,B)$ for some objects $A$ and $B$. 
A \look{morphism} $(\alpha_i)_{i\geq 0}$ from a chain complex $(M_i)_{i \geq 0}$ in $\A(A,B)$ to a chain complex $(N_i)_{i \geq 0}$ in $\A(C,D)$ is a pair of vertical arrows $f: A \to C$ and $g: B \to D$ along with $2$-cells $\alpha_i:  M_i \cc{f}{g} N_i$ for all $i \geq 0$, such that the following diagram commutes for all $i \geq 0$:
\begin{equation*}
\begin{tikzcd}
	A && B \\
	\\
	C && D
	\arrow[""{name=0, anchor=center, inner sep=0}, "{M_{i+1}}"', "\shortmid"{marking}, curve={height=12pt}, from=1-1, to=1-3]
	\arrow[""{name=1, anchor=center, inner sep=0}, "{M_i}", "\shortmid"{marking}, curve={height=-12pt}, from=1-1, to=1-3]
	\arrow["f"', from=1-1, to=3-1]
	\arrow["g", from=1-3, to=3-3]
	\arrow[""{name=2, anchor=center, inner sep=0}, "{N_{i+1}}"', "\shortmid"{marking}, curve={height=12pt}, from=3-1, to=3-3]
	\arrow[""{name=3, anchor=center, inner sep=0}, "{N_i}", "\shortmid"{marking, text={rgb,255:red,128;green,128;blue,128}}, color={rgb,255:red,128;green,128;blue,128}, curve={height=-12pt}, from=3-1, to=3-3]
	\arrow["{ \partial_{i+1}}", shorten <=5pt, shorten >=5pt, Rightarrow, from=0, to=1]
	\arrow["{\alpha_i}"{description, pos=0.7}, shift left=5, color={rgb,255:red,128;green,128;blue,128}, shorten <=4pt, shorten >=4pt, Rightarrow, from=1, to=3]
	\arrow["{\alpha_{i+1}}"{description, pos=0.3}, shift right=5, shorten <=4pt, shorten >=4pt, Rightarrow, from=0, to=2]
	\arrow["{\partial_{i+1}}"', color={rgb,255:red,128;green,128;blue,128}, shorten <=5pt, shorten >=5pt, Rightarrow, from=2, to=3]
	\arrow[curve={height=12pt}, from=1-1, to=1-3]
\end{tikzcd}
    \end{equation*}
\end{def41}
\noindent 

\begin{def46}
The \look{homology} of an object $X$ in $\C$ is the homology: $$\HM_*(X): L(X) \tobar R(X)$$ of the chain complex $M_*(X)$ in the local abelian category $\DD(L(X), R(X))$. This construction is functorial.
\end{def46}

Similarly to homology theories in abelian categories, we derive interesting 
exact sequences in Section~\ref{section:exact_sequences}.
For particular relative pairs that we define, see Section~\ref{section:les_rh}, we have a relative homology exact sequence, very similar to the one in singular or simplicial homology: 

\begin{thm54}
For all relative pairs $(X,Y)$ that we define, we show that there is a long exact sequence akin to the classical relative homology exact sequence in simplicial or singular homology theories: 
\begin{center}
    \begin{tikzcd}[arrow style=math font,cells={nodes={text height=2ex,text depth=0.75ex}}]
    & & 0 \arrow[draw=none]{d}[name=X,shape=coordinate]{} \\
       \HM_0(X,Y) \arrow[curarrow=X]{urr}{} 
       & \HM_{0}(X) \arrow[l] \arrow[draw=none]{d}[name=Y, shape=coordinate]{} & \arrow[l] \cdots \\
       \HM_{i}(X,Y) \arrow[curarrow=Y]{urr}{} & \HM_{i}(X) \arrow[l] \arrow[draw=none]{d}[name=Z,shape=coordinate]{} & \lex X \HM_i(Y) \arrow[l] \\
       \HM_{i+1}(X,Y) \arrow[curarrow=Z]{urr}{} & \HM_{i+1}(X) \arrow[l] & \cdots \arrow[l]
   \end{tikzcd}
\end{center}
\end{thm54}

Similarly, we will prove a \MV{} sequence, for `good covers', see Section~\ref{sec:MayerVietoris}: 

\begin{thm57}
Consider a good cover of an object $X$ in $\C$, as in Definition~\ref{def:good_cover}. Then, we have the following long exact sequence in homology:
\begin{center}
    \begin{tikzcd}[arrow style=math font,cells={nodes={text height=2ex,text depth=0.75ex}}, column sep=small]
    & & 0 \arrow[draw=none]{d}[name=X,shape=coordinate]{} \\
       \HM_1(X) \arrow[curarrow=X]{urr}{} 
       & \lex X \HM_{1}(X_1) \oplus \lex X \HM_1(X_2) \arrow[l] \arrow[draw=none]{d}[name=Y, shape=coordinate]{} & \arrow[l] \cdots \\
       \HM_{i}(X) \arrow[curarrow=Y]{urr}{} & \lex X \HM_{i}(X_1) \oplus \lex X HMi(X_2) \arrow[l] \arrow[draw=none]{d}[name=Z,shape=coordinate]{} & \lex X \HM_i(X_1 \cap X_2) \arrow[l] \\
       \HM_{i+1}(X) \arrow[curarrow=Z]{urr}{} & \lex X \HM_{i+1}(X_1) \oplus \lex X \HM_{i+1}(X_2) \arrow[l] & \cdots \arrow[l]
   \end{tikzcd}
\end{center}
\end{thm57}
\noindent 
as well as a K\"unneth theorem, in particular monoidal abelian framed bicategories, see Section~\ref{sec:Kunneth}: 

\begin{thm59}
For all objects $X, Y$ in $\C$, if the tensor product functor $\otimes: \DD(L(X), R(X)) \times \DD(L(Y), R(Y)) \to \DD(L(X) \otimes L(Y), R(X) \otimes R(Y))$ satisfies the conditions of the algebraic Künneth theorem~\cite[Theorem 3.19]{fluch2004kunneth}, see Appendix~\ref{annex:algebraic}, then we have the following short exact sequence, where $\Tor_1$ denotes the first left derived functor of the tensor product functor:
$$ 0 \to l^*(\HM(X) \otimes \HM(Y)){r}^* \to \HM(X \otimes Y) \to l^*\Tor_1(\HM(X),\HM(Y)){r}^* \to 0.$$
\end{thm59}
Finally, we define a specific class of abelian framed bicategories, see Section~\ref{section:embedding_theorems}:
\begin{def626}
A \look{module-like} abelian framed bicategory is a locally cocomplete, closed, abelian framed bicategory $\AAA$ with an initial coefficient $I$ such that all restriction functors are faithful and $U_I$ is a compact projective generator of $\A(I,I)$.
\end{def626}
\noindent for which we get a representation theorem, similar to Gabriel theorem for abelian categories: 
\begin{thm645}
Let $\AAA$ be a module-like abelian framed bicategory. Then, there is a framed lax functor $F : \AAA \to \BimodUI$ such that $F_1$ is fully faithful and locally an equivalence of categories. 
\end{thm645}
\paragraph{Related work}
This research builds upon and extends the concrete directed homology theory of precubical sets developed in~\cite{goubault2025directed}. We abstract and generalize the combinatorial arguments from~\cite{goubault2025directed} using our new framework of abelian framed bicategories, based on the work of Shulman~\cite{shulman2007framed}.

Abelian framed bicategories are not homological categories (they are not even pointed in general). Rather, they consist of a structured patchwork of abelian categories. However, one could just as well consider homological categories instead of abelian ones, see Remark~\ref{remark:related}.
An abelian framed bicategory can be regarded as the Grothendieck construction of an indexed category $F: \C^{\mathrm{op}} \to \cat$ whose constituent categories are abelian. Just as Shulman’s work~\cite{shulman2007framed} studies the interactions between the Grothendieck construction and monoidal structures, our work studies its interactions with abelian structures.

In Section~\ref{section:embedding_theorems}, we construct embedding theorems mapping certain abelian framed bicategories into concrete abelian framed bicategories of bimodules over rings. These embeddings share some properties with the Yoneda embedding constructed by Paré in his Yoneda theory for double categories~\cite{pare2011yoneda}, ours, however, is strictly functorial, see Remark~\ref{remark:pare}.
\section{Framed bicategories}\label{section:fb}
Our work is based on, and extends, the established notion of framed bicategory introduced by Shulman in~\cite{shulman2007framed}. 
We assume no prior familiarity with double categories and framed bicategories, and we present in this section the definitions, notations and the examples that will be used throughout this document.

\subsection{Double categories}

Double categories were first introduced by Ehresmann in~\cite{ehresmann1963categories}. Intuitively, double categories are internal categories in the $2$-category of small categories. Alternatively, they are a variant of  $2$-category where $1$-cells are interpreted as objects parameterized by pairs of $0$-cells, just like bimodules over rings. $2$-cells are then regarded as parameterized morphisms between parameterized objects. In addition, a different kind of $1$-cell represents morphisms between $0$-cells. For more details on this viewpoint, we refer to Shulman's introduction in~\cite{shulman2007framed}.

\begin{definition}[\cite{ehresmann1963categories}]\label{def:double_category}
    A \look{double category} $\D$ consists of two small categories $\D_0$ and $\D_1$, along with structural functors $L$, $R$ and $U$:
\begin{align*}
    U &: \D_0 \to \D_1 \\
    L, R &: \D_1 \to \D_0 \\
    \odot &: \D_1 \times_{\D_0} \D_1 \to \D_1
\end{align*}
where $\D_1 \times_{D_0} \D_1$ is the pullback over $\D_1 \xrightarrow{R} \D_0 \xleftarrow{L} \D_1$, 
satisfying the usual category laws, in particular the following equations for all objects $A \in \D_0$ and $M,N \in \D_1$:
   \begin{align*}
    LU(A) &= A = RU(A) & (M \odot N ) \odot P &= M \odot (N \odot P )\\
    L(M \odot N ) &= LM & UA \odot M &= M \\
    R(M \odot N ) &= RN & M \odot UB &= M
\end{align*}

Objects of the \look{vertical category} $\D_0$ are called \look{objects}, or \look{$0$-cells}, while arrows of $\D_0$ are called \look{vertical arrows} and written $f : A \to B$. Objects of the \look{horizontal category} $\D_1$ are called \look{$1$-cells} and written $M : A \tobar B$ whenever $L(M) = A$ and $R(M) = B$. An arrow $\alpha : M \to N$ in $\D_1$ with $L(\alpha) = f: A \to C$ and $R(\alpha) = g : B \to D$ is called a \look{$2$-cell}, written $\alpha : M \cc{{f}}{g} N$, or just $\alpha: M \to N$ when $g$ and $f$ are clear from the context, and drawn as:
$$
\begin{tikzcd}[sep=scriptsize]
	A & B \\
	C & D
	\arrow[""{name=0, anchor=center, inner sep=0}, "M"{inner sep=.8ex}, "\shortmid"{marking}, from=1-1, to=1-2]
	\arrow["f"', from=1-1, to=2-1]
	\arrow["g", from=1-2, to=2-2]
	\arrow[""{name=1, anchor=center, inner sep=0}, "N"'{inner sep=.8ex}, "\shortmid"{marking}, from=2-1, to=2-2]
	\arrow["\alpha"{description}, between={0.2}{0.8}, Rightarrow, from=0, to=1]
\end{tikzcd}
.$$

If $A$ and $B$ are objects, we write $\DD(A, B)$ for the category of $1$-cells and \look{globular} $2$-cells from $A$ to $B$, i.e., whose vertical arrows are the identities $\id_A$ and $\id_B$. For all $1$-cells $M$ and $N$, we denote the homset $\Hom_{\D_1}(M,N)$ by $\D(M,N)$, and the subset of $2$-cells of the form $M \cc{f}{g} N$ by $\D_{f,g}(M,N)$. Moreover, if both $M$ and $N$ are objects of $\DD(A,B)$ for some $0$-cells $A$ and $B$, then we write $\DD(M,N)$ for the homset $\Hom_{\DD(A,B)}(M,N)$. We say a property holds \look{locally} when it holds for all categories $\DD(A,B)$. The image of the \look{unit} functor $U$ on a object $A$ is also denoted $U_A$. Finally, the functor $\odot$ is called \look{horizontal composition}, while \look{vertical composition} denotes the usual composition in the categories $\D_0$ and $\D_1$. 
\end{definition}
Following Shulman~\cite{shulman2007framed}, we work in the weakened setting of pseudo double categories. These are internal categories that are only weakly associative and unital, i.e., there are natural isomorphisms satisfying the coherence axioms for bicategories:
\begin{align*}
\mathfrak{a}_{M,N,P} &: M  \odot (N  \odot P) \xrightarrow{\cong}(M  \odot N)  \odot P  \\
\mathfrak{l}_M &: U_A  \odot M \xrightarrow{\cong} M \\
\mathfrak{r}_M &: M  \odot U_B \xrightarrow{\cong} M
\end{align*}
In addition, the vertical and horizontal categories are allowed to be locally small rather than small. 
We also call them double categories, and use the word `strict' when necessary. By a result of Grandis and Paré~\cite[Theorem 7.5]{grandis1999limits}, every pseudo double category is equivalent to a strict one. Still, the main examples of double categories are more easily formulated as pseudo double categories. On a side note, if one wishes to weaken the horizontal composition even more, for example to define horizontal composition using universal properties, then one can use the notion of virtual double category~\cite{burroni1971t, cruttwell2009unified}.
In a double category $\D$, local categories of the form $\DD(A,A)$, for all vertical objects $A$ are monoidal. Therefore, the following result, due to Kelly~\cite{kelly1964maclane}, holds.
\begin{lemma}\label{lemma:reql}
For all objects $A$ in a double category $\D$, the two isomorphisms $\mathfrak l_{U_A}, \mathfrak r_{U_A}: U_A \odot U_A \to U_A$ coincide.
\end{lemma}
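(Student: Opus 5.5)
The plan is to reduce to the standard Kelly argument in the monoidal category $(\DD(A,A), \odot, U_A)$, with associator $\mathfrak a$ and unitors $\mathfrak l, \mathfrak r$ restricted to globular $2$-cells. These restrictions satisfy the pentagon and triangle axioms, by the bicategory coherence assumed for pseudo double categories. So it suffices to show $\mathfrak l_U = \mathfrak r_U$ in an arbitrary monoidal category, writing $U = U_A$ and $\otimes$ for $\odot$.

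First I establish the auxiliary identities
\[ \mathfrak l_{U\otimes M} = \mathfrak l_U \otimes M \circ \mathfrak a_{U,U,M} \quad\text{and}\quad \mathfrak r_{M\otimes U}\circ \mathfrak a_{M,U,U} = M\otimes \mathfrak r_U, \]
which are Kelly's consequences of the pentagon and triangle. For the first, I tensor on the left by $U$ and paste the pentagon for $(U,U,U,M)$ with triangles and naturality squares of $\mathfrak a$ and $\mathfrak r$. This yields $U\otimes \mathfrak l_{U\otimes M} = U\otimes\big((\mathfrak l_U\otimes M)\circ\mathfrak a\big)$, up to composing with isomorphisms. The functor $U\otimes -$ is faithful on morphisms because $\mathfrak l$ is a natural isomorphism: $f = \mathfrak l\circ(U\otimes f)\circ \mathfrak l^{-1}$. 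Cancelling $U\otimes -$ then gives the identity. This diagram chase is the main obstacle, essentially careful bookkeeping of the pentagon.

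Next, naturality of $\mathfrak l$ applied to the arrow $\mathfrak l_U\colon U\otimes U\to U$ gives $\mathfrak l_U\circ(U\otimes\mathfrak l_U) = \mathfrak l_U\circ \mathfrak l_{U\otimes U}$. Since $\mathfrak l_U$ is invertible, $U\otimes \mathfrak l_U = \mathfrak l_{U\otimes U}$. Combining this with the first auxiliary identity at $M=U$ gives $U\otimes\mathfrak l_U = (\mathfrak l_U\otimes U)\circ\mathfrak a_{U,U,U}$.

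The triangle axiom at $(U,U)$ reads $(\mathfrak r_U\otimes U)\circ\mathfrak a_{U,U,U} = U\otimes\mathfrak l_U$. Comparing the two expressions and cancelling the isomorphism $\mathfrak a$ gives $\mathfrak l_U\otimes U = \mathfrak r_U\otimes U$. Finally, $-\otimes U$ is faithful, since $\mathfrak r$ is a natural isomorphism, so $\mathfrak l_U=\mathfrak r_U$.

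Alternatively, one could cite Mac Lane's coherence theorem: both maps are formal composites of structural isomorphisms between the same words $U\otimes U \to U$, hence they are equal. However, the direct argument above avoids invoking the full coherence theorem.
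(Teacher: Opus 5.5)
Your proposal is correct and takes the same route as the paper: the paper observes that $\DD(A,A)$ is monoidal under $\odot$ with unit $U_A$ and cites Kelly's theorem. You reduce to that same monoidal category and write out Kelly's pentagon--triangle argument in full; your second auxiliary identity is never used, but including it does no harm.
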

\begin{example}\label{ex:bimod} Let $R$ be an arbitrary commutative ring. Consider two $R$-algebras $A$ and $B$. A \look{left $A$-module} (resp. \look{right $B$-module}) $M$ is a left $A$-module (resp. right $B$-module) $(M, 0, +, \act_A)$ where $A$ (resp. $B$) is regarded as a ring.  
Similarly, we define \look{$A$-$B$-bimodule} as abelian groups $M$ that are both left $A$-modules and right $B$-modules in a compatible way, i.e. such that for all $a\in A, b \in B$ and $m \in M$,  $$ a \act_A (m \act_B b) = (a \act_A m) \act_B b,$$
and such that $M$ is $R$-symmetric in the sense that the two actions of $R$ induced by $A$ and $B$ on $M$ coincide.
Let $M$ and $N$ be two $A$-$B$ and $C$-$D$-bimodules, respectively. An \look{$(f, g)$-bilinear map} $\alpha: M \to N$ is a an abelian group homomorphism $\alpha: M \to N$ along with $R$-module homomorphisms $f: A \to C$ and $g: B \to D$ satisfying for all $a \in A, b \in B$ and $m \in M$:
$$ \alpha(a \act_A m \act_B b) = f(a) \act_C \alpha(m) \act_D g(b).$$
The \look{double category of bimodules over $R$-algebras}, denoted $\Bimod$, has:
\begin{itemize}[noitemsep]
    \item as objects $R$-algebras,
    \item as vertical arrows homomorphisms of $R$-algebras,
    \item as $1$-cells $M: A \tobar B$, $A$-$B$-bimodules $M$.
    \item as $2$-cells $\alpha: M \cc{f}{g} N$, $(f, g)$-bilinear maps $\alpha: M \to N$.
\end{itemize}
$R$-algebras can be regarded as bimodules over themselves, and horizontal composition is induced by the tensor products $\otimes_B$ of $A$-$B$-bimodules and $B$-$C$-bimodules over an $R$-algebra $B$. When $R = \Z$, we recover the double category of bimodules over rings. This is the prototypical example of double category. 
\end{example}
\begin{example}[{\cite[Ex. 2.6]{shulman2007framed}},\cite{ponto2014linearity}]\label{ex:dist} For all bicomplete closed symmetric monoidal category $\mathcal V$ with unit $1$ and product $\otimes$, like $\set$ or ${}_R\mmod$ for $R$ a commutative ring, the double category of $\mathcal V$-distributors, or profunctors, denoted $\mathcal V$-$\Dist$, has:
\begin{itemize}[noitemsep]
    \item as objects small categories,
    \item as vertical arrows functors,
    \item as $1$-cells $F: A \tobar B$, $A$-$B$-distributors, i.e., functors $A \times B^{\mathrm{op}} \to \mathcal V$,
    \item as $2$-cells $\alpha: F \cc{f}{g} G$, natural transformations: $$\alpha_{(x,y)}: F(x,y) \to G(f(x),g(y)).$$
\end{itemize}
For all small categories $A$, the $1$-cell $U_A: A \tobar A$ is defined by the copowers of the unit $1$ by the homsets of $A$: $U_A(x,y) = A(y,x) \cdot 1$, and the horizontal composition of two $1$-cells $F$ and $G$ is given on objects by the coend ${F \odot G}(a,c) = \int^{b \in B} F(a,b)\otimes G(b,c)$.
When $\mathcal V$ is $\set$ (resp. ${}_R\mmod$), $U_A(x,y)$ is the homset $A(y,x)$  (resp. the free $R$-module generated by $A(y,x)$).
Distributors are related to persistence: let $\Repr$ denote the sub-double category of $\rmod$-$\Dist$ whose $0$-cells are posets, regarded as small, thin categories. Distributors $M : P \tobar \{*\}$ are known as \look{representations} of $P$.  
\end{example}
\begin{example}
\label{ex:absorptionmonoid}
A notion of directed homotopy (bi)module was introduced in \cite{goubault2025homotopy}. Algebraically, they are \look{bimodules over absorption monoids} valued in \look{absorption semigroups}. Absorption monoids (resp. semigroups) are monoids (resp. semigroups) with an absorbing element $0$. A left $T$-{module} $(G,T)$ over absorption monoids is an absorption semigroup $M$ together with an action of an absorption monoid $T$, i.e. a map
$\act: \ T\times M \rightarrow M$ satisfying the classical axioms of monoids, and the fact that the action of $0$ on elements of $M$ is $0$. Similarly, we define right $T$-modules and $T$-$S$-bimodules, as in Example \ref{ex:bimod}.
Morphisms of bimodules over monoids are triples $(h,f,g): \ (M,T,S) \rightarrow (M',T',S')$ where $h: \ M \rightarrow M'$, $f: \ T \rightarrow T'$ and $h: \ S \rightarrow S'$ are absorption monoid morphisms such that 
$f(t\bullet m)=f(t)\bullet h(m)\bullet g(s)$. As in the case of bimodules over algebras, Example \ref{ex:bimod}, $h$ is called an $(f,g)$-bilinear map from $M$ to $M'$. 
    The double category of bimodules over absorption monoids
    has:
    \begin{itemize}[noitemsep]
        \item as objects absorption monoids,
        \item as vertical arrows morphisms of absorption monoids,
        \item as $1$-cells $M: A \tobar B$, absorption semigroups $M$ equipped with compatible actions of $A$ and $B$ on $M$, i.e., bimodules over absorption monoids in the sense above,
        \item as $2$-cells $\alpha: M \cc{f}{g} N$, $(f,g)$-bilinear morphisms.
    \end{itemize}
An absorption monoids can be regarded as bimodule over itself, and horizontal composition is given by the tensor product of bimodules.

In \cite{goubault2025homotopy}, it is proven that traces in directed spaces, i.e. directed paths modulo reparametrization, equipped with concatenation, an absorbing and a neutral 1 element, is an absorption monoid. The singular simplices of dimension $n$ of the trace space, the topological space of directed paths modulo reparametrization, equipped with the respective quotient topology of the compact-open topology is shown to form a bimodule over the absorption monoid of traces. The construction of singular simplices in the trace space actually forms a Kan simplicial object in the category of bimodules over the trace monoid. From this stems a natural definition of directed homotopy bimodules and some of their properties. 
\end{example}
\subsection{A framed bicategory}\label{subsection:fb}
Framed bicategories, or fibrant double category, is a double category which admits restriction and extension of $1$-cells along vertical arrows, just as bimodules admit restriction and extension of scalars. They were introduced by Shulman in~\cite{shulman2007framed}, building upon the equivalent notion of proarrow equipment on $2$-categories~\cite{wood1982abstract,wood1985proarrows}.
\begin{definition}[\cite{shulman2007framed}]\label{def:fb}
A \look{framed bicategory} $\D$ is a double category such that the functor  
$(L, R) : \D_1 \to \D_0 \times \D_0$ is a bifibration.
Explicitly, given a framed bicategory $\D$, for all vertical arrows $f: A \to B$ and $g: C \to D$ and $1$-cell $M: B \tobar D$, there are \look{cartesian} $2$-cells, unique up to precomposition by a unique globular isomorphic $2$-cell, that are written $f^*Mg^* \xrightarrow{\cart} M$ or $f^*Mg^* \xrightarrow{\ct} M$ and drawn as:
$$
\begin{tikzcd}[sep=scriptsize]
	A\  & \ C \\
	B & D
	\arrow[""{name=0, anchor=center, inner sep=0}, "{f^*Mg^*}", "\shortmid"{marking}, from=1-1, to=1-2]
	\arrow["f"', from=1-1, to=2-1]
	\arrow["g", from=1-2, to=2-2]
	\arrow[""{name=1, anchor=center, inner sep=0}, "M"', "\shortmid"{marking}, from=2-1, to=2-2]
	\arrow["\cart"{description}, draw=none, from=0, to=1]
\end{tikzcd}
,$$ cartesian in the sense that any $2$-cell: 
$$
\begin{tikzcd}[sep=scriptsize]
	{A'} & {C'} \\
	B & D
	\arrow[""{name=0, anchor=center, inner sep=0}, "N", "\shortmid"{marking}, from=1-1, to=1-2]
	\arrow["fh"', from=1-1, to=2-1]
	\arrow["gk", from=1-2, to=2-2]
	\arrow[""{name=1, anchor=center, inner sep=0}, "M"', "\shortmid"{marking}, from=2-1, to=2-2]
	\arrow[shorten <=9pt, shorten >=9pt, Rightarrow, from=0, to=1]
\end{tikzcd}
$$
factors uniquely as:
$$
\begin{tikzcd}[sep=scriptsize]
	{A'} & {C'} \\
	A\  & \ C \\
	B & D
	\arrow[""{name=0, anchor=center, inner sep=0}, "N", from=1-1, to=1-2]
	\arrow["h"', from=1-1, to=2-1]
	\arrow["k", from=1-2, to=2-2]
	\arrow[""{name=1, anchor=center, inner sep=0}, "{f^*Mg^*}"{description}, from=2-1, to=2-2]
	\arrow["f"', from=2-1, to=3-1]
	\arrow["g", from=2-2, to=3-2]
	\arrow[""{name=2, anchor=center, inner sep=0}, "M"'{inner sep=.8ex}, "\shortmid"{marking}, from=3-1, to=3-2]
	\arrow[shorten <=9pt, shorten >=9pt, Rightarrow, from=0, to=1]
	\arrow["\cart"{description}, draw=none, from=1, to=2]
\end{tikzcd}
.$$

Dually, there are \look{cocartesian} $2$-cells, unique up to postcomposition by a unique globular isomorphic $2$-cell, that are denoted $M \xrightarrow{\cocart} f_!Mg_!$ or $M \xrightarrow{\cct} f_!Mg_!$ for simplicity. 
\end{definition}
We record here some facts about cofibrations (the dual statements hold for fibrations):
\begin{enumerate}[noitemsep]
    \item For all cocartesian $2$-cells $\alpha$, $\alpha$ is epic.\label{fact1}
    \item For all $2$-cells $\alpha$ and $\beta$, if $\beta\circ \alpha$ and $\alpha$ are cocartesian, then $\beta$ is too.\label{fact2}
    \item For all cocartesian $2$-cells $\alpha: X \to X_1$ and $\beta: X \to X_2$, if $\alpha$ and $\beta$ lie over the same vertical arrow $h$, then there is a unique isomorphism $X_1 \cong X_2$ commuting with $\alpha$ and $\beta$.\label{fact3} 
\end{enumerate}
Assuming the axiom of choice, any fibration admits a (normal) cleavage, i.e., a choice of (co)cartesian $2$-cells for all compatible $1$-cells and pairs of vertical arrows. A cleavage allows to define change of coefficients functors for all pairs of vertical arrows. Due to the uniqueness property of (co)cartesian $2$-cells, any other choice of cleavage gives naturally isomorphic change of coefficients functors.  
In Sections~\ref{section:afb}, \ref{section:homology} and~\ref{section:exact_sequences}, any normal cleavage suffices, as $1$-cells are studied up to isomorphism. In Section~\ref{section:embedding_theorems}, however, we construct a specific cleavage to satisfy certain strictness conditions. Indeed, in Section~\ref{section:framed_embedding} in particular, $1$-cells are studied up to equality. 
\begin{definition}
For all vertical arrows $f: A \to B$ and $g: C \to D$, the \look{restriction of scalars} functor $f^*(\mathunderscore)g^* : \DD(B,D) \to \DD(A,C)$ is defined on $1$-cells $M : B \tobar D$ by $f^*Mg^*$, and on globular $2$-cells $\alpha: M \to N$ as the factorization of the $2$-cell $f^*Mg^* \xrightarrow{\cart} M \xrightarrow{\alpha} N$ through the cartesian $2$-cell $f^*Ng^* \xrightarrow{\cart} N$. Dually, we define \look{extension of scalars} functors for all pairs of vertical arrows, as well as change of coefficient functors on the right and on the left for all vertical arrows.  
\end{definition}
By~\cite[Proposition 3.9]{shulman2007framed}, the restriction functor along $(f,g)$ is right adjoint to the extension functor along $(f,g)$. More concretely, the unit $\eta$ of the adjunction is defined pointwise on $1$-cells $M$ by the factorization of the cocartesian $2$-cell $X \xrightarrow{\cocart} f_!Mg_!$ through the cartesian $2$-cell $f^*f_!Mg_!g^* \xrightarrow{\cart} f_!Mg_!$.
When all restriction functors admit a right adjoint, these adjoints are called \look{coextension}. This condition is equivalent to $\D$ being a closed framed bicategory. 
\begin{definition}[\cite{shulman2007framed}]\label{def:closed}
    A framed bicategory $\D$ is \look{closed} when its underlying horizontal bicategory $\DD$ is closed as a bicategory, i.e., horizontal composition has right adjoints on both variables $\D_1(M \odot N, P) \cong \D_1(M, N \rhd P) \cong \D_1(N, P \lhd M)$. In this case, for all vertical arrows $f: A \to B$, the restriction functors $f^*$ have right adjoints called \look{coextension}, denoted $f_*$ and naturally isomorphic to $(\mathunderscore) \lhd {}_f B$.
\end{definition}
In the rest of this document, we will also use the notion of `external' monoidal structure: 
\begin{definition}[\cite{shulman2007framed}]\label{def:monoidal}
A \look{monoidal} framed bicategory is a pseudo-monoid in the $2$-category of framed bicategories~\cite[\S 6]{shulman2007framed}. More explicitly, this consists of a framed bicategory $\D$ such that the horizontal and vertical categories $\D_0$ and $\D_1$ are monoidal, with both monoidal products denoted $\otimes$, and satisfying the usual axioms. In particular, if $I$ denotes the monoidal unit in $\D_0$, then the monoidal unit in $\D_1$ is $U(I)$, and for all $1$-cells $M: A \tobar B$ and $N : C \tobar D$, $M \otimes N $ is a $1$-cell $A\otimes C \tobar B \otimes D$.
\end{definition}
\begin{example}
\begin{itemize}
    \item The standard change of coefficients functors for modules turn $\Bimod$ into a framed bicategory. Explicitly, for any $B$-$B'$-bimodule $M$ and $R$-algebra homomorphisms $f: A \to B$ and $g: B \to C$, the left restriction $f^*M$ is the $A$-$B'$-bimodule with underlying set $M$, right action $\act_{B'}$, and left action $(a,m) \mapsto f(a) \act_B m$. The extension $g_!M$ is given by $U_Cg^* \otimes_B M$, and the coextension $g_*M$ by $\Bimod(g^*U_C,M)$. In addition, $\Bimod$ is closed by the tensor-hom adjunction, and monoidal via the external tensor product of bimodules $\otimes_R$, associating to an $A$-$B$-bimodule $M$ and a $C$-$D$-bimodule $N$ the $A \otimes_R C$-$B\otimes_R D$-bimodule $M \otimes_R N$.
     \item The double category $\mathcal V$-$\Dist$ of $\mathcal V$-distributors is also a closed framed bicategory. Restriction is given by precomposition, and extension and coextension by left and right Kan extension, respectively. The tensor product of $\mathcal V$-categories makes $\mathcal V$-$\Dist$ monoidal. For more details, see~\cite[Example 2.6]{shulman2007framed} where this example of framed bicategory is presented in more details.
    \item We are in a similar situation for bimodules in absorption monoids, see Example \ref{ex:absorptionmonoid}. It is shown in \cite{goubault2025homotopy} that the restriction of scalar functor is right-adjoint to the extension of scalar functor, and left-adjoint to the co-extension of scalar functor. 
\end{itemize}
\end{example}
By~\cite[Theorem 4.1]{shulman2007framed}, the framed structure on a double category $\D$ can equivalently be expressed as the data of $1$-cells $\lus {f \,} B: A \tobar B$ and $B_f : B \tobar A$ called \look{special objects}, for all vertical arrows $f: A \to B$, along with certain $2$-cells satisfying the diagrammatic conditions of~\cite[Theorem 4.1]{shulman2007framed}. If a double category is framed, then special objects can be defined as follows:
\begin{definition}\label{def:special_objects}
For all framed bicategories $\D$, we define \look{special objects} ${}_f B = f^*U_B$ and $B_f = U_Bf^*$ for all vertical arrows $f: A \to B$. 
\end{definition}
Conversely, if a double category contains special objects, then one can show that for all $1$-cells $M: B \tobar D$, vertical arrows $f: A \to B$, $g: B \to A$ and vertical objects $C$, the following $2$-cells are cartesian and cocartesian, respectively:
\[\begin{tikzcd}
	A & B & C \\
	B & B & C \\
	B && C
	\arrow[""{name=0, anchor=center, inner sep=0}, "{{}_fB}"{inner sep=.8ex}, "\shortmid"{marking}, from=1-1, to=1-2]
	\arrow["f"', from=1-1, to=2-1]
	\arrow[""{name=1, anchor=center, inner sep=0}, "M"{inner sep=.8ex}, "\shortmid"{marking}, from=1-2, to=1-3]
	\arrow[equals, from=1-2, to=2-2]
	\arrow[equals, from=1-3, to=2-3]
	\arrow[""{name=2, anchor=center, inner sep=0}, "{U_B}"'{inner sep=.8ex}, "\shortmid"{marking}, from=2-1, to=2-2]
	\arrow[equals, from=2-1, to=3-1]
	\arrow[""{name=3, anchor=center, inner sep=0}, "M"'{inner sep=.8ex}, "\shortmid"{marking}, from=2-2, to=2-3]
	\arrow[equals, from=2-3, to=3-3]
	\arrow[""{name=4, anchor=center, inner sep=0}, "M"'{inner sep=.8ex}, "\shortmid"{marking}, from=3-1, to=3-3]
	\arrow["\cart"{description}, draw=none, from=0, to=2]
	\arrow["{\id_M}"{description}, draw=none, from=1, to=3]
	\arrow["{\mathfrak l_B}"{description}, draw=none, from=2-2, to=4]
\end{tikzcd}~~~~~~~~
\begin{tikzcd}
	B && C \\
	B & B & C \\
	A & B & C
	\arrow[""{name=0, anchor=center, inner sep=0}, "M"{inner sep=.8ex}, "\shortmid"{marking}, from=1-1, to=1-3]
	\arrow[equals, from=1-1, to=2-1]
	\arrow[equals, from=1-3, to=2-3]
	\arrow[""{name=1, anchor=center, inner sep=0}, "{U_B}"{inner sep=.8ex}, "\shortmid"{marking}, from=2-1, to=2-2]
	\arrow["g"', from=2-1, to=3-1]
	\arrow[""{name=2, anchor=center, inner sep=0}, "M"{inner sep=.8ex}, "\shortmid"{marking}, from=2-2, to=2-3]
	\arrow[equals, from=2-2, to=3-2]
	\arrow[equals, from=2-3, to=3-3]
	\arrow[""{name=3, anchor=center, inner sep=0}, "{A_g}"'{inner sep=.8ex}, "\shortmid"{marking}, from=3-1, to=3-2]
	\arrow[""{name=4, anchor=center, inner sep=0}, "M"'{inner sep=.8ex}, "\shortmid"{marking}, from=3-2, to=3-3]
	\arrow["{\mathfrak l^{-1}_B}"{description}, draw=none, from=0, to=2-2]
	\arrow["\chi_g"{description}, draw=none, from=1, to=3]
	\arrow["{\id_M}"{description}, draw=none, from=2, to=4]
\end{tikzcd}\]
where the cocartesian $2$-cell $U_B \ccs{g}{\id_B} A_g$ denoted $ \chi_g$ is defined as the factorization of $Ug$ trough the cartesian $2$-cell $U_Ag^* \to U_A$, and shown to be cartesian.
Symmetrically, there is a cocartesian $2$-cell ${}_g \chi: U_B \ccs{\id_B}{g} {}_g A$, as well as cartesian and cocartesian $2$-cells for horizontal composition on the right.
Consequently, there are unique isomorphisms $\lambda^*: {}_fB\odot M \to f^*M$ and $\lambda_! : B_g \odot M \to g_!M$ natural in $M$ that commute with the relevant (co)cartesian cells, and symmetrically natural isomorphisms $\rho^*$ and $\rho_!$.
In the rest of this document, we use the notations $B_f$ and ${}_f B$ as aliases for $f^*U_B$ and $U_Bf^*$ to emphasize the above cartesian $2$-cells and natural isomorphisms. 
\section{Abelian framed bicategories}
\label{section:afb}
An abelian category is an abstract categorical generalization of the concrete category of vector spaces and linear maps. 
Intuitively, the properties of vector spaces and linear maps that are used to defined homology vector spaces are abstracted into axioms: morphisms have kernels and cokernels; there is a zero object and zero morphisms between any two objects; and finite products and coproducts coincide.
Functors between abelian categories are called \look{additive} when they preserve direct sums and the zero object, and \look{exact} (resp. left, right exact) when they preserve exact sequences (resp. exactness on the left, right). 
See~\cite{weibel1994introduction, kashiwara2006categories} for textbook references.

In this section, we define abelian framed bicategories, which are framed bicategories that are locally abelian, and such that horizontal composition is additive, a condition that is satisfied as soon as a framed bicategory is closed. The motivation is to capture the fact that while  the category of modules over varying ring is not abelian, for fixed rings $A$ and $B$ the category of $A$-$B$-bimodules \emph{is} abelian. 
\begin{definition}\label{def:afb}
An \look{abelian framed bicategory} is a framed bicategory $\D$ that is locally abelian, i.e. all local categories $\DD(A, B)$ are abelian, and such that horizontal composition yields additive functors: $$\DD(A,B) \times \DD(B,C) \to \DD(A,C), (M,N) \mapsto M \odot N$$ for all objects $A,B,C \in \D_0$. 
\end{definition}
\begin{remark}
Alternative definitions might require either the category of objects $\D_0$ or the entire horizontal category $\D_1$ to be abelian, but such definitions would not accommodate our examples. Indeed, if $R$ is a ring, then the category of $R$-algebras is not abelian because the initial and terminal objects $R$ and $0$ are not isomorphic. Similarly, in the category of bimodules over varying $R$-algebras, the initial and terminal objects $0: R \tobar R$ and $0: 0 \tobar 0$ are not isomorphic.
\end{remark}
\begin{remark}\label{remark:related}
To the best of our knowledge, the notion of abelian framed bicategory has not previously appeared in the literature:
\begin{itemize}
    \item Semi-abelian categories~\cite{borceux2004mal, van2006homology} are weakened versions of abelian category in which homological algebra can still be carried out. Incidentally, they have also been used by Goubault to define directed homology theories~\cite{goubault2024semi}. Similarly, a homology theory can be defined for abelian framed bicategories even though the horizontal category is only locally abelian. However, there are two main differences: abelian framed bicategories are a $2$-categorical notion, and being locally abelian is orthogonal to being semi-abelian in the sense that one could equally consider locally semi-abelian categories, but we focus on abelian categories for simplicity.
   \item Several variants of abelian $2$-categories have been proposed in the literature~\cite{dupont2008abelian, nakaoka2011comparison}, but they are all {abelian Gpd-categories} in the sense of~\cite{dupont2008abelian}. They bear some resemblance to abelian framed bicategories (e.g., local zero $1$-cells), but they are understood as $2$-categories and not as double categories, and moreover all $2$-cells are invertible. 
   The closest connection to abelian framed bicategories would be a notion of abelian $2$-category with equipment, which does not seem to have been explored.
   \item The notion of monoidal abelian category is well studied under the name of tensor category, see for example~\cite{rivano1972categories,etingof2015tensor}. 
   However, in an abelian framed bicategory $\AAA$, neither the horizontal category $\AAA_1$ nor the local categories $\AAA(A,B)$ (for $A \neq B$) are typically monoidal abelian; only the categories $\AAA(A,A)$ carry a monoidal structure.
\end{itemize}
\end{remark}
In the remainder of this section, we deduce immediate consequences of Definition~\ref{def:afb}, introduce further definitions, and present our main examples of abelian framed bicategories.
In an abelian framed bicategory $\AAA$, 
since the change of coefficients functors are left or right adjoints, they are automatically right or left exact. Similarly, additivity of horizontal composition is automatic when $\AAA$ is closed, i.e., when the horizontal composition functors have both adjoints.
\begin{corollary}\label{cor:exactness}
Restriction functors are left exact, and extension functors right exact. If coextension functors also exist, then restriction functors are exact, coextension functors are left exact, and horizontal composition functors are right exact. 
In particular, all these functors are additive.
\end{corollary}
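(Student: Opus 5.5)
The whole corollary reduces to one standard fact: between abelian categories, a left adjoint is right exact and a right adjoint is left exact. This holds because a left adjoint preserves all colimits that exist, in particular cokernels and finite coproducts. Since these functors go between abelian categories, they also preserve zero objects and biproducts, so they are additive. So the plan is to exhibit every functor in the statement as a left or right adjoint between local categories $\DD(A,B)$, all of which are abelian by Definition~\ref{def:afb}.

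\emph{Change of coefficients.} For vertical arrows $f,g$, the restriction functor $f^*(\mathunderscore)g^*:\DD(B,D)\to\DD(A,C)$ is right adjoint to the extension $f_!(\mathunderscore)g_!$ by \cite[Proposition 3.9]{shulman2007framed}, as recalled after the definition of restriction of scalars. It follows that restriction is left exact and extension is right exact. One should check that this adjunction really lives between the local categories with globular $2$-cells. It does, since both functors were defined on $\DD(B,D)$ and $\DD(A,C)$ via factorizations through (co)cartesian cells.

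\emph{Coextensions.} If coextensions $f_*$ exist, then restriction is also a left adjoint. Being both a left and a right adjoint, it is both right exact and left exact, hence exact. The coextension, being a right adjoint, is left exact.

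\emph{Horizontal composition.} The hypothesis that coextensions exist is, by Definition~\ref{def:closed} and the remark preceding it, the closedness of $\D$. So for fixed $N$, the functor $(\mathunderscore)\odot N$ has right adjoint $N\rhd(\mathunderscore)$, and for fixed $M$, the functor $M\odot(\mathunderscore)$ has right adjoint $(\mathunderscore)\lhd M$. Each partial functor is therefore right exact and in particular additive in each variable.

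The only delicate point is the step from closedness to these local adjunctions. The closed structure is stated with homsets of $\D_1$, so I would restrict to globular $2$-cells to get genuine adjunctions between $\DD(A,B)$ and $\DD(A,C)$. With that done, ``additive'' in the statement is read variable-wise, which is the usual notion for a bifunctor.
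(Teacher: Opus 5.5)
Your proof is the paper's one-line argument made explicit: you exhibit each functor as a left or right adjoint between abelian local categories and use the one-sided exactness of adjoints, which also gives additivity (variable-wise for $\odot$). The one ingredient you take on trust is that existence of coextensions amounts to closedness. This is the paper's own remark, but only the direction closed $\Rightarrow$ coextensions holds: take a one-object framed bicategory with trivial vertical category and horizontal category $\mathrm{Ab}^{\mathrm{op}}$ with $\odot=\otimes$. There all restrictions are identities, so coextensions exist, yet $\odot$ is not right exact in $\mathrm{Ab}^{\mathrm{op}}$. So the horizontal-composition clause really requires closedness as its hypothesis, which is what both you and the paper actually use.
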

\begin{proof}
Right (resp. left) adjoint functors between abelian categories are left (resp. right) exact.
\end{proof}
\begin{corollary}\label{cor:extension_zero}
Restricting or (co)extending zero objects yields zero objects. For all vertical arrows $f: A \to C$ and $g: B \to D$ and $1$-cells $M: A \tobar B$ and $N: C \tobar D$, there are unique $2$-cells $0 \cc{f}{g} N$ and $M \cc{f}{g} 0$, unique up to postcomposition (resp. precomposition) by a unique isomorphism.
\end{corollary}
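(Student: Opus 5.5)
The plan is to combine Corollary~\ref{cor:exactness} with the (co)cartesian factorization properties of Definition~\ref{def:fb}.

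\emph{Zero objects are preserved.} The change of coefficients functors between local abelian categories are additive by Corollary~\ref{cor:exactness}. An additive functor sends a zero object $0$ to a zero object, because $\id_0 = 0$ forces $\id_{F0} = F(\id_0) = 0$. So for $f: A \to C$ and $g: B\to D$ we get $f^*0g^* \cong 0$ in $\DD(A,B)$ and $f_!0g_! \cong 0$ in $\DD(C,D)$, and likewise for coextensions when they exist. If one wants to avoid additivity, there is a more elementary argument for the extension. A left adjoint preserves initial objects, so $f_!0g_!$ is initial in $\DD(C,D)$, hence zero. Dually, restriction and coextension are right adjoints and preserve terminal objects.

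\emph{The $2$-cell $0 \cc{f}{g} N$.} By cocartesianness, $2$-cells $0 \cc{f}{g} N$ correspond bijectively to globular $2$-cells $f_!0g_! \to N$ in $\DD(C,D)$, via precomposition with the cocartesian cell $0 \xrightarrow{\cct} f_!0g_!$. Since $f_!0g_!$ is a zero object, there is exactly one such globular cell. Hence there is exactly one $2$-cell $0 \cc{f}{g} N$.

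\emph{The $2$-cell $M \cc{f}{g} 0$.} Dually, $2$-cells $M \cc{f}{g} 0$ correspond bijectively to globular $2$-cells $M \to f^*0g^*$ in $\DD(A,B)$, via postcomposition with the cartesian cell. Here $f^*0g^*$ is zero, so again there is exactly one such $2$-cell.

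\emph{The ``unique up to isomorphism'' clause.} The choice of zero object, and of the (co)cartesian cell, is determined only up to a unique globular isomorphism. Changing these choices changes the resulting $2$-cell by pre- or postcomposition with a unique isomorphism, by fact~\ref{fact3} and its dual. This is exactly the precision stated.

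The only point needing care is that additivity is established in Corollary~\ref{cor:exactness} only through adjointness. For the extension this requires nothing extra, because the restriction–extension adjunction always exists. For the restriction, the dual argument via preservation of terminal objects already suffices, so no closedness assumption is needed.
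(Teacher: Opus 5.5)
Your argument is correct and is essentially the paper's proof, which simply cites additivity of the change-of-coefficient functors together with the (co)cartesian factorization property; you spell out the bijection between $2$-cells $0 \cc{f}{g} N$ (resp.\ $M \cc{f}{g} 0$) and globular $2$-cells $f_!0g_! \to N$ (resp.\ $M \to f^*0g^*$) on which that one-line proof relies. Note that your argument gives strict uniqueness once the zero objects are fixed: the choice of (co)cartesian cell does not change the resulting $2$-cell, and the ``up to isomorphism'' clause accounts only for the choice of zero object.
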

\begin{proof}
This follows from the additivity of the functors and the properties of (co)cartesian $2$-cells.
\end{proof}
The additional structures on framed bicategories carries on to abelian framed bicategories. A \look{closed abelian framed bicategory} is a framed bicategory that is both abelian and closed. For monoidal framed bicategories, an additional compatibility condition is required. 
\begin{definition}
A \look{monoidal abelian framed bicategory} is an abelian framed bicategory $\D$ that is also monoidal, and such that for all objects $A, B, C$, and $D$, the functor $\otimes: \DD(A,B) \times \DD(C,D) \to \DD(A \otimes C, B \otimes D)$ induced by the monoidal product in $\D_1$ is additive.
\end{definition}
\begin{example}\label{ex:abelian_monoidal}
    \begin{itemize}[noitemsep]
        \item The category of $A$-$B$-bimodules, where $A$ and $B$ are fixed $R$-algebras, is abelian. In addition, horizontal composition is additive since tensor product distributes over finite direct sums. Therefore, the framed bicategory $\Bimod$ is abelian. For the same reason, $\Bimod$ is also abelian monoidal. 
         \item The framed bicategory of $\mathcal V$-distributors is closed and if $\mathcal V$ is abelian, then the local category of $\mathcal V$-distributors over two fixed small categories is also abelian. In that case, $\mathcal V$-$\Dist$ is therefore an abelian framed bicategory. In particular, $\Repr$ is. 
        \item The framed bicategory of bimodules over absorption monoids (Example \ref{ex:absorptionmonoid}) is not abelian. 
    \end{itemize}
\end{example}
\section{Homology and cohomology in abelian framed bicategories}\label{section:homology}
In this section, we adapt classical notions of homological algebra to the setting of abelian framed bicategories. Since abelian framed bicategories are only locally abelian, we work as much as possible within local categories, using extension or restriction functors when necessary. In what follows, let $\AAA$ denote an abelian framed bicategory.
\subsection{Chain complexes in abelian framed bicategories}
We define chain complexes in $\AAA$ as chain complexes within the local abelian categories of $\AAA$, so that their homology is well-defined. Morphisms of chain complexes, however, are not necessarily local and satisfy properties similar to those of $2$-cells. In the remainder of this document, chain complexes are understood to be positively graded. In particular, we do not consider shifted chain complexes as in~\cite{goubault2025directed}.
\begin{definition}
\label{def:chaincomplex}
A \look{chain complex} in $\AAA$ is a chain complex in a local abelian category of $\AAA$, i.e., a chain complex in $\A(A,B)$ for some objects $A$ and $B$. 
A \look{morphism} $(\alpha_i)_{i\geq 0}$ from a chain complex $(M_i)_{i \geq 0}$ in $\A(A,B)$ to a chain complex $(N_i)_{i \geq 0}$ in $\A(C,D)$ is a pair of vertical arrows $f: A \to C$ and $g: B \to D$ along with $2$-cells $\alpha_i:  M_i \cc{f}{g} N_i$ for all $i \geq 0$, such that the following diagram commutes for all $i \geq 0$:
\begin{equation*}
\begin{tikzcd}
	A && B \\
	\\
	C && D
	\arrow[""{name=0, anchor=center, inner sep=0}, "{M_{i+1}}"', "\shortmid"{marking}, curve={height=12pt}, from=1-1, to=1-3]
	\arrow[""{name=1, anchor=center, inner sep=0}, "{M_i}", "\shortmid"{marking}, curve={height=-12pt}, from=1-1, to=1-3]
	\arrow["f"', from=1-1, to=3-1]
	\arrow["g", from=1-3, to=3-3]
	\arrow[""{name=2, anchor=center, inner sep=0}, "{N_{i+1}}"', "\shortmid"{marking}, curve={height=12pt}, from=3-1, to=3-3]
	\arrow[""{name=3, anchor=center, inner sep=0}, "{N_i}", "\shortmid"{marking, text={rgb,255:red,128;green,128;blue,128}}, color={rgb,255:red,128;green,128;blue,128}, curve={height=-12pt}, from=3-1, to=3-3]
	\arrow["{ \partial_{i+1}}", shorten <=5pt, shorten >=5pt, Rightarrow, from=0, to=1]
	\arrow["{\alpha_i}"{description, pos=0.7}, shift left=5, color={rgb,255:red,128;green,128;blue,128}, shorten <=4pt, shorten >=4pt, Rightarrow, from=1, to=3]
	\arrow["{\alpha_{i+1}}"{description, pos=0.3}, shift right=5, shorten <=4pt, shorten >=4pt, Rightarrow, from=0, to=2]
	\arrow["{\partial_{i+1}}"', color={rgb,255:red,128;green,128;blue,128}, shorten <=5pt, shorten >=5pt, Rightarrow, from=2, to=3]
	\arrow[curve={height=12pt}, from=1-1, to=1-3]
\end{tikzcd}
\end{equation*}
\end{definition}

We denote by $\Ch(\AAA)$ the category of chain complexes and morphisms of chain complexes in $\AAA$.
Chain complexes are stable under change of coefficients functors, and there are (co)cartesian morphisms of chain complexes:
\begin{definition}
The \look{restriction} along vertical arrows $f: A \to B$ and $g: C \to D$ of a chain complex $M_* : B \tobar D$ in $\AAA$ is the pointwise restriction of $M_*$. Since restriction is additive, this yields another chain complex, denoted $f^*Mg^*: A \tobar C$.
Likewise, we define \look{(co)extension} along vertical arrows and \look{horizontal composition} along $1$-cells.
\end{definition}
\begin{proposition}
For all chain complexes $M$ and $N$ in $\A(A,B)$ and $\A(C,D)$, respectively, and for all vertical arrows $f: A\to C$ and $g: B \to D$, there exists a morphism of chain complexes $\gamma: f^*Ng^* \cc{f}{g} N$, unique up to precomposition by a unique isomorphism.
This morphism is cartesian in the sense that for any morphism of chain complexes $\alpha: M \cc{f}{g} N$, there exists a unique morphism $\fact{\alpha}: M \to f^*Ng^*$ in $\Ch(\DD(A,B))$ such that $\alpha = \gamma \circ \fact \alpha$.
\end{proposition}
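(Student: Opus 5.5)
The plan is to build $\gamma$ degreewise from the cartesian $2$-cells of Definition~\ref{def:fb}. Then we use the universal property in each degree, together with the uniqueness of factorizations, to handle the differentials and the factorization.

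\emph{Step 1: construction.} For each $i \geq 0$, fix a cartesian $2$-cell $\gamma_i : f^*N_ig^* \cc{f}{g} N_i$. The restricted complex $f^*Ng^*$ has differential $\tilde\partial_{i+1} = f^*\partial_{i+1}g^*$. By the definition of the restriction functor, this is the unique globular $2$-cell satisfying $\gamma_i \circ \tilde\partial_{i+1} = \partial_{i+1}\circ \gamma_{i+1}$. This equation is exactly the commuting square required in Definition~\ref{def:chaincomplex}, so $\gamma = (\gamma_i)_i$ is a morphism of chain complexes over $(f,g)$. The complex $f^*Ng^*$ is indeed a chain complex, since restriction is additive (Corollary~\ref{cor:exactness}) and therefore preserves $\partial\partial = 0$.

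\emph{Step 2: factorization.} Let $\alpha : M \cc{f}{g} N$ be a morphism of chain complexes. Applying the cartesian property of $\gamma_i$ with $h = \id_A$ and $k = \id_B$, we get a unique globular $\fact{\alpha}_i : M_i \to f^*N_ig^*$ with $\gamma_i \circ \fact{\alpha}_i = \alpha_i$. It remains to check that $\fact\alpha$ commutes with the differentials, i.e.\ that $\tilde\partial_{i+1}\circ\fact{\alpha}_{i+1} = \fact{\alpha}_i \circ \partial^M_{i+1}$. Both sides are globular $2$-cells $M_{i+1} \to f^*N_ig^*$. Composing each with $\gamma_i$ gives the same result:
\[
\gamma_i\tilde\partial_{i+1}\fact{\alpha}_{i+1} = \partial_{i+1}\gamma_{i+1}\fact{\alpha}_{i+1} = \partial_{i+1}\alpha_{i+1} = \alpha_i\partial^M_{i+1} = \gamma_i\fact{\alpha}_i\partial^M_{i+1}.
\]
The uniqueness clause in the cartesian property of $\gamma_i$ then forces the two sides to be equal. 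Uniqueness of $\fact\alpha$ as a chain map also follows degreewise from this same uniqueness clause. This step, transporting the chain-map condition through the uniqueness of cartesian factorizations, is the only real content of the proof. The one point to be careful about is that the cartesian property is invoked with identity vertical arrows $h,k$, so that all the factorizations are globular, i.e.\ morphisms in $\Ch(\DD(A,B))$.

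\emph{Step 3: uniqueness of $\gamma$.} Suppose $\gamma' : P \to N$ is another cartesian morphism of chain complexes over $(f,g)$. The factorizations $\fact{\gamma'} : P \to f^*Ng^*$ and $\fact{\gamma}' : f^*Ng^* \to P$ exist by Step 2 for $\gamma$ and by the cartesian property of $\gamma'$, respectively. Their composites factor $\gamma$ and $\gamma'$ through themselves, so by uniqueness of factorizations these composites are identities. Hence $\gamma'$ equals $\gamma$ precomposed with a unique isomorphism of chain complexes. This is the standard argument, here applied in $\Ch(\AAA)$.
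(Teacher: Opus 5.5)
Your proposal is correct and follows the paper's proof. Like the paper, you build $\gamma$ degreewise from cartesian $2$-cells, factor $\alpha$ degreewise, and use uniqueness of cartesian factorizations throughout; you additionally spell out why $\fact{\alpha}$ commutes with the differentials (which the paper leaves implicit), and you derive uniqueness of $\gamma$ from the chain-level universal property rather than degreewise, an equally valid variant.
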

\begin{proof}
The cartesian $2$-cells $(\gamma_i: f^*N_ig^* \cc{f}{g} N_i)_{i\geq 0}$ automatically assemble into a morphism of chain complexes by definition of $f^*\partial_i g^*$ for $i\geq 0$. 
Given another morphism of chain complexes $\alpha: M \ccs{f}{g} N$, factor $\alpha$ pointwise along each cartesian $2$-cell $\gamma_i$ to obtain $2$-cells $\fact \alpha_i: M_i \to f^*N_ig^*$ for all $i\geq 0$. These again assemble into a morphism of chain complexes $\fact \alpha$ that satisfies the required property. Conversely, any other factorization must coincide with this one, by applying pointwise the uniqueness of each $\fact \alpha_i$. Similarly, to see that $\gamma$ is unique up to precomposition by a unique isomorphism, apply pointwise the uniqueness property of each $2$-cell $\gamma_i$.
\end{proof}
The horizontal category $\AAA_1$ is not necessarily abelian, nor is $\Ch(\AAA)$. However, for all objects $A$ and $B$, the categories of chain complexes $\Ch(\A(A,B))$ in the local categories $\A(A,B)$ are abelian.
Therefore, factorization through (co)cartesian morphisms is required to compute, for example, the cokernel of a morphism of chain complexes.
\begin{lemma}\label{lemma:cokernel}
Let $M$ and $N$ be chain complexes in $\A(A,B)$ and $\A(C,D)$, respectively, and let $\alpha: M \cc{f}{g} N$ be a morphism of chain complexes. Denote by $\fact{\alpha}: f_!Mg_! \to N$ the factorization of $\alpha$ through the cocartesian morphism $M \xrightarrow{\cocart} f_!Mg_!$. Then the cokernel of $\fact{\alpha}$ exists.
\end{lemma}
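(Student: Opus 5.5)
The plan is to reduce the claim to the existence of cokernels in the abelian category $\Ch(\A(C,D))$. The key point is to check that $f_!Mg_!$ is a chain complex in $\A(C,D)$, that the cocartesian $2$-cells assemble into a cocartesian morphism of chain complexes $M \to f_!Mg_!$, and that the induced $\fact{\alpha}$ is a globular morphism in $\Ch(\A(C,D))$.

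First, define $f_!Mg_!$ pointwise, with differentials $f_!\partial_i g_!$. This is a chain complex because extension is a functor, and it is additive by Corollary~\ref{cor:exactness}. Applying the functor to $\partial_i\circ\partial_{i+1}=0$ gives $f_!\partial_i g_!\circ f_!\partial_{i+1}g_! = f_!(0)g_! = 0$.

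Next, the pointwise cocartesian $2$-cells $\kappa_i : M_i \to f_!M_ig_!$ assemble into a morphism of chain complexes. Indeed, by definition $f_!\partial_{i+1}g_!$ is the unique globular factorization of $\kappa_i\circ\partial_{i+1}$ through $\kappa_{i+1}$, so the relevant squares commute.

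Then factor each $\alpha_i$ through $\kappa_i$, obtaining globular $2$-cells $\fact{\alpha}_i : f_!M_ig_! \to N_i$. To see that the $\fact{\alpha}_i$ commute with the differentials, I compare the two composites $\partial^N_{i+1}\circ\fact{\alpha}_{i+1}$ and $\fact{\alpha}_i\circ f_!\partial_{i+1}g_!$. Precomposed with $\kappa_{i+1}$, they become $\partial^N_{i+1}\circ\alpha_{i+1}$ and $\alpha_i\circ\partial^M_{i+1}$, which agree because $\alpha$ is a chain map. Since cocartesian $2$-cells are epic (fact~\ref{fact1}), the two composites are equal. Hence $\fact{\alpha}$ is a morphism in $\Ch(\A(C,D))$; this is the dual of the preceding proposition on cartesian morphisms.

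Finally, $\A(C,D)$ is abelian, so $\Ch(\A(C,D))$ is abelian as well. The cokernel of $\fact{\alpha}$ therefore exists and is computed degreewise: it is $\operatorname{coker}(\fact{\alpha}_i)$, with the differentials induced by the universal property.

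The only delicate step is the compatibility of the $\fact{\alpha}_i$ with the differentials, which rests on the epimorphism property of cocartesian cells; everything else is formal. It may also be worth remarking that the result is independent of the chosen cleavage up to unique isomorphism, by fact~\ref{fact3}.
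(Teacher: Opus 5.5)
Your proposal is correct and follows the paper's route. The paper's proof is the single observation that $\Ch(\A(C,D))$ is abelian; your checks that $f_!Mg_!$ is a chain complex and that $\fact{\alpha}$ is a globular chain map (the dual of the proposition on cartesian morphisms of chain complexes, using that cocartesian cells are epic) make explicit what that one line takes for granted.
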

\begin{proof}
The category $\Ch(\A(C,D))$ is abelian.
\end{proof}
Fortunately, changing coefficients is not overly restrictive. 
For example, given a cone $(N: A \tobar B, (\phi_x)_{x\in I})$ of a diagram $F: I \to \D_1$, one can apply restriction of scalars functor to obtain a cone of the `restricted diagram' in the local category $\A(A,B)$.
This is the content of the following lemma.
\begin{lemma}\label{lemma:cone_change}
Let $(N: A \tobar B, (\phi_x)_{x\in I})$ be a cone of a diagram $F: I \to \D_1$. There exists a restricted diagram $\fact F: I \to \A(A,B)$ defined on objects $x$ in $I$ by $\fact F (x) = L(\phi_x)^*F(x)R(\phi_x)^*$. Furthermore, $N$ together with the factorizations of the $\phi_x$ for $x\in I$ is a cone of this diagram.
\end{lemma}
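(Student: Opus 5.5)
The plan is to build $\fact F$ directly from the cartesian factorizations and the cleavage, and to check functoriality and the cone condition using only the uniqueness part of the cartesian universal property (Definition~\ref{def:fb}).

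\emph{Objects and the cone legs.} For each $x \in I$, write $f_x = L(\phi_x)\colon A \to L(F(x))$ and $g_x = R(\phi_x)\colon B \to R(F(x))$. Set $\fact F(x) = f_x^* F(x) g_x^*$, a $1$-cell $A \tobar B$. Factor $\phi_x$ through the chosen cartesian $2$-cell $\gamma_x\colon f_x^*F(x)g_x^* \to F(x)$, taking $h=\id_A$, $k=\id_B$. This gives a unique globular $\psi_x\colon N \to \fact F(x)$ with $\gamma_x \circ \psi_x = \phi_x$.

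\emph{Morphisms.} Let $u\colon x \to y$ in $I$. The cone condition $F(u)\circ\phi_x = \phi_y$, together with functoriality of $L$ and $R$, gives
\[
L(F(u))\circ f_x = f_y, \qquad R(F(u))\circ g_x = g_y.
\]
Hence $F(u)\circ\gamma_x$ is a $2$-cell $\fact F(x) \to F(y)$ lying over $(f_y, g_y)$. It therefore factors uniquely through $\gamma_y$ as a globular $2$-cell $\fact F(u)\colon \fact F(x) \to \fact F(y)$, with $\gamma_y\circ \fact F(u) = F(u)\circ \gamma_x$.

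\emph{Functoriality.} Both $\fact F(\id_x)$ and $\id$ satisfy $\gamma_x\circ(-) = \gamma_x$, so they agree by uniqueness. For composable $u\colon x\to y$ and $v\colon y\to z$, both $\fact F(vu)$ and $\fact F(v)\fact F(u)$ satisfy
\[
\gamma_z\circ(-) = F(vu)\circ\gamma_x.
\]
For the composite this is checked by pasting the two defining squares, so again the two agree by uniqueness.

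\emph{Cone condition.} We need $\fact F(u)\circ\psi_x = \psi_y$. Both sides are globular $2$-cells $N \to \fact F(y)$. Composing the left side with $\gamma_y$ gives
\[
F(u)\circ\gamma_x\circ\psi_x = F(u)\circ\phi_x = \phi_y = \gamma_y\circ\psi_y,
\]
so the two sides coincide by uniqueness of factorization through $\gamma_y$.

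The main obstacle is the bookkeeping around vertical arrows. The key point is that every factorization used lies over exactly the vertical arrows required by the universal property, with $h$ and $k$ identities; this is what the identities $L(F(u))\circ f_x = f_y$ and $R(F(u))\circ g_x = g_y$ guarantee. Beyond that, the argument is only uniqueness of factorizations through cartesian $2$-cells, and it is independent of the chosen cleavage up to isomorphism.
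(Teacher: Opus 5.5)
Your proof is correct and follows the paper's argument. As in the paper, the restricted diagram is defined on arrows by factoring $F(u)\circ\gamma_x$ through the cartesian cell $\gamma_y$, and both functoriality and the cone condition come from uniqueness of factorization through cartesian $2$-cells. You also spell out the identities $L(F(u))\circ f_x = f_y$ and $R(F(u))\circ g_x = g_y$, which ensure these factorizations are globular; the paper leaves this implicit.
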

\begin{proof}
Define $\fact F$ on an arrow $f: x \to y$ by the factorization of $\fact F(x) \xrightarrow{\cart} F(x) \xrightarrow{F(f)} F(y)$ through $\fact F(y) \xrightarrow{\cart} F(y)$, as depicted in the diagram below. By uniqueness of the factorization, $\fact F$ is functorial. For all $x \in I$, we denote by $\fact \phi_x$ the factorization of $\phi_x$ through $\fact F(x) \xrightarrow{\cart} F(x)$. Then, $(N, (\fact \phi_x)_{x \in I})$ is a cone of $\fact F$, as shown for all arrows $x \to y$ in $I$ by the following commutative diagram:
\[
\begin{tikzcd}[sep=small]
	&& {\fact F(x)} && {F(x)} \\
	N \\
	&& {\fact F(y)} && {F(y)}
	\arrow["\cart", from=1-3, to=1-5]
	\arrow["{\fact F(f)}", from=1-3, to=3-3]
	\arrow["{F(f)}", from=1-5, to=3-5]
	\arrow["{\fact \phi_x}", from=2-1, to=1-3]
	\arrow["{\fact \phi_y}"', from=2-1, to=3-3]
	\arrow["\cart", from=3-3, to=3-5]
\end{tikzcd}
\]
where the left triangle commutes because the right square and the outer diagram do, and by uniqueness of factorization through the cartesian morphism $\fact F (y) \xrightarrow{\cart} F(y)$.
\end{proof}
\subsection{Homology}
Let $\C$ be any category, $\AAA$ an abelian framed bicategory, and $M: \C \to \Ch(\AAA)$ a functor. The image under $M$ of an object $X$ is denoted $M(X): L(X) \tobar R(X)$. 
That is, $M(X)$ is a chain complex of `$L(X)$-$R(X)$-bimodules'.
\begin{definition}\label{def:homology}
The \look{homology} of an object $X$ in $\C$ is the homology $\HM_*(X): L(X) \tobar R(X)$ of the chain complex $M_*(X)$ in the local abelian category $\A(L(X), R(X))$. This construction is functorial.
\end{definition}
\begin{example}\label{ex:homology_precubical}
Except for the shift-by-one, the homology theory of~\cite[Definition 5.4]{goubault2025directed}  corresponds in the framework of abelian framed bicategories to a functor $C: \Cub_I \to \Ch(\Bimod)$ from the category $\Cub_I$ of precubical sets with proper non-looping length covering, with maps which are injective on vertices.
\end{example}
Just as in abelian categories, one can consider the homology of an object $X$ in $\C$ relative to an arrow $f: Y \to X$. We denote the extension of scalars functors $L(f)_!(\mathunderscore)R(f)_!$ by ${}^{f}(\mathunderscore)$, or ${}^{X}(\mathunderscore)$ by when there is no ambiguity.
\begin{definition}\label{def:relative_homology}
Let $f: Y \to X$ be a morphism in $\C$. By Lemma~\ref{lemma:cokernel}, the cokernel $(M_i(X)/{}^X M_i(Y))_{i \geq 0}$ is well-defined. We denote this chain complex by $M(X,Y)$ and its homology by $HM(X,Y)$, and call it the \look{relative homology} of $f: Y \to X$.
\end{definition}
Observe that the cokernel $M(X,Y)$ is computed in the same local category $\A(L(X),R(X))$ as $M(X)$, whereas $M_*(Y)$ is defined in the local category $\A(L(Y), R(Y))$.
Since $\AAA_1$ is not assumed to be abelian, the notion of exact sequences in $\AAA_1$ is ill-defined. Hence, one cannot directly obtain a long exact sequence that computes relative homology in terms of the homology of $X$ and $Y$.   
Nevertheless, in Section~\ref{section:les_rh} we exhibit a long exact sequence involving extension of scalars that computes relative homology along certain well-behaved morphisms $f: Y \to X$, called relative pairs.

\subsection{Cohomology}

As in Definition~\ref{def:chaincomplex}, we can define cochains a and cohomology theory: 
\begin{definition}
\label{def:cochaincomplex}
A \look{cochain complex} in $\AAA$ is a cochain complex in a local abelian category of $\AAA$, i.e., a cochain complex in $\A(A,B)$ for some objects $A$ and $B$.  
A \look{morphism} $(\alpha_i)_{i\geq 0}$ from a cochain complex $(M_i)_{i \geq 0}$ in $\A(A,B)$ to a cochain complex $(N_i)_{i \geq 0}$ in $\A(C,D)$ is a pair of vertical arrows $f: A \to C$ and $g: B \to D$ together with $2$-cells $\alpha_i: M_i \cc{f}{g} N_i$ for all $i \geq 0$, such that the following diagram commutes for all $i \geq 0$:
\begin{equation*}
\begin{tikzcd}
	A && B \\
	\\
	C && D
	\arrow[""{name=0, anchor=center, inner sep=0}, "{M_{i}}"', "\shortmid"{marking}, curve={height=12pt}, from=1-1, to=1-3]
	\arrow[""{name=1, anchor=center, inner sep=0}, "{M_{i+1}}", "\shortmid"{marking}, curve={height=-12pt}, from=1-1, to=1-3]
	\arrow["f"', from=1-1, to=3-1]
	\arrow["g", from=1-3, to=3-3]
	\arrow[""{name=2, anchor=center, inner sep=0}, "{N_{i}}"', "\shortmid"{marking}, curve={height=12pt}, from=3-1, to=3-3]
	\arrow[""{name=3, anchor=center, inner sep=0}, "{N_{i+1}}", "\shortmid"{marking, text={rgb,255:red,128;green,128;blue,128}}, color={rgb,255:red,128;green,128;blue,128}, curve={height=-12pt}, from=3-1, to=3-3]
	\arrow["{ \partial_{i+1}}", shorten <=5pt, shorten >=5pt, Rightarrow, from=0, to=1]
	\arrow["{\alpha_{i+1}}"{description, pos=0.7}, shift left=5, color={rgb,255:red,128;green,128;blue,128}, shorten <=4pt, shorten >=4pt, Rightarrow, from=1, to=3]
	\arrow["{\alpha_{i}}"{description, pos=0.3}, shift right=5, shorten <=4pt, shorten >=4pt, Rightarrow, from=0, to=2]
	\arrow["{\partial_{i+1}}"', color={rgb,255:red,128;green,128;blue,128}, shorten <=5pt, shorten >=5pt, Rightarrow, from=2, to=3]
	\arrow[curve={height=12pt}, from=1-1, to=1-3]
\end{tikzcd}
    \end{equation*}
\end{definition}
We denote by $\coCh(\AAA)$ the category of cochain complexes and morphisms of cochain complexes in $\AAA$.
Cochain complexes are also stable under change of coefficients functors, and there are (co)cartesian morphisms of cochain complexes.

Similarly to the homological case, let $\C$ be any category, $\AAA$ an abelian framed bicategory, and $M: \C \to \coCh(\AAA)$ a functor. The image under $M$ of an object $X$ is denoted $M(X): L(X) \tobar R(X)$, i.e., $M(X)$ is a cochain complex of `$L(X)$-$R(X)$-bimodules'.
We can then define:

\begin{definition}\label{def:cohomology}
The \look{cohomology} of an object $X$ in $\C$ is the cohomology $\HM^*(X): L(X) \tobar R(X)$ of the cochain complex $M^*(X)$ in the local abelian category $\A(L(X), R(X))$. This construction is functorial.
\end{definition}

As in Definition~\ref{def:relative_homology}, relative cohomology can also be defined.

In many interesting cases, we can define cochain complexes from chain complexes and get cohomology theories that way. As in the homology case, we again consider a category $\C$, $\AAA$ an abelian framed bicategory and $M$ a functor $M: \C \to \Ch(\AAA)$.


In order to dualize chain complexes into cochain complexes, we are going to use dual pairs in $\AAA$. Recall \cite{shulman2007framed} that a 
dual pair in a double category $\D$ is a pair of $1$-cells $(M,N)$, with $M: \ A \tobar B$,
$N: \ B \tobar A$, together with `evaluation' and `coevaluation' $2$-cells
$N \odot M \rightarrow U_B$ and $U_A \rightarrow M \odot N$ 
satisfying the triangle identities. In that case, $N$ is called the right dual of $M$. 

Suppose now that $\AAA$ is a symmetric closed monoidal abelian framed bicategory, i.e., we have
$\AAA(M \odot N,P)\simeq \AAA(M,N \vartriangleright P)\simeq \AAA(N,P \vartriangleleft M)$ for all $1$-cells $M$, $N$ and $P$. 
We know again from \cite{shulman2007framed} that when the right dual of a $1$-cell $M: A \tobar B$ exists, it is always isomorphic to $M \vartriangleright U_B$. 
We now suppose that for all $X$ in $\C$, and $i \geq 0$, $M_i(X)$ is right dualizable, and we set $M^i(X)$ to be the dual of $M_i(X)$. Similarly, consider the dual globular $2$-cells $\partial^{i+1}: \ M^{i}(X)\rightarrow M^{i+1}(X)$ of the boundary operators of $M_*(X)$. It is immediate to see that this forms a cochain complex $M^*(X) : R($. 

\begin{example}
We consider the abelian framed bicategory $\Bimod$ of bimodules of algebras, see Example \ref{ex:bimod}, and the homological construction of~\cite{goubault2025directed}, see Example~\ref{ex:homology_precubical}. $\Bimod$ is symmetric and closed monoidal, therefore we can apply the above construction to any right dualizable chain complex. In~\cite{goubault2025directed}, to all precubical set $X$ is associated a chain complex $M(X)$ of $R(X)$-$R(X)$-bimodules, with $R(X)$ the path algebra of the underlying quiver of $X$. Recall that $M_0(X)$ is $U_{R(X)}$, i.e., $R(X)$ regarded as a $R(X)$-$R(X)$-bimodule, and that $M_i(X)$ is free for $i\geq 1$.
It is easy to see that they are both right dualizable and that the dual structure can be described as follows:

Let $X$ be a precubical set. 
Consider $R(X)^{\mathrm{op}}$ the opposite algebra of $R(X)$, i.e., the algebra which has the same underlying $R$-module as $R[X]$ but has as internal multiplication $\times^*$ the opposite of the internal multiplication $\times$ of $R(X)$: for $p$ and $q$ in $R(X)^{\mathrm{op}}$, $p\times^* q=q\times p$.
For all $i \geq 1$, let $M^i(X)$ be the $R$-module $\Hom_{{ }_R \mmod}(M_n(X),R)$. 
We define coboundary maps $\partial^{i+1}: M^i(X) \rightarrow M^{i+1}(X)$. For all $f \in M^i(X)$, let 
$\partial^{i+1}(f)$ be $f \circ \partial_{i+1}$, which is an element of $M^{i+1}(X)$ given $\partial_{i+1}:  M_{i+1}(X) \rightarrow M_i(X)$. 
%
Let us now define a $R(X)^{\mathrm{op}}$-$R(X)^{\mathrm{op}}$-bimodule structure on $M^*(X)$. For all $p,q$ in $R(X)^{\mathrm{op}}$, $f \in M^i(X)$ and $x \in M_i(X)$, $i \geq 0$, let $p\act f \act q (x)$ be $f(p\act x \act q)$. We have: 
\begin{align*}
p'\act (p\act f \act q)\act q'(x) & =  p\act f(p'\act x \act q') \act q\\
& =  f(p\act (p'\act x \act q')\act q) \\
& =  f((p\times p')\act x \act (q'\times q) )\\
& =  f((p'\times^* p)\act x \act (q\times^* q')) \\
& =  (p'\times^* p) \act f \act (q\times^* q') (x)
\end{align*}
and the coboundary map we defined is indeed a morphism of $R(X)^{\mathrm{op}}$-bimodules: 
\begin{align*}
\partial^{i+1}(p\act f \act q)(x) & =  (p\act f \act q)(\partial_{i+1}(x)) \\
& =  f(p\act \partial_{i+1}(x) \act q) \\
& =  f(\partial_{i+1}(p\act x \act q)) \\
& =  p\act (f \circ \partial_{i+1}) \act q(x) \\
& =  p \act \partial^{i+1}(f) \act q (x).
\end{align*}

As we know, $B^{\mathrm{op}}$-$A^{\mathrm{op}}$-bimodules can be equivalently seen as $A$-$B$-bimodules, therefore this construction makes $M^*(X)$ a $R(X)$-$R(X)$-bimodule, as expected. 
\end{example}

\section{Exact sequences through relative pairs}\label{section:exact_sequences}
In this section, we argue that in the setting of abelian framed bicategories, one can still obtain adaptations of the usual exact sequences involving extension of scalars. Long exact sequences for relative homology and \MV{} sequences require the notion of relative pairs. In the following, let $\AAA$ denote an abelian framed bicategory, $\C$ a category, and $M : \C \to \Ch(\AAA)$ a functor.
\subsection{Relative pairs}
The goal of this section is to generalize the notion introduced in~\cite[Definition 5.17]{goubault2025directed}. 
The original definition is combinatorial and induces a factorization property~\cite[Lemma 5.18]{goubault2025directed} that ensures the existence of certain exact sequences.
By abstracting proof~\cite[Theorem 8.18]{goubault2025directed}, we obtain the following two sufficient conditions.
\begin{definition}\label{def:relative_pair}
A \look{relative pair} $(X,Y)$ is an arrow $f: Y \to X$ in $\C$ such that:
\begin{enumerate}
    \item the extension of scalars functor $\lex X (\mathunderscore)$ is exact,\label{condition:1}
    \item the factorization  $\lex X M(Y) \to M(X)$ of the morphism of chain complexes $M(f): M(Y) \to M(X)$ through the cocartesian morphism $M(Y) \xrightarrow{\cocart} \lex X M(Y)$ is monic. \label{condition:2}
\end{enumerate}
\end{definition}
\begin{remark}
  To satisfy condition~\ref{condition:2}, it suffices that $f$ be monic and that both $M$ and the factorization of $2$-cells $M(Y) \ccs{L(f)}{R(f)} N$ through $M(Y) \xrightarrow{\cocart} \lex X M(Y)$ preserve monomorphisms. This last property and condition~\ref{condition:1} can be stated for any pair of vertical arrows $f: A \to C$ and $g: B \to D$ and any $1$-cell $M: A \tobar B$. This gives a notion of relative pair that is independent of the category $\C$.
\end{remark}
As in~\cite{goubault2025directed}, relative pairs compose, although the argument here is entirely different.
\begin{proposition}\label{prop:rel_pair_compose}
Let $g: Z \to Y$ and $f: Y \to X$ be relative pairs. Then $f\circ g: Z \to X$ is a relative pair, and there is a natural isomorphism $\lex {fg} (\mathunderscore) \cong \lex f (\lex g (\mathunderscore)): \A(L(Z), R(Z)) \to \A(L(X),R(X)) $.
\end{proposition}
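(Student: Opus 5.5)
We first establish the natural isomorphism $\lex{fg}(\mathunderscore) \cong \lex f(\lex g(\mathunderscore))$, since the rest depends on it. Write $l_f = L(M(f))$, $r_f = R(M(f))$, and similarly for $g$, so that $L(M(fg)) = l_f l_g$ and $R(M(fg)) = r_f r_g$ by functoriality of $L$, $R$ and $M$. For a $1$-cell $N : L(Z) \tobar R(Z)$, consider the composite
\[ N \xrightarrow{\cocart} \lex g N \xrightarrow{\cocart} \lex f(\lex g N), \]
which lies over $(l_f l_g, r_f r_g)$. In a (bi)fibration, composites of cocartesian arrows are cocartesian: to factor a $2$-cell $N \to P$ over $(l_f l_g h, r_f r_g k)$, we factor first through the first cocartesian cell and then through the second, and uniqueness follows from each step. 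Fact~\ref{fact3} then gives a unique globular isomorphism $\lex{fg} N \cong \lex f(\lex g N)$ commuting with the cocartesian cells. Naturality in $N$ follows because both functors are defined on globular $2$-cells by factorization through cocartesian cells, and uniqueness of factorizations forces the relevant square to commute.

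For condition~\ref{condition:1}, $\lex f(\lex g(\mathunderscore))$ is exact as a composite of exact functors, and exactness is invariant under natural isomorphism, so $\lex{fg}(\mathunderscore)$ is exact.

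Condition~\ref{condition:2} is the main point. Write $\phi_g : \lex g M(Z) \to M(Y)$ and $\phi_f : \lex f M(Y) \to M(X)$ for the monic factorizations given by the hypotheses. We claim that the factorization of $M(fg) = M(f)\circ M(g)$ through $M(Z) \to \lex{fg}M(Z) \cong \lex f\lex g M(Z)$ is, up to this isomorphism, the composite
\[ \lex f \lex g M(Z) \xrightarrow{\lex f \phi_g} \lex f M(Y) \xrightarrow{\phi_f} M(X). \]
To check this, precompose with the two cocartesian cells and unwind the definitions. By definition of $\lex f$ on globular cells, $(\lex f\phi_g)\circ \cocart = \cocart\circ \phi_g$. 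Hence the composite equals
\[ \phi_f\circ\cocart\circ\phi_g\circ\cocart = M(f)\circ M(g). \]
Uniqueness of factorization through the (composite) cocartesian cell then identifies the factorization with this composite. All of these identities hold degreewise and are compatible with the differentials, so they are identities of morphisms of chain complexes.

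It remains to show the composite is monic. Since $\lex f$ is exact, it preserves monomorphisms, so $\lex f\phi_g$ is monic in the abelian category $\Ch(\A(L(X),R(X)))$. The map $\phi_f$ is monic by hypothesis, and composing with an isomorphism preserves monicity. The only delicate step is the bookkeeping in the identification of the factorization; everything else is formal.
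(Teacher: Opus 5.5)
Your proof is correct and follows the paper's argument essentially step for step. The comparison isomorphism comes from composites of cocartesian cells, where the paper simply cites pseudofunctoriality of extension. Exactness follows by composition, and monicity by identifying the factorization of $M(fg)$ with $\phi_f \circ \lex{f}\phi_g$ up to that isomorphism. One cosmetic slip: in the cocartesian universal property the extra vertical arrows are postcomposed, so the $2$-cell to be factored lies over $(h\, l_f l_g,\ k\, r_f r_g)$ rather than $(l_f l_g h,\ r_f r_g k)$; your two-step factorization argument is nonetheless exactly right.
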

\begin{proof}
By the pseudofunctoriality~\cite[\S 4]{shulman2007framed} of change of coefficient functors, we have $\lex {fg} (\mathunderscore) \cong \lex f (\lex g (\mathunderscore))$. Exact functors are stable under composition, hence $\lex {fg} (\mathunderscore)$ 
is exact. It remains to see that the factorization of $M(fg)$ is monic.
We have the following equality of $2$-cells, by functoriality of $M$, by relativeness of the pairs $(X,Y)$ and $(Y,Z)$, by definition of $\lex f \alpha$, and because extension $\lex f (\mathunderscore)$ preserves monomorphisms by exactness. Monomorphisms are drawn in blue.
\[\begin{tikzcd}[row sep=3em]
	{L(Z)} & {R(Z)} \\
	{L(X)} & {R(X)}
	\arrow[""{name=0, anchor=center, inner sep=0}, "{M(Z)}", "\shortmid"{marking}, from=1-1, to=1-2]
	\arrow["{L[fg]}"', from=1-1, to=2-1]
	\arrow["{R[fg]}", from=1-2, to=2-2]
	\arrow[""{name=1, anchor=center, inner sep=0}, "{M(X)}"', "\shortmid"{marking}, from=2-1, to=2-2]
	\arrow["{M(fg)}"{description}, shorten <=8pt, shorten >=8pt, Rightarrow, from=0, to=1]
\end{tikzcd}
=
\begin{tikzcd}[row sep=3em]
	{L(Z)} & {R(Z)} \\
	{L(Y)} & {R(Y)} \\
	{L(X)} & {R(X)}
	\arrow[""{name=0, anchor=center, inner sep=0}, "{M(Z)}", "\shortmid"{marking}, from=1-1, to=1-2]
	\arrow["{L[g]}"', from=1-1, to=2-1]
	\arrow["{R[g]}", from=1-2, to=2-2]
	\arrow[""{name=1, anchor=center, inner sep=0}, "{M(Y)}"{description}, from=2-1, to=2-2]
	\arrow["{L[f]}"', from=2-1, to=3-1]
	\arrow["{R[f]}", from=2-2, to=3-2]
	\arrow[""{name=2, anchor=center, inner sep=0}, "{M(X)}"', "\shortmid"{marking}, from=3-1, to=3-2]
	\arrow["{M(g)}"{description}, shorten <=8pt, shorten >=8pt, Rightarrow, from=0, to=1]
	\arrow["{M(f)}"{description}, shorten <=8pt, shorten >=8pt, Rightarrow, from=1, to=2]
\end{tikzcd}\]
\[
=
\begin{tikzcd}[row sep=3em]
	{L(Z)} & {R(Z)} \\
	{L(Y)} & {R(Y)} \\
	{L(Y)} & {R(Y)} \\
	{L(X)} & {R(X)} \\
	{L(X)} & {R(X)}
	\arrow[""{name=0, anchor=center, inner sep=0}, "{M(Z)}"{inner sep=.8ex}, "\shortmid"{marking}, from=1-1, to=1-2]
	\arrow[from=1-1, to=2-1]
	\arrow[from=1-2, to=2-2]
	\arrow[""{name=1, anchor=center, inner sep=0}, "{\lex g M(Z)}"{description}, from=2-1, to=2-2]
	\arrow[equals, from=2-1, to=3-1]
	\arrow[equals, from=2-2, to=3-2]
	\arrow[""{name=2, anchor=center, inner sep=0}, "{M(Y)}"{description}, from=3-1, to=3-2]
	\arrow[from=3-1, to=4-1]
	\arrow[from=3-2, to=4-2]
	\arrow[""{name=3, anchor=center, inner sep=0}, "{\lex f M(Y)}"{description}, from=4-1, to=4-2]
	\arrow[equals, from=4-1, to=5-1]
	\arrow[equals, from=4-2, to=5-2]
	\arrow[""{name=4, anchor=center, inner sep=0}, "{M(X)}"'{inner sep=.8ex}, "\shortmid"{marking}, from=5-1, to=5-2]
	\arrow["\cocart"{description}, draw=none, from=0, to=1]
	\arrow["\alpha"{description}, draw={rgb,255:red,92;green,92;blue,214}, between={0.2}{0.8}, Rightarrow, from=1, to=2]
	\arrow["\cocart"{description}, draw=none, from=2, to=3]
	\arrow["\beta"{description}, draw={rgb,255:red,92;green,92;blue,214}, between={0.2}{0.8}, Rightarrow, from=3, to=4]
\end{tikzcd}
=
\begin{tikzcd}[row sep=3em]
	{L(Z)} & {R(Z)} \\
	{L(Y)} & {R(Y)} \\
	{L(Y)\ } & {\ R(Y)} \\
	{L(X)} & {R(X)} \\
	{L(X)} & {R(X)}
	\arrow[""{name=0, anchor=center, inner sep=0}, "{M(Z)}"{inner sep=.8ex}, "\shortmid"{marking}, from=1-1, to=1-2]
	\arrow[from=1-1, to=2-1]
	\arrow[from=1-2, to=2-2]
	\arrow[""{name=1, anchor=center, inner sep=0}, "{\lex f M(Z)}"{description}, from=2-1, to=2-2]
	\arrow[from=2-1, to=3-1]
	\arrow[from=2-2, to=3-2]
	\arrow[""{name=2, anchor=center, inner sep=0}, "{\lex f (\lex g M(Z))}"{description}, from=3-1, to=3-2]
	\arrow[equals, from=3-1, to=4-1]
	\arrow[equals, from=3-2, to=4-2]
	\arrow[""{name=3, anchor=center, inner sep=0}, "{\lex f M(Y)}"{description}, from=4-1, to=4-2]
	\arrow[equals, from=4-1, to=5-1]
	\arrow[equals, from=4-2, to=5-2]
	\arrow[""{name=4, anchor=center, inner sep=0}, "{M(X)}"'{inner sep=.8ex}, "\shortmid"{marking}, from=5-1, to=5-2]
	\arrow["\cocart"{description}, draw=none, from=0, to=1]
	\arrow["\cocart"{description}, draw=none, from=1, to=2]
	\arrow["{\lex f \alpha}"{description}, draw={rgb,255:red,92;green,92;blue,214}, between={0.2}{0.8}, Rightarrow, from=2, to=3]
	\arrow["\beta"{description}, draw={rgb,255:red,92;green,92;blue,214}, between={0.2}{0.8}, Rightarrow, from=3, to=4]
\end{tikzcd}
\]
In addition, $M(fg)$ factors through the cocartesian morphism $M(Z) \xrightarrow{\cct} \lex X M(Z)$:
\begin{equation*}
\begin{tikzcd}[row sep=2.5em]
	{L(Z)} & {R(Z)} \\
	{L(X)} & {R(X)}
	\arrow[""{name=0, anchor=center, inner sep=0}, "{M[Z]}", "\shortmid"{marking}, from=1-1, to=1-2]
	\arrow["{L(fg)}"', from=1-1, to=2-1]
	\arrow["{R(fg)}", from=1-2, to=2-2]
	\arrow[""{name=1, anchor=center, inner sep=0}, "{M[X]}"', "\shortmid"{marking}, from=2-1, to=2-2]
	\arrow["{M(fg)}"{description}, shorten <=8pt, shorten >=8pt, Rightarrow, from=0, to=1]
\end{tikzcd}
=
\begin{tikzcd}[row sep=2.5em]
	{L(Z)} & {R(Z)} \\
	{L(X)} & {R(X)} \\
	{L(X)} & {R(X)}
	\arrow[""{name=0, anchor=center, inner sep=0}, "{M(Z)}", "\shortmid"{marking}, from=1-1, to=1-2]
	\arrow[from=1-1, to=2-1]
	\arrow[from=1-2, to=2-2]
	\arrow[""{name=1, anchor=center, inner sep=0}, "{\lex {fg} M(Z)}"{description}, from=2-1, to=2-2]
	\arrow[equals, from=2-1, to=3-1]
	\arrow[equals, from=2-2, to=3-2]
	\arrow[""{name=2, anchor=center, inner sep=0}, "{M(X)}"', "\shortmid"{marking}, from=3-1, to=3-2]
	\arrow["\cocart"{description}, draw=none, from=0, to=1]
	\arrow["\gamma"{description}, shorten <=8pt, shorten >=8pt, Rightarrow, from=1, to=2]
\end{tikzcd}
\end{equation*}
By a general result about fibrations, the composition of two cartesian morphisms is cartesian, and cartesian morphisms are unique up to composition by a unique isomorphism. Hence, by uniqueness of factorization along cartesian $2$-cells, 
$\gamma$ is the composition of $\beta$, $\lex X \alpha$, and of an isomorphism, all of which are monic $2$-cells. Therefore, $\gamma$ is monic.
\end{proof}
\begin{example}\label{ex:rel_pairs_1}
As shown in~\cite{goubault2025directed}, under the homology theory of Example~\ref{ex:homology_precubical}, for all inclusions of precubical sets $Y \hookrightarrow X$ such that the directed paths in $X$ can only enter $Y$ once, and exit $Y$ once, then $(X,Y)$ is a relative pair. 
A similar property holds for relative pairs of d-spaces, as shown in~\cite{goubault2026homological}.

This situation occurs for examples in Conley theory for dynamical systems: given an isolated invariant set $S$ in a flow $\varphi: \R \times X \to X$, consider an index pair $(N, L)$ for $S$. The flow $\varphi$ induces a structure of d-space on $X$, and therefore on the subspaces $N_1$ and $N_2$, with as directed paths the cloture under reparametrization of maps:
\begin{align*}
    p_{a,b,x}: [0,1] &\to X \\
    t &\mapsto \varphi(a + (b-a) t, x)
\end{align*}
for all points $x \in X$ and closed intervals $[a,b]$ in $\R$. By definition of an index pair, we have $L \subseteq N$. Moreover, for all directed path in $N$, $L$ is positively invariant in $N$ so a directed path in $N$ may enter $L$ at most once, and cannot exit.
This observation allows to consider the directed relative homology, as defined in~\cite{goubault2026homological}, of an index pair, mimicking the definition of the homological Conley index. Its properties will be studied elsewhere.
\end{example}
More generally, it is interesting to determine when the first condition of Definition~\ref{def:relative_pair} holds, i.e., when extension of scalars is exact.
\begin{example}\label{ex:dist_galois}
Consider the framed bicategory $\Repr$ of representations over posets, i.e., ${}_R\mmod$-distributors of the form $M : P \tobar \{*\}$, see Example~\ref{ex:dist}. For any Galois connection of posets $\alpha: C \to A \dashv \gamma : A \to C $, we already know that extension along $\alpha$ is right exact. Let us show that it is also left exact, hence exact. It suffices to show that extension along $\alpha$ is naturally isomorphic to restriction along $\gamma$, and more precisely that there are isomorphisms of special objects ${}_\alpha A \cong C_\gamma$ and $A_\alpha \cong {}_\gamma C$. This follows immediately from the definition of $U$ in terms of homsets and the adjunction $\alpha \dashv \gamma$.
\end{example}
\subsection{Long exact sequences for relative homology}\label{section:les_rh}
Almost by definition, there are long exact sequences for relative homology along relative pairs.
\begin{theorem}\label{th:exact_seq_rel_hom}
For all relative pairs $(X,Y)$, there is a long exact sequence: 
\begin{center}
    \begin{tikzcd}[arrow style=math font,cells={nodes={text height=2ex,text depth=0.75ex}}]
    & & 0 \arrow[draw=none]{d}[name=X,shape=coordinate]{} \\
       \HM_0(X,Y) \arrow[curarrow=X]{urr}{} 
       & \HM_{0}(X) \arrow[l] \arrow[draw=none]{d}[name=Y, shape=coordinate]{} & \arrow[l] \cdots \\
       \HM_{i}(X,Y) \arrow[curarrow=Y]{urr}{} & \HM_{i}(X) \arrow[l] \arrow[draw=none]{d}[name=Z,shape=coordinate]{} & \lex X \HM_i(Y) \arrow[l] \\
       \HM_{i+1}(X,Y) \arrow[curarrow=Z]{urr}{} & \HM_{i+1}(X) \arrow[l] & \cdots \arrow[l]
   \end{tikzcd}
\end{center}
\end{theorem}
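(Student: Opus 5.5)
The plan is to reduce the statement to the classical long exact sequence in the single abelian category $\Ch(\A(L(X),R(X)))$. Write $\gamma : \lex X M(Y) \to M(X)$ for the factorization of $M(f)$ through the cocartesian morphism $M(Y) \xrightarrow{\cocart} \lex X M(Y)$. Here $\lex X M(Y)$ is the pointwise extension of the chain complex $M(Y)$. It is again a chain complex in $\A(L(X),R(X))$, because extension is additive (Corollary~\ref{cor:exactness}), so $\gamma$ is a morphism in $\Ch(\A(L(X),R(X)))$. By Definition~\ref{def:relative_homology}, $M(X,Y)$ is the cokernel of $\gamma$ computed in this abelian category (Lemma~\ref{lemma:cokernel}). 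By condition~\ref{condition:2} of Definition~\ref{def:relative_pair}, $\gamma$ is monic. This gives a short exact sequence of chain complexes in $\Ch(\A(L(X),R(X)))$:
\[ 0 \to \lex X M(Y) \xrightarrow{\gamma} M(X) \to M(X,Y) \to 0. \]

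The second step is to apply the standard snake-lemma argument, which is valid in any abelian category. It yields a long exact sequence
\[ \cdots \to \HM_{i+1}(X,Y) \xrightarrow{\partial_*} \HM_i(\lex X M(Y)) \to \HM_i(X) \to \HM_i(X,Y) \xrightarrow{\partial_*} \cdots \to \HM_0(X,Y) \to 0. \]
The final $0$ holds because the complexes are positively graded and $M_0(X) \to M_0(X,Y)$ is an epimorphism, being a cokernel map.

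The third step, which carries the real content, is to identify $\HM_i(\lex X M(Y))$ with $\lex X \HM_i(Y)$. For this I use condition~\ref{condition:1}: the functor $\lex X(\mathunderscore) : \A(L(Y),R(Y)) \to \A(L(X),R(X))$ is exact. An exact functor between abelian categories preserves kernels, images and cokernels. Hence it commutes with homology: there are natural isomorphisms $\HM_i(\lex X M(Y)) \cong \lex X \HM_i(M(Y)) = \lex X \HM_i(Y)$, obtained by applying $\lex X$ to the exact sequences $0 \to B_i \to Z_i \to \HM_i \to 0$ and $0\to Z_i \to M_i \to M_{i-1}$. Substituting these isomorphisms into the sequence from the second step gives exactly the diagram in the statement. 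The connecting maps are the composites of the snake-lemma $\partial_*$ with these isomorphisms, and the map $\lex X \HM_i(Y) \to \HM_i(X)$ is induced by $\gamma$.

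I expect the main obstacle to be the bookkeeping in this third step rather than any conceptual difficulty. One needs to check that extension of chain complexes, defined pointwise through a cleavage, really sends the differentials $\partial$ to $\lex X \partial$, so that "homology of the extension" makes sense. One also needs the identification to be natural, so that the resulting maps are the expected ones. Uniqueness of factorization through cocartesian cells, together with functoriality of $\lex X(\mathunderscore)$, should handle both points.
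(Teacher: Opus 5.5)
Your proposal is correct and follows the same route as the paper. The paper also forms the short exact sequence $0 \to \lex X M(Y) \to M(X) \to M(X,Y) \to 0$ in $\A(L(X),R(X))$, using condition~\ref{condition:2} and the cokernel definition of $M(X,Y)$, takes the induced long exact sequence, and then uses exactness of the extension functor to get $\HM(\lex X M(Y)) \cong \lex X \HM(Y)$; you simply spell out bookkeeping that the paper leaves implicit.
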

\begin{proof}
Since $(X,Y)$ is a relative pair, the factorization $\lex X M(Y) \to M(X)$ is monic. In addition, $M(X,Y)$ is defined as a cokernel. Therefore, the following short sequence in $\A(L(X),R(X))$ is exact:
$$
0 \to \lex X M(Y) \to M(X) \to M(X,Y) \to 0 
$$
It induces a long exact sequence, which gives the required sequence after using the exactness of the extension functor $\lex X (\mathunderscore)$ to get the isomorphism $H(\lex X M(Y)) \cong \lex X \HM(Y)$. 
\end{proof}
Theorem~\ref{th:exact_seq_rel_hom} is a direct generalization of~\cite[Theorem 8.18]{goubault2025directed}.
\begin{example}[Example 8.19 of \cite{goubault2025directed}]
Consider $X=\overrightarrow{D}^n=\K^n$ the tensor product of $n$ copies of the precubical set $\K$ that has two vertices $0$ and $1$, and a unique 1-cell $a$ with $d^0(a)=0$ and $d^1(a)$. $\K$ ``represents'' a directed segment, and $\overrightarrow{D}^n$ is a precubical version of a directed $n$-disc. We write $Y=\overrightarrow{S}^{n-1}$ for the boundary of $\overrightarrow{D}^n$, which is a version of a directed $(n-1)$-sphere, or ``hollow $n$-hypercube".  

Let us illustrate the relative homology sequence for $(\overrightarrow{D}^n,\overrightarrow{S}^{n-1})$, when $n=2$. The homology between any pair of points in $\overrightarrow{D}^2$ and $\overrightarrow{S}^{1}$ is trivial except between the start and end points, $00$ and $11$ respectively. 
In that case, $\HM_0(\overrightarrow{D}^2)=R$, $\HM_1(\overrightarrow{D}^2)=R$, $\HM_0(\overrightarrow{S}^{1})=R^2$, $\HM_1(\overrightarrow{S}^{1})=R$ and it is easy to check that $\HM_0(\overrightarrow{D}^2,\overrightarrow{S}^{1})=0$, $\HM_1(\overrightarrow{D}^2,\overrightarrow{S}^{1})=R$ and the relative exact sequence of Theorem \ref{th:exact_seq_rel_hom} boils down to: 
\begin{center}
\begin{tikzcd}
0 \arrow[r] & R \arrow[r] & R \arrow[r] & R^2 \arrow[r] & R\arrow[r] & 0 \arrow[r] & 0
\end{tikzcd}
\end{center}
\end{example}
\subsection{Mayer–Vietoris sequence}
\label{sec:MayerVietoris}
To formulate our Mayer-Vietoris theorem, we need the following adapted definition of good cover. 
\begin{definition}\label{def:good_cover}
A \look{good cover} of an object $X$ in $\C$ is a commutative square of relative pairs as depicted below such that the canonical morphism of Proposition~\ref{prop:canonical_excision} below induces an isomorphism in homology $\lex X \HM(X_1, X_1 \cap X_2) \cong \HM(X, X_2)$, i.e. an excision property.
\begin{equation*}
\begin{tikzcd}[sep=scriptsize]
	{X_1 \cap X_2} & {X_1} \\
	{X_2} & X
	\arrow[from=1-1, to=1-2]
	\arrow[from=1-1, to=2-1]
	\arrow[from=1-2, to=2-2]
	\arrow[from=2-1, to=2-2]
\end{tikzcd}
\end{equation*}
\end{definition}
\begin{proposition}\label{prop:canonical_excision}
Consider a commutative square of relative pairs as in Definition~\ref{def:good_cover} above.
The factorization $\lex X M(X_1) \to M(X)$ of $M(X_1 \to X)$ descends to the quotient, i.e. there is a unique morphism $\lex X M(X_1, X_1 \cap X_2) \to M(X, X_2)$ such that the following diagram commutes.
\begin{equation}\label{diagram:canonical}
\begin{tikzcd}[sep=scriptsize]
	0 & {\lex X M(X_1 \cap X_2)} & {\lex X M(X_1)} & {\lex X M(X_1, X_1 \cap X_2)} & 0 \\
	0 & {\lex X M(X_2)} & {M(X)} & {M(X,X_2)} & 0
	\arrow[from=1-1, to=1-2]
	\arrow[from=1-2, to=1-3]
	\arrow[from=1-2, to=2-2]
	\arrow[from=1-3, to=1-4]
	\arrow[from=1-3, to=2-3]
	\arrow[from=1-4, to=1-5]
	\arrow["{\exists !}", dashed, from=1-4, to=2-4]
	\arrow[from=2-1, to=2-2]
	\arrow[from=2-2, to=2-3]
	\arrow[from=2-3, to=2-4]
	\arrow[from=2-4, to=2-5]
\end{tikzcd}
\end{equation}
\end{proposition}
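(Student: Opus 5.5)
We work throughout in the abelian category $\Ch(\A(L(X),R(X)))$. The plan is to first build the two rows of diagram~\eqref{diagram:canonical} as short exact sequences, then check that the left square commutes, and finally obtain the dashed arrow from the universal property of the cokernel.

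\emph{Top row.} Since $X_1 \cap X_2 \to X_1$ is a relative pair, we have the short exact sequence $0 \to \lex {X_1} M(X_1\cap X_2) \to M(X_1) \to M(X_1, X_1\cap X_2) \to 0$ in $\A(L(X_1),R(X_1))$, as in the proof of Theorem~\ref{th:exact_seq_rel_hom}. The extension $\lex X(\mathunderscore)$ along $X_1 \to X$ is exact because $X_1 \to X$ is a relative pair. Applying it therefore gives a short exact sequence whose left term is identified with $\lex X M(X_1\cap X_2)$ via Proposition~\ref{prop:rel_pair_compose}.

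\emph{Bottom row.} This is the defining short exact sequence of the relative pair $X_2 \to X$. Its first map is the monic factorization of $M(X_2 \to X)$, and its second map is the cokernel $M(X) \to M(X,X_2)$.

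\emph{Vertical arrows.} The left arrow is $\lex X$ applied to the factorization $\lex {X_2} M(X_1\cap X_2) \to M(X_2)$. Composed with the identification $\lex X(\lex {X_2}(\mathunderscore)) \cong \lex X(\mathunderscore)$ of Proposition~\ref{prop:rel_pair_compose}, it gives a map $\lex X M(X_1\cap X_2) \to \lex X M(X_2)$. The middle arrow is the factorization $\lex X M(X_1) \to M(X)$.

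\emph{Left square.} This is the key step. Both composites $\lex X M(X_1\cap X_2) \to M(X)$ are globular morphisms. I claim that, after precomposition with the cocartesian morphism $M(X_1\cap X_2) \xrightarrow{\cocart} \lex X M(X_1\cap X_2)$, both equal $M(X_1\cap X_2 \to X)$. This follows from functoriality of $M$ and commutativity of the square in $\C$, together with the definition of $\lex f\alpha$ as a factorization; it is the same diagram chase as in the proof of Proposition~\ref{prop:rel_pair_compose}. Uniqueness of factorization through cocartesian morphisms then gives equality of the two composites. I expect this to be the main obstacle: the bookkeeping of the pseudofunctoriality isomorphisms $\lex {fg} \cong \lex f\lex g$, computed along the two different routes $X_1\cap X_2 \to X_1 \to X$ and $X_1\cap X_2 \to X_2 \to X$, needs care. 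Since all the cocartesian morphisms involved lie over the same vertical arrows, Fact~\ref{fact3} should pin these isomorphisms down uniquely.

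\emph{Dashed arrow.} Once the left square commutes, the composite $\lex X M(X_1\cap X_2) \to \lex X M(X_1) \to M(X) \to M(X,X_2)$ equals $\lex X M(X_1\cap X_2) \to \lex X M(X_2) \to M(X) \to M(X,X_2)$, which is zero because the bottom row is a complex. The top row is exact, so $\lex X M(X_1, X_1\cap X_2)$ is the cokernel of $\lex X M(X_1\cap X_2) \to \lex X M(X_1)$. The universal property of the cokernel then gives a unique morphism $\lex X M(X_1, X_1\cap X_2) \to M(X,X_2)$ making the right square commute. This is a morphism of chain complexes because all constructions are carried out in $\Ch(\A(L(X),R(X)))$. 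Uniqueness also follows directly, since $\lex X M(X_1) \to \lex X M(X_1,X_1\cap X_2)$ is epic.
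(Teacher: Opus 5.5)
Your proposal is correct and follows essentially the paper's route. Composition of relative pairs and exactness of $\lex X$ give the two rows. The left square commutes by functoriality of $M$ and uniqueness of factorization through cocartesian cells, which the paper invokes as the dual of Lemma~\ref{lemma:cone_change}. The dashed arrow then comes from the universal property of the cokernel $\lex X M(X_1, X_1\cap X_2)$. Your version just spells out the bookkeeping of the pseudofunctoriality isomorphisms that the paper leaves implicit.
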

\begin{proof}
Since relative pairs compose, and because the extension functor $\lex X (\mathunderscore): \A(L(X_1), R(X_1)) \to \A(L(X),R(X))$ is exact, each row composes to zero. Moreover, the leftmost square commutes by functoriality of $M$ and by the dual of Lemma~\ref{lemma:cone_change}. The result then follows by the universal property of the cokernel $\lex X M(X_1, X_1 \cap X_2)$, which is a cokernel since $\lex X (\mathunderscore)$ is exact, hence preserves cokernels.
\end{proof}
\begin{theorem}\label{th:mayer_vietoris}
Consider a good cover of an object $X$ in $\C$, as in Definition~\ref{def:good_cover}. Then, we have the following long exact sequence in homology:
\begin{center}
    \begin{tikzcd}[arrow style=math font,cells={nodes={text height=2ex,text depth=0.75ex}}, column sep=small]
    & & 0 \arrow[draw=none]{d}[name=X,shape=coordinate]{} \\
       \HM_1(X) \arrow[curarrow=X]{urr}{} 
       & \lex X \HM_{1}(X_1) \oplus \lex X \HM_1(X_2) \arrow[l] \arrow[draw=none]{d}[name=Y, shape=coordinate]{} & \arrow[l] \cdots \\
       \HM_{i}(X) \arrow[curarrow=Y]{urr}{} & \lex X \HM_{i}(X_1) \oplus \lex X HMi(X_2) \arrow[l] \arrow[draw=none]{d}[name=Z,shape=coordinate]{} & \lex X \HM_i(X_1 \cap X_2) \arrow[l] \\
       \HM_{i+1}(X) \arrow[curarrow=Z]{urr}{} & \lex X \HM_{i+1}(X_1) \oplus \lex X \HM_{i+1}(X_2) \arrow[l] & \cdots \arrow[l]
   \end{tikzcd}
\end{center}
\end{theorem}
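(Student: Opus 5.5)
The plan is to reproduce the classical derivation of Mayer–Vietoris from two relative long exact sequences and excision, the ``Barratt–Whitehead'' ladder argument. Everything happens in the local abelian category $\A(L(X),R(X))$, after pushing all homologies along $\lex X(\mathunderscore)$.

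First, apply $\lex X(\mathunderscore)$ to the long exact sequence of Theorem~\ref{th:exact_seq_rel_hom} for the relative pair $(X_1, X_1\cap X_2)$. Since $X_1\cap X_2 \to X_1$ is a relative pair, the extension $\lex{X_1}(\mathunderscore)$ is exact. Since $X_1\to X$ is a relative pair, $\lex X(\mathunderscore)$ on $\A(L(X_1),R(X_1))$ is exact as well. By Proposition~\ref{prop:rel_pair_compose} we have $\lex X(\lex{X_1}(\mathunderscore)) \cong \lex X(\mathunderscore)$. Together these give a long exact sequence
\[\cdots \to \lex X \HM_i(X_1\cap X_2) \to \lex X\HM_i(X_1) \to \lex X\HM_i(X_1,X_1\cap X_2) \to \lex X\HM_{i-1}(X_1\cap X_2)\to\cdots\]
Alternatively, and more cleanly, I would take homology directly of the top row of diagram~\eqref{diagram:canonical}. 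That row is short exact because $\lex X$ is exact and the factorization of $M(X_1\cap X_2)\to M(X_1)$ is monic. Exactness of $\lex X$ also gives $\HM(\lex X M(Y))\cong \lex X\HM(Y)$. The bottom row of~\eqref{diagram:canonical} is the short exact sequence from the proof of Theorem~\ref{th:exact_seq_rel_hom} for the pair $(X,X_2)$. Diagram~\eqref{diagram:canonical} is then a morphism of short exact sequences of chain complexes in $\Ch(\A(L(X),R(X)))$. By naturality of connecting morphisms, it yields a commutative ladder between the two long exact sequences.

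Next, by the good cover hypothesis, every third vertical map of this ladder, $\lex X\HM_i(X_1,X_1\cap X_2)\to \HM_i(X,X_2)$, is an isomorphism. Then I apply the standard algebraic lemma: given a commutative ladder of long exact sequences $A_i\to B_i\to C_i\to A_{i-1}$ and $A'_i\to B'_i\to C'_i\to A'_{i-1}$ in an abelian category in which all $C_i\to C'_i$ are isomorphisms, there is a long exact sequence
\[\cdots\to A_i \to B_i\oplus A'_i \to B'_i \to A_{i-1}\to\cdots\]
The maps are $a\mapsto (f(a), -\alpha(a))$, $(b,a')\mapsto \beta(b)+g'(a')$, and the connecting map $\partial\circ\gamma^{-1}\circ h'$. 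Here $A_i=\lex X\HM_i(X_1\cap X_2)$, $B_i=\lex X\HM_i(X_1)$, $A'_i=\lex X\HM_i(X_2)$ and $B'_i=\HM_i(X)$, which is exactly the claimed sequence. It terminates with $\HM_0(X)\to 0$, since the chain complexes are positively graded.

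The main obstacle is proving this algebraic lemma in a general abelian category rather than for modules. I would prove it either by a diagram chase justified via the Freyd–Mitchell embedding theorem on the small abelian subcategory generated by the diagram, or by a pure arrow-theoretic argument. A secondary point needing care is identifying the homology of the top row with the extended homologies and the connecting morphisms, and checking that the left square of~\eqref{diagram:canonical} commutes. That commutation was already provided by Proposition~\ref{prop:canonical_excision}.
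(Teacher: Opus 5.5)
Your proposal is correct and is essentially the paper's proof. You take the long exact sequences of the two short exact rows of diagram~\eqref{diagram:canonical}, using exactness of the extension functors to identify the homology of the extended complexes with the extended homologies; you get a commutative ladder from naturality of connecting morphisms, use the good cover hypothesis for the relative isomorphisms, and conclude with the algebraic \MV{} (Barratt--Whitehead) lemma, which the paper simply cites from Selick.

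Two points in your write-up go beyond the paper, which leaves them implicit: you justify that lemma in a general abelian category, and you note that the sequence terminates at $\HM_0(X)\to 0$.
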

\begin{proof}
Recall that by Proposition~\ref{prop:rel_pair_compose}, $(X,X_1 \cap X_2)$ is a relative pair, and thus that the extension of scalars functor $\lex X (\mathunderscore)$ commutes with homology. Hence, by applying Theorem~\ref{th:exact_seq_rel_hom} to the two exact rows of diagram~\ref{diagram:canonical}, we get the following two long exact sequences in homology:
\begin{equation*}
\begin{tikzcd}[ column sep=tiny,row sep=scriptsize]
	\cdots & {\lex X \HM_{i+1}(X_1, X_1 \cap X_2)} & {\lex {X} \HM_i(X_1 \cap X_2)} & {\lex X \HM_i(X_1)} & {\lex X \HM_i(X_1, X_1 \cap X_2)} & \cdots \\
	\cdots & {\HM_{i+1}(X, X_2)} & { \lex {X} \HM_i(X_2)} & {\HM_i(X)} & {\HM_i(X, X_2)} & \cdots
	\arrow[from=1-1, to=1-2]
	\arrow[from=1-2, to=1-3]
	\arrow[from=1-3, to=1-4]
	\arrow[from=1-4, to=1-5]
	\arrow[from=1-5, to=1-6]
	\arrow[from=2-1, to=2-2]
	\arrow[from=2-2, to=2-3]
	\arrow[from=2-3, to=2-4]
	\arrow[from=2-4, to=2-5]
	\arrow[from=2-5, to=2-6]
\end{tikzcd}
\end{equation*}
In addition, by the functoriality of this construction, see for example~\cite[Proposition 1.3.6]{kashiwara2013sheaves}, we get the following commutative diagram with exact rows. By definition of a good cover, some of the vertical maps are isomorphisms:
\begin{equation*}
\begin{tikzcd}[column sep=tiny,row sep=scriptsize]
	\cdots & {\lex X \HM_{i+1}(X_1, X_1 \cap X_2)} & {\lex {X} \HM_i(X_1 \cap X_2)} & {\lex X \HM_i(X_1)} & {\lex X \HM_i(X_1, X_1 \cap X_2)} & \cdots \\
	\cdots & {\HM_{i+1}(X, X_2)} & { \lex {X} \HM_i(X_2)} & {\HM_i(X)} & {\HM_i(X, X_2)} & \cdots
	\arrow[from=1-1, to=1-2]
	\arrow[""{name=0, anchor=center, inner sep=0}, draw=none, from=1-1, to=2-1]
	\arrow[from=1-2, to=1-3]
	\arrow[""{name=1, anchor=center, inner sep=0}, "\cong"', from=1-2, to=2-2]
	\arrow[from=1-3, to=1-4]
	\arrow[""{name=2, anchor=center, inner sep=0}, from=1-3, to=2-3]
	\arrow[from=1-4, to=1-5]
	\arrow[""{name=3, anchor=center, inner sep=0}, from=1-4, to=2-4]
	\arrow[from=1-5, to=1-6]
	\arrow[""{name=4, anchor=center, inner sep=0}, "\cong"', from=1-5, to=2-5]
	\arrow[""{name=5, anchor=center, inner sep=0}, draw=none, from=1-6, to=2-6]
	\arrow[from=2-1, to=2-2]
	\arrow[from=2-2, to=2-3]
	\arrow[from=2-3, to=2-4]
	\arrow[from=2-4, to=2-5]
	\arrow[from=2-5, to=2-6]
\end{tikzcd}
\end{equation*}
The result then follows from the algebraic \MV{} theorem~\cite[Lemma 4.1.8]{selick1997introduction}, see Appendix~\ref{annex:algebraic}.
\end{proof}
Theorem~\ref{th:mayer_vietoris} is also a direct generalization of~\cite[Theorem 8.25]{goubault2025directed}.

\section{Künneth and Eilenberg–Zilber theorems}\label{sec:Kunneth}

In this section, we present a Künneth theorem, involving restriction of scalars, for abelian framed bicategories $\AAA$ and functors $M : \C \to \Ch(\AAA)$. 
In contrast to the exact sequence theorems in Section~\ref{section:exact_sequences}, this theorem does not rely on relative pairs, nor does it immediately generalize the Künneth theorem from~\cite{goubault2025directed}.
Nevertheless, in the concrete setting of Example~\ref{ex:homology_precubical}, we show that our Künneth theorem applies to a different, albeit still restricted, set of precubical sets than~\cite[Theorem 8.14]{goubault2025directed}.

\subsection{A Künneth theorem}

We require the following hypotheses on $\AAA$ and $\C$:
\begin{itemize}
    \item To formulate our Künneth theorem, we need $\AAA$ and $\C$ to be monoidal, with products both denote $\otimes$. This induces a product for chain complexes in $\AAA$: for all chain complexes $C_*$ in $\AAA(A,B)$ and $D_*$ in $\AAA(C,D)$, $C \otimes D$ is the chain complex in $\AAA(A \otimes C, B \otimes D) $ with:
\begin{itemize}
    \item $(C \otimes D)_i = \bigoplus_{i = j+k}C_j \otimes D_k$ for all $i \geq 0$;
    \item $\partial_i : (C \otimes D)_i \to (C \otimes D)_{i-1}$ is defined on the summand $C_j\otimes D_k$ by $\partial_j \otimes \id_{D_k} + (-1)^j \id_{C_j} \otimes \partial_k$, for all $i > 0$.
\end{itemize}
Now, for all $X, Y$ in $\C$, we can consider the two chain complexes $M_*(X \otimes Y)$ and $M_*(X) \otimes M_*(Y)$.
\item To relate the two chain complexes above, we require natural transformations 
$l_{X,Y}: L(X\otimes Y) \to L(X) \otimes L(Y)$ and $r_{X,Y}: R(X\otimes Y) \to R(X) \otimes R(Y)$ at the vertical level, along with an \EZ{}-like result, i.e.,  a natural isomorphism $\HM_*(X\otimes Y) \cong l^*(H_*(M(X) \otimes M(Y)){r}^*$ at the horizontal level.
\item Finally, we require $\AAA$ to locally have enough projectives, enough injectives and coproducts in order to apply the algebraic Künneth theorem~\ref{th:algebraic_kunneth}, and we require $\AAA$ to be closed in order to use the above \EZ{} result. 
\end{itemize}
\begin{theorem}\label{th:kunneth}
For all objects $X, Y$ in $\C$, if the tensor product functor $\otimes: \A(L(X), R(X)) \times \A(L(Y), R(Y)) \to \A(L(X) \otimes L(Y), R(X) \otimes R(Y))$ satisfies the conditions of the algebraic Künneth theorem~\cite[Theorem 3.19]{fluch2004kunneth}~\ref{th:algebraic_kunneth}, then we have the following short exact sequence, where $\Tor_1$ denotes the first left derived functor of the tensor product functor:
$$ 0 \to l^*(HM(X) \otimes \HM(Y)){r}^* \to \HM(X \otimes Y) \to l^*\Tor_1(HM(X),HM(Y)){r}^* \to 0.$$
\end{theorem}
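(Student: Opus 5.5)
The plan is to reduce the statement to the algebraic Künneth theorem in a single local abelian category, and then transport the result along the restriction functor $l^*(\mathunderscore)r^*$ by means of the Eilenberg–Zilber-type isomorphism assumed in the hypotheses. Write $A = L(X)\otimes L(Y)$ and $B = R(X)\otimes R(Y)$.

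\emph{Step 1: apply the algebraic Künneth theorem locally.} Since $\AAA$ is monoidal abelian, the product chain complex $M(X)\otimes M(Y)$ lives in the abelian category $\A(A,B)$. By assumption, the bifunctor $\otimes: \A(L(X),R(X))\times\A(L(Y),R(Y))\to\A(A,B)$ satisfies the hypotheses of \cite[Theorem 3.19]{fluch2004kunneth}: additivity (built into the definition of monoidal abelian framed bicategory), enough projectives, enough injectives and coproducts, and whatever flatness conditions that theorem imposes on $M(X)$ or $M(Y)$. Applying Theorem~\ref{th:algebraic_kunneth} in $\A(A,B)$ yields a short exact sequence
$$0\to \HM(X)\otimes\HM(Y)\to \HM_*(M(X)\otimes M(Y))\to \Tor_1(\HM(X),\HM(Y))\to 0,$$
with the grading shifted in the $\Tor_1$ term as usual.

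\emph{Step 2: restrict along $(l_{X,Y}, r_{X,Y})$.} Apply $l^*(\mathunderscore)r^*:\A(A,B)\to\A(L(X\otimes Y),R(X\otimes Y))$. Because $\AAA$ is assumed closed, coextensions exist. By Corollary~\ref{cor:exactness}, restriction is then exact, since it has both a left adjoint (extension) and a right adjoint (coextension). So restricting the sequence of Step 1 gives an exact sequence whose middle term is $l^*\HM_*(M(X)\otimes M(Y))r^*$.

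\emph{Step 3: identify the middle term.} Use the Eilenberg–Zilber hypothesis $\HM_*(X\otimes Y)\cong l^*\HM_*(M(X)\otimes M(Y))r^*$ to replace the middle term by $\HM(X\otimes Y)$. Composing the restricted maps with this isomorphism gives the claimed short exact sequence. Naturality in $X$ and $Y$, if desired, follows from the naturality of the algebraic Künneth sequence, of restriction, and of the Eilenberg–Zilber isomorphism.

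\emph{Main obstacle.} The delicate point is Step 1, namely checking that the external tensor product really fits the framework of Theorem~\ref{th:algebraic_kunneth}, which is stated for a bifunctor between three possibly different abelian categories. One must verify that the boundary and sign conventions of the product complex defined above match the ones used there. One must also check that its left derived functor $\Tor_1$ is well defined, which requires enough projectives locally. I would take these as exactly the hypotheses packaged in the statement, so that the proof itself reduces to Steps 2 and 3.
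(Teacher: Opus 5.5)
Your proposal is correct and follows the paper's own proof step for step. You apply the algebraic Künneth theorem in the local category $\A(L(X)\otimes L(Y), R(X)\otimes R(Y))$, restrict along $(l,r)$, and replace the middle term using the assumed \EZ{} isomorphism. Your justification that restriction is exact (closedness provides coextensions, so Corollary~\ref{cor:exactness} applies) spells out the step the paper states only as ``restriction of scalars \ldots{} preserving exactness''.
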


\begin{proof}
The algebraic Künneth theorem yields the following exact sequence:
$$ 0 \to \HM(X) \otimes \HM(Y) \to H_*(M(X) \otimes M(Y)) \to \Tor_1(\HM(X),\HM(Y)) \to 0.$$ 
The appropriate restriction of scalars functors can be applied on this sequence, preserving exactness. Finally, the middle term is replaced using the provided \EZ{} formula.
\end{proof}
An \EZ{}-type formula is typically stronger in the sense that it requires the existence of a chain homotopy equivalence, not merely an isomorphism in homology. In Section~\ref{section:ezf}, we show that for the concrete directed homology theory of~\cite{goubault2025directed}, the strong \EZ{} formula does not hold, whereas the weak version does. Nevertheless, as noted above, this weaker version suffices for our purposes.

\subsection{Eilenberg–Zilber formulas for precubical sets}\label{section:ezf}
In this section, we consider the concrete homology theory of~\cite{goubault2025directed}. In the framework of abelian framed bicategories, this corresponds to a functor $C: \Cub \to \Ch(\Bimod)$ from a certain category $\Cub$ of precubical sets, with tensor products the usual tensor product of precubical sets~\cite[Definition 8.11]{goubault2025directed}, into the monoidal abelian framed bicategory of bimodules over $R$-algebras, with $R$ a fixed commutative ring. 
In particular we do not use the shifted convention of~\cite{goubault2025directed}. 

In more detail, $C$ maps a precubical set $X$ to a chain complex $C(X)$ of $R(X)$-bimodules, where the $R$-algebra $R(X)$ denotes the path algebra of $X$. Recall that $R(X)$ is unital, with unit denoted $1$. The homology of $C(X)$ is denoted $H(X)$. For $C$ to be functorial, morphisms in $\Cub$ must be injective on vertices, see~\cite{goubault2024semi}. 
We take $\Cub$ to be the category of finite precubical sets without cycles, and morphisms of precubical sets which are injective on vertices.
We restrict the objects of $\Cub$ to cycle-less precubical set in order to get Lemma~\ref{lemma:eq_prec_chains}, see Remark~\ref{remark:funct_cycle}.  

The goal of this section is to show that in the setting of bimodules over $R$-algebras, the classical \EZ{} formula does not hold, but that a weaker version in homology does.
This implies a Künneth theorem, see~\ref{th:kunneth}, which applies to cases where the Künneth theorem from~\cite{goubault2025directed} does not, for example the second case of~\cite[Example 8.17]{goubault2025directed}.
Let us start with the following natural algebra homomorphism $h$, introduced in~\cite[Remark 8.15]{goubault2025directed}:
\begin{definition}
For all precubical sets $X,Y$, the $R$-algebra map $h_{X,Y}: R(X\otimes Y) \to R(X) \otimes R(Y)$ is defined on generators $(u,v) \in (X_0\times Y_1) \cup (X_1\times Y_0)$ of $R(X\otimes Y)$ by $(u, v)$.
\end{definition}
In the following, we denote by $h^*M$ the restriction on the left and right along $h_{X,Y}$ of a $R(X) \otimes R(Y)$-bimodule $M$. 
Therefore, we can consider for all precubical sets $X,Y \in \Cub$, the chain complex of $R(X\otimes Y)$-bimodules $h^*{C(X) \otimes C(Y)}$.
\begin{proposition}\label{prop:no_homotopy}
There is no natural chain homotopy equivalence $C(X \otimes Y) \cong h^* ({C(X) \otimes C(Y)})$ of modules over $R$-algebras
\end{proposition}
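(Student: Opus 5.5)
A natural chain homotopy equivalence is in particular a natural quasi-isomorphism. It is also a chain map in every degree between bimodules. I will therefore produce a small explicit counterexample. In it, a degreewise obstruction (rank or freeness) or a naturality obstruction prevents any chain homotopy equivalence, even though the homologies agree, as the weak \EZ{} statement later asserts.

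The first step is to compare the two complexes degreewise. This uses the explicit description of $C$ from~\cite{goubault2025directed}. $C_0(X) = R(X)$ is the path algebra as a bimodule over itself, and $C_n(X)$ for $n\ge 1$ is the free $R(X)$-bimodule on the $n$-cells, $R(X)\,x\,R(X)$, in the unshifted convention. Take the simplest case $X = Y = \K$. Then $X\otimes Y$ is the square $\overrightarrow{D}^2$ with four vertices, four edges and one $2$-cell. Its path algebra $R(X\otimes Y)$ has the two maximal paths $00\to 11$ as distinct elements. The algebra $R(\K)\otimes R(\K)$ is $4$-dimensional per endpoint pair, and restricting along $h$ identifies both paths with $a\otimes a$. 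I would compute $h^*(C(\K)\otimes C(\K))$ explicitly as an $R(X\otimes Y)$-bimodule, degree by degree. For instance, degree $0$ is $h^*(R(\K)\otimes R(\K))$. Its component from $00$ to $11$ is $R$, whereas $C_0(X\otimes Y)$ has $R^2$ there.

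The second step turns this degreewise discrepancy into an obstruction. Use the idempotents $e_v$ of the path algebra. Applying $e_{00}(\mathunderscore)e_{11}$ is an exact additive functor to $R$-modules, and it commutes with chain homotopies. So a chain homotopy equivalence of bimodule complexes yields a chain homotopy equivalence of the complexes of $R$-modules $e_{00}C(X\otimes Y)e_{11}$ and $e_{00}h^*(\cdots)e_{11}$. Homology alone cannot distinguish these two complexes. Instead, I would use the bimodule structure. $C_0(X\otimes Y) = R(X\otimes Y)$ is projective as a bimodule, and so is $C_n$, since free modules are projective. On the other side, $h^*$ of a free $R(\K)\otimes R(\K)$-bimodule is typically not projective over $R(X\otimes Y)$, because the two distinct paths act identically. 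A chain homotopy equivalence between bounded complexes of projectives would then be impossible, since two bounded projective complexes that are homotopy equivalent have the same ``projective'' Euler-type invariants. The cleaner route is to evaluate both complexes at the bimodule map induced by the difference of the two paths $p - q \in R(X\otimes Y)$. This element acts by zero on every term of $h^*(\cdots)$, hence by zero on its homotopy category image. It acts injectively on $C_0(X\otimes Y)$, and one checks that it acts nontrivially up to homotopy. Concretely, the identity of $C(X\otimes Y)$ multiplied by $p-q$ is not null-homotopic, because in degree $0$ a null-homotopy would force $(p-q)\cdot 1 \in \operatorname{im}\partial_1$ with a compatible bimodule structure.

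The main obstacle is this last check: showing that multiplication by $p-q$ is not null-homotopic on $C(X\otimes Y)$. Note that $H_0$ may already kill $p-q$, since the square identifies $p$ and $q$ up to a boundary. So the argument cannot be run on homology, and one must use the chain level. First I would try to show that any null-homotopy $s_0: C_0 \to C_1$ must be a bimodule map sending $1_{00}$ to an element $\sigma$ with $\partial_1\sigma = p - q$ and $e_{00}\sigma = \sigma = \sigma e_{00}$, but $C_1$ has no loops at $00$, so $\sigma = 0$, a contradiction. If that fails, the fallback is naturality. Use the symmetry $\swap: X\otimes Y\to Y\otimes X$, which acts compatibly on both sides, and derive a sign contradiction in degree $2$ from the $(-1)^j$ in the tensor differential.
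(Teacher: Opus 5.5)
Your decisive step does not go through. Left multiplication by $z=p-q$ is not a bimodule endomorphism of $C(X\otimes Y)$. Indeed, $z$ is a combination of paths from $00$ to $11$, so exactly one of $e_{00}z$, $ze_{00}$ equals $z$ and the other is $0$; hence $z$ is not central. So $z\cdot(-)$ is not a morphism in the homotopy category of bimodule complexes, and ``acts by zero on the homotopy category image'' has no meaning there.

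As a chain map of $R$-modules (or of right modules), $z\cdot(-)$ \emph{is} null-homotopic. We have $\partial_1 s=\pm z$ for the $2$-cell $s$, so $s_0(x)=\pm\, s\cdot x$ satisfies $\partial_1 s_0=z\cdot(-)$ on $C_0$. In degree $1$, both $z\cdot s$ and $s\cdot z$ vanish, and $C_2(X\otimes Y)=0$. So your claim that $z\cdot\id$ is not null-homotopic is false. Moreover, a null-homotopy of it can never be a bimodule map, since then $z\cdot(-)=\partial_1 s_0$ would be one.

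Your other routes do not help either. $R(X\otimes Y)$ is free as a one-sided module but not projective as a bimodule over itself, because a path algebra with arrows is not separable. No Euler-type invariant is actually defined. The swap fallback is vacuous here, since both complexes vanish in degree $2$.

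The ingredient you isolate at the very end is the right one, but you must apply it to a map that genuinely is a bimodule map. That map is the degree-$0$ component $\psi_0\colon C_0(X\otimes Y)\to C_1(X\otimes Y)=R\,s$ of the bimodule homotopy $fg\simeq\id_{C(X\otimes Y)}$. Put $\sigma=\psi_0(1)$. Then $e_{00}\sigma=\psi_0(e_{00})=\sigma e_{00}$, and $s$ is not a loop, so $\sigma=0$ and hence $\psi_0=0$. This gives $f_0g_0=\id$ on the nose.

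Your observation now closes the argument:
\[ z=f_0g_0(z\cdot 1)=f_0\bigl(h(z)\cdot g_0(1)\bigr)=0, \]
a contradiction. Equivalently, applying $e_{00}(-)e_{11}$ would produce maps $R^2\to R\to R^2$ composing to the identity, which is impossible. This is the paper's proof, except that the paper reaches the final contradiction by comparing the ranks of $C_0(X\otimes Y)$ and $C_0(X)\otimes C_0(Y)$ as $R$-modules.
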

\begin{proof}
Consider the precubical sets $X = Y = \lineone$ and the tensor product $X \otimes Y$, i.e., the full square. Via restriction along $h$, all the bimodules involved are regarded as $R(X \otimes Y)$-bimodules.
A chain homotopy equivalence between $C(X\otimes Y)$ and $C(X)\otimes C(Y)$ is the data of chain maps $f: C(X)\otimes C(Y) \to C(X\otimes Y)$ and $g: C(X\otimes Y) \to C(X)\otimes C(Y)$ along with chain homotopies $gf \cong \id_{C(X) \otimes C(Y)}$ and $fg \cong \id_{C(X \otimes Y)}$.
The latter is the data of a bimodule map $\psi_0: C_0(X\otimes Y) \to C_1(X\otimes Y)$ such that $f_0 \circ g_0 - \id_{C_0(X\otimes Y)} = \partial_1 \circ \psi_0$, as seen in the diagram below:
\[\begin{tikzcd}
	0 & {C_0(X) \otimes C_0(Y)} & 0 & 0 \\
	0 & {C_0(X\otimes Y)} & {C_1(X\otimes Y)} & 0
	\arrow[from=1-2, to=1-1]
	\arrow["{f_0}", shift left=2, from=1-2, to=2-2]
	\arrow[from=1-3, to=1-2]
	\arrow[from=1-4, to=1-3]
	\arrow["{g_0}", shift left=2, from=2-2, to=1-2]
	\arrow[from=2-2, to=2-1]
	\arrow["{\psi_0}", curve={height=-6pt}, from=2-2, to=2-3]
	\arrow["{\partial_1}", from=2-3, to=2-2]
	\arrow[from=2-4, to=2-3]
\end{tikzcd}\]
$\psi_0$ sends $0$-chains to $1$-chains, i.e., to the unique $1$-chain in $C_1(X\otimes Y)$ up to scalar multiplication. $\psi_0$ must respect the action of $R(X\otimes Y)$ on $C_0(X\otimes Y)$ and $C_1(X\otimes Y)$. In particular, for all $0$-chains $c \in C_0(X\otimes Y)$, we have $c \act \psi_0(1) = \psi_0(c) = \psi_0(1) \act c$, hence $\psi_0(c)$ is $0$. Therefore, we must have $fg = \id_{C_0(X\otimes Y)}$, hence $C_0(X\otimes Y) \cong C_0(X) \otimes C_0(Y)$. However, $C_0(X\otimes Y)$ is freely generated by the eleven $0$-chains in the filled square $X \otimes Y$ while $(C(X)\otimes C(Y))_0 = C_0(X)\otimes C_0(Y)$ is freely generated by the nine pure tensors of the three $0$-chains in $X = Y$. This contradict the previous isomorphism of $R$-module.
\end{proof}
A convenient source of natural chain homotopy equivalences is the acyclic model theorem (see~\ref{section:amt}). In the following sections, we show that even if the acyclic model theorem cannot be directly generalized to modules over $R$-algebras by Proposition~\ref{prop:no_homotopy}, it still applies to our case when modules are regarded as $R$-modules. Moreover, we show that at the level of homology, the induced natural isomorphism respects the actions of the algebras.

Let us now show how to apply the acyclic model theorem to our situation, using wedges of cubes~\cite{ziemianski2020spaces} as models. Recall that we are working with the category $\Cub$, hence all the precubical sets we consider are cycle-free. 
\begin{definition}[\cite{ziemianski2020spaces}]
For all $n \in \N$, the \look{standard $n$-dimensional cube} is the precubical set $\mathrm C^n$ where for all $k \in \N$, $\mathrm C^n_k$ is the set of $k$-dimensional faces of the unit $n$-dimensional cube in $\R^n$, and where $d^\epsilon_i$ is the projection along the $i$-th coordinate onto the hyperplane $x_i = \epsilon$. We denote by $s$ and $e$ its endpoints.
\end{definition}

We now define the realization of a sequence $s = (n_1, \ldots, n_l) \in \N_*^l$, which is the wedge $s$-cubes in the sense of~\cite{ziemianski2020spaces}, along with $i$-precubical chains.

\begin{definition}
For all $i \geq 0$, and for a sequence $s = (n_1, \ldots , n_l)$ with $n_k \geq 1$ for $1 \leq k \leq l$ and \look{dimension} $\sum_{1 \leq k\leq l} n_k - l = i$, the \look{realization} of $s$, denoted $\rea s$, is the precubical set: $${\bigsqcup_{1 \leq k \leq l} \mathrm C^{n_k}}/\{d^1(\mathrm C^{n_k}) \sim d^0( \mathrm C^{n_{k+1}}), \ 1 \leq k < l \}.$$
We denote by $s$ the vertex $d^0(x)$ with $x \in \mathrm C^{n_1}_{n_1} \hookrightarrow {\rea s}_{n_1}$. Symmetrically, $e$ denotes the vertex $d^1(x)$ with $x \in \mathrm C^{n_l}_{n_l} \hookrightarrow {\rea s}_{n_l}$.
A \look{$i$-precubical chain} is the realization of a sequence of dimension $i$. The realization of an empty sequence is the point $X_0 =\{*\}$. For every cube chain $c$ in a precubical set $X$, the \look{realization} of $c$ is the realization of its type.
\end{definition}

\begin{example}
Consider the standard $0$, $1$ and $2$-dimensional cubes, along with the realization of the sequence $(1,2,2)$:
\vspace{-\baselineskip}
\begin{figure}[htbp]
    \centering
    \begin{subfigure}[b]{0.1\textwidth}
        \centering
\[\begin{tikzcd}
	\bullet
	\arrow["{s= e}"{description}, shift right=3, draw=none, from=1-1, to=1-1, loop, in=60, out=120, distance=5mm]
\end{tikzcd}\]
        \caption{$\mathrm C^0$}
    \end{subfigure}
    \begin{subfigure}[b]{0.19\textwidth}
        \centering
\[\begin{tikzcd}[sep=scriptsize]
	\bullet & \bullet
	\arrow["{s}"{description}, shift right=2.5, draw=none, from=1-1, to=1-1, loop, in=60, out=120, distance=5mm]
	\arrow["{c_1}", from=1-1, to=1-2]
	\arrow["{e}"{description}, shift right=2.5, draw=none, from=1-2, to=1-2, loop, in=60, out=120, distance=5mm]
\end{tikzcd}\]
        \caption{$\mathrm C^1$}
    \end{subfigure}
    \begin{subfigure}[b]{0.25\textwidth}
        \centering
\[\begin{tikzcd}[sep=scriptsize]
	\bullet & \bullet \\
	\bullet & \bullet
	\arrow["{d^1_0(c_2)}", from=1-1, to=1-2]
	\arrow["e"{description}, shift right=3, draw=none, from=1-2, to=1-2, loop, in=60, out=120, distance=5mm]
	\arrow["{d^0_1(c_2)}", from=2-1, to=1-1]
	\arrow["{c_2}"{description}, draw=none, from=2-1, to=1-2]
	\arrow["s"{description}, shift left=3, draw=none, from=2-1, to=2-1, loop, in=300, out=240, distance=5mm]
	\arrow["{d^0_0(c_2)}"', from=2-1, to=2-2]
	\arrow["{d^1_1(c_2)}"', from=2-2, to=1-2]
\end{tikzcd}\]
        \caption{$\mathrm C^2$}
    \end{subfigure}
    \begin{subfigure}[b]{0.4\textwidth}
        \centering
\[\begin{tikzcd}[sep=scriptsize]
	&& \bullet & \bullet \\
	& \bullet & \bullet & \bullet \\
	\bullet & \bullet & \bullet
	\arrow[from=1-3, to=1-4]
	\arrow["e"{description}, shift right=3, draw=none, from=1-4, to=1-4, loop, in=60, out=120, distance=5mm]
	\arrow[from=2-2, to=2-3]
	\arrow[from=2-3, to=1-3]
	\arrow["{c_2'}"{description}, draw=none, from=2-3, to=1-4]
	\arrow[from=2-3, to=2-4]
	\arrow[from=2-4, to=1-4]
	\arrow["s"{description}, shift right=3, draw=none, from=3-1, to=3-1, loop, in=60, out=120, distance=5mm]
	\arrow["{c_1}", from=3-1, to=3-2]
	\arrow[from=3-2, to=2-2]
	\arrow["{c_2}"{description}, draw=none, from=3-2, to=2-3]
	\arrow[from=3-2, to=3-3]
	\arrow[from=3-3, to=2-3]
\end{tikzcd}\]
        \caption{$\rea{(1,2,2)}$}
    \end{subfigure}
\end{figure}
\end{example}

Precubical chains can be regarded as cubical complexes in the sense of~\cite{dubut2017directed}. In particular, they are cycle-free.
Intuitively, the directed geometric realization of a precubical chain is a sequence of cubes where consecutive cubes $c$ and $c'$ are glued along their extremities $d^1(c)$ and $d^0(c')$.

In the following, we denote by $\mathcal M_i$ the set of $i$-precubical chains, and by $\mathcal M$ the set of all precubical chains. We show that cube chains in a precubical set $X$ are characterized by morphisms of precubical chains into $X$, and similarly for the tensor product of precubical sets. If $\mathrm C$ is a precubical chain, we denote by $C'(C)$ the chain complex of cube-chains in $\mathrm C$ whose endpoints are the endpoints $s$ and $e$ of $\mathrm C$.

\begin{lemma}\label{lemma:eq_prec_chains}
For every $i$-precubical chain $\mathrm C$, there is a unique cube chain in $C'_i(\mathrm C)$, and its realization is $\mathrm C$. Moreover, for all precubical sets $X \in \Cub$ and $i$-cube chains $c = (c_1, \ldots, c_l)$ of type $(n_1, \ldots, n_l)$ in $X$, there is a unique morphism of precubical sets $\rea c \to X$ in $\Cub$ mapping $x \in \mathrm C^{n_k}_{n_k} \hookrightarrow {\rea c}_{n_k}$ to $c_k \in X_{n_k}$ for all $1 \leq k \leq l$.\end{lemma}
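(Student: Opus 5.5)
The proof splits into two parts: the uniqueness of the top-dimensional cube chain in a precubical chain, and the classification of $i$-cube chains in $X$ by morphisms $\rea c \to X$.

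Recall from \cite{goubault2025directed,ziemianski2020spaces} that a cube chain from $s$ to $e$ of type $(m_1,\dots,m_p)$ is a sequence of cubes $(x_1,\dots,x_p)$ with $x_k$ of dimension $m_k \geq 1$, $d^0(x_1)=s$, $d^1(x_k)=d^0(x_{k+1})$ and $d^1(x_p)=e$, of dimension $\sum_k m_k - p$.

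For the first part, let $\mathrm C = \rea s$ with $s=(n_1,\dots,n_l)$ of dimension $i$. The sequence $c_0=(x_1,\dots,x_l)$ of top cubes $x_k \in \mathrm C^{n_k}_{n_k}$ is a cube chain from $s$ to $e$ of dimension $i$, by the gluing relations. For uniqueness, I would use a grading argument. Every vertex of $\mathrm C$ lies in exactly one cube $\mathrm C^{n_k}$, except the gluing vertices. Because $\mathrm C$ is a wedge glued only at extremal vertices, any cube chain from $s$ to $e$ must pass through every gluing vertex $v_k = d^1(x_k)=d^0(x_{k+1})$. Between $v_{k-1}$ and $v_k$ it therefore restricts to a cube chain from the bottom to the top vertex of the standard cube $\mathrm C^{n_k}$. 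A cube chain from $0$ to $1$ in $\mathrm C^{n}$ of type $(m_1,\dots,m_p)$ satisfies $\sum m_j = n$, so its dimension is $n-p \leq n-1$, with equality iff $p=1$, i.e. the chain is the top cube. Summing over $k$, the total dimension is at most $\sum_k (n_k-1) = i$, with equality iff every segment is the top cube, which gives uniqueness. The realization of this unique chain has type $s$, hence is $\mathrm C$.

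For the second part, let $c=(c_1,\dots,c_l)$ be a cube chain in $X$. A morphism of precubical sets $\mathrm C^{n} \to X$ is the same as an $n$-cube of $X$, by the Yoneda lemma for precubical sets since $\mathrm C^n$ is representable. This gives unique maps $\phi_k : \mathrm C^{n_k} \to X$ sending the top cube to $c_k$. These are compatible with the identifications $d^1(\mathrm C^{n_k}) \sim d^0(\mathrm C^{n_{k+1}})$ because $d^1(c_k)=d^0(c_{k+1})$ in the cube-chain sense, which is the iterated face equality on vertices. Since $\rea c$ is a colimit (pushout) of the $\mathrm C^{n_k}$, the maps $\phi_k$ glue into a unique morphism $\phi : \rea c \to X$, and uniqueness follows from the universal property.

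The remaining point, which I expect to be the main obstacle, is that $\phi$ lies in $\Cub$, i.e. is injective on vertices. Here I would use that $X$ is cycle-free. First, if two distinct vertices of a single $\mathrm C^{n_k}$ had the same image, then, comparing them along the coordinate order, some nondegenerate directed path of edges inside the cube would map to a directed loop in $X$, contradicting cycle-freeness. Second, vertices in different cubes $\mathrm C^{n_j}$ and $\mathrm C^{n_k}$ with $j<k$ are linked by a directed path in $\rea c$ going through the gluing vertices. Identifying their images would again produce a directed cycle unless the path is trivial, which only happens at the glued vertex itself.

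Making the ``path to cycle'' argument precise requires that identified vertices be connected by a directed path of positive length in $\rea c$. This holds because, by the grading argument above, any two distinct vertices of a cube (or of the wedge) are either comparable in the product order along a monotone edge path, or both lie above a common vertex in a way that forces a comparable pair to collide as well. I would reduce to comparable pairs by noting that if two incomparable vertices $u,w$ of a cube collide, then their join $u\vee w$ receives two distinct directed edge paths, from $u$ and from $w$, whose images become parallel. Rather than pursue this, I would instead argue directly from the definition of cycle-free used in $\Cub$ (no nonconstant directed path returning to its start, \cite{dubut2017directed}), adjusting the reduction to whatever precise cycle notion that reference uses.
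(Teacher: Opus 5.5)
Your first part (cube chains in $\rea s$ pass through every gluing vertex, and each segment has dimension at most $n_k-1$, with equality only for the top cube) follows the paper's proof and is fine. So does your construction of $\rea c\to X$ (Yoneda on each $\mathrm C^{n_k}$, glued along $d^1(c_k)=d^0(c_{k+1})$).

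The gap is the one you flagged, injectivity on vertices, and your proposed reduction does not close it. If two incomparable vertices $u,w$ of one cube collide, the two edge paths from $u$ and $w$ to $u\vee w$ become parallel in $X$. That is an undirected cycle, not a directed one, and no comparable pair need collide. With the notion of cycle the paper actually uses, the claim is in fact false. That notion is directed cycles: the paper calls $\vec S^1$, which has two parallel edges, cycle-free.

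Here is a counterexample. Let $X$ be the quotient of $\mathrm C^2$ identifying the vertices $(0,1)$ and $(1,0)$. Its edges are two $s\to v$ and two $v\to e$, so $X$ is finite with no directed cycle, i.e.\ $X\in\Cub$. The $2$-cube $c$ gives a $1$-cube chain $(c)$ of type $(2)$. The unique precubical map $\rea{(c)}=\mathrm C^2\to X$ sending the top cell to $c$ is the quotient map, which is not injective on vertices. So no adjustment of the cycle notion along the lines you suggest can close the argument, unless that notion forbids such identifications.

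The paper's own proof has the same hole, in a larger form. It only observes that the gluing vertices $d^0(c_1),\dots,d^0(c_l),d^1(c_l)$ are pairwise distinct, and then asserts injectivity on all vertices of $\rea c$. Your case analysis is more careful. Comparable vertices of one cube, and vertices of different cubes (joined by a directed path through the gluing vertices), are indeed separated by cycle-freeness. Exactly the incomparable same-cube case is left.

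What is missing is an extra hypothesis on the objects of $\Cub$. For instance, require $X$ to be non-self-linked: the $2^n$ vertices of each $n$-cube are pairwise distinct, as for cubical complexes. Under that hypothesis, your two cases complete the proof.
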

\begin{proof}
Let $\mathrm C$ be the realization of a sequence $s = (n_1, \ldots , n_l)$ of dimension $i$. A cube chain in $C'(\mathrm C)$ is a concatenation of cube chains $(c_k)$ in $C'(\mathrm C^{n_k})$, $1 \leq k \leq l$. The dimension of such a cube chain is $\sum_{1 \leq k \leq l} \dim c_k$, and the maximum dimension of a cube chain $c_k$ is $n_k - 1$, attained by a unique cube chain: $(x)$ where $x$ is the only element of $\mathrm C^{n_k}_{n_k}$. Therefore, there is a unique choice of cube chains $c_k$, $1 \leq k \leq l$, yielding a cube chain in $C'(\mathrm C)$ of dimension $i$, and the type of this cube chain is $s$.

Moreover, for all precubical sets $X$ and $i$-cube chains $c = (c_1, \ldots, c_l)$ of type $(n_1, \ldots, n_l)$ in $X$, we show that mapping $x \in \mathrm C^{n_k}_{n_k} \hookrightarrow {\rea c}_{n_k}$ to $c_k \in X_{n_k}$ for all $1 \leq k \leq l$ completely defines a morphism $\varphi$ of precubical sets $\rea c \to X$. Indeed, for all $1 \leq k \leq l$ and $1 \leq i \leq n_k$, the definition of a morphism of precubical sets determines the value of $\varphi(x)$ for all $x \in \mathrm C^{n_k}_i \hookrightarrow \rea c$. For $i=0$, we must additionally check that $d^1(c_k) = d^0(c_{k+1})$ for all $1 \leq k < l$, which holds by definition of a cube chain. In addition, since $X$ does not have cycles, then the vertices $(d^0(c_1), \ldots, d^0(c_l), d^1(c_l))$ in $c$ are distinct from each others. Hence, the cube chain map we have constructed is injective on vertices.
\end{proof}

\begin{remark}\label{remark:funct_cycle}
The acyclic models theorem applies to functors. To guarantee functoriality, we restricted the morphisms of precubical sets in $\Cub$ to those that are injective on vertices. Therefore, we had to restrict the objects of $\Cub$ to cycle-free precubical sets in order for the above lemma to hold. 
\end{remark}

\begin{lemma}\label{lemma:bij_cube_chain1}
For all $i \geq 0$ and precubical sets $X \in \Cub$, the function $\varphi$ mapping morphisms $f: \mathrm C \to X$ in $\Cub$, $\mathrm C \in \mathcal M_i$, to $C_i(f)(c)$ in $C_i(X)$, where $c$ is the unique cube chain in $C'_i(\mathrm C)$, is a bijection.
\end{lemma}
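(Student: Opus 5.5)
We must show that $\varphi$ is a bijection between $\bigsqcup_{\mathrm C \in \mathcal M_i}\Cub(\mathrm C, X)$ and the set of $i$-cube chains in $X$, which are the free generators of $C_i(X)$. The map should be understood as landing in this generating set. The plan is to build an explicit inverse using Lemma~\ref{lemma:eq_prec_chains}, and to check the two composites separately.

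\textbf{Well-definedness.} Let $f:\mathrm C \to X$ be a morphism in $\Cub$ with $\mathrm C = \rea s$ and $s = (n_1,\ldots,n_l)$. Let $c = (x_1,\ldots,x_l)$ be the unique cube chain in $C'_i(\mathrm C)$, where $x_k$ is the top cell of the $k$-th cube. Then $C_i(f)(c) = (f(x_1),\ldots,f(x_l))$. This is again a cube chain of dimension $i$ and type $s$, because $f$ commutes with face maps and hence preserves the gluing conditions $d^1(x_k) = d^0(x_{k+1})$. It is nonzero because $f$ is injective on vertices, so no degeneracy collapses it. Hence $\varphi(f)$ is a genuine generator of $C_i(X)$.

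\textbf{Inverse map.} Define $\psi$ on an $i$-cube chain $c$ of type $(n_1,\ldots,n_l)$ in $X$ to be the unique morphism $\rea c \to X$ provided by Lemma~\ref{lemma:eq_prec_chains}; it lies in $\Cub$ by that lemma.
\begin{itemize}
\item The composite $\varphi\circ\psi$ is the identity. The morphism $\psi(c)$ sends the top cells $x_k$ to $c_k$, so applying it to the unique cube chain of $C'_i(\rea c)$ returns $(c_1,\ldots,c_l) = c$.
\item The composite $\psi\circ\varphi$ is the identity. Given $f:\mathrm C \to X$, the first step shows that $\varphi(f)$ has the same type $s$ as $\mathrm C$, so $\rea{\varphi(f)} = \mathrm C$ as precubical sets. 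Moreover $f$ sends each $x_k$ to $f(x_k)$. By the uniqueness clause of Lemma~\ref{lemma:eq_prec_chains}, $f$ therefore coincides with $\psi(\varphi(f))$.
\end{itemize}

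\textbf{Expected obstacle.} The delicate point is the identification $\rea{\varphi(f)} = \mathrm C$ on the nose. Distinct sequences must give distinct elements of $\mathcal M_i$, so the domain set is treated as indexed by sequences $s$. The type of a cube chain recovers $s$ exactly, which makes this identification work. A second point is checking that the uniqueness in Lemma~\ref{lemma:eq_prec_chains} really covers all morphisms. A precubical map out of $\rea s$ is determined by its values on the top cells $x_k$, since every cell of $\rea s$ is an iterated face of some $x_k$. I would state this explicitly, since it is what the lemma's proof uses implicitly.
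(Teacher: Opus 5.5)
Your proposal is correct and is essentially the paper's proof. The paper likewise defines $\psi$ by sending an $i$-cube chain $c$ in $X$ to the morphism $\rea c \to X$ of Lemma~\ref{lemma:eq_prec_chains}, and concludes directly from that lemma's existence and uniqueness clauses that $\psi$ and $\varphi$ are mutually inverse; you simply spell out the two composites in more detail.

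One minor aside: pushing a cube chain forward along any precubical map already gives a cube chain of the same type. So injectivity on vertices is not what makes $\varphi(f)$ a generator of $C_i(X)$; it is only needed for $f$ and $\psi(c)$ to be morphisms of $\Cub$.
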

\begin{proof}
Let $\psi$ be the function mapping $i$-cube chains $c$ in $X$ to the morphism $f: \rea c  \to X$ from Lemma~\ref{lemma:eq_prec_chains}. By Lemma~\ref{lemma:eq_prec_chains}, $\psi$ and $\varphi$ are inverses of each other.
\end{proof}

\begin{lemma}\label{lemma:bij_cube_chain2}
For all $i \geq 0$ and precubical sets $X, Y \in \Cub$, the function mapping triples $(c, f: \mathrm C \to X, g: \mathrm C' \to Y)$ with $c \in C_i(\mathrm C \otimes \mathrm C')$ a dimension $i$ shuffling of a $j$-cube chain in $C'_j(\mathrm C)$, $\mathrm C \in \mathcal M_j$ and a $k$-cube chain $C'_k(\mathrm C')$, $\mathrm C' \in \mathcal M_k$, 
to $C_i(f\otimes g)(c)$ in $C_i(X \otimes Y)$, is a bijection.
\end{lemma}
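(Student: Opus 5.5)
We mirror the proof of Lemma~\ref{lemma:bij_cube_chain1} and build an explicit inverse. The point is that cube chains in a tensor product $X \otimes Y$ split into a cube chain in $X$, a cube chain in $Y$, and a shuffle datum recording how the two are interleaved.

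\textbf{Step 1: decompose an $i$-cube chain of $X\otimes Y$.} Take an $i$-cube chain $d = (d_1,\ldots,d_m)$ in $X \otimes Y$. Each cube $d_r$ is a pair $(x_r, y_r)$ with $x_r \in X_{a_r}$ and $y_r \in Y_{b_r}$. Discard the coordinates $x_r$ that are vertices, and consecutive duplicates, and keep the remaining $x_r$ in order; do the same for the $y_r$.
\begin{itemize}
\item Since $d^1(d_r) = d^0(d_{r+1})$ and faces in $X\otimes Y$ are computed coordinatewise, the sequence of positive-dimensional $x$-coordinates forms a cube chain $c_X$ in $X$. Likewise the $y$-coordinates give a cube chain $c_Y$ in $Y$.
\item The dimension of $d$ is $\sum_r (a_r + b_r - 1)$. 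Comparing with $\dim c_X = j$ and $\dim c_Y = k$ gives $i = j + k$, provided each $d_r$ has exactly one positive-dimensional factor or the gluing is compatible. This counting is the first thing to check, and it amounts to making precise what "dimension $i$ shuffling" means.
\end{itemize}

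\textbf{Step 2: lift to precubical chains.} Apply Lemma~\ref{lemma:eq_prec_chains} to $c_X$ and to $c_Y$. This gives unique morphisms $f:\rea{c_X}\to X$ and $g:\rea{c_Y}\to Y$ in $\Cub$, with $\mathrm C = \rea{c_X} \in \mathcal M_j$ and $\mathrm C' = \rea{c_Y} \in \mathcal M_k$. By Lemma~\ref{lemma:eq_prec_chains}, $\mathrm C$ and $\mathrm C'$ carry unique top cube chains $\tilde c_X$ and $\tilde c_Y$.

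Now replace each coordinate $x_r$ by its preimage under $f$, and each $y_r$ by its preimage under $g$. This produces a cube chain $c$ in $\mathrm C\otimes \mathrm C'$ whose image under $f\otimes g$ is $d$. The preimages are well defined because $f$ and $g$ are injective on vertices and bijective on the top cubes. By construction, $c$ is a shuffling of $\tilde c_X$ and $\tilde c_Y$. This defines the inverse map $d \mapsto (c,f,g)$.

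\textbf{Step 3: check the two composites.} One composite sends $d$ to $(c,f,g)$ and back to $C_i(f\otimes g)(c) = d$; this holds by construction.

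For the other composite, start from a triple $(c,f,g)$ and set $d = C_i(f\otimes g)(c)$. The coordinate projections of $d$ are $f(\tilde c_X)$ and $g(\tilde c_Y)$. By the uniqueness part of Lemma~\ref{lemma:eq_prec_chains}, which is the bijection of Lemma~\ref{lemma:bij_cube_chain1}, the lifts recovered from $d$ are exactly $f$ and $g$, up to the canonical identification of realizations. The shuffle recovered from $d$ is then $c$, because $f\otimes g$ is injective on the cubes involved.

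\textbf{Expected obstacle.} The delicate step is showing that the coordinate projection of a cube chain in $X\otimes Y$ really is a cube chain. This means handling cubes of $X\otimes Y$ that are products of two positive-dimensional cubes, and verifying that the shuffle data are determined by $d$ alone.

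I would first make the notion of shuffling precise: a sequence of cubes $(u_r, v_r)$ in $\mathrm C\otimes \mathrm C'$ in which each $u_r$ is either a vertex or a top cube of $\mathrm C$, each $v_r$ likewise in $\mathrm C'$, and the top cubes of each side appear in order. With this definition, Step 1 reduces to reading off which coordinates are top cubes, and injectivity on vertices, which holds because the precubical sets are cycle-free, makes the reading unambiguous.
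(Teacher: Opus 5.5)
Your proposal is correct and follows essentially the paper's route. You read off the coordinate projections $c_X$, $c_Y$ of an $i$-cube chain in $X\otimes Y$ together with the interleaving data, lift each projection via Lemma~\ref{lemma:eq_prec_chains}, and reshuffle the model chains in the same way; you also verify both composites, which the paper leaves implicit.

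Two small corrections:
\begin{enumerate}
\item Each cube $(u,v)$ with both factors positive-dimensional adds one to the dimension. So in general $i = j+k+\#\{\text{mixed cubes}\}\geq j+k$, not $i=j+k$. This is harmless, since your inverse never uses equality.
\item When $c_X$ or $c_Y$ is empty, you must keep the constant vertex coordinate rather than discard it, since it is what determines the lift from the point $\{*\}$.
\end{enumerate}
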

\begin{proof}
Recall that the dimension of a shuffle of two cube chains $c$ and $c'$ is greater or equal to $\dim(c) + \dim(c')$. Therefore, the triples of the statement must satisfy $i \geq j + k$.
For all precubical sets $X$ and $Y$, an $i$-cube chain $c \in C_i(X \otimes Y)$ 
is a unique shuffle of a $j$-cube chain $c_X$ in $X$ and a $k$-cube chain $c_Y$ in $Y$, with $i = j + k$. By Lemma~\ref{lemma:eq_prec_chains}, $c_X$ and $c_Y$ induce unique morphisms $f: \mathrm C \to X$ and $g: \mathrm C' \to Y$ in $\Cub$ for some $\mathrm C \in \mathcal M_j$ and $\mathrm C' \in \mathcal M_k$.
Consider the unique cube chains $c_{\mathrm C} \in C'_j(\mathrm C)$ and $c_{\mathrm C'} \in C'_k(\mathrm C')$. By definition, $c_{\mathrm C}$ and $c_X$ have the same type, and similarly for $c_{\mathrm C'}$ and $c_Y$. Therefore, we can shuffle $c_{\mathrm C}$ and $c_{\mathrm C'}$ as in $c$, yielding a cube chain $c' \in C_i(\mathrm C \otimes \mathrm C')$ of the same type as $c$, with $C_i(f\otimes g)(c') = c$.
\end{proof}

Consider the models $\mathcal M \times \mathcal M$ in $\C$, along with the functors $F$ and $G$:\begin{align*}
F: \Cub^2 &\to \Ch({}_R \mmod) & G: \Cub^2 &\to \Ch({}_R \mmod)\\
(X,Y) &\mapsto C_*(X \otimes Y) & (X,Y) &\mapsto C_*(X) \otimes C_*(Y).
\end{align*}
To prove the existence of a natural chain homotopy equivalence $C_*(X \otimes Y) \simeq C_*(X) \otimes C_*(Y)$ of chain complexes of $R$-modules, it suffices to show that $F$ and $G$ are both free and acyclic on $\mathcal M\times \mathcal M$, and to exhibit a natural isomorphism $H_0(F) \cong H_0(G)$.

\begin{proposition}
The functors $F,G: \Cub^2 \to \Ch({}_R\mmod)$ are free and acyclic on $\mathcal M$.
\end{proposition}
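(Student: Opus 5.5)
Recall the notions from the acyclic model theorem (Section~\ref{section:amt}). A functor $K: \Cub^2 \to \Ch({}_R\mmod)$ is \emph{free on $\mathcal M \times \mathcal M$} when each $K_i$ is free with basis given by images $K_i(f,g)(b)$ of distinguished elements $b \in K_i(\mathrm C, \mathrm C')$, ranging over model pairs and morphisms $(f,g)$ out of them. It is \emph{acyclic} when $H_i(K(\mathrm C, \mathrm C')) = 0$ for $i>0$ for every model pair. I would prove freeness using the bijections of Lemmas~\ref{lemma:bij_cube_chain1} and~\ref{lemma:bij_cube_chain2}, and acyclicity by a direct homology computation on precubical chains.

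\textbf{Freeness of $G$.} In degree $i$,
$$G_i(X,Y) = \bigoplus_{j+k=i} C_j(X) \otimes_R C_k(Y).$$
Each $C_j(X)$ is the free $R$-module on $j$-cube chains of $X$. So $G_i(X,Y)$ is free on pairs $(c_X, c_Y)$ of cube chains with $\dim c_X + \dim c_Y = i$. By Lemma~\ref{lemma:bij_cube_chain1}, such pairs correspond bijectively to pairs of morphisms $f: \mathrm C \to X$ and $g: \mathrm C' \to Y$ with $\mathrm C \in \mathcal M_j$ and $\mathrm C' \in \mathcal M_k$. Hence the distinguished elements are $b_{\mathrm C} \otimes b_{\mathrm C'} \in G_i(\mathrm C, \mathrm C')$, where $b_{\mathrm C}$ denotes the unique top cube chain of $\mathrm C$. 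Every basis element is $G_i(f,g)(b_{\mathrm C} \otimes b_{\mathrm C'})$ for exactly one triple, which is what freeness requires.

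\textbf{Freeness of $F$.} Lemma~\ref{lemma:bij_cube_chain2} gives the analogous bijection between $i$-cube chains of $X \otimes Y$ and triples (shuffle $c$, $f$, $g$). The distinguished elements are the dimension-$i$ shuffles $c \in C_i(\mathrm C \otimes \mathrm C')$. Note that $\mathrm C \otimes \mathrm C'$ need not itself be a model; this is harmless, since freeness only asks for elements of $F(\mathrm C, \mathrm C') = C_*(\mathrm C \otimes \mathrm C')$ on model pairs.

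\textbf{Acyclicity.} By the Künneth theorem over $R$ for free complexes, it suffices for $G$ to show that $C_*(\mathrm C)$ has homology concentrated in degree $0$, with $H_0$ free, for each precubical chain $\mathrm C$. The complex $C_*(\mathrm C)$ splits as a direct sum over pairs of endpoints $(v,w)$. Each summand is the cube-chain complex between $v$ and $w$, which takes place inside a sub-wedge of cubes, and cube chains in a wedge are concatenations of cube chains in the cubes. So each summand is a tensor product of complexes of cube chains between two vertices of a single standard cube $\mathrm C^n$, possibly between vertices that are not its endpoints, which is then a subcube. By Ziemiański's computation \cite{ziemianski2020spaces} (or directly, by an explicit contracting homotopy built from the first coordinate), the cube-chain complex of $\mathrm C^n$ from $s$ to $e$ computes the homology of the trace space of the $n$-cube, which is contractible. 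Hence it is acyclic with $H_0 = R$, and the Künneth theorem gives acyclicity of the tensor product.

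For $F$, I would argue that $\mathrm C \otimes \mathrm C'$ is again a cubical complex whose trace spaces between any two vertices are contractible, being products of wedges of cubes. The cube-chain complex again computes trace-space homology, as in \cite{ziemianski2020spaces} and the comparison used in \cite{goubault2025directed}, so it is acyclic. The main obstacle is this identification for $F$: justifying that the cube-chain complex of $\mathrm C \otimes \mathrm C'$ is acyclic without a general comparison theorem. If no citable result applies, I would instead build an explicit contracting homotopy on $C_*(\mathrm C \otimes \mathrm C')$ by induction on the number of cubes, peeling off the first cube of $\mathrm C$ and using the shuffle description of Lemma~\ref{lemma:bij_cube_chain2}.
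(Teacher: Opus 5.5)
Your proof is correct. Freeness is handled exactly as in the paper, via Lemmas~\ref{lemma:bij_cube_chain1} and~\ref{lemma:bij_cube_chain2}. Your choice of distinguished elements for $G$ (namely $b_{\mathrm C}\otimes b_{\mathrm C'}$, and only when $j+k=i$) is also the precise one: the paper's ``all pure tensors in $(C'_*(\mathrm C)\otimes C'_*(\mathrm C'))_i$'', read literally, would also contain lower-dimensional cube chains and give repeated images.

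For acyclicity your route differs from the paper's. The paper views $\mathrm C$ as a cubical complex and uses \cite[Lemma 7.1]{goubault2025directed} together with contractibility of trace spaces. It then gets $F$ from the directed K\"unneth formula \cite[Theorem 8.14]{goubault2025directed} and $G$ from the classical one.

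You instead decompose $C_*(\mathrm C)$ into tensor products of single-cube complexes, each acyclic with $H_0=R$ (each is essentially the cellular complex of a permutohedron). You also record that $H_0$ is free, which is exactly what makes the K\"unneth step legitimate over an arbitrary commutative ring $R$; the paper skips this point. For $F$ you bypass the directed K\"unneth theorem and apply the trace-space comparison to $\mathrm C\otimes\mathrm C'$ directly.

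The obstacle you flag is not serious. A product of Euclidean cubical complexes is again one, so the same comparison lemma the paper applies to $\mathrm C$ applies to $\mathrm C\otimes\mathrm C'$, and no ad hoc contracting homotopy is needed. Just phrase the contractibility carefully. Reparametrization acts diagonally, so the trace space of a product is not literally a product of trace spaces. It is, however, homotopy equivalent to the product of the corresponding directed path spaces, hence contractible when nonempty.

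Your version thus trades the directed K\"unneth theorem, whose hypotheses the paper does not spell out, for one standard fact about products of d-spaces.
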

\begin{proof}
Let us show that $F$ is free.
We consider all the elements in $\mathcal M^2$ ($A = \mathcal M^2$).
Fix $i\geq 0$.
For all precubical chains $\mathrm C \in \mathcal M_j, \mathrm C' \in \mathcal M_k$, we let $B^i_{\mathrm C, \mathrm C'}$ be the set of dimension $i$ shufflings $c \in C_i(\mathrm C \otimes \mathrm C')$ of $j$-cube chains in $C'_j(\mathrm C)$ and $k$-cube chains in $C'_k(\mathrm C')$.
For all precubical sets $X,Y \in \Cub$, $C_i(X  \otimes Y)$ is freely generated by the $i$-cube chains in $X \otimes Y$, hence Lemma~\ref{lemma:bij_cube_chain2} concludes.
For the freeness of $G$, we keep $A = \mathcal M^2$. Fix $i\geq 0$. 
For all precubical chains $\mathrm C \in \mathcal M_j, \mathrm C' \in \mathcal M_k$, we let $B^i_{\mathrm C, \mathrm C'}$ be the set of pure tensors in $(C'_*(\mathrm C) \otimes C'_*(\mathrm C'))_i$.
For all precubical sets $X,Y \in \Cub$, $(C_*(X) \otimes C_*(Y))_i$ is freely generated by pure tensors of a $j$-cube chain in $X$ and a $k$-cube chain in $Y$ with $i = j + k$. Applying Lemma~\ref{lemma:bij_cube_chain1} twice yields the result.

Since a precubical chain can be regarded as a cubical complex, by~\cite[Lemma 7.1]{goubault2025directed}, its homology is isomorphic to that of its trace space. These trace spaces are contractible, hence the homology is concentrated in degree $0$. Moreover, the Künneth formula~\cite[Theorem 8.14]{goubault2025directed} holds so for all precubical chains $\mathrm C, \mathrm C'$, we have that $H_*(\mathrm C \otimes \mathrm C')$ is isomorphic to $H_*(\mathrm C) \otimes H_*(\mathrm C')$, 
hence also concentrated in degree $0$. Thus $F$ is acyclic. Similarly, the classical Künneth theorem yields $H_i(C_*(\mathrm C) \otimes C_*(\mathrm C')) \cong 0$ for all $i \geq 1$ and $\mathrm C,\mathrm C' \in \mathcal M$, hence $G$ is acyclic.\end{proof}

It then suffices to construct a natural isomorphism $H_0(X \otimes Y) \to H_0(C_*X \otimes C_*Y)$ of $R$-modules to obtain an \EZ{} formula.

\begin{proposition}\label{prop:exists_iso_deg0}
For all precubical sets $X,Y \in \Cub$, there is an isomorphism $H_0(X \otimes Y) \cong H_0(C_*X \otimes C_*Y)$, natural in $X$ and $Y$.\end{proposition}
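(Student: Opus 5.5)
We compute both sides explicitly in degree $0$ as $R$-modules and match the generators. Recall the conventions of~\cite{goubault2025directed} without shift: $C_0(X)$ is the path algebra $R(X)$, so it is freely generated as an $R$-module by the directed paths of $X$ (including constant paths at vertices). $C_1(X)$ is generated by $1$-cube chains, i.e.\ sequences of edges with a single $2$-cube inserted. The boundary $\partial_1$ sends such a chain to the difference of the two paths obtained by replacing the square by its two boundary paths. Hence $H_0(X) = R(X)/\mathrm{im}\,\partial_1$ is the free $R$-module on dihomotopy classes of directed paths, where two paths are identified when they differ by an elementary square swap.

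For the right-hand side, $C_*(X)\otimes C_*(Y)$ is a tensor product of non-negatively graded complexes. By right exactness of $\otimes_R$ (or directly, since $\partial_1$ on $(C\otimes D)_1 = C_1\otimes D_0 \oplus C_0 \otimes D_1$ has image $\mathrm{im}\,\partial^X_1\otimes C_0(Y) + C_0(X)\otimes \mathrm{im}\,\partial^Y_1$), we get $H_0(C_*X\otimes C_*Y)\cong H_0(X)\otimes_R H_0(Y)$. This is free on pairs $([p],[q])$ of dihomotopy classes in $X$ and $Y$. For the left-hand side, a directed path in $X\otimes Y$ is a shuffle of a path $p$ in $X$ and a path $q$ in $Y$, and conversely any shuffle determines such a pair. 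Moreover, two different shuffles of the same $(p,q)$ differ by a sequence of swaps across squares $e\otimes e'$ of $X\otimes Y$. These are exactly the $2$-cubes of type $(X_1\times Y_1)$, i.e.\ elements of $C_1(X\otimes Y)$ built from the single cube $e\otimes e'$. The other $2$-cubes of $X\otimes Y$ are of the form $s\otimes y$ or $x\otimes s'$ with $s$ a square of $X$ or $s'$ a square of $Y$, and they generate precisely the dihomotopies of the projections.

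The plan is therefore to define $\Phi: C_0(X\otimes Y)\to C_0(X)\otimes C_0(Y)$ on generators by sending a path to $p\otimes q$, its pair of projections. We then check three things. First, $\Phi$ is an $R$-linear chain-level map in degree $0$ that sends $\partial_1 C_1(X\otimes Y)$ into the degree-$0$ boundaries of $C_*X\otimes C_*Y$: mixed squares give $0$, and squares of type $s\otimes y$ give $\partial_1(s\text{-chain})\otimes q$. Second, the induced map on $H_0$ is surjective, because every pair $(p,q)$ admits a shuffle. Third, and this is the main obstacle, the induced map is injective: we must show that the kernel of $\Phi$ composed with the quotient is generated by boundaries. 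To do this, we build an inverse $\Psi: H_0(X)\otimes H_0(Y)\to H_0(X\otimes Y)$ sending $[p]\otimes[q]$ to the class of any chosen shuffle, say ``first $p$ then $q$''. Independence of the shuffle follows from the mixed-square swaps. Independence of the representatives $p, q$ follows from the $s\otimes y$ and $x\otimes s'$ squares, since a square swap in $p$ can be performed after moving $q$ out of the way by mixed swaps. We then check $\Psi\Phi=\id$ and $\Phi\Psi=\id$ on generators. The cycle-freeness hypothesis on $\Cub$ ensures that paths are finite and that shuffles are well-defined combinatorially.

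Naturality follows because, for morphisms $f: X\to X'$ and $g:Y\to Y'$ in $\Cub$, we have $C_0(f\otimes g)$ applied to a shuffle of $(p,q)$ equal to the corresponding shuffle of $(f p, g q)$. Projections therefore commute with $f\otimes g$, and $\Phi$ is natural already at the chain level in degree $0$.
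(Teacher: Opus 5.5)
Your proposal is correct and follows essentially the same route as the paper. Your $\Phi$ is the paper's projection map $f$ and your ``first $p$ then $q$'' shuffle $\Psi$ is the paper's $g$; mixed squares $e\otimes e'$ supply the swaps that make all shuffles homologous, and squares of type $s\otimes y$ or $x\otimes s'$ are handled by projection, exactly as in the paper. The only cosmetic difference is that you first identify $H_0(C_*X\otimes C_*Y)$ with $H_0(X)\otimes_R H_0(Y)$ and define $\Psi$ on classes, whereas the paper defines $g$ at the chain level and checks that it sends boundaries to boundaries.
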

\begin{proof}
Consider the map $f: C_0(X \otimes Y) \to C_0(C_*X \otimes C_*Y)$ defined on cube chains
$((u_1,v_1), \ldots, (u_l,v_l))$ by $(u_{i_1},\ldots, u_{i_n}) \otimes (v_{j_1}, \ldots, v_{j_m})$ where $i_1 < \cdots < i_n$ (resp. $j_1 < \cdots < j_m$) is the ordered set of indices $1 \leq k \leq l$ such that $u_k \in X_1$ (resp. $j_k \in Y_1$).
and extended by linearity. 
Similarly, consider the map $ g : C_0(C_*X \otimes C_*Y) \to C_0(X \otimes Y)$, defined on pure tensors $(u_1,\cdots, u_k) \otimes (v_1, \cdots, v_l)$ by: $$((u_1,d^0v_1), \ldots, (u_k,d^0v_1), (d^1u_k,v_1), \ldots (d^1u_k,v_l)).$$
Intuitively, $f$ separates a $0$-cube chain into its $X$ and $Y$ components, while $g$ maps a pair of $0$-cube chains to their trivial shuffle (all $X$ cubes before all $Y$ cubes). We show $f$ and $g$ induce maps $\bar f$ and $\bar g$ in homology. It suffices to check they send boundaries to boundaries.
A boundary in $C_0(X \otimes Y)$ is generated by elements of the form: 
\begin{equation}
    b = \partial((u_1,v_1), \ldots, (v,s),\ldots, (u_k,v_k))\label{eq:generator1}
\end{equation}
for some square $s \in Y_2$ and vertex $v \in X_0$, of the symmetric form for $X$, or of the form: 
\begin{equation}
    b = \partial((u_1,v_1), \ldots, (e,e'),\ldots, (u_k,v_k))\label{eq:generator2}
\end{equation}
for some edges $e \in X_1$ and $e' \in Y_1$.
In case~\eqref{eq:generator2}, $f(b) = 0$. In case~\eqref{eq:generator1}, $f(b)$ is the boundary of $f_1((u_1,v_1), \ldots, (v,s),\ldots, (u_k,v_k))$ where $f_1: C_1(X\otimes Y) \to C_1(C_*X \otimes C_*Y)$ is defined similarly to $f$, or symmetrically. 
For $g$, boundaries in $C_0(C_*X \otimes C_*Y)$ are generated by elements of the form $c  \otimes \partial c'$ for a $0$-cube chain $c$ in $X$ and a $1$-cube chain $c'$ in $Y$, or symmetrically. These are sent by $g$ to elements of the form~\eqref{eq:generator1} (or symmetric).
To see that $\bar f$ and $\bar g$ are inverse, it suffices to show that in $H_0(X \otimes Y)$, any two shuffles of $0$-cube chains $(u_1,\cdots, u_k)$ and $(v_1, \cdots, v_l)$ are in the same homology class. In $C_0(X \otimes Y)$, shuffles are related by finite sequences of \look{swaps}:
\begin{definition}
    For all precubical sets $X,Y \in \Cub$ and $0$-cube chain in $C_0(X \otimes Y)$ of the form:
    \begin{equation}
        ((u_1,v_1), \ldots, (e, v'), (v,e'),\ldots, (u_k,v_k)) \label{eq:form1}
    \end{equation}
    for some vertices $v \in X_0$, $v' _in Y_0$ and edges $e\in X_1$, $e' \in Y_1$, or symmetrically, 
    the following $0$-cube chain in $C_0(X \otimes Y)$, or its symmetric, is called a \look{swap} of $c$:
     $$((u_1,v_1), \ldots,(v,e'),(e, v'),\ldots, (u_k,v_k)).$$
\end{definition}
To complete the proof, observe that swapping preserves homology classes: for any $0$-cube chain $c$ of the form~\eqref{eq:form1}, the difference between $c$ and its swap is a boundary of the form~\eqref{eq:generator2}.
Finally, the naturality of $\bar f$ and $\bar g$ is straightforward.
\end{proof}

We can now apply the acyclic model theorem for modules over commutative rings to obtain a natural chain homotopy equivalence $\alpha_{X,Y}: C_*(X \otimes Y) \rightleftarrows C_*(X) \otimes C_*(Y): \beta_{X,Y}$ of chain complexes of $R$-modules. In particular, we obtain a natural isomorphism $\bar \alpha: H_*(X \otimes Y) \cong H_*(C(X) \otimes C(Y))$ of $R$-modules at the homology level, with $\bar \beta$ its inverse.
This raises the following question: even though such natural chain maps $\alpha$ and $\beta$ do not necessarily respect the $R$-algebra action (Proposition~\ref{prop:no_homotopy}), does it hold at the level of homology? We investigate this in our concrete case.
By following the proof of the acyclic models theorem, we can explicitly construct such natural chain maps $\alpha, \beta$ and chain homotopies $\alpha\beta \simeq \id$ and $\beta\alpha \simeq \id$.

\begin{proposition}
    Consider the natural transformation $\bar f$ of Proposition~\ref{prop:exists_iso_deg0}.
    The acyclic model theorem yields a unique natural chain map $\alpha_{X,Y}: C_*(X \otimes Y) \to C_*(X) \otimes C_*(Y)$ of chain complexes of $R$-modules.\end{proposition}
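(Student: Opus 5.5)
We will build $\alpha_{X,Y}$ degree by degree, as in the standard proof of the acyclic model theorem, using the freeness of $F$ on the models $\mathcal M\times\mathcal M$ and the acyclicity of $G$ on them.

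\textbf{Degree $0$.} Note that $H_0(F(X,Y)) = C_0(X\otimes Y)/\partial C_1(X\otimes Y)$, and similarly for $G$. Since $F$ is free, $C_0(X\otimes Y)$ has a basis given by the elements $C_0(f\otimes g)(c)$, indexed by triples $(c,f,g)$ with $c\in B^0_{\mathrm C,\mathrm C'}$ (Lemma~\ref{lemma:bij_cube_chain2}). For each model pair $(\mathrm C,\mathrm C')$ and each basis element $c$, we choose $\alpha_0(c)\in G_0(\mathrm C,\mathrm C')$ as a lift of $\bar f([c])$. Concretely, we take $\alpha_0(c)=f(c)$, where $f$ is the explicit map of Proposition~\ref{prop:exists_iso_deg0}. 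We then extend to arbitrary $(X,Y)$ by naturality:
$$\alpha_0(C_0(f\otimes g)(c)) = G_0(f,g)(\alpha_0(c)).$$
By the bijection of Lemma~\ref{lemma:bij_cube_chain2}, this is well defined on a basis, so it extends $R$-linearly. It is natural because precomposition with morphisms permutes the indexing triples.

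\textbf{Inductive step.} Suppose $\alpha_0,\dots,\alpha_{i-1}$ have been constructed, are natural, and commute with $\partial$. For a basis element $c\in B^i_{\mathrm C,\mathrm C'}$ with $i\geq 1$, the element $\alpha_{i-1}(\partial c)$ is a cycle in $G_{i-1}(\mathrm C,\mathrm C')$: for $i\geq 2$ because $\partial\alpha_{i-1}\partial c=\alpha_{i-2}\partial\partial c=0$, and for $i=1$ because the composite $C_1\to C_0\to H_0(G)$ equals $\bar f\circ[\partial c]=0$. Since $G$ is acyclic on the models, and in degree $0$ since the class vanishes, this cycle is a boundary. So we can choose $\alpha_i(c)$ with $\partial\alpha_i(c)=\alpha_{i-1}(\partial c)$. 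We then extend to all $(X,Y)$ by naturality, as in degree $0$, and check that $\partial\alpha_i=\alpha_{i-1}\partial$ on the generators $C_i(f\otimes g)(c)$, using naturality of $\partial$ and of $\alpha_{i-1}$.

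\textbf{Uniqueness.} The statement should be read as uniqueness up to natural chain homotopy among natural chain maps inducing $\bar f$ on $H_0$, which is the standard uniqueness clause of the acyclic model theorem. We prove it by the same induction. Given two such maps $\alpha,\alpha'$, their difference induces $0$ on $H_0$. On the model generators we build $s_i$ with
$$\partial s_i+s_{i-1}\partial=\alpha_i-\alpha'_i,$$
using acyclicity of $G$, and extend the $s_i$ by naturality.

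The main obstacle is the well-definedness of the naturality extension. A generator of $F_i(X,Y)$ must arise from a \emph{unique} triple $(c,f,g)$, otherwise the formula $G(f,g)(\alpha(c))$ could be ambiguous. This is exactly the bijection of Lemma~\ref{lemma:bij_cube_chain2}, which in turn relies on the cycle-freeness hypothesis on $\Cub$ (Lemma~\ref{lemma:eq_prec_chains}). A second point to verify at the base step is that our choice $\alpha_0=f$ on models is compatible with naturality, i.e.\ that $f$ itself is natural. This holds because $f$ is defined componentwise on cube chains, and morphisms of $\Cub$ act componentwise on them.
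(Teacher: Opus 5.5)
Your existence argument is the standard acyclic-models induction and is correct. However, you have weakened the statement. You read ``unique'' as ``unique up to natural chain homotopy'', but that is only the general clause of the acyclic model theorem, and it holds equally for $\beta$. The paper means strict uniqueness: the construction leaves no choices. This is exactly what separates $\alpha$ from $\beta$ (``$\beta$, however, is not uniquely defined by the acyclic models theorem''). It is also what yields the explicit formula for $\alpha$ that the paper uses immediately afterwards to show $\alpha$ is $R(X\otimes Y)$-linear on the nose. So your uniqueness paragraph proves something the paper does not need and leaves the actual claim unproved.

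The missing idea is a dimension count on the models.
\begin{itemize}
\item \emph{The degree-$(i+1)$ term vanishes.} A basis element $c\in B^i_{\mathrm C,\mathrm C'}$ has $\mathrm C\in\mathcal M_j$ and $\mathrm C'\in\mathcal M_k$ with $i\geq j+k$. A cube chain in a wedge of cubes of sizes $n_1,\dots,n_l$ has dimension at most $\sum_t (n_t-1)$, so $C_a(\mathrm C)=0$ for $a>j$ and $C_b(\mathrm C')=0$ for $b>k$. Hence $G_{i+1}(\mathrm C,\mathrm C')=0$.
\item \emph{Degree $0$ is forced.} For $i=0$ this means $j=k=0$ and $G_1(\mathrm C,\mathrm C')=0$. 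Then $H_0=G_0$, and $f(c)$ is the only representative of $\bar f([c])$, so $\alpha_0=f$.
\item \emph{Higher degrees are forced.} For $i\geq 1$, acyclicity gives $Z_i(G(\mathrm C,\mathrm C'))=B_i=\partial G_{i+1}=0$. So the element with boundary $\alpha_{i-1}(\partial c)$ is unique.
\end{itemize}
By freeness, a natural chain map is determined by its values on model basis elements. Therefore every natural chain map $F\to G$ inducing $\bar f$ on $H_0$ equals $\alpha$, which is the strict uniqueness asserted.

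The paper also solves these forced equations explicitly. It shows $\alpha_i(c)=0$ when $c$ contains a cube $(u,v)$ with $\dim u,\dim v\geq 1$. Otherwise $\alpha_i(c)$ is the tensor of the subsequence of positive-dimensional $u_t$ with the subsequence of positive-dimensional $v_t$. Your proposal does not recover this formula.
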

\begin{proof}
Let us consider $\alpha$.
For $i=0$, $\alpha_0$ is determined by its value on $0$-cube chains $c$ in $B^0_{\mathrm C, \mathrm C'}$, with $\mathrm C, \mathrm C' \in \mathcal M_0$. For each such $c$, $\alpha_0(c)$ must be a representative of the homology class $\bar f(c)$ in $G_0(\mathrm C, \mathrm C')$. There is only one such element: $f(c)$. Hence, $\alpha_0 = f$.
For $i > 0$, let us show by induction that for all $i$-cube chain $c$ in $B^i_\alpha$, if $c$ contains a cube of the form $(u,v)$ with $\dim(u), \dim(v) \geq 1$, then $\alpha_i(c) = 0$, and otherwise $\alpha_i((u_1,v_1), \ldots, (u_k,v_k))$ is: $$ (u_1,\cdots, u_k \ | \ 1 \leq i \leq k, u_i \in X_{\geq 1}) \otimes (v_1, \cdots, v_k \ | \ 1 \leq i \leq k, v_i \in Y_{\geq 1}).$$ 
For $i = 1$, given a $1$-cube chain $c \in B^1_\alpha$ of the first type, $f(\partial c) = 0$, so $\alpha(c)$ is the unique element in $G_1(M_\alpha)$ with boundary $0$. The reasoning is similar for the second type.
For $i > 1$, assume the result holds for $i-1$. Let $c$ be an $i$-cube chain in $B^i_\alpha$ of the first type, of the form $((u_1,v_1), \ldots, (u, v),\ldots, (u_k,v_k))$ with $\dim(u), \dim(v) \geq 1$. Using the definition of $\partial c$ from~\cite[Definition 2.11]{goubault2025directed}, we observe that $\partial c$ is a linear combination of $(i-1)$-cube chains of the first type (which are mapped by $\alpha_{i-1}$ to $0$), and up to a sign, of:
\begin{align*}
&((u_1,v_1), \ldots, (u, d^0(v),(d^1(u),v),\ldots, (u_k,v_k))\\ &- ((u_1,v_1), \ldots, (d^0(u), v),(u, d^1(v)), \ldots, (u_k,v_k)),
\end{align*}
which is also mapped by $\alpha_{i-1}$ to $0$.
The reasoning for elements of the second type is similar.
\end{proof}

Given this explicit formulation of $\alpha$, we observe that for all precubical sets $X,Y \in  \Cub$, $\alpha_{X,Y}$ respects the action of $R(X \otimes Y)$, not just in homology. $\beta$, however, is not uniquely defined by the acyclic models theorem. Nevertheless, in homology, $\bar \beta$ is. 
We will show that in homology, $\bar \beta_{X,Y}$ also respects the action of $R(X \otimes Y)$ by applying the following result to a specific $\beta$.

\begin{proposition}\label{prop:action_iso}
    Let $f: C \rightleftarrows C': g$ be chain maps of chain complexes of $R$-modules such that in homology, $\bar f$ and $\bar g$ are inverse to each other. Suppose $C$ and $C'$ are chain complexes of left $A$-modules, for an $R$-algebra $A$. Then the homology modules inherit a left $A$-module structure. If for all $i \geq 0$, $x \in \ker(\partial_i: C'_i \to C'_{i-1})$, and $a \in A$, we have $f_i\circ g_i(a \act x) = f_i(a \act g_i(x))$, then $\bar g$ respects the action of $A$.\end{proposition}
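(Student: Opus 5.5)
The plan is to first make the inherited $A$-module structure on homology explicit, and then check the identity $\bar g(a\act[x]) = a\act \bar g([x])$ on a representative, using the hypothesis together with the fact that $\bar f$ is injective. Since $C$ and $C'$ are chain complexes of left $A$-modules, the differentials are $A$-linear. Hence cycles and boundaries are $A$-submodules, and $H_i(C)$, $H_i(C')$ are left $A$-modules via $a\act[x] = [a\act x]$. This uses only $A$-linearity of $\partial$: if $x-y=\partial z$, then $a\act x - a\act y = \partial(a\act z)$. Note that $f$ and $g$ themselves are only assumed $R$-linear, so this is exactly where care is needed.

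Fix $i\geq 0$, a cycle $x \in \ker(\partial_i : C'_i \to C'_{i-1})$ and $a\in A$. We want
\[
\bar g([a\act x]) = a\act \bar g([x]),
\]
that is, $[g_i(a\act x)] = [a\act g_i(x)]$ in $H_i(C)$. Both sides are genuine homology classes. Indeed, $a\act x$ is a cycle because $\partial$ is $A$-linear, so $g_i(a\act x)$ is a cycle since $g$ is a chain map. Likewise $g_i(x)$ is a cycle, hence so is $a\act g_i(x)$.

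Since $\bar f$ and $\bar g$ are mutually inverse, $\bar f$ is injective (indeed bijective), so it suffices to compare the images under $\bar f$. By the hypothesis, the chains $f_i(g_i(a\act x))$ and $f_i(a\act g_i(x))$ are equal in $C'_i$, so their classes agree:
\[
\bar f([g_i(a\act x)]) = [f_ig_i(a\act x)] = [f_i(a\act g_i(x))] = \bar f([a\act g_i(x)]).
\]
Applying $\bar g = \bar f^{-1}$ gives $[g_i(a\act x)] = [a\act g_i(x)]$, which is the required equality. Together with $R$-linearity of $\bar g$, this shows that $\bar g$ is a morphism of $A$-modules. Consequently $\bar f$, as the inverse of a bijective $A$-linear map, is $A$-linear as well.

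The only delicate point is bookkeeping. One must check that each element to which $\bar f$ is applied is actually a cycle, so that its class makes sense; otherwise the hypothesis, which only concerns cycles $x$, would be misapplied. One must also recall that $\bar f$ and $\bar g$ are inverse isomorphisms of $R$-modules, not merely of sets. Beyond this, the argument is a direct diagram chase with no real obstacle.
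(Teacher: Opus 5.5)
Your proof is correct and follows essentially the same route as the paper: since $\bar f$ and $\bar g$ are mutually inverse, you reduce the claim to an equality after applying $\bar f$, and then read that equality off the hypothesis on cycles of $C'$. Your version is more explicit, since you check that $g_i(a\act x)$ and $a\act g_i(x)$ are cycles so that the hypothesis applies directly. The paper compresses the argument into one line, phrased in the mirrored form $\bar g_i\circ\bar f_i(a\act x)=\bar g_i(a\act\bar f_i(x))$ on $H_i(C)$.
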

\begin{proof}
    For all $i \geq 0$, $x \in H_i(C)$, and $a \in A$, since $\bar f$ and $\bar g$ are inverses, it suffices to show $\bar g_i\circ \bar f_i(a \act x) = \bar g_i(a \act \bar f_i(x))$. This follows immediately from the hypothesis.
\end{proof}

We now show how to obtain a natural chain map $\beta_{X,Y}: C_*(X) \otimes C_*(Y) \to C_*(X \otimes Y)$ of $R$-modules via the acyclic model theorem that satisfies the conditions of Proposition~\ref{prop:action_iso}.

\begin{proposition}
    Consider the natural transformation $\bar g$ of Proposition~\ref{prop:exists_iso_deg0} and the natural chain map $\beta_{X,Y}: C_*(X) \otimes C_*(Y) \to C_*(X \otimes Y)$ defined on pure tensors $(u_1,\cdots, u_k) \otimes (v_1, \cdots, v_l) \in (C_*(X) \otimes C_*(Y))_i$ by:
    $$((u_1,d^0v_1), \ldots, (u_k,d^0v_1), (d^1u_k,v_1), \ldots (d^1u_k,v_l)) \in C_i(X \otimes Y)$$ for all $i \geq 0$.
    Then $\beta$ can be constructed by applying the proof of the acyclic model theorem to $\bar g$. Moreover, $\beta$ satisfies the conditions of Proposition~\ref{prop:action_iso}.
\end{proposition}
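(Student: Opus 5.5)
We have to establish two things: that $\beta$ as defined is a chain map which can be obtained from the proof of the acyclic model theorem applied to $\bar g$, and that $\beta$ satisfies the hypothesis of Proposition~\ref{prop:action_iso}. The second verification uses $f=\alpha$, $g=\beta$ and $A=R(X\otimes Y)$ acting through $h$.

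\textbf{Chain map and naturality.} First I will check that $\beta$ commutes with the boundary. For a pure tensor $c\otimes c'$ with $c=(u_1,\dots,u_k)$ and $c'=(v_1,\dots,v_l)$, $\beta(c\otimes c')$ is the trivial shuffle: all cubes of $c$ at the start vertex of $c'$, then all cubes of $c'$ at the end vertex of $c$. By the boundary formula of~\cite[Definition 2.11]{goubault2025directed}, the boundary of a cube chain is a signed sum of face terms, taken cube by cube.
\begin{itemize}
\item Faces of cubes $(u_j,d^0v_1)$ only involve $u_j$. They reproduce $\beta(\partial c\otimes c')$.
\item Faces of cubes $(d^1u_k,v_j)$ reproduce $(-1)^{\dim c}\beta(c\otimes\partial c')$. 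The sign comes from the position count, which gives exactly the Koszul sign of the tensor differential.
\end{itemize}
Each cube of the trivial shuffle is a product with a vertex, so no mixed faces appear. Naturality is immediate, since precubical maps commute with $d^0$ and $d^1$.

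\textbf{Compatibility with the acyclic model construction.} The proof of the acyclic model theorem builds $\beta$ inductively on the model generators $B^i_{\mathrm C,\mathrm C'}$, which are pure tensors of the unique top chains. In degree $0$ one chooses representatives of $\bar g$; the choice $\beta_0=g$ is one such choice. In degree $i>0$ one chooses any element of $F_i(\mathrm C,\mathrm C')$ whose boundary is $\beta_{i-1}\partial$ of the generator. Such an element exists because $F$ is acyclic on models, and the choice is free. Since the formula for $\beta$ is natural and satisfies the chain map identity on the generators, taking $\beta_i(\text{generator})$ to be the trivial shuffle is a legitimate choice at every stage. Extending by naturality via Lemma~\ref{lemma:bij_cube_chain1} then recovers exactly the displayed formula on every pure tensor.

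\textbf{Condition of Proposition~\ref{prop:action_iso}.} We need $\alpha_i\beta_i(a\cdot x)=\alpha_i(a\cdot\beta_i(x))$ for cycles $x$ and $a\in R(X\otimes Y)$.
\begin{itemize}
\item By the explicit description of $\alpha$, it kills every cube chain containing a mixed cube. On chains without mixed cubes, it separates the chain into its $X$-components and $Y$-components. Hence $\alpha\beta=\id$ on pure tensors.
\item It suffices to take $a$ a generating path $(w,z)$ of $R(X\otimes Y)$ acting on the left (the right is symmetric). Then $a\cdot\beta(c\otimes c')$ is a concatenation of the path $a$ with the trivial shuffle, or zero if the endpoints do not match.
\item Applying $\alpha$ to this concatenation again separates components, giving $(\text{$X$-part of }a)\,c\otimes(\text{$Y$-part of }a)\,c'$.
\item By the definition of $h$, this equals $h(a)\cdot(c\otimes c')=\alpha\beta(a\cdot(c\otimes c'))$.
\end{itemize}
So the identity holds on all chains, in particular on cycles.

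\textbf{Main obstacle.} I expect the hardest step to be the sign and face bookkeeping in the chain map verification, especially at the junction cube between the two blocks. I would check it first on $\rea{(1)}\otimes\rea{(1)}$ and $\rea{(2)}\otimes\rea{(1)}$, then argue generally by induction on $k+l$.
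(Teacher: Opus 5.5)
Your proposal is correct and follows essentially the same route as the paper. You verify that the trivial shuffle is a natural chain map whose degree-$0$ part $g$ represents $\bar g$, so it is a legitimate choice at each inductive step of the acyclic model construction. You then check the condition of Proposition~\ref{prop:action_iso} on generators of $R(X\otimes Y)$ using the explicit form of $\alpha$.

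The differences are cosmetic. You sketch the face and sign bookkeeping for $\partial\beta=\beta\partial$, which the paper only asserts. You also compute both sides as $h(a)\cdot(c\otimes c')$ via $\alpha\beta=\id$ on pure tensors. The paper instead notes that $\beta$ commutes with the action except when $a=(x,v)$ with $x\in X_0$, $v\in Y_1$, and in that case both sides equal $c\otimes(v\cdot c')$.
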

\begin{proof}
We show the first statement by induction. For $i=0$, for all $0$-cube chains $c \in C'_0(\mathrm C)$ and $c' \in C'_0(\mathrm C')$ with $\mathrm C, \mathrm C' \in \mathcal M_0$, $\beta_0(c\otimes c')$ is indeed a representative of the homology class $\bar g(c\otimes c')$ in $F_0(\mathrm C, \mathrm C')$. For $i>0$, suppose $\beta_0, \ldots, \beta_{i-1}$ have been constructed during the first $i$ steps of the acyclic model proof. Then for all dimension $i$ shufflings of cube 
chains $c \in C'_j(\mathrm C)$ and $c' \in C'_k(\mathrm C')$ with $\mathrm C \in \mathcal M_j$, $\mathrm C' \in \mathcal M_k$, we indeed have:
$$\partial \beta_i(c \otimes c') = \beta_{i-1}(\partial_j(c) \otimes c' + (-1)^j c \otimes \partial_k(c')) = \beta_{i-1}(\partial(c \otimes c')).$$

For the second statement, consider a $j$-cube chain $c$ in $X$, a $k$-cube chain $c'$ in $Y$ with $i= j+k$, and a generator $a = (u,v) \in R(X \otimes Y)$ with $\dim u + \dim v = 1$. The only case where we do not directly have $\beta_i(a \act (c \otimes c'))= a \act \beta_i(c \otimes c')$ is when $a = (x, v)$ with $x \in X_0$ and $v \in Y_1$. In that case, we have:
$$\alpha_i(\beta_i(a \act (c \otimes c'))) = c \otimes (v \act c') = \alpha_i(a \act \beta_i(c \otimes c')).$$ This concludes the proof.
\end{proof}
We have just proved that for all precubical sets $X,Y \in \Cub$, $\bar \beta$ respects the left action of $R(X\otimes Y)$. Similarly, using the symmetric version of Proposition~\ref{prop:action_iso}, we can show $\bar \beta$ also respects the right action of $R(X\otimes Y)$.
Assembling the previous facts, and using that restriction of scalars commutes with homology, we have shown a weak \EZ{} formula at the level of homology:
\begin{theorem}\label{th:weak_ez}
    For all precubical sets $X,Y \in \Cub$ and $i \geq 0$, there is an isomorphism $H_i(X \otimes Y) \cong h^*H_i(C(X) \otimes C(Y))$ of $R(X \otimes Y)$-bimodules, natural in $X$ and $Y$.
\end{theorem}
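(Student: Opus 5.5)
The proof assembles the preceding results rather than introducing new arguments. First, since $F$ and $G$ are free and acyclic on $\mathcal M\times\mathcal M$ and $\bar f$ of Proposition~\ref{prop:exists_iso_deg0} is a natural isomorphism $H_0(F)\cong H_0(G)$, the acyclic model theorem (Section~\ref{section:amt}) gives natural chain maps $\alpha_{X,Y}: C_*(X\otimes Y)\to C_*(X)\otimes C_*(Y)$ and $\beta_{X,Y}$ in the other direction. They are natural chain homotopy inverses as chain complexes of $R$-modules. We use the explicit $\alpha$ computed above and the explicit trivial-shuffle $\beta$ of the last proposition, which arises from the acyclic model construction applied to $\bar g$. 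Hence, for each $i$, $\bar\alpha_i: H_i(X\otimes Y)\to H_i(C(X)\otimes C(Y))$ is an isomorphism of $R$-modules, natural in $X$ and $Y$, with inverse $\bar\beta_i$.

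Next, we upgrade this to an isomorphism of $R(X\otimes Y)$-bimodules, where the target carries the restricted structure $h^*$. Restriction of scalars is exact (Corollary~\ref{cor:exactness}) and is computed on underlying $R$-modules. So $h^*H_i(C(X)\otimes C(Y))$ is the homology of the complex $h^*(C(X)\otimes C(Y))$, with the action induced degreewise.

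For $\bar\alpha$, we check on generators $(u,v)$ of $R(X\otimes Y)$ with $\dim u+\dim v=1$ that $\alpha$ commutes with the left and right actions at the chain level. Acting by an edge $(x,v)$ concatenates a $0$-dimensional cube of the form $(\text{vertex},\text{edge})$. The explicit formula for $\alpha$ separates the $X$- and $Y$-components and sends chains containing a mixed cube with both factors of positive dimension to $0$. Therefore $\alpha(a\cdot c)=h(a)\cdot\alpha(c)$. Nonzero values, and the vanishing case, are preserved by prepending or appending such $0$-cubes, since these cubes are never mixed of positive dimension.

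For $\bar\beta$, which is not equivariant on chains, we apply Proposition~\ref{prop:action_iso} with $f=\alpha$ and $g=\beta$, using its left and right versions. The last proposition verifies its hypothesis, so $\bar\beta$ is equivariant. Since $\bar\beta=\bar\alpha^{-1}$, equivariance of either map suffices, and checking both serves as a consistency check.

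Finally, naturality in $X$ and $Y$ is inherited from the naturality of $\alpha$ as a transformation between functors $\Cub^2\to\Ch({}_R\mmod)$, together with the naturality of $h_{X,Y}$. The step I expect to be most delicate is the equivariance check on generators. It requires careful bookkeeping of how concatenation by the path-algebra generators interacts with the separation formula and with the signs in $\partial$ and $\otimes$. The case $a=(x,v)$ with $x\in X_0$, $v\in Y_1$ is where $\beta$ fails chain-level equivariance, and there one must rely on the homology-level argument rather than on chain maps.
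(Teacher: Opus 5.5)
Your proposal is correct and follows essentially the same route as the paper. Both use the natural chain homotopy inverses $\alpha,\beta$ of $R$-modules given by the acyclic model theorem, chain-level equivariance of the explicit $\alpha$, Proposition~\ref{prop:action_iso} and its right-hand version applied to the trivial-shuffle $\beta$, and exactness of restriction along $h$ to identify $h^*H_i$ with the homology of the restricted complex. Your observation that chain-level equivariance of $\alpha$ already makes $\bar\alpha$ an isomorphism of $R(X\otimes Y)$-bimodules is valid, so the separate argument for $\bar\beta$ is only a consistency check and that step of the paper is redundant.
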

From this point onward, suppose $R$ is a field, and let us show that the conditions for applying the Künneth theorem~\ref{th:kunneth} hold. First,
The abelian framed bicategory of modules over $R$-algebras $\Bimod$ is indeed closed, and locally has enough injectives, enough projectives and coproducts. 
Then, for all precubical sets $X,Y \in \Cub$, let $\A$, $\B$ and $\C$ the local categories of $R(X)$, $R(Y)$ and $R(X)$-bimodules, since $\Bimod$ is externally closed in the sense of~\cite[Definition 9.9]{shulman2007framed}, then the external tensor product functor $\otimes: \A \times \B \to \C$ is right exact. 
Finally, it suffices to show the sufficient condition of Proposition~\ref{prop:sufficient_kunneth}.
However, since $R$ is a field, and since external tensor product of bimodules over algebras is the tensor product over $R$ of the bimodules regarded as $R$-modules, equipped with the component-wise action, its first derived functor is trivial. 

Therefore, for all precubical sets $X, Y \in \Cub$, i.e., precubical sets without cycles, by~\ref{th:kunneth} we have  $H_*(X \otimes Y) \cong h^*(H_*(X) \otimes H_*(Y))$. This applies in particular to the second case of~\cite[Example 8.17]{goubault2025directed}.
\begin{example}\label{ex:circle}
Consider the precubical set $\vec{S}^1$: 
\[
\begin{tikzcd}
1 \arrow[r,bend left,"\alpha"] \arrow[r,bend right,swap,"\beta"] & 2
\end{tikzcd}
\]
$\vec{S}^1$ is not a cubical complex, but it is cycle-free. Therefore we can apply our Künneth formula to get:  $$H_1(\vec{S}^1 \otimes \vec{S}^1) \cong H_0(\vec{S}^1) \otimes H_1(\vec{S}^1) \oplus H_1(\vec{S}^1) \otimes H_0(\vec{S}^1) = 0.$$
\end{example}

Theorem~\ref{th:weak_ez} raises the following question: for all functors $F, G: \C \to \Ch(\Bimod)$ such that $F$ and $G$ regarded as functor $\C \to {}_R \mmod$ are free and acyclic on some models $\mathcal{M}$, respectively. Now, when regarding bimodules as $R$-modules, by the acyclic model theorem, for all natural transformations $\overline \tau_0: H_0(F) \rightarrow H_0(G)$ there is a natural transformation $ \tau : F \rightarrow G$ inducing it. If in addition $\overline \tau_0: H_0(F) \rightarrow H_0(G)$ respects the actions of the algebras, under which additional conditions does the natural transformation $\overline \tau$ induced by $\tau$ in homology also respects these actions ?

\section{Embedding theorems}\label{section:embedding_theorems}

The Gabriel and \FM{} theorems state that under certain conditions, an abelian category is equivalent to a concrete category of modules over a fixed ring. Not every abelian category is a concrete category whose objects have underlying sets, but nevertheless, under the conditions specified by these theorems, element-wise reasoning remains possible.

In this section, we investigate the interactions of these embedding theorems with the change of coefficient functors in an abelian framed bicategory, culminating in an embedding theorem for abelian framed bicategories. Like the original embedding theorems, ours permits element-wise reasoning, but is primarily a result about the structure of abelian framed bicategories. The key notion in the proof of these embedding theorems is that of a compact projective generator.

\begin{definition}[{\cite[Definitions 8.4.1, 5.2.1, 6.3.3]{kashiwara2006categories}}]\label{def:cpg}
    An object $P$ in an abelian category $\A$ is called \look{projective} (resp. \look{generator}, \look{compact}) whenever the functor $\phi_P: \Hom(P, \mathunderscore): \A \to \set$ is exact (resp. conservative, preserves filtered colimits).
\end{definition}

Both the Gabriel and the \FM{}  theorems make use of functors of the form $\phi_P = \Hom(P, \mathunderscore) : \A \to \set $ where $\A$ is an abelian category and $P$ an object of $\A$. Since $\A$ is abelian, the homset $\Hom(P,P)$ inherits a ring structure, and $\phi_P$ is promoted to a functor $\A \to {}_{\End(P)}\mmod$ into left $\End(P)$-modules where $\End(P)$ denotes ${\Hom(P,P)}^{\mathrm{op}}$. Explicitly, for an object $X$ in $\A$, the left action of $\End(P)$ on a set $\Hom(P,X)$ is given by precomposition.
However, this functor is not necessarily exact, fully faithful, or an equivalence.

\begin{theorem}[Gabriel~\cite{gabriel1972unzerlegbare}]\label{th:gabriel}
Let $\A$ be a cocomplete abelian category. If $\A$ has a compact projective generator $P$, then $\phi_P$ is an equivalence of categories.
\end{theorem}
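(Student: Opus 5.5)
We prove Gabriel's theorem by the standard route. Write $R=\End(P)=\Hom(P,P)^{\mathrm{op}}$ and $\phi=\phi_P:\A\to{}_R\mmod$. We show in turn that $\phi$ is exact and preserves colimits, that $\phi$ is fully faithful, and that $\phi$ is essentially surjective.

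\textbf{Step 1: exactness and colimits.} Since $P$ is projective, $\phi$ is exact. In particular $\phi$ preserves cokernels and finite direct sums. Arbitrary direct sums are filtered colimits of finite direct sums, so by compactness $\phi$ preserves them too. Hence $\phi$ preserves all small colimits, and it does so as a functor into ${}_R\mmod$, because the forgetful functor to sets creates filtered colimits. We also record that $\phi(P)=R$ as a left $R$-module.

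\textbf{Step 2: every object is a quotient of copies of $P$.} For $X\in\A$, let $\epsilon_X:\bigoplus_{\Hom(P,X)}P\to X$ be the canonical map. Because $P$ is a generator, $\phi$ is conservative and exact, so $\phi$ is faithful and reflects epimorphisms. The map $\phi(\epsilon_X)$ is surjective, since each $u\in\Hom(P,X)$ is hit by the inclusion of the corresponding summand; therefore $\epsilon_X$ is epic. Applying the same argument to $\ker\epsilon_X$ yields a presentation
\[
\bigoplus_J P\to\bigoplus_I P\to X\to 0.
\]

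\textbf{Step 3: full faithfulness.} For free objects, $\Hom_\A(P,Y)\cong\Hom_R(R,\phi Y)$ holds by the definition of the action and the opposite convention. Since $\phi$ preserves direct sums, $\Hom_\A(\bigoplus_I P,Y)\cong\prod_I\phi(Y)\cong\Hom_R(\phi(\bigoplus_I P),\phi Y)$. Now apply $\Hom_\A(-,Y)$ and $\Hom_R(\phi(-),\phi Y)$ to the presentation of $X$. Both turn the right-exact sequence into a left-exact one, using Step 1 for the second. The five lemma then gives $\Hom_\A(X,Y)\cong\Hom_R(\phi X,\phi Y)$.

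\textbf{Step 4: essential surjectivity.} Take a module $N$ with free presentation $R^{(J)}\xrightarrow{m}R^{(I)}\to N\to 0$. By Step 3, $m$ lifts to a map $\bigoplus_J P\to\bigoplus_I P$; its cokernel exists because $\A$ is cocomplete. By Step 1, $\phi$ sends this cokernel to $N$.

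The main obstacle is Step 1: compactness is phrased in terms of $\set$-valued filtered colimits, and we must check that this transfers to arbitrary coproducts in ${}_R\mmod$. Every later step relies on commuting $\phi$ with infinite direct sums, so this is the point that needs the most care. The conventions for $\End(P)^{\mathrm{op}}$ in Step 3 also need care, but they are routine.
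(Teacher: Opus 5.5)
The paper does not prove this statement. It quotes it from the literature, attributed to Gabriel, and uses it only as a black box (e.g.\ in Proposition~\ref{prop:pointwise_gabriel}), so there is no internal argument to compare yours with.

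Your proof is the standard one and it is correct:
\begin{itemize}
\item Exactness plus compactness give preservation of cokernels and of arbitrary direct sums, as finite sums glued along a filtered colimit.
\item Projectivity plus conservativity make $\phi_P$ faithful and epi-reflecting. So the canonical map $\bigoplus_{\Hom(P,X)}P\to X$ is epic, and every object has a presentation by coproducts of copies of $P$.
\item The comparison $\Hom_\A(P,Y)\to\Hom_R(\phi_P P,\phi_P Y)$, $f\mapsto\phi_P(f)$, is bijective by evaluation at $\id_P$. It extends to coproducts of $P$, and then to arbitrary $X$ by left exactness of both Hom functors and the five lemma.
\item Essential surjectivity follows by lifting a free presentation and using exactness of $\phi_P$ on the cokernel.
\end{itemize}

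Two small points deserve a sentence more care.
\begin{itemize}
\item In Step~3, say explicitly that the vertical maps in your five-lemma diagram are the maps $\Hom_\A(Z,Y)\to\Hom_R(\phi_P Z,\phi_P Y)$ induced by $\phi_P$. These are natural in $Z$, which is what makes the squares commute. On coproducts, the identification with $\prod_I\phi_P(Y)$ is compatible with them because the coprojections $\phi_P(\iota_i)$ exhibit $\phi_P(\bigoplus_I P)$ as a coproduct.
\item Cocompleteness, together with local smallness so that $\Hom(P,X)$ is a set, is what you need for the infinite coproducts in Steps~2 and~4. It is not needed for the cokernel in Step~4, which exists in any abelian category.
\end{itemize}

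The point you flag as the main obstacle is routine, since, as you note, the forgetful functor ${}_R\mmod\to\set$ creates filtered colimits.
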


\begin{theorem}[\FM{}~\cite{mitchell1964full}]\label{th:freyd-mitchell}
Let $\A$ be a small abelian category. Then $\A$ is equivalent to a fully abelian subcategory of the category ${}_R\mmod$ of left $R$-modules, i.e., there exists a ring $R$ and an exact fully faithful functor $\A \to {}_R\mmod$. 
\end{theorem}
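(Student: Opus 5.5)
The plan is the classical route: reduce to the case where $\A$ has a projective generator and injective cogenerators, then build that situation by a Yoneda-type embedding into a functor category. First, use the Lubkin–Freyd construction. Consider the category $\mathcal L(\A)$ of left exact additive functors $\A \to \mathrm{Ab}$, and the contravariant embedding $A \mapsto \Hom(A,\mathunderscore)$. Since $\A$ is small, $\mathcal L(\A)$ is a well-defined abelian category; it is in fact a Grothendieck category. Cokernels are computed by sheafifying, i.e.\ taking the left exact reflection of the pointwise cokernel. By the Yoneda lemma this embedding is fully faithful. It is exact because representables are left exact, and because a short exact sequence in $\A$ gives an exact sequence of representables in $\mathcal L(\A)$ once cokernels are computed there. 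Replacing $\A$ by $\A^{\mathrm{op}}$ if needed, we obtain an exact fully faithful functor $\A \to \mathcal G$ into a Grothendieck category $\mathcal G$.

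Second, I would use the standard facts about $\mathcal G$: it is cocomplete, it has a generator $U$ (the direct sum of the representables, which form a set because $\A$ is small), and it has enough injectives. I then take an injective object $E$ into which a cogenerator embeds. The contravariant functor $\Hom(\mathunderscore, E)$ is exact and faithful. Dualising, $\mathcal G^{\mathrm{op}}$ has a projective generator $P = E$. Composing the embeddings (with the correct number of dualisations) yields an exact faithful functor $\A \to {}_R\mmod$ for $R = \End(P)$, of the form $\phi_P$, with the action given by precomposition as described before Theorem~\ref{th:gabriel}.

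Third, I would upgrade faithful to full. Following Mitchell, pass to the full subcategory of $\mathcal G$ generated by the image of $\A$. This is small, so we can choose a projective generator $P$ relative to it, namely a coproduct of enough copies of the projectives needed. Then the induced $\phi_P$ is full on the image, because every object of the image is a quotient of a coproduct of copies of $P$ and $\phi_P$ preserves these presentations; full faithfulness then follows as in the proof of Theorem~\ref{th:gabriel}. Alternatively, one can directly restrict Gabriel's equivalence to the image whenever $P$ is also compact.

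I expect the main obstacle to be this fullness step. Exactness plus faithfulness comes fairly cheaply from injective cogenerators, but fullness needs the generator to be chosen so that morphisms $\phi_P(X) \to \phi_P(Y)$ lift. This requires a careful cardinality argument to ensure that the coproduct defining $P$ exists and that the relevant objects are $P$-presented. This is exactly where the smallness hypothesis on $\A$ is used.
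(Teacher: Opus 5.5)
Your route is in substance the paper's. The Lubkin--Freyd category $\mathcal L(\A)$ of left exact additive functors $\A\to\mathrm{Ab}$ is equivalent to $\Ind(\A^{\mathrm{op}})$, so $\mathcal L(\A)^{\mathrm{op}}\simeq\Pro(\A)$. Your first two steps, read as the covariant embedding $A\mapsto h^A=\Hom_{\A}(A,-)$ of $\A$ into $\mathcal L(\A)^{\mathrm{op}}$, reproduce the paper's exact fully faithful embedding $\A\to\Pro(\A)$ together with a projective generator (the dual of an injective cogenerator $E$ of $\mathcal L(\A)$).

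The genuine gap is in the fullness step, which is the real content of the theorem. You justify it by saying that every object of the image is a quotient of a coproduct of copies of $P$ and that $\phi_P$ preserves these presentations, or else by restricting Gabriel's equivalence. But $\phi_P=\Hom(P,-)$ commutes with infinite coproducts only when $P$ is compact. A projective generator obtained by dualising an injective cogenerator is typically not compact. Already in $\mathrm{Vect}_k^{\mathrm{op}}\simeq\Pro(\mathrm{fdVect}_k)$, with $\mathrm{fdVect}_k$ the finite-dimensional spaces, the canonical map $\bigoplus_{\N}k\to\Hom_k(\prod_{\N}k,k)$ is not surjective. So neither the presentation argument nor the Gabriel fallback is available. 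Note also that the enlarged generator must be formed in $\mathcal L(\A)^{\mathrm{op}}$, i.e.\ as a product of copies of $E$ in $\mathcal L(\A)$. It cannot be formed in $\mathcal G$ itself, which need not have enough projectives.

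The missing idea is that each object of $\A$ must be a quotient of a \emph{single} copy of the projective generator. This is exactly the hypothesis the paper's sketch feeds into the Kashiwara--Schapira lemma. To obtain it, take $P:=\prod_{A\in\mathrm{Ob}\,\A}\prod_{E(A)}E$, computed in $\mathcal L(\A)$, where $E(A)\cong\Hom(h^A,E)$ by Yoneda. Smallness of $\A$ makes this a set-indexed product, and no further cardinality issue arises. $P$ is still an injective cogenerator, and every $h^A$ embeds into it, i.e.\ is a quotient of $P$ in $\mathcal L(\A)^{\mathrm{op}}$.

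Fullness then needs no presentation of kernels, only exactness and faithfulness of $\phi_P$:
\begin{enumerate}
\item Let $u\colon\phi_P(X)\to\phi_P(Y)$ be a module map, and $p\colon P\to X$ an epimorphism with kernel $k\colon K\to P$.
\item The map $u\circ\phi_P(p)$ out of the free rank-one module $\phi_P(P)$ equals $\phi_P(g)$, where $g=u(p)\in\Hom(P,Y)$.
\item Hence $\phi_P(gk)=u\circ\phi_P(pk)=0$, so $gk=0$ by faithfulness, and $g=fp$ for some $f\colon X\to Y$.
\item Finally $\phi_P(f)=u$, because $\phi_P(p)$ is surjective by projectivity.
\end{enumerate}
With this replacement your proof goes through.
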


\begin{proof}[Proof sketch~\cite{kashiwara2006categories}.]
If $\A$ is a small abelian category, then there exists an exact fully faithful embedding $\A \to \Pro(\A)$ of $\A$ into the category of pro-objects $\Pro(\A)$. The category $\Pro(\A)$ is a cocomplete abelian category and admits a projective generator $P$ such that every object $X$ of $\A$ regarded as an object of $\Pro(\A)$ is a quotient of $P$, i.e., there is an epimorphism $P \twoheadrightarrow X$ in $\Pro(\A)$.
By~\cite[Lemma 9.6.9]{kashiwara2006categories}, this implies that $\phi_P$ is fully faithful on $\A$.
\end{proof}

The pro-cocompletion $\Pro(\C)$ of a category $\C$ is pseudofunctorial and preserves adjoint functors. To prove it, it suffices to establish these properties for the ind-completion $\Ind(\C)$ of a category $\C$. Recall that for a given category $\C$, the objects of $\Ind(\C)$ are given by `formal colimits' of filtered diagrams $\alpha: I \to \C$ in $C$, denoted $\colim^f(\alpha)$. The embedding $\C \to \Ind(\C)$ sends an object $X$ to the formal colimit $\colim^f(* \mapsto X)$.

\begin{lemma}[{\cite[Prop.~6.1.9-6.1.11]{kashiwara2006categories}}]\label{lemma:ind_pseudofunc}
For all functors $F: \C \to \C'$, there is a unique functor $\Ind(F): \Ind(\C) \to \Ind(\C')$ such that the following diagram commutes:
\[\begin{tikzcd}[row sep=scriptsize]
	\C & \C' \\
	{\Ind(\C)} & {\Ind(\C')}
	\arrow["F", from=1-1, to=1-2]
	\arrow[from=1-1, to=2-1]
	\arrow[from=1-2, to=2-2]
	\arrow["{\Ind(F)}"', from=2-1, to=2-2]
\end{tikzcd}\]
More precisely, $\Ind(F)$ sends the formal colimit $\colim^f \alpha$ to $\colim^f (F\circ \alpha)$.
In addition, for all functors $G: \C' \to \C''$, there is a natural isomorphism $\Ind(G\circ F) \cong \Ind(G) \circ \Ind(F)$, and if $F$ is faithful or fully faithful functors, then so is $\Ind(F)$.
\end{lemma}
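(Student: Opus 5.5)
We work directly with the standard model of $\Ind(\C)$: the objects are filtered diagrams $\alpha: I \to \C$, written $\colim^f \alpha$, and the morphisms are
\[
\Hom_{\Ind(\C)}(\colim^f \alpha, \colim^f \beta) = \lim_{i} \colim_{j} \Hom_\C(\alpha(i), \beta(j)).
\]
Equivalently, $\Ind(\C)$ is the full subcategory of presheaves $\C^{\mathrm{op}} \to \set$ consisting of filtered colimits of representables, with $\colim^f \alpha$ corresponding to $\colim_i \C(\mathunderscore, \alpha(i))$. The embedding $\C \to \Ind(\C)$ is the Yoneda embedding, which is fully faithful.

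\textbf{Existence and uniqueness.} Define $\Ind(F)(\colim^f \alpha) = \colim^f(F\circ\alpha)$. On a morphism, an element of $\lim_i \colim_j \Hom(\alpha(i),\beta(j))$ is sent to its image under the maps $\Hom_\C(\alpha(i),\beta(j)) \to \Hom_{\C'}(F\alpha(i),F\beta(j))$ induced by $F$. These maps are natural in $i$ and $j$, so they pass to the colimit over $j$ and then to the limit over $i$. Functoriality follows from functoriality of $F$ and compatibility of composition with these limits and colimits. The square commutes on the nose, since a constant diagram goes to a constant diagram.

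For uniqueness, note that this $\Ind(F)$ preserves filtered colimits: $\colim^f \alpha$ is the filtered colimit in $\Ind(\C)$ of the images of the $\alpha(i)$. A functor that makes the square commute and preserves filtered colimits is therefore determined on every object $\colim^f\alpha = \colim_i \alpha(i)$, and likewise on morphisms. This is the reading of ``unique'' in~\cite[Prop.~6.1.9]{kashiwara2006categories}, so we adopt it: uniqueness is among functors preserving small filtered colimits, up to the canonical identification.

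\textbf{Pseudofunctoriality.} On objects, $\Ind(G)\Ind(F)(\colim^f\alpha) = \colim^f(G F\alpha) = \Ind(GF)(\colim^f\alpha)$, and the same holds on morphisms because $(G\circ F)$ acting on homsets is the composite of the two induced maps. So the isomorphism is in fact an identity in this model. In any other model it is the canonical isomorphism furnished by the uniqueness statement, and naturality follows from that uniqueness.

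\textbf{(Full) faithfulness.} This is the only step with content, and the main obstacle is that faithfulness need not be preserved by arbitrary limits of colimits; it must be argued by exactness. If $F$ is faithful, each map $\Hom(\alpha(i),\beta(j)) \to \Hom(F\alpha(i),F\beta(j))$ is injective. Filtered colimits in $\set$ are exact, so they preserve injections, and limits preserve injections. Hence the induced map on $\lim_i\colim_j$ is injective.

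If $F$ is fully faithful, these maps are bijections, which are preserved by any colimit and any limit. So $\Ind(F)$ is fully faithful.
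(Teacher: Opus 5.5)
Your proof is correct and is essentially the Kashiwara--Schapira argument that the paper cites in place of a proof: the $\lim_i\colim_j$ hom-set formula, strict pseudofunctoriality in the model of formal diagrams, and, for faithfulness, exactness of filtered colimits in $\set$ together with the fact that limits preserve injections. You are also right to read uniqueness as uniqueness among functors commuting with small filtered colimits, up to unique isomorphism. The paper's statement omits this hypothesis, and without it uniqueness fails: for example, the ultrafilter functor on $\set\simeq\Ind(\mathrm{FinSet})$ agrees with the identity on finite sets up to isomorphism, yet is not isomorphic to the identity.
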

In particular, if $\C'$ is a full subcategory of $\C$ then $\Ind(\C')$ is again a full subcategory of $\Ind(\C)$. 

\begin{lemma}
Ind-completion preserves adjoint functors.
\end{lemma}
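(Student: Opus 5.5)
Given an adjunction $F \dashv G$ with $F: \C \to \C'$ and $G : \C' \to \C$, I will show that $\Ind(F) \dashv \Ind(G)$. I will use a unit and counit rather than build the hom-set bijection by hand. Write $\eta : \Id_\C \Rightarrow GF$ and $\varepsilon : FG \Rightarrow \Id_{\C'}$ for the unit and counit. By Lemma~\ref{lemma:ind_pseudofunc}, $\Ind$ sends a functor $F$ to the functor $\colim^f \alpha \mapsto \colim^f(F\circ\alpha)$, and it comes with coherent isomorphisms $\Ind(G\circ F) \cong \Ind(G)\circ\Ind(F)$ and $\Ind(\Id_\C)\cong \Id_{\Ind(\C)}$. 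The plan is to check that $\Ind$ is a pseudofunctor $\cat \to \cat$, i.e. that it also acts on natural transformations compatibly with vertical and whiskering composition. Then $\Ind$ preserves adjunctions by the standard $2$-categorical fact that pseudofunctors preserve adjunctions.

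First, I extend $\Ind$ to $2$-cells. For $\theta: F \Rightarrow F'$, define $\Ind(\theta)_{\colim^f \alpha}: \colim^f(F\alpha) \to \colim^f(F'\alpha)$ as the morphism of ind-objects induced by the natural transformation $\theta\alpha : F\alpha \Rightarrow F'\alpha$ of diagrams indexed by the same filtered category $I$. Morphisms in $\Ind$ are computed as $\lim_i \colim_j \Hom(F\alpha(i), F'\alpha(j))$, and the family $(\theta_{\alpha(i)})_i$ yields a compatible element of this limit. Next, I check naturality of $\Ind(\theta)$ in $\colim^f\alpha$. A morphism $\colim^f\alpha\to\colim^f\beta$ is represented, for each $i$, by some $\alpha(i)\to\beta(j)$. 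Naturality then follows from naturality of $\theta$ on each representative, and it is independent of the choice of representatives because $\theta$ commutes with the transition maps of $\beta$.

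Second, I verify functoriality on $2$-cells: $\Ind(\theta'\circ\theta)=\Ind(\theta')\circ\Ind(\theta)$, $\Ind(\id_F)=\id$, and compatibility with whiskering, $\Ind(H\theta)\cong\Ind(H)\Ind(\theta)$ and $\Ind(\theta K)\cong \Ind(\theta)\Ind(K)$, modulo the comparison isomorphisms of Lemma~\ref{lemma:ind_pseudofunc}. These are all componentwise identities on representing diagrams, so they hold. In fact, with the explicit formula $\Ind(F)(\colim^f\alpha)=\colim^f(F\alpha)$, the comparison $\Ind(GF)\cong\Ind(G)\Ind(F)$ is an identity on objects, since $(G F)\alpha = G(F\alpha)$, which simplifies the bookkeeping.

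Finally, I apply $\Ind$ to the triangle identities $(\varepsilon F)\circ(F\eta)=\id_F$ and $(G\varepsilon)\circ(\eta G)=\id_G$. This gives $(\Ind(\varepsilon)\Ind(F))\circ(\Ind(F)\Ind(\eta))=\id_{\Ind(F)}$ and the symmetric identity, so $\Ind(\eta)$ and $\Ind(\varepsilon)$ exhibit $\Ind(F)\dashv\Ind(G)$. The statement for $\Pro$ follows by duality, since $\Pro(\C)=\Ind(\C^{\mathrm{op}})^{\mathrm{op}}$ and adjunctions reverse under $(\mathunderscore)^{\mathrm{op}}$. The main obstacle is the first step: checking that $\Ind(\theta)$ is well defined and natural with respect to morphisms of ind-objects, which are only represented up to the equivalence relation in the filtered colimit. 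I would handle this by working through the Yoneda description $\Ind(\C)\subseteq \mathrm{Fun}(\C^{\mathrm{op}},\set)$, where $\colim^f\alpha$ is the actual colimit of representables. In that description, $\Ind(\theta)$ is the map of colimits induced by $y\theta\alpha$, and well-definedness is automatic.
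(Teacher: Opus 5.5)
Your proof is correct, but it takes a different route from the paper.

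The paper works directly with hom-sets. Since $\Ind(L)(\colim^f\alpha)=\colim^f(L\alpha)$, the formula for morphisms of ind-objects gives $\Ind(\DDD)(\Ind(L)(\colim^f\alpha),\colim^f\beta)\cong\lim_i\colim_j\DDD(L\alpha(i),\beta(j))$. The adjunction bijection is then applied termwise inside $\lim_i\colim_j$. This is legitimate because the bijection is natural in both variables, hence compatible with the transition maps. The result is read back as $\Ind(\C)(\colim^f\alpha,\Ind(R)(\colim^f\beta))$.

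You instead upgrade $\Ind$ to a $2$-functor on the $2$-category of categories. In the formal-colimit presentation it is even strict, since $\Ind(G\circ F)=\Ind(G)\circ\Ind(F)$ on the nose. You define $\Ind(\theta)$ componentwise, check its naturality, vertical functoriality and compatibility with whiskering, and then transport the unit, counit and triangle identities.

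The paper's argument is shorter. It needs only the hom-formula and never mentions $2$-cells, although it leaves implicit the naturality of the composite bijection in $\colim^f\alpha$ and $\colim^f\beta$; that naturality follows from the naturality of the original adjunction.

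Your argument costs more bookkeeping, namely exactly the well-definedness and naturality checks for $\Ind(\theta)$ that you flag. In return it gives the explicit unit $\Ind(\eta)$ and counit $\Ind(\varepsilon)$. It also proves a more structural fact: $\Ind$ is a $2$-functor, so it preserves anything defined by equations between $2$-cells, not just adjunctions. Finally, it yields the $\Pro$ version cleanly by the duality $\Pro(\C)=\Ind(\C^{\mathrm{op}})^{\mathrm{op}}$, which is the case actually used in the surrounding discussion.
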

\begin{proof}
    For all adjoint functors $L \dashv R$ between categories $\C$ and $\DDD$, we denote the functors $\Ind(L)$ and $\Ind(R)$ by $IL$ and $IR$, respectively, and the formal colimits $\colim^f \alpha$ by $\lex f \alpha$. For all filtered diagrams $\alpha: I \to \C$ and $\beta: J \to \DDD$, we have: 
    \begin{align*}
        \Ind(\DDD)(IL(\lex f\alpha),\lex f \beta) & \cong \lim_{i \in I} \colimb_{j \in J} \DDD(L\alpha(i), \beta(j)) \\
        &\cong \lim_{i \in I} \colimb_{j \in J} \C(\alpha(i), R\beta(j)) \\
        &\cong \Ind(\DDD)(\lex f\alpha, IR(\lex f \beta)) \qedhere
    \end{align*}
\end{proof}

We call an object satisfying the property used in the proof sketch of Theorem~\ref{th:freyd-mitchell} a \look{strong generator}. This nomenclature is not standard, but is motivated by the following remark: By definition, a strong generator is a separator in the sense of~\cite{borceux1994handbook}, which, \emph{in a locally small category}, is equivalent to the notion of generator of Definition~\ref{def:cpg}. In a category with coproducts, strong generators coincides with generators, as noted by Qiaochu in~\cite{qchu2015generators}. 

\begin{definition}
An object $P$ in an abelian category $\A$ is called a \look{strong generator} of a set $S$ of objects of $\A$ if for every object $X \in S$, $X$ is a quotient of $P$, i.e., there exists an epimorphism $P \twoheadrightarrow X$.
\end{definition}

We now show that being a strong generator, and the properties of Definition~\ref{def:cpg}, are preserved by the leftmost adjoint of certain adjoint triples.

\begin{theorem}\label{th:preservation_triple}
For any abelian categories $\A,\B$ and adjoint triple $f_! \dashv f^* \dashv f_*$ with functors $f_!,\, f_* \colon \B \to \A$ and $f^* \colon \A \to \B$, $f_!$ preserves projectives. Moreover, if $f^*$ is faithful, then $f_!$ preserves generators and sends a strong generator of a set $S$ to a strong generator of the preimage $(f^*)^{-1}(S)$. If both $\A$ and $\B$ are cocomplete, then $f_!$ preserves compact objects.
\end{theorem}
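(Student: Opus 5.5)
Throughout, write $\phi_P=\Hom(P,\mathunderscore)$. The plan is to rewrite every hom-functor out of $f_!P$ through the adjunction $f_!\dashv f^*$: for all objects $P\in\B$ and $X\in\A$ there is an isomorphism
\[
\Hom_\A(f_!P,X)\cong\Hom_\B(P,f^*X),
\]
natural in $X$. Hence $\phi_{f_!P}\cong\phi_P\circ f^*$ as functors $\A\to\set$ (even into abelian groups, since $f^*$ is additive). Each property of Definition~\ref{def:cpg} is then a property of a composite. The right adjoint $f_*$ is used only to know that $f^*$ is itself a left adjoint.

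\textbf{Projectives.} Since $f^*$ has both a left and a right adjoint, it is exact by Corollary~\ref{cor:exactness}. If $P$ is projective, $\phi_P$ is exact, so $\phi_P\circ f^*$ is exact, and $f_!P$ is projective.

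\textbf{Generators.} Assume $f^*$ is faithful. An exact faithful functor between abelian categories is conservative: if $f^*u$ is an isomorphism, then $f^*\ker u=\ker f^*u=0$, and faithfulness forces $\ker u=0$; the same argument applies to $\operatorname{coker} u$. A composite of conservative functors is conservative, so if $P$ is a generator, $\phi_P\circ f^*$ is conservative and $f_!P$ is a generator.

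\textbf{Strong generators.} Assume $f^*$ is faithful, let $P$ be a strong generator of $S$, and take $X$ with $f^*X\in S$. Choose an epimorphism $e\colon P\twoheadrightarrow f^*X$ and let $\tilde e\colon f_!P\to X$ be its transpose, so that $e=f^*(\tilde e)\circ\eta_P$. Since $e$ is epic, $f^*\tilde e$ is epic. The cokernel $c\colon X\to\operatorname{coker}\tilde e$ satisfies $c\circ\tilde e=0$, so $f^*c\circ f^*\tilde e=0$, hence $f^*c=0$. Faithfulness then gives $c=0$, so $\tilde e$ is epic and $f_!P$ is a strong generator of $(f^*)^{-1}(S)$.

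\textbf{Compact objects.} This is the step where the extra adjoint genuinely matters. Assume $\A$ and $\B$ are cocomplete. Because $f^*$ has a right adjoint $f_*$, it preserves all colimits, in particular filtered ones. If $P$ is compact, $\phi_P$ preserves filtered colimits, so $\phi_P\circ f^*$ does too, and $f_!P$ is compact. The only care needed is that "compact" refers to filtered colimits in $\A$, which exist by cocompleteness, and that $f^*$ carries them to colimits in $\B$. The main subtlety overall is checking that faithful plus exact gives conservative, as used in the generator step.
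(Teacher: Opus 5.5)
Your proof is correct and follows essentially the same route as the paper: every property is transported along the adjunction isomorphism $\Hom(f_!P,X)\cong\Hom(P,f^*X)$. This uses that $f^*$ is exact, that it is conservative when faithful, and that it preserves colimits because it has the right adjoint $f_*$. The only difference is cosmetic and lies in the strong-generator step: the paper writes your transpose as $\epsilon_X\circ f_!(e)$ and concludes from the counit being pointwise epic together with $f_!$ preserving epimorphisms, whereas you show directly that the faithful functor $f^*$ reflects the epimorphism $f^*\tilde e$.
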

\begin{proof}
The functor $f^*$ is between abelian categories and has both adjoints so is automatically exact. As a left adjoint to an exact functor, $f_!$ preserves projectives.

If $f^*$ is faithful, since it is exact, it is conservative.
Let us show in that case that $f_!$ preserves (strong) generators. If $G$ is a generator in $\A$, then for all arrows $a: X \to Y$ in $\B$ such that $\B(f_!G, a) : \B(f_!G, X) \to \B(f_!G, Y)$ is an isomorphism, applying the natural isomorphism of homsets of the adjunction $f_! \dashv f^*$ shows that $\B(G, f^*a)$ is an isomorphism. Since $\phi_G$ and $f^*$ are conservative, $a$ is also an isomorphism. 

Then, for all strong generator $P$ of a set $S$, and for all objects $Y$ in $\B$ with $f^*Y \in S$, there exists an epimorphism $\iota: P \twoheadrightarrow f^*Y$. Let us show that there exists an epimorphism $f_! P \twoheadrightarrow Y$.
By properties of adjoint functors, since $f^*$ is faithful, then the counit $\epsilon: {f_!}{f^*} \to \id_{\B}$ is pointwise epic.
In addition $f_!$ preserves epimorphisms as a left adjoint. Therefore, the morphism $\epsilon_Y\circ f_!(\iota): f_! P \to Y$ is epic.

Finally, if $\A$ and $\B$ are cocomplete, then for all compact objects $C$ in $\A$ and filtered diagrams $F: I \to \B$, we have the following natural isomorphism:
\begin{align*}
    \B(f_!C, \colim_I F) &\cong \A(C, f^*(\colim_I F))\\
    & \cong \A(C, \colim_I f^*F)\\
    & \cong \colim_I \A(C, f^*F) \\
    & \cong \colim_I \B(f_!C, F)
\end{align*}
where $f^*$ preserves colimits as a right adjoint. Hence $f_!$ preserves compact objects.
\end{proof}

\begin{example}\label{ex:commutativity1}
    Consider the cocomplete abelian categories of $\R$ and $\CC$-vector spaces, respectively. The inclusion $f: \R \to \CC$ induces restriction and extension of scalars; in particular, extension is called complexification.
    As with any ring morphism, restriction of scalars is faithful.
    $\R$, regarded as a $1$-dimensional $\R$-vector space, is a compact projective generator. Similarly, $\CC$ is a compact projective generator, and indeed the complexification $\R^{\mathrm C} = \R \otimes_{\R} \CC$ of $\R$ is isomorphic to $\CC$.
\end{example}

Theorem~\ref{th:preservation_triple} implies that the Gabriel and \FM{} theorems can be applied locally and coherently to the local horizontal categories of certain abelian framed bicategories. This is the content of Section~\ref{section:gt_pointwise}. The proofs rely on certain notions of framed bicategories, and on a specific choice of cleavage (that can always be made) which simplifies our approach. These preliminary results are presented in Section~\ref{section:preliminaries}.
Finally, in Section~\ref{section:framed_embedding} we show  that these results assemble into a framed embedding.

\subsection{A cleavage for (co)fibrations \texorpdfstring{$(L,R)$}{(L,R)}}\label{section:preliminaries}

In this section, we show more generally that (co)fibrations of the form $(L,R)$ admit convenient cleavages. We treat the case of fibrations, the case of cofibrations is dual. Let $\C$ and $\DDD$ be categories, and let $L,R : \DDD \to \C$ be two functors such that $(L,R): \DDD \to \C \times \C$ is a fibration.

\begin{proposition}\label{prop:LRfib}
    Both $R$ and $L$ are fibrations. Moreover, a cartesian lifting $f$ of $Lf$ to $Y$ can be chosen such that $f$ is also a cartesian lifting of $(Lf, \id_Y)$ to $Y$, and symmetrically for $R$.
\end{proposition}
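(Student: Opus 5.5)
The plan is to obtain a cartesian lifting for $L$ (and dually for $R$) directly from the given fibration $(L,R)$, by lifting pairs whose second component is an identity. Fix an object $Y \in \DDD$ and an arrow $u: A \to L(Y)$ in $\C$. By hypothesis, the pair $(u, \id_{R(Y)}) : (A, R(Y)) \to (L(Y), R(Y))$ has a cartesian lifting $f: X \to Y$ for $(L,R)$, with $L(X) = A$, $Lf = u$, $R(X) = R(Y)$ and $Rf = \id_{R(Y)}$. I will show that this $f$ is also $L$-cartesian. That gives both claims at once: $L$ is a fibration, and the chosen $L$-cartesian lifting is simultaneously an $(L,R)$-cartesian lifting of $(Lf, \id)$. (In the statement, ``$(Lf,\id_Y)$'' should be read as $(Lf, \id_{R(Y)})$.)

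To check that $f$ is $L$-cartesian, take any $g: Z \to Y$ in $\DDD$ and any $v: L(Z) \to A$ with $u \circ v = L(g)$. I have to produce a unique $h: Z \to X$ with $f\circ h = g$ and $L(h) = v$. Consider the pair $(v, R(g)) : (L(Z),R(Z)) \to (A, R(Y))$ in $\C\times\C$. It satisfies
\[
(u,\id_{R(Y)})\circ(v,R(g)) = (L(g),R(g)) = (L,R)(g).
\]
So the $(L,R)$-cartesian property of $f$ gives a unique $h$ with $f\circ h = g$ and $(L,R)(h) = (v, R(g))$. In particular $L(h)=v$, which proves existence.

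Uniqueness is the main point to watch, because $L$-cartesianness only prescribes $L(h)$ and leaves $R(h)$ free. Suppose $h'$ is another arrow with $f\circ h' = g$ and $L(h') = v$. Applying $R$ gives
\[
R(h') = \id_{R(Y)}\circ R(h') = R(f)\circ R(h') = R(g).
\]
Hence $(L,R)(h') = (v,R(g))$, and uniqueness in the $(L,R)$-cartesian property forces $h' = h$. The key fact here is that $Rf$ is an identity, so $R$ of any factorization is already determined.

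For $R$ the argument is symmetric: lift $(\id_{L(Y)}, w)$ instead. If the ``strong'' (non-prone) notion of cartesian morphism is in use, the same argument applies, since I only used factorization of arbitrary $g$ over composites. The dual statement for cofibrations follows by passing to opposite categories.
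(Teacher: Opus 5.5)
Your proof is correct and follows essentially the same route as the paper: you lift $(u,\id_{R(Y)})$ along the fibration $(L,R)$, get existence by applying the $(L,R)$-cartesian property to $(v,R(g))$, and get uniqueness because $Rf=\id_{R(Y)}$ forces $R(h')=R(g)$. Your reading of $(Lf,\id_Y)$ as $(Lf,\id_{R(Y)})$ is also the intended one.
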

\begin{proof}
    Let us show that $L$ is a fibration. For all objects $Y$ in $\DDD$ and arrows $f_0: A \to LY$ in $\C$, consider a cartesian lifting $f: X \to Y$ of $(f_0, \id_{RY}): (A,RY) \to (LY, RY)$ to $Y$. In particular, $Lf = f_0$ and $Rf = \id_{RY}$. Let us show that this arrow is cartesian with respect to $L$. For all arrows $g: Z \to Y$ and $w_0: LZ \to A = LX$ such that $Lg = Lf \circ w_0$, we have $Rg =  \id_{RY} \circ Rg$. Since $f$ is cartesian over $(Lf, \id_{RY})$, there exists a unique arrow $w: Z \to X$ satisfying:
    \begin{align*}
        Lw &= w_0\\
        Rw &= Rg\\
        g &= f \circ w.
    \end{align*}
    If $w': Z \to X$ is another arrow satisfying $Lw' = w_0$ and $g = f\circ w'$, then in particular: 
    \begin{align*} 
    Rg &= R(f \circ w')\\
        &= R(f) \circ R(w')\\
        &= \id_{RY} \circ Rw'\\
        &= Rw'.
    \end{align*}
    Thus, $w=w'$ by uniqueness of $w$. 
    The proof that $R$ is a fibration is symmetric. 
\end{proof}

Henceforth, we consider only cartesian liftings with respect to $L$ and $R$ that satisfy the property of Proposition~\ref{prop:LRfib}. 
For all compatible objects $Y$ in $\DDD$ and arrows $f,g$ in $\C$, consider the cartesian liftings $f^LY \to Y$ of $Lf$ to $Y$ and $Yg^R \to Y$ of $Rg$ to $Y$. By general properties of cartesian arrows, both $(f^LY)g^R \to f^LY \to Y$ and $f^L(Yg^R) \to Yg^R \to Y$ are cartesian liftings of $(f,g)$ to $Y$, and there is a unique isomorphism $f^L(Yg^R) \cong (f^LY)g^R$ commuting with the above morphisms. However, $f^L(Yg^R)$ and $(f^LY)g^R$ may not be equal.
Still, we can construct a cleavage of $(L, R)$ where equality holds. 

\begin{proposition}\label{prop:choice}
Assuming the axiom of choice, there is a choice, for all compatible objects $Y$ in $\DDD$ and arrows $f, g$ in $\C$, of 
    objects $f^*Yg^*$, $f^*Y$, and $Yg^*$ in $\DDD$,
    together with cartesian liftings $f^*Yg^*  \xrightarrow{c} Y$ of $(f,g)$, $f^*Yg^* \xrightarrow{c} f^* Y$ of $(\id_{Lf^*Y}, g)$, $f^*Yg^* \xrightarrow{c} Yg^*$ of $(f, \id_{RYg^*})$, $f^*Y \xrightarrow{c} Y$ of $(f,\id_{RY})$, and $Yg^* \xrightarrow{c} Y$ of $(\id_{LY}, g)$,
such that
    the choice is normal in the sense that $\id_{LY}^*Yg^* \to Yg^*$ is $\id_{Yg^*}$, $\id_{LY}^*Y \to Y$ is $\id_Y$, and symmetrically;
    and such that the following diagram commutes:
\begin{equation}\label{diag:triangles}
\begin{tikzcd}
	{f^*Yg^*} & {Yg^*} \\
	{f^*Y} & Y
	\arrow["c"{description}, from=1-1, to=1-2]
	\arrow["c"{description}, from=1-1, to=2-1]
	\arrow["c"{description}, from=1-1, to=2-2]
	\arrow["c"{description}, from=1-2, to=2-2]
	\arrow["c"{description}, from=2-1, to=2-2]
\end{tikzcd}
\end{equation}
\end{proposition}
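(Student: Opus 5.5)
The idea is to build the cleavage in two stages. First I fix the one-sided liftings. Then I define the two-sided restriction as an iterated one-sided restriction. Finally I redefine the remaining arrows so that diagram~\eqref{diag:triangles} commutes by construction.

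\textbf{Stage 1: one-sided liftings.} By Proposition~\ref{prop:LRfib}, $L$ and $R$ are fibrations. Moreover, liftings may be chosen that are simultaneously cartesian for $(L,R)$ over $(f,\id)$, respectively $(\id,g)$. Using the axiom of choice, I fix, for each $Y$ and each $f\colon A\to LY$, a cartesian lifting $f^*Y\xrightarrow{c}Y$ of $(f,\id_{RY})$. I require $\id_{LY}^*Y=Y$ with lifting $\id_Y$, which is allowed because identities are cartesian. Symmetrically, I fix liftings $Yg^*\xrightarrow{c}Y$ of $(\id_{LY},g)$ with the same normality.

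\textbf{Stage 2: the two-sided object.} I set
\[
f^*Yg^* := f^*(Yg^*),
\]
meaning the chosen left lifting of $f$ applied to the object $Yg^*$. The arrow $f^*Yg^*\to Yg^*$ is then the chosen cartesian lifting of $(f,\id)$. The arrow $f^*Yg^*\to Y$ is defined as the composite $f^*(Yg^*)\to Yg^*\to Y$. This composite is cartesian over $(f,g)$, because composites of cartesian arrows are cartesian, the fact already used in the proof of Proposition~\ref{prop:rel_pair_compose}. With these definitions the upper-right triangle of \eqref{diag:triangles} commutes by definition.

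\textbf{Stage 3: the arrow to $f^*Y$.} I define $f^*Yg^*\to f^*Y$ as the unique factorization of the composite $f^*Yg^*\to Y$ through the cartesian arrow $f^*Y\to Y$. This factorization lies over the pair $(\id_{A},g)$. With this choice the lower-left triangle commutes by construction, and therefore so does the whole diagram.

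\textbf{The main obstacle: cartesianness of the factorization.} It remains to show that this factorization is cartesian over $(\id,g)$. For this I use the cancellation property of fibrations, the dual of fact~\ref{fact2}: if $\beta\circ\alpha$ and $\beta$ are cartesian, then $\alpha$ is cartesian. Here $\beta$ is $f^*Y\to Y$ and $\beta\circ\alpha$ is $f^*Yg^*\to Y$, both of which are cartesian. Hence $\alpha$ is cartesian.

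\textbf{Normality.} When $f=\id$, the normality of the Stage 1 liftings gives $f^*Yg^*=Yg^*$ and $\id_{LY}^*Yg^*\to Yg^*=\id$. When $g=\id$, we get $Yg^*=Y$, hence $f^*Yg^*=f^*Y$. The factorization of $f^*Y\to Y$ through itself is then the identity, by uniqueness of factorization. The remaining normality conditions follow in the same way from normality in Stage 1 and uniqueness of factorizations.

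The only real care needed is the asymmetric choice $f^*(Yg^*)$ rather than $(f^*Y)g^*$, together with the cancellation lemma used above. Everything else is bookkeeping with uniqueness of cartesian factorizations.
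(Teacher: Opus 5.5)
Your proof is correct and is essentially the paper's argument with left and right swapped. The paper sets $f^*Yg^* := (f^*Y)g^*$ and takes the chosen $R$-lifting as the arrow to $f^*Y$. It then obtains the arrow to $Yg^*$ by factoring through the cartesian arrow $Yg^*\to Y$ and applying the same cancellation property, so its uniqueness-based normality check falls on $\id_{LY}^*Yg^*\to Yg^*$ rather than on $f^*Y\id_{RY}^*\to f^*Y$.
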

\begin{proof}
By assumption, both $L$ and $R$ admit a normal cleavage. For all compatible $Y$ and $f,g$, consider the cartesian arrows $f^LY \xrightarrow{l} Y$ and $Yg^R \xrightarrow{r} Y$, and let $f^*Y = f^LY$, $Yg^* = Yg^R$, and $f^*Yg^* = (f^LY)g^R$. Furthermore, let $f^*Y \xrightarrow{c} Y$ be $f^LY \xrightarrow{l} Y$, let $f^*Yg^* \xrightarrow{c} f^*Y$ be $(f^LY)g^R \xrightarrow{r} f^LY$, and since cartesian arrows are stable under composition, let $f^*Yg^* \xrightarrow{c} Y$ be the composition of the two previous arrows. Then, let $Yg^* \xrightarrow{c} Y$ be $Yg^R \xrightarrow{r} Y$. Finally, using the fact that $Yg^* \xrightarrow{c} Y$ is a fibration, let $f^*Yg^* \xrightarrow{c} Yg^*$ be the unique arrow $\alpha$ such that $L(\alpha) = f$, $R(\alpha) = \id$, and such that the following diagram commutes:
\[\begin{tikzcd}
	{f^*Yg^*} & {Yg^*} \\
	{f^*Y} & Y
	\arrow["{\alpha}", from=1-1, to=1-2]
	\arrow["c"{description}, from=1-1, to=2-1]
	\arrow["c"{description}, from=1-2, to=2-2]
	\arrow["c"{description}, from=2-1, to=2-2]
\end{tikzcd}\]
Since postcomposing $\alpha$ by a cartesian arrow yields another cartesian arrow, $\alpha$ is also cartesian. Thus Diagram~\ref{diag:triangles} commutes. Moreover, since we have chosen a normal cleavage satisfying the property of~Proposition~\ref{prop:LRfib}, $\id_{LY}^*Y \xrightarrow{c} Y$ is $\id_Y$ and symmetrically, and $f^*Y{\id_{RY}}^* \xrightarrow{c} f^*Y$ is $\id_{f^*Y}$. To show that $\id_{LY}^*Yg^* \xrightarrow{c} Yg^*$ is $\id_{Yg^*}$, it suffices to apply its uniqueness property to the following commutative diagram:
\[\begin{tikzcd}[baseline=(\tikzcdmatrixname-\the\pgfmatrixcurrentrow-1.base)]
	{ \id_{LY}^*Yg^* = Yg^*} & {Yg^*} \\
	\id_{LY}^*= Y & Y
	\arrow["{\id_{Yg^*}}", from=1-1, to=1-2]
	\arrow["r"', from=1-1, to=2-1]
	\arrow["r", from=1-2, to=2-2]
	\arrow["{\id_Y}"', from=2-1, to=2-2]
\end{tikzcd}\qedhere\]
\end{proof}

Special care is required when using Proposition~\ref{prop:choice}. In particular, for compatible $Y$, $f$, and $g$, we do not necessarily have $f^*(Yg^*) = f^*Yg^*$.
Let us now return to framed bicategories. For all framed bicategories $\D$, we suppose that the bifibration $(L,R)$ is equipped with a cleavage given by Proposition~\ref{prop:choice}. 
Such cleavage significantly simplifies subsequent proofs, particularly that of Proposition \ref{prop:op}.

\subsection{The Gabriel theorem, pointwise}\label{section:gt_pointwise}

In this section, we get to the heart of the matter: we show that Theorem~\ref{th:preservation_triple} can be applied to pairs of local categories in a framed bicategory connected by change of coefficient functors. This implies that, under certain conditions, if the Gabriel theorem applies to one local category in an abelian framed bicategory, then it propagates to other local categories via extension of scalars.

\begin{proposition}\label{prop:pointwise_gabriel}
Let $\AAA$ be a closed abelian framed bicategory, and let $f: A \to C$ and $g: B \to D$ be vertical arrows such that the local categories $\A(A,B)$ and $\A(C,D)$ are cocomplete, and such that the restriction functor $f^*(\mathunderscore)g^*$ is faithful.
If $\A(A,B)$ contains a compact projective generator $P$, then by Theorem~\ref{th:gabriel} it is equivalent to the category of left modules ${}_{\End(P)}\mmod$. Then:
\begin{itemize}[noitemsep]
    \item $\A(C,D)$ is equivalent to the category of left modules ${}_{\End(f_!Pg_!)}\mmod$ via the functor $\A(f_!Pg_!, \mathunderscore)$,
    \item there is a ring homomorphism $ r: \End(P) \to \End(f_!Pg_!)$,
    \item the restriction functor along $r$ is compatible with the restriction functor along $(f,g)$ in $\AAA$, i.e., there is a natural isomorphism $r^*Y \cong f^*Yg^*$ where $1$-cells $Y$ in $\AAA$ are regarded as left modules via the embedding.
\end{itemize}
In addition, this construction is {pseudofunctorial} in the sense that if $f$ and $g$ are identities, then $r$ is naturally isomorphic to identity, and if $f$ and $g$ are compositions, then $r$ is naturally isomorphic to the composition of corresponding ring homomorphisms.
\end{proposition}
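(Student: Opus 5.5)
The plan is to obtain everything from the adjoint triple $f_!(\mathunderscore)g_! \dashv f^*(\mathunderscore)g^* \dashv f_*(\mathunderscore)g_*$ between $\A(A,B)$ and $\A(C,D)$. Closedness of $\AAA$ supplies the coextension and hence the right adjoint (Definition~\ref{def:closed}), and \cite[Proposition 3.9]{shulman2007framed} supplies the left adjoint. With $f^*(\mathunderscore)g^*$ faithful and both local categories cocomplete, Theorem~\ref{th:preservation_triple} shows that $f_!Pg_!$ is again a projective, generating and compact object of $\A(C,D)$. Theorem~\ref{th:gabriel} then makes $\phi_{f_!Pg_!} = \A(f_!Pg_!, \mathunderscore)$ an equivalence $\A(C,D) \simeq {}_{\End(f_!Pg_!)}\mmod$, which is the first bullet.

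For the ring homomorphism I take $r = f_!(\mathunderscore)g_!$ acting on endomorphisms, i.e. the map $\A(P,P) \to \A(f_!Pg_!, f_!Pg_!)$ given by the extension functor. It is additive by Corollary~\ref{cor:exactness} and preserves composition and identities by functoriality, so it is a ring map. Passing to opposites gives $r: \End(P) \to \End(f_!Pg_!)$.

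For the third bullet I use the adjunction isomorphism
$$\theta_Y : \A(f_!Pg_!, Y) \cong \A(P, f^*Yg^*),$$
natural in both variables. The left side is $\phi_{f_!Pg_!}(Y)$ with $\End(P)$ acting through $r$, which is $r^*\phi_{f_!Pg_!}(Y)$. The right side is $\phi_P(f^*Yg^*)$. Naturality of $\theta$ in the first variable, applied to $u \in \A(P,P)$, gives $\theta(h \circ f_!ug_!) = \theta(h) \circ u$, so $\theta_Y$ is $\End(P)$-linear. Naturality in $Y$ shows that the isomorphisms assemble into $r^*\circ\phi_{f_!Pg_!} \cong \phi_P \circ f^*(\mathunderscore)g^*$, which is the required compatibility.

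Pseudofunctoriality comes from the pseudofunctoriality of extension \cite[\S4]{shulman2007framed}, as already used in Proposition~\ref{prop:rel_pair_compose}.
\begin{itemize}
\item For identities, the normal cleavage gives $\id_!(\mathunderscore)\id_! \cong \Id$, so $r$ equals the identity up to conjugation by $\id_!P\id_! \cong P$.
\item For composites, the isomorphism $\kappa: (f'f)_!P(g'g)_! \cong f'_!(f_!Pg_!)g'_!$ is natural, so conjugation by $\kappa$ carries $r_{f'f,g'g}$ to $r_{f',g'} \circ r_{f,g}$.
\end{itemize}

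The main obstacle I expect is bookkeeping rather than substance. I must confirm that the coherence isomorphisms of the extension pseudofunctor make these ring-map identifications coherent, with conjugations by unique isomorphisms compatible with composition, and that the op conventions in $\End(P) = \Hom(P,P)^{\mathrm{op}}$ are tracked consistently. I would handle this by working with the explicit cleavage of Proposition~\ref{prop:choice} and reducing each coherence to the uniqueness of factorizations through cocartesian cells.
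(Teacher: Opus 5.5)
Your proposal is correct and takes essentially the same route as the paper. Theorem~\ref{th:preservation_triple} together with Gabriel's theorem gives the first bullet, $r(\alpha) = f_!\alpha g_!$ is a ring map by functoriality and additivity, and the compatibility is the $\End(P)$-linearity of the adjunction bijection $\A(f_!Pg_!, Y) \cong \A(P, f^*Yg^*)$. You read that linearity off naturality in the first variable, where the paper computes it via naturality of the unit $\eta$. For the pseudofunctoriality clause, the paper also notes that faithful restriction functors are stable under composition, so the construction applies to the composite pair $(f'f, g'g)$; you should state this explicitly.
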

\begin{proof}
By Theorem~\ref{th:preservation_triple}, a compact projective generator $P$ in $\A(A,B)$ is sent by extension to a compact projective generator in $\A(C,D)$. Therefore, Gabriel's theorem~\ref{th:gabriel} applies to $\A(C,D)$. The following map $r$ is induced by the extension of scalars functor $f_!(\mathunderscore)g_!$:
\begin{align*}
   r: \End(P) &\to \End(f_!Pg_!)\\ 
   \alpha &\mapsto f_!\alpha g_!.  
\end{align*}
By functoriality and additivity of $f_!(\mathunderscore)g_!$, $r$ is a ring homomorphism.
Now, for all objects $Y \in \A(C,D)$, we denote by $r^*Y$ the restriction on the left of the left $\End(f_!Pg_!)$-module $\Hom(f_!Pg_!, Y)$. Let us show that there is a natural isomorphism of left $\End(P)$-modules $$r^*\Hom(f_!Pg_!, Y) \cong \Hom(P, f^*Yg^*).$$
By adjunction, the abelian group $\Hom(f_!Pg_!, Y)$ is naturally isomorphic to $\Hom(P, f^*Yg^*)$. 
It remains to show that this natural isomorphism respects the action of $\End(P)$.
For all elements $\alpha \in \Hom(f_!Pg_!, Y)$ and $\beta \in \Hom(P,P)^{\mathrm{op}}$, by naturality of the unit $\eta$ of the adjunction, we have:
\begin{align*}
    f^*(\beta \act \alpha)g^* \circ \eta_P  &= f^*(\alpha \circ f_!\beta g_!)g^* \circ \eta_P\\ 
    &= f^*\alpha g^* \circ f^*f_!\beta g_!g^* \circ \eta_P \\
    &= f^*\alpha g^* \circ \eta_P \circ \beta\\
    &= \beta \act (f^*\alpha g^* \circ \eta_P). 
\end{align*}
Finally, pseudofunctoriality follows from the pseudofunctoriality of changing coefficients, and from the stability of faithful functors under composition.
\end{proof}
\begin{example}
Consider the abelian framed bicategory $\BimodZ$ of bimodules over rings, and consider the local categories of $\R$-vector spaces and $\CC$-vector spaces (since $\R$ and $\CC$ are commutative a bimodule is the same thing as vector space), and the compact projective generator $\R$. $\BimodZ$ satisfies the hypothesis of Proposition~\ref{prop:pointwise_gabriel}, therefore Gabriel's theorem can indeed be applied coherently on $\R$-vector spaces and $\CC$-vector spaces.   
\end{example}
\begin{remark}\label{remark:alg1}
Proposition~\ref{prop:pointwise_gabriel} implies that for any such abelian framed bicategories $\AAA$, because of the ring homomorphism $r: \End(P) \to \End(f_!Pg_!)$, \emph{if the ring $\End(P)$ is commutative and the compatibility condition $r(p) \circ \alpha = \alpha \circ r(p)$ holds for all elements $p \in \End(P)$ and $\alpha \in \End(f_!Pg_!)$}, then the ring $\End(f_!Pg_!)$ carries the structure of an $\End(P)$-algebra (see Proposition~\ref{prop:algebra}), and for all $1$-cells $M: C \tobar D$, the image $\A(f_!Pg_!, M)$ of $M$ under the embedding automatically acquires the structure of a left module over an $\End(P)$-algebra. 

Determining whether the ring $\End(P)$ is commutative is in general a difficult problem~\cite{cualuguareanu2010modules}. 
Nevertheless, under the conditions of Section~\ref{section:framed_embedding}, an Eckmann-Hilton argument applies, see Proposition~\ref{prop:endui_commut}, and also sheds light on the second condition.
\end{remark}
\begin{remark}\label{remark:faithful_necessary}
Since natural isomorphisms preserve faithful functors, and since restriction of scalars along ring homomorphisms is always faithful in concrete categories of modules, the condition in Proposition~\ref{prop:pointwise_gabriel} that the restriction functor be faithful is necessary.
\end{remark}
Proposition~\ref{prop:pointwise_gabriel} implies that in a closed abelian framed bicategory, local equivalence to a concrete category of modules propagates via extension of scalars, albeit only pseudofunctorially.
Proposition~\ref{prop:initial_faithful} demonstrates how to apply Proposition~\ref{prop:pointwise_gabriel} without losing functoriality by using a cone over a diagram of pairs of vertical objects whose restriction of scalars functors along the cone components are faithful. This occurs, for instance, in an abelian framed bicategory $\AAA$ with an initial coefficient $I$ such that all restriction functors (equivalently, all restrictions along vertical maps $I \to A$ for $A \in \Ob(\AAA_0)$) are faithful.
\begin{definition}
An \look{initial coefficient} $I$ in a double category $\D$ is an initial object in the vertical category $\D_0$. We write $\iota_X: I \to X$ for the unique vertical arrow from $I$ to a vertical object $X$. 
\end{definition}
\begin{example}
The ring $R$ regarded as an $R$-algebra is initial in the vertical category of $R$-algebras,  the empty set is initial in the vertical category $\set$, and the absorption monoid $\{0,1\}$ is initial in the vertical category of absorption monoids.
\end{example}
\begin{remark}
The existence of an initial coefficient $I$ in a framed bicategory $\D$ allows one to define additional concepts: for example, local categories of left (resp. right) parameterized objects $\DD(A, I)$ (resp. $\DD(I, A)$) for all vertical objects $A$. In a framed bicategory of modules, this corresponds to the usual notion of left (resp. right) modules. 
\end{remark}
\begin{proposition}
    In an abelian framed bicategory $\AAA$ with initial coefficient $I$, if the restriction functors along all vertical maps $I \to A$, $A \in \Ob(\AAA_0)$ are faithful, then all the restriction functors are faithful.
\end{proposition}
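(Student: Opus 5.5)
The plan is to reduce faithfulness of an arbitrary restriction functor to the faithfulness of restrictions out of the initial coefficient. Let $f: A \to C$ and $g: B \to D$ be vertical arrows. By initiality of $I$ in $\AAA_0$ we have $f \circ \iota_A = \iota_C$ and $g \circ \iota_B = \iota_D$. We will use the pseudofunctoriality of change of coefficients \cite[\S 4]{shulman2007framed}, already invoked in Proposition~\ref{prop:rel_pair_compose}. It yields a natural isomorphism
$$\iota_A^*(f^*(\mathunderscore)g^*)\iota_B^* \;\cong\; \iota_C^*(\mathunderscore)\iota_D^*$$
of functors $\A(C,D) \to \A(I,I)$. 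The hypothesis is read as applying to the two-sided restriction along the pair $(\iota_C,\iota_D)$. If the hypothesis is stated only for one-sided restrictions, we first note that the composite of the two one-sided restrictions is isomorphic to the two-sided one, for instance via the cleavage of Proposition~\ref{prop:choice}. The composite of faithful functors is faithful, so the two-sided restriction is then faithful too.

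The key step is the standard cancellation property. Suppose $G \circ F$ is faithful, naturally isomorphic to a faithful functor. Then $F$ is faithful. Take $\alpha,\beta: M \to N$ in $\A(C,D)$ with $f^*\alpha g^* = f^*\beta g^*$. Applying $\iota_A^*(\mathunderscore)\iota_B^*$ gives equal images. Transporting along the natural isomorphism above shows that $\iota_C^*\alpha\iota_D^* = \iota_C^*\beta\iota_D^*$, because naturality makes the conjugation by the isomorphism components injective on morphisms. Faithfulness of $\iota_C^*(\mathunderscore)\iota_D^*$ then gives $\alpha = \beta$.

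The only point requiring care is making sure the pseudofunctoriality isomorphism is natural in the $1$-cell. I would argue this directly, using uniqueness of factorizations through cartesian $2$-cells. Both $\iota_A^*(f^*Mg^*)\iota_B^* \to f^*Mg^* \to M$ and $\iota_C^*M\iota_D^* \to M$ are cartesian liftings of $(\iota_C,\iota_D)$, since cartesian cells compose. Hence there is a unique globular isomorphism between them commuting with these cells. Naturality in $M$ follows from the uniqueness of factorization through $\iota_C^*N\iota_D^* \to N$, exactly as in the proof of Lemma~\ref{lemma:cone_change}. Abelianness is not really needed for this argument.
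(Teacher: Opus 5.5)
Your proposal is correct and is essentially the paper's argument: the paper uses pseudofunctoriality to get $\iota_A^* \circ f^* \cong (f\circ\iota_A)^* = \iota_B^*$ and concludes that $f^*$ is faithful because $\iota_B^*$ is, which is exactly your cancellation step (you carry it out two-sidedly along $(f,g)$). Your check that the comparison isomorphism is natural, via uniqueness of factorization through cartesian cells, and your remark on the one- versus two-sided reading of the hypothesis make explicit what the paper leaves implicit.
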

\begin{proof}
For all vertical arrows $f: A \to B$, pseudofunctoriality of change of coefficient functors yields the natural isomorphism $\iota_A^* \circ f^* \cong (f \circ \iota_A)^* = \iota_B^*$. Since $\iota_B^*$ is faithful, so is $f^*$.
\end{proof}
In an abelian framed bicategory with initial coefficient $I$, if Gabriel's theorem applies to the local category $\AAA(I,I)$, then it applies functorially to all local categories. To see this, we apply Proposition~\ref{prop:pointwise_gabriel} to the extension of scalars functors ${\iota_A}!(\mathunderscore){\iota_B}!: \AAA(I,I) \to \AAA(A,B)$ for all vertical objects $A$ and $B$, and construct the remaining required ring homomorphisms explicitly, using the following lemma. 

\begin{lemma}[Factorization is pseudofunctorial]\label{lemma:warmup}
Let $\D$ be a framed bicategory. For all $2$-cells $\alpha: M \ccs{f}{g} N$, let $\fact \alpha$ denote the factorization of $\alpha$ through the cartesian cell $f^*Ng^* \xrightarrow{\cart} N$. Then, for all $2$-cells $\alpha: M \ccs{f}{g} N$ and $\beta: N \cc{h}{k} Q$, consider the unique isomorphism $\phi: f^*h^*Qk^*g^* \to (hf)^*Q(kg)^*$ given by the uniqueness property of cartesian $2$-cells.
Then, we have $\fact{\beta \circ \alpha} = \phi \circ f^*\fact \beta g^* \circ \fact \alpha$. 
\end{lemma}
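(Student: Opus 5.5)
The plan is to use the uniqueness clause in the universal property of the cartesian $2$-cell $(hf)^*Q(kg)^* \xrightarrow{\cart} Q$. By definition, $\fact{\beta\circ\alpha}$ is the \emph{unique} globular $2$-cell $\gamma : M \to (hf)^*Q(kg)^*$ such that $\cart_{hf,kg}\circ\gamma = \beta\circ\alpha$. It therefore suffices to show that the candidate $\gamma := \phi\circ f^*\fact\beta g^*\circ\fact\alpha$ is globular and satisfies this equation. Globularity is immediate: $\fact\alpha$ lies over $(\id,\id)$ by construction, $f^*\fact\beta g^*$ is the image of a globular cell under the restriction functor, and $\phi$ is globular by definition.

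First I will fix notation. Write $c_{f,g}: f^*Ng^*\to N$ and $c_{h,k}: h^*Qk^* \to Q$ for the chosen cartesian cells, and $c'$ for the cartesian cell $f^*(h^*Qk^*)g^* \to h^*Qk^*$. The isomorphism $\phi$ is characterised as the unique globular cell with
\[
c_{hf,kg}\circ \phi = c_{h,k}\circ c'.
\]
This uses that a composite of cartesian cells is cartesian, which the paper already invokes in the proof of Proposition~\ref{prop:rel_pair_compose}. Second, I will unfold the definition of the restriction functor on the globular cell $\fact\beta : N \to h^*Qk^*$. The cell $f^*\fact\beta g^*$ is the unique factorisation of $\fact\beta\circ c_{f,g}$ through $c'$, that is,
\[
c'\circ f^*\fact\beta g^* = \fact\beta\circ c_{f,g}.
\]

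Then the computation is a chain of these identities:
\[
c_{hf,kg}\circ\phi\circ f^*\fact\beta g^*\circ\fact\alpha = c_{h,k}\circ c'\circ f^*\fact\beta g^*\circ\fact\alpha = c_{h,k}\circ\fact\beta\circ c_{f,g}\circ\fact\alpha = \beta\circ\alpha.
\]
The last step uses $c_{f,g}\circ\fact\alpha=\alpha$ and $c_{h,k}\circ\fact\beta=\beta$. Uniqueness of the factorisation through $c_{hf,kg}$ then gives the claim.

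The only delicate point, and the one I expect to need care, is that all the cells being composed lie over the right vertical arrows, so that the uniqueness clause genuinely applies. The candidate $\gamma$ lies over $(\id,\id)$, and the composite $c_{hf,kg}\circ\gamma$ lies over $(hf,kg)$, as does $\beta\circ\alpha$, so the factorisation problem is well posed. I also need to confirm that $\phi$ as defined in the statement is exactly the comparison cell characterised above. That is a matter of matching conventions, not a real obstacle. No choice of cleavage from Proposition~\ref{prop:choice} is needed, since the argument is valid for any cleavage.
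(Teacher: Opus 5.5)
Your proposal is correct and follows the same route as the paper. The paper also reduces the claim to uniqueness of factorization through the cartesian cell $(hf)^*Q(kg)^* \to Q$. It then checks, in a single commutative diagram, exactly the identities you chain together: the defining property of $\phi$, the defining square of $f^*\fact\beta g^*$, and the factorizations of $\alpha$ and $\beta$.
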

\begin{proof}
By uniqueness of factorization through the cartesian cell $(hf)^*Q(kg)^* \xrightarrow{\ct} Q$, the commutativity of the following diagram suffices:
\[\begin{tikzcd}[baseline=(\tikzcdmatrixname-\the\pgfmatrixcurrentrow-1.base)] 
	M && {(hf)^*Q(kg)^*} \\
	{f^*Ng^*} & N & Q \\
	{f^*h^*Qk^*g^*} && {h^*Qk^*}
	\arrow["{\fact{\beta \alpha}}"{description}, from=1-1, to=1-3]
	\arrow["{\fact \alpha}"', from=1-1, to=2-1]
	\arrow["\alpha"{description}, from=1-1, to=2-2]
	\arrow["{\beta\alpha}"{description}, from=1-1, to=2-3]
	\arrow["{ \cart}", from=1-3, to=2-3]
	\arrow["\cart"{description}, from=2-1, to=2-2]
	\arrow["{f^*\fact \beta g^*}"', from=2-1, to=3-1]
	\arrow["\beta"{description}, from=2-2, to=2-3]
	\arrow["{\fact \beta}"{description}, from=2-2, to=3-3]
	\arrow[bend right = 25.5, from=3-1, to=1-3]
	\arrow["\cart"{description}, from=3-1, to=3-3]
	\arrow["\cart"', from=3-3, to=2-3]
\end{tikzcd}\qedhere
\]
\end{proof}

In the following, we will use the dual of Lemma~\ref{lemma:warmup} for cocartesian $2$-cells.

\begin{proposition}\label{prop:general_functoriality}
 Let $\AAA$ be a locally cocomplete closed abelian framed bicategory with an initial coefficient $I$ such that all restriction functors are faithful. For all compact projective generators $P$ in $\A(I,I)$, there exists a functor:
 \begin{align*}
 G:\AAA_1 &\to {}_{\Z}\mmod \\
 M: C \tobar D &\mapsto \A({\iota_C}_! P {\iota_D}_!, M)
 \end{align*}
 where  ${}_{\Z}\mmod$ denotes the category of left modules over rings and left linear maps.
\end{proposition}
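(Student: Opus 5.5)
Set $P_{C,D} := {\iota_C}_! P {\iota_D}_!$ for every pair of vertical objects $C,D$. The plan has three parts: define $G$ on objects, define it on arbitrary (non-globular) $2$-cells by factorizing through cocartesian cells, and check functoriality with the dual of Lemma~\ref{lemma:warmup}.

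\textbf{Objects.} By Theorem~\ref{th:preservation_triple}, applied to the adjoint triple ${\iota_C}_!(\mathunderscore){\iota_D}_! \dashv \iota_C^*(\mathunderscore)\iota_D^* \dashv {\iota_C}_*(\mathunderscore){\iota_D}_*$ (which exists because $\AAA$ is closed, and whose middle functor is faithful by hypothesis), $P_{C,D}$ is a compact projective generator of $\A(C,D)$. Its endomorphisms form a ring $\End(P_{C,D})$, and $G(M) = \A(P_{C,D}, M)$ is a left $\End(P_{C,D})$-module via precomposition. So $G(M)$ is an object of ${}_{\Z}\mmod$ over the ring $\End(P_{C,D})$. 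By Proposition~\ref{prop:pointwise_gabriel}, the extension functor induces a ring homomorphism $r_{C,D}: \End(P) \to \End(P_{C,D})$.

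\textbf{Morphisms.} Let $\alpha: M \cc{f}{g} N$ be a $2$-cell with $M: C \tobar D$ and $N: C' \tobar D'$. I need a ring map $\rho_{f,g}: \End(P_{C,D}) \to \End(P_{C',D'})$ and a $\rho_{f,g}$-linear map $G(\alpha)$.
\begin{itemize}
\item Since $I$ is initial, $f\iota_C = \iota_{C'}$ and $g\iota_D = \iota_{D'}$.
\item Hence the composite $P \xrightarrow{\cct} P_{C,D} \xrightarrow{\cct} f_!P_{C,D}g_!$ is cocartesian over $(\iota_{C'},\iota_{D'})$.
\item By fact~\ref{fact3}, there is a unique isomorphism $\theta_{f,g}: f_!P_{C,D}g_! \cong P_{C',D'}$ commuting with these cocartesian cells.
\item Define $\rho_{f,g}(\beta) = \theta_{f,g} \circ f_!\beta g_! \circ \theta_{f,g}^{-1}$ (with the $\mathrm{op}$ convention for $\End$). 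This is a ring map by functoriality and additivity of extension.
\end{itemize}
For $x \in \A(P_{C,D}, M)$, the composite $\alpha \circ x: P_{C,D} \to N$ lies over $(f,g)$. Let $\fact{\alpha x}: f_!P_{C,D}g_! \to N$ be its factorization through the cocartesian cell, and set
\[ G(\alpha)(x) = \fact{\alpha x} \circ \theta_{f,g}^{-1}. \]
Additivity in $x$ follows from uniqueness of factorizations together with additivity of $\A(C',D')$. Linearity, $G(\alpha)(x\beta) = G(\alpha)(x)\,\rho_{f,g}(\beta)$, follows from the dual of Lemma~\ref{lemma:warmup}: the factorization of $\alpha x \beta$ equals $\fact{\alpha x}\circ f_!\beta g_!$, because $x\beta$ is globular.

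\textbf{Functoriality.} For identities, normality of the cleavage gives $\theta = \id$ and $\fact{x} = x$. For a composite $\beta\circ\alpha$ over $(hf, kg)$, the dual of Lemma~\ref{lemma:warmup} gives
\[ \fact{\beta\alpha x} = \fact{\beta\,\fact{\alpha x}} \circ \phi, \]
where $\phi$ is the canonical comparison between $(hf)_!$ and $h_!f_!$. It then remains to show $\theta_{hf,kg} = \theta_{h,k} \circ h_!\theta_{f,g}k_! \circ \phi$. Both sides are isomorphisms commuting with the cocartesian cells out of $P$, so they agree by the uniqueness in fact~\ref{fact3}. The same uniqueness yields $\rho_{hf,kg} = \rho_{h,k}\rho_{f,g}$.

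\textbf{Main obstacle.} I expect the hardest part to be this strict functoriality bookkeeping: proving that the isomorphisms $\theta$, $\phi$ and the iterated factorizations compose coherently to give equalities rather than mere isomorphisms. The key tool is that everything is determined by uniqueness of factorization through the cocartesian cell out of $P$, which exists because $I$ is initial.
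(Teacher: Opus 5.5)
Your proposal is correct and follows essentially the paper's construction. Your $\theta_{f,g}$ is the inverse of the paper's isomorphism $\varphi$. Your $G(\alpha)(x)=\fact{\alpha x}\circ\theta_{f,g}^{-1}$ coincides with the paper's $\fact{\alpha}\circ f_!x\circ\varphi$, since $\fact{\alpha x}=\fact{\alpha}\circ f_!xg_!$. Your $\rho_{f,g}$ is the paper's $r_f=G(\hat f)$. Your coherence identity for $\theta$ is equivalent to the paper's $\widehat{g\circ f}=\hat g\circ\hat f$, with the only difference that the paper writes out only the one-sided case $R(M)=I$ while you treat pairs $(f,g)$ directly.

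The one step to tighten is additivity of $G(\alpha)$. This is cleanest via the identity $\fact{\alpha x}=\fact{\alpha}\circ f_!xg_!$ together with additivity of extension, because "$\alpha\circ(x+y)$" has no a priori meaning for non-globular $2$-cells. The appeal to Theorem~\ref{th:preservation_triple} in your object step is harmless but not needed for this statement.
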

\begin{proof}
For clarity, we prove the statement for $1$-cells $M$ and $2$-cells $\alpha$ with $R(M) = I$ and $R(\alpha) = \id_I$. However, the proof only relies on the fact that $L$ is a bifibration (by Proposition~\ref{prop:LRfib}), and the same construction can be carried out with the bifibration $(L,R)$, yielding the above statement.
It remains to define $G$ on $2$-cells, and to show its functoriality. For all vertical arrows $f: A \to B$, $1$-cells $M: A \tobar I$, $N: B \tobar I$ and $2$-cells $\alpha: M \cc{\id_I}{f} N$, there is a unique globular, isomorphic $2$-cell $\varphi$ such that the following diagram commutes:
\begin{equation}\label{diag:uniquecommuting}
    \begin{tikzcd}
	& {{\iota_A}_!P} \\
	P & {f_!{\iota_A}_!P} \\
	& {{\iota_B}_!P}
	\arrow["\cct", from=1-2, to=2-2]
	\arrow["\cct", from=2-1, to=1-2]
	\arrow["\cct"', from=2-1, to=3-2]
	\arrow["\varphi"', from=3-2, to=2-2]
\end{tikzcd}.
\end{equation}
We denote $\varphi^{-1} \circ \cct$ by $\hat f$. $\hat f$ is the unique $2$-cell making the following diagram commutes. Moreover, by fact~\ref{fact2}, $\hat f$ is also cocartesian.
\[\begin{tikzcd}[row sep=tiny]
	& {{\iota_A}_!P} \\
	P \\
	& {{\iota_B}_!P}
	\arrow["{\hat f}", from=1-2, to=3-2]
	\arrow["\cct", from=2-1, to=1-2]
	\arrow["\cct"', from=2-1, to=3-2]
\end{tikzcd}\]
Then, for all elements $m \in G(M)$, let $G(\alpha)(m)$ denotes $\fact \alpha \circ f_! m \circ \varphi$, where $\fact \alpha : f_!M \to N$ is the factorization of $\alpha$ through $M \xrightarrow{cc} f_!M$. By construction, the following diagram commutes. Moreover, since $\hat f$ is cocartesian, $G(\alpha)(m)$ is the unique factorization of $\alpha \circ m$ trough $\hat f$. 
\[\begin{tikzcd}
	& {{\iota_A}_!P} & M \\
	P & {f_!{\iota_A}_!P} & {f!M} \\
	& {{\iota_B}_!P} & N
	\arrow["m", from=1-2, to=1-3]
	\arrow["\cct"{description}, from=1-2, to=2-2]
	\arrow["\cct"{description}, from=1-3, to=2-3]
	\arrow["\alpha", curve={height=-18pt}, from=1-3, to=3-3]
	\arrow["\cct", from=2-1, to=1-2]
	\arrow["\cct"', from=2-1, to=3-2]
	\arrow["{f_!m}", from=2-2, to=2-3]
	\arrow["{\fact \alpha}"', from=2-3, to=3-3]
	\arrow["\varphi"', from=3-2, to=2-2]
	\arrow["{G(\alpha)(m)}"', from=3-2, to=3-3]
\end{tikzcd}\]
Since extension of scalars and composition in an abelian category are additive, $G(\alpha)$ is a group homomorphism. We show it is in fact left $r_f$-linear, where $r_f$ denotes the group homomorphism $G(\hat f): \End({\iota_A}_!P) \to \End({\iota_B}_!P)$. We have to show that $r_f$ is a ring homomorphism, i.e., that it preserves composition, and that $G(\alpha)$ preserves the action of $\End({\iota_A}_!P)$ through $r_f$. For all $1$-cells $X: A \tobar I$, $Y: B \tobar I$, $2$-cells $\gamma: X \ccs{f}{\id_I} Y$ 
and elements $x \in G(X)$ and $a \in \End({\iota_A}_!P)$, consider the following commutative diagram:
\begin{equation}
\begin{tikzcd}\label{diag:threeinone}
	& {{\iota_A}_!P} & {{\iota_A}_!P} & X \\
	P & {f_!{\iota_A}_!P} & {f_!{\iota_A}_!P} & {f!X} \\
	& {{\iota_B}_!P} & {{\iota_B}_!P} & Y
	\arrow["a", from=1-2, to=1-3]
	\arrow["\cct"{description}, from=1-2, to=2-2]
	\arrow["x", from=1-3, to=1-4]
	\arrow["\cct"{description}, from=1-3, to=2-3]
	\arrow["\cct"{description}, from=1-4, to=2-4]
	\arrow["\gamma", curve={height=-18pt}, from=1-4, to=3-4]
	\arrow["\cct", from=2-1, to=1-2]
	\arrow["\cct"', from=2-1, to=3-2]
	\arrow["{f_!a}", from=2-2, to=2-3]
	\arrow["{f_!x}", from=2-3, to=2-4]
	\arrow["{\fact \gamma}"', from=2-4, to=3-4]
	\arrow["\varphi"', from=3-2, to=2-2]
	\arrow["{r_f(a)}"', from=3-2, to=3-3]
	\arrow["\varphi"', from=3-3, to=2-3]
	\arrow["{G(\gamma)(x)}"', from=3-3, to=3-4]
\end{tikzcd}
\end{equation}
Both points follow directly from functoriality of changing coefficients and diagram~\ref{diag:threeinone} with:
\begin{itemize}[noitemsep]
    \item for the first point, $X={\iota_A}_!P$, $Y= {\iota_B}_!P$, $\gamma = \varphi^{-1} \circ \cct$ for all elements $a,x \in \End({\iota_A}_!P)$;
    \item for the second point, $X=M$, $Y=N$, $\gamma = \alpha$ and $x = m$ for all elements $a \in \End({\iota_A}_!P)$.
\end{itemize}
Finally, let us show that $G$ is functorial.
For all vertical arrows $f: A \to B$, $g: B \to C$, $1$-cells $M: A \tobar I$, $N: B \tobar I$, $Q: C \tobar I$, $2$-cells $\alpha: M \ccs{\id_I}{f} N$ and $\beta: N \cc{\id_I}{g} Q$, let $\varphi$ and $\varphi'$ be the unique isomorphisms ${\iota_B}_! \to f_!{\iota_A}_!P$ and ${\iota_C}_! \to g_!{\iota_B}_!P$ such that diagram~(\ref{diag:uniquecommuting}) and its adaptation for $\varphi'$ commute. By uniqueness, $\widehat {g \circ f} = \hat g \circ \hat f$. We must show that for any $m \in G(M)$, $G(\beta\alpha)(m) = G(\beta)(G(\alpha)(m))$. The commutativity of the following diagram suffices:
\[\begin{tikzcd}
	{{\iota_A}_!P} & M \\
	{f_!{\iota_A}_!P} & {f!M} \\
	{{\iota_B}_!P} & N & {g_!f_!M} & {(gf)_!M} \\
	{g_!{\iota_B}_!P} & {g_!N} \\
	{{\iota_C}_!P} & Q
	\arrow["m", from=1-1, to=1-2]
	\arrow["\cct"{description}, from=1-1, to=2-1]
	\arrow["{\hat f}"', curve={height=20pt}, from=1-1, to=3-1]
	\arrow["{\widehat{gf}}"', curve={height=50pt}, from=1-1, to=5-1]
	\arrow["\cct"{description}, from=1-2, to=2-2]
	\arrow["\cct", from=1-2, to=3-4]
	\arrow["{f_!m}", from=2-1, to=2-2]
	\arrow["{\varphi^{-1}}", from=2-1, to=3-1]
	\arrow["{\fact \alpha}", from=2-2, to=3-2]
	\arrow["\cct", from=2-2, to=3-3]
	\arrow["{G(\alpha)(m)}"', from=3-1, to=3-2]
	\arrow["\cct"{description}, from=3-1, to=4-1]
	\arrow["{\hat g}"', curve={height=20pt}, from=3-1, to=5-1]
	\arrow["\cct"{description}, from=3-2, to=4-2]
	\arrow["\phi", from=3-3, to=3-4]
	\arrow[""{name=0, anchor=center, inner sep=0}, "{g_!\fact \alpha}"', from=3-3, to=4-2]
	\arrow[""{name=1, anchor=center, inner sep=0}, "{\fact{\beta\circ \alpha}}", from=3-4, to=5-2]
	\arrow[from=4-1, to=4-2]
	\arrow["{\varphi'^{-1}}", from=4-1, to=5-1]
	\arrow["{\fact \beta}", from=4-2, to=5-2]
	\arrow["{G(\beta)(G(\alpha)(m))}"', shift right=2, draw=none, from=5-1, to=5-2]
	\arrow[from=5-1, to=5-2]
	\arrow["A"{description}, draw=none, from=0, to=1]
\end{tikzcd}\]
where cell $A$ commutes by the dual of Lemma~\ref{lemma:warmup}. This completes the (partial) proof. In the full proof, one would define $2$-cells $\widehat{f,g}$ and ring homomorphisms $r_{f,g}$ for all pairs of vertical arrows $f,g$, but otherwise the notations would have been similar.
\end{proof}
\begin{remark}\label{remark:alg2}
Following Remark~\ref{remark:alg1}, if the rings $\End({\iota_A}_!P)$ and $\End({\iota_B}_!P)$ are actually $\End(P)$-algebras, then the ring homomorphism $r: \End({\iota_A}_!P) \to \End({\iota_B}_!P)$ defined in Proposition~\ref{prop:general_functoriality} is an $\End(P)$-algebra homomorphism: it suffices to verify that $r$ preserves the action of $\End(P)$, which follows from the commutativity of diagram~\ref{diag:threeinone} with $X={\iota_A}_!P$, $Y= {\iota_B}_!P$, $\gamma = \varphi^{-1} \circ \cct$ and $a={\iota_A}_!(p)$ for all elements $p \in \End(P)$, $x \in \End({\iota_A}_!P)$, together with the commutativity of diagram~(\ref{diag:riotaAp}) showing that $r({\iota_A}_!(p)) = {\iota_B}_!(p)$ for all $p \in \End(P)$:
\begin{equation}\label{diag:riotaAp}
\begin{tikzcd}
	{{\iota_A}_!P} &&& {{\iota_A}_!P} \\
	& P & P \\
	{{\iota_B}_!P} &&& {{\iota_B}_!P}
	\arrow["{{\iota_A}_!p}", from=1-1, to=1-4]
	\arrow["{\hat f}"', from=1-1, to=3-1]
	\arrow["{\hat f}", from=1-4, to=3-4]
	\arrow["\cct"{description}, from=2-2, to=1-1]
	\arrow["p", from=2-2, to=2-3]
	\arrow["\cct"{description}, from=2-2, to=3-1]
	\arrow["\cct"{description}, from=2-3, to=1-4]
	\arrow["\cct"{description}, from=2-3, to=3-4]
	\arrow["{{\iota_B}_!p}"', from=3-1, to=3-4]
\end{tikzcd}
\end{equation}
\end{remark}
\begin{proposition}\label{prop:initial_faithful}
Let $\AAA$ be a locally cocomplete closed abelian framed bicategory with an initial coefficient $I$ such that all restriction functors are faithful. For all compact projective generators $P$ in $\A(I,I)$, let $R(\mathunderscore,\mathunderscore): \AAA_0 \times \AAA_0 \to \ring$ be the functor 
\begin{align*}
    (A,B) & \mapsto \End({\iota_A}_!P{\iota_B}_!) = G({\iota_A}_!P{\iota_B}_!)^{\mathrm{op}}\\
    (f: A \to C,g: B \to D) & \mapsto G(\widehat{f,g}) : \End({\iota_A}_!P{\iota_B}_!) \to \End({\iota_C}_!P{\iota_D}_!).  
\end{align*}
For all pairs of vertical arrows $(f,g)$, the change of coefficient functors along $R(f,g)$ are compatible with the change of coefficients functor along $(f,g)$ in the sense of Proposition~\ref{prop:pointwise_gabriel}.
\end{proposition}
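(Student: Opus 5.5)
The plan is to reduce the statement to Proposition~\ref{prop:pointwise_gabriel}, applied to the extension ${(f,g)}_!$ from $\A(A,B)$ to $\A(C,D)$, and then to identify the ring map $R(f,g) = G(\widehat{f,g})$ with the map $r$ built there. Write $P_{A,B} := {\iota_A}_!P{\iota_B}_!$.

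\textbf{Step 1: $P_{A,B}$ is a compact projective generator.} By Theorem~\ref{th:preservation_triple}, applied to the adjoint triple ${\iota_A}_!(\mathunderscore){\iota_B}_! \dashv \iota_A^*(\mathunderscore)\iota_B^* \dashv {\iota_A}_*(\mathunderscore){\iota_B}_*$, the object $P_{A,B}$ is a compact projective generator of $\A(A,B)$. The coextension exists by closedness, the restriction is faithful by hypothesis, and the local categories are cocomplete. Hence Gabriel's Theorem~\ref{th:gabriel} gives $\A(A,B) \simeq {}_{R(A,B)}\mmod$ via $\A(P_{A,B},\mathunderscore)$, and similarly for $(C,D)$.

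\textbf{Step 2: identify the two ring maps.} Since restriction along $(f,g)$ is faithful, Proposition~\ref{prop:pointwise_gabriel} with generator $P_{A,B}$ yields a ring map
\[
r : \End(P_{A,B}) \to \End(f_!P_{A,B}g_!), \qquad \alpha \mapsto f_!\alpha g_!,
\]
together with a natural isomorphism of left $\End(P_{A,B})$-modules $r^*\Hom(f_!P_{A,B}g_!,Y) \cong \Hom(P_{A,B}, f^*Yg^*)$. By pseudofunctoriality of extension, the cocartesian cells give the unique globular isomorphism $\varphi : P_{C,D} \to f_!P_{A,B}g_!$ of diagram~(\ref{diag:uniquecommuting}). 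Conjugation by $\varphi$ is a ring isomorphism $c_\varphi : \End(f_!P_{A,B}g_!) \cong \End(P_{C,D})$. Diagram~(\ref{diag:threeinone}), with $X = P_{A,B}$ and $\gamma = \widehat{f,g}$, shows that $G(\widehat{f,g})(a) = \varphi^{-1}\circ f_!ag_! \circ \varphi$, that is, $R(f,g) = c_\varphi \circ r$.

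\textbf{Step 3: transport the compatibility.} Precomposition with $\varphi$ gives an isomorphism $\Hom(f_!P_{A,B}g_!,Y) \cong \Hom(P_{C,D},Y)$. This isomorphism is equivariant along $c_\varphi$, so restricting along $R(f,g)$ yields
\[
R(f,g)^*\Hom(P_{C,D},Y) \cong r^*\Hom(f_!P_{A,B}g_!,Y) \cong \Hom(P_{A,B}, f^*Yg^*),
\]
naturally in $Y$. This is exactly the compatibility in the sense of Proposition~\ref{prop:pointwise_gabriel}. Functoriality of $R$ itself (strict, not only pseudo) comes from $\widehat{g'g,f'f} = \widehat{g',f'}\circ\widehat{g,f}$, which follows by uniqueness of factorization through cocartesian cells, and from the functoriality of $G$ (Proposition~\ref{prop:general_functoriality}).

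\textbf{Main obstacle.} The hardest part is Step 2. It requires checking carefully that the action twisted by $\varphi$ matches, including the op-convention $\End = \Hom^{\mathrm{op}}$, and that the naturality computation of the unit $\eta$ in Proposition~\ref{prop:pointwise_gabriel} survives composing with $\varphi$. I would verify this elementwise on diagram~(\ref{diag:threeinone}), using that $\widehat{f,g}$ is cocartesian (fact~\ref{fact2}).
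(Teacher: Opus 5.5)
Your proposal is correct and follows essentially the paper's route: the paper takes the composite of precomposition by $\varphi^{-1}$ with the adjunction isomorphism, and checks its equivariance in a single computation using $R(f,\id_I)(a)\circ\varphi^{-1}=\varphi^{-1}\circ f_!a$ together with naturality of $\eta$. That is exactly your factorization $R(f,g)=c_\varphi\circ r$ combined with the equivariance already established in Proposition~\ref{prop:pointwise_gabriel}; the only differences are packaging (you invoke that proposition as a black box, which is why you need Step~1) and that you handle the two-sided case directly, whereas the paper writes out only left restriction.
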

\begin{proof}
The proof follows the pattern of Proposition~\ref{prop:pointwise_gabriel}. As in Proposition~\ref{prop:general_functoriality}, we present only the case of left restriction. Using the notation of Proposition~\ref{prop:general_functoriality}, for all vertical arrows $f: A \to B$, we must show that the following group isomorphism, natural in $1$-cells $N: B \tobar I$, preserves the action of $\End({\iota_A}_!P)$: $$R(f,\id_I)^*\A({\iota_B}_!P, N) \xrightarrow{\mathunderscore \circ \varphi^{-1} }{\cong} \A(f_!{\iota_A}_!P, N) \cong \A({\iota_A}_!P, f^*N)$$
For all $1$-cells $N: B \tobar I$, and elements $n \in \A({\iota_B}_!P, N)$ and $a \in \End({\iota_A}_!P)$, we have:
\begin{align*}
    f^*((a \act n) \circ \varphi^{-1}) \circ \eta_{{\iota_A}_!P} &= f^*(n \circ R(f,\id_I)(a) \circ \varphi^{-1}) \circ \eta_{{\iota_A}_!P}\\
    &= f^*(n \circ \varphi^{-1} \circ f_!a) \circ \eta_{{\iota_A}_!P}\\
    &= f^*n \circ f^*\varphi^{-1} \circ f^*f_!a  \circ \eta_{{\iota_A}_!P}\\
    &= f^*n \circ f^*\varphi^{-1} \circ \eta_{{\iota_A}_!P} \circ a\\
    &= a \act (f^* (n \circ \varphi^{-1}) \circ \eta_{{\iota_A}_!P}) \qedhere
\end{align*}
\end{proof}

\begin{remark}\label{remark:all_faithful}
    The condition that all restriction functors be faithful is very restrictive. Restriction of scalars along ring morphisms is always faithful, but this rarely holds for other examples of framed bicategories. For distributors and spans (see~\cite[Example 2.4]{shulman2007framed}), restriction is faithful only along epimorphisms.
    Still, this condition is necessary; see Remark~\ref{remark:faithful_necessary}.
\end{remark}

\begin{example}\label{ex:bimodules_unsmash}
Consider the abelian framed bicategory $\BimodZ$ of modules over rings. $\BimodZ$ is closed, locally cocomplete, and has an initial coefficient: the ring $\Z$. $U_{\Z}$ is $\Z$ regarded as a $\Z$-module, i.e, as an abelian group. $U_{\Z}$ is a compact projective generator in the local category of abelian groups of $\BimodZ$. Moreover, restriction of scalars functors are faithful. 
By definition, for all rings $A$ and $B$, we have the $A$-$B$-bimodule isomorphism ${\iota_A}_!\Z{\iota_B}_! \cong U_A\otimes_{\Z}U_B$, 
hence the ring isomorphism $\End({\iota_A}_!\Z{\iota_B}_!) \cong A \otimes_{\Z} B^{\mathrm{op}}$, and for all $A$-$B$-bimodule the left $A \otimes_{\Z} B^{\mathrm{op}}$ module isomorphism $\Hom({\iota_A}_!\Z{\iota_B}_!, M) \cong M$.
Therefore, Propositions~\ref{prop:general_functoriality} applies and yields a functor $G: \BimodZ_1 \to {}_{\Z}\mmod$ sending an $A$-$B$-bimodule
$M$ to $M$ regarded as a left $A \otimes_{\Z} B^{\mathrm{op}}$ module, and similarly for morphisms.
Intuitively, $G$ smashes bimodules into left modules.
\end{example}

Putting together Propositions~\ref{prop:pointwise_gabriel},~\ref{prop:general_functoriality}, and~\ref{prop:initial_faithful}, we see that for any abelian framed bicategory $\AAA$ satisfying the hypotheses of Proposition~\ref{prop:initial_faithful}, all local abelian categories are equivalent to concrete categories of left modules, coherently with respect to change of coefficient functors, and functorially. One might ask whether this data assemble into a framed functor~\cite[Definition 6.1]{shulman2007framed}, the correct notion of morphism between framed bicategories, whose target is the concrete abelian framed bicategory of bimodules over varying rings $\BimodZ$. This is the content of the following section.   

\subsection{A framed embedding}\label{section:framed_embedding}

In this section, we construct for all abelian framed bicategories $\AAA$ satisfying the conditions of Proposition~\ref{prop:initial_faithful} and such that $U_I$ is a compact projective generator, a lax framed functor into 
$\BimodZ$. This will actually yield a lax framed functor into the concrete abelian framed bicategory of bimodules over $\End(P)$-algebras $\BimodP$.
There are several variants of framed functors, but the lax ones are often of greatest interest:

\begin{definition}[{\cite[Definition 6.14]{shulman2007framed}}]
A \look{lax framed functor} $F: \AAA \to \BBB$ between framed bicategories is the data of a \look{vertical functor} $F_0 : \AAA_0 \to \BBB_0$ and an \look{horizontal functor} $F_1: \AAA_1 \to \BBB_1$ satisfying $L \circ F_1 = F_0 \circ L$ and $R \circ F_1 = F_0 \circ R$, along with natural transformations $F_\odot : F_1(M) \odot F_1(N) \to F_1(M \odot N)$ and $F_U : U_{F_0(A)} \to F_1(U_A)$ satisfying the usual axioms for $2$-functors. If the natural transformations $F_\odot$ and $F_U$ go in the reverse direction (resp. are isomorphism), then $F$ is said to be \look{oplax} (resp. \look{strong}) instead.
\end{definition}

For any abelian framed bicategories $\AAA$ satisfying the conditions of Proposition~\ref{prop:initial_faithful} with a compact projective generator $P$ in $\A(I,I)$, consider the functors $G$ and $R$ defined in Propositions~\ref{prop:general_functoriality} and~\ref{prop:initial_faithful}.
For all $1$-cells $M: A \tobar B$, $G(M)$ is a left $R(A,B)$-module, whereas we expect $F_1(M)$ to be an $F_0(A)$-$F_0(B)$-bimodule. Thus we need a way to 'unsmash` the ring $R(A,B)$, i.e., we need to show that $F_1(M)$ is a left-$F_0(A) \otimes_{\Z} F_0(B)$ module for some functor $F_0$. Since the ring-valued functor $R(\mathunderscore, I)$ is a simple candidate for $F_0$, a two-step approach would be:
\begin{enumerate}
  \item first, exhibit a natural ring homomorphism: $$\psi_{A,B}: R(A,P) \otimes_{\Z} R(B,P)^{\mathrm{op}} \to R(A,B);\label{step:1}$$
  \item second, show that restriction of scalars along $\psi$ can be applied on $G$, preserving its functoriality.\label{step:2}. 
\end{enumerate}
However, the first step proves challenging at this level of generality, not to mention the natural transformations $F_\odot$ and $F_U$.
Therefore, we restrict our attention to a well-behaved family of abelian framed bicategory,  which we call \look{module-like}.

\begin{definition}\label{def:modulelike}
A \look{module-like} abelian framed bicategory is a locally cocomplete, closed, abelian framed bicategory $\AAA$ with an initial coefficient $I$ such that all restriction functors are faithful and $U_I$ is a compact projective generator of $\A(I,I)$.
\end{definition}

\begin{example}
    The prototypical examples of module-like abelian framed bicategories are framed bicategories of modules as $\Bimod$ ($I$ is $R$ regarded as an $R$-algebra), $\BimodZ$ ($I = \Z$). This also includes trivial framed bicategories where the vertical category is a point category and the horizontal category is a category of modules.
\end{example}

From this point onward, let $\AAA$ denote a module-like abelian framed bicategory with initial coefficient $I$. A consequence of Definition~\ref{def:modulelike} is that certain endomorphism rings automatically become $\End(U_I)$-algebras. In particular, the ring $\End(U_I)$ is now commutative.

\begin{proposition}\label{prop:endui_commut} In $\End(U_I)$, the two operations $ \mathfrak l_{U_I}\circ (\mathunderscore \odot \mathunderscore) \circ \mathfrak l_{U_I}^{-1}$ and $\circ$ coincide and are commutative. In particular, $\End(U_I)$ is commutative ring. 
\end{proposition}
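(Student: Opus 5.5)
We prove this with an Eckmann–Hilton argument on the set $\End(U_I) = \A(U_I,U_I)$, carrying two binary operations with common unit $\id_{U_I}$. The first is vertical composition $\circ$. The second is
$$a \star b := \mathfrak l_{U_I}\circ (a \odot b) \circ \mathfrak l_{U_I}^{-1},$$
where $a \odot b : U_I \odot U_I \to U_I \odot U_I$ is the horizontal composite of globular $2$-cells. Horizontal composition is a functor $\A(I,I)\times\A(I,I)\to\A(I,I)$, so we get the interchange law
$$(a\circ b)\odot(c\circ d) = (a\odot c)\circ(b\odot d).$$
Conjugating by the isomorphism $\mathfrak l_{U_I}$ turns this into $(a\circ b)\star(c\circ d) = (a\star c)\circ(b\star d)$.

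\textbf{Units.} We must check that $\id_{U_I}$ is a two-sided unit for $\star$. Naturality of $\mathfrak l$ in the globular argument gives
$$\mathfrak l_{U_I}\circ(\id_{U_I}\odot b) = b\circ \mathfrak l_{U_I},$$
so $\id\star b = b$. For the other side, naturality of $\mathfrak r$ gives $\mathfrak r_{U_I}\circ (a\odot \id) = a\circ \mathfrak r_{U_I}$. To conclude $a\star \id = a$ we replace $\mathfrak r_{U_I}$ by $\mathfrak l_{U_I}$, which is allowed by Lemma~\ref{lemma:reql} (Kelly). This is the one non-formal input and the step requiring care: the conjugation must use the same isomorphism on both sides, and without Kelly's lemma only a twisted version of unitality would be available. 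Note also that $\id_{U_I}\odot\id_{U_I} = \id_{U_I\odot U_I}$ by functoriality, so both operations share the unit $\id$.

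\textbf{Eckmann–Hilton.} With a common unit and the interchange law, the standard argument applies:
$$a\circ b = (a\star \id)\circ(\id\star b) = (a\circ \id)\star(\id\circ b) = a\star b,$$
and
$$a\star b = (\id\circ a)\star(b\circ \id) = (\id\star b)\circ(a\star \id) = b\circ a.$$
Hence $\star=\circ$, and both are commutative.

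\textbf{Ring structure.} Additivity is already part of the data: $\End(U_I)$ is the endomorphism ring of an object of the abelian category $\A(I,I)$, with $\circ$ bilinear. Passing to the opposite ring $\Hom(U_I,U_I)^{\mathrm{op}}$, as in the paper's convention for $\End$, preserves commutativity. Therefore $\End(U_I)$ is a commutative ring. The module-like hypotheses are not needed beyond $\A(I,I)$ being additive; the result holds in any abelian framed bicategory.
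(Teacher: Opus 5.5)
Your proof is correct and is essentially the paper's argument. It uses the same Eckmann--Hilton setup: the interchange law comes from functoriality of $\odot$ conjugated by $\mathfrak l_{U_I}$, and $\id_{U_I}$ is the common unit. You also appeal to naturality of $\mathfrak r$ together with Lemma~\ref{lemma:reql} for the right unit law $a \star \id_{U_I} = a$, which makes explicit a step the paper attributes only to naturality of $\mathfrak l$.
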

\begin{proof}
The proof is similar to the argument used in~\cite[\S I.1.3.3]{rivano1972categories}.
In details, consider the following binary operations: multiplication in $\End(U_I)$ and $\times:  (\alpha, \beta) \mapsto \mathfrak l_{U_I} \circ (\alpha \odot \beta) \circ \mathfrak l_{U_I}^{-1}$. By naturality of $\mathfrak l$ the identity $2$-cell $1 \in \End(U_I)$ is also a unit for $\times$. For all elements $\alpha, \beta, \gamma, \delta \in \End(U_I)$, we have:
\begin{align*}
    (\alpha \times \beta) \circ (\gamma \times \delta) &= \mathfrak l_{U_I} \circ (\alpha \odot \beta) \circ (\gamma \odot \delta) \circ \mathfrak l_{U_I}^{-1}  \\
    &=  \mathfrak l_{U_I} \circ ((\alpha \circ \gamma) \odot (\beta \circ \delta)) \circ \mathfrak l_{U_I}^{-1} \\
    &= (\alpha \circ \gamma) \times (\beta \circ \delta).
\end{align*}
Hence the Eckmann-Hilton argument applies. 
\end{proof}

\begin{lemma}\label{lemma:simple_natiso}
    There is a natural isomorphism ${\iota_A}_! U_I \odot U_I {\iota_C}_! \cong {\iota_A}_! (U_I \odot U_I) {\iota_C}_! $ commuting with the $2$-cells $U_I \odot U_I \xrightarrow{\cct \odot \cct } {\iota_A}_! U_I \odot U_I {\iota_C}_!$ and $U_I \odot U_I \xrightarrow{\cct} {\iota_A}_! (U_I \odot U_I) {\iota_C}_!$. In particular, the $2$-cell $U_I \odot U_I \xrightarrow{\cct \odot \cct } {\iota_A}_! U_I \odot U_I {\iota_C}_!$ is cocartesian. 
\end{lemma}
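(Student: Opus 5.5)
The plan is to express both sides through special objects and the natural isomorphisms $\lambda_!$ and $\rho_!$ from Section~\ref{section:fb}, and then read off the isomorphism from pseudo-associativity. By the discussion after Definition~\ref{def:special_objects}, left extension along $\iota_A$ is naturally isomorphic to $A_{\iota_A} \odot (\mathunderscore)$: there is a unique isomorphism $\lambda_!: A_{\iota_A}\odot M \cong {\iota_A}_! M$ commuting with the cocartesian cells. Symmetrically, right extension along $\iota_C$ is given by $(\mathunderscore)\odot {}_{\iota_C} C$ via $\rho_!$. Moreover, the cocartesian cell $M \to {\iota_A}_!M$ corresponds to the composite $M \cong U_I\odot M \xrightarrow{\chi_{\iota_A}\odot \id} A_{\iota_A}\odot M$.

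Using these identifications together with the associator $\mathfrak a$, I would build the chain
\begin{align*}
{\iota_A}_! U_I \odot U_I {\iota_C}_! &\cong (A_{\iota_A}\odot U_I)\odot (U_I\odot {}_{\iota_C}C) \\
&\cong A_{\iota_A}\odot (U_I\odot U_I)\odot {}_{\iota_C}C \cong {\iota_A}_!(U_I\odot U_I){\iota_C}_!,
\end{align*}
where the last step uses the two-sided version of $\lambda_!,\rho_!$. In that version, the extension along the pair $(\iota_A,\iota_C)$ is obtained as a left extension followed by a right extension, and the cocartesian cell is the corresponding composite. Naturality in $A$ and $C$, or in the $1$-cells if one replaces $U_I$ by variables, follows from the naturality of $\lambda_!$, $\rho_!$ and $\mathfrak a$.

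Next I would check compatibility with the two $2$-cells. The cell $\cct\odot\cct$ is the horizontal composite $(\chi_{\iota_A}\odot \id)\odot(\id\odot{}_{\iota_C}\chi)$ up to unitors. The cell $\cct$ into ${\iota_A}_!(U_I\odot U_I){\iota_C}_!$ is $\chi_{\iota_A}\odot \id_{U_I\odot U_I}\odot {}_{\iota_C}\chi$ up to unitors. These agree under the chain of isomorphisms by the interchange law, i.e.\ functoriality of $\odot$, together with the coherence of $\mathfrak a$, $\mathfrak l$ and $\mathfrak r$. Lemma~\ref{lemma:reql} is useful here: it lets me identify $\mathfrak l_{U_I}$ with $\mathfrak r_{U_I}$ when the unitors are removed, so the two ways of stripping $U_I\odot U_I$ agree. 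Once this commutation is established, the final claim follows from fact~\ref{fact2}. The composite of $\cct\odot\cct$ with the isomorphism is cocartesian, and isomorphisms are cocartesian, so $\cct\odot\cct$ is cocartesian as well. Equivalently, cocartesian cells are determined up to unique isomorphism, as in fact~\ref{fact3}.

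The main obstacle is the bookkeeping in the coherence diagram. One has to confirm that the $2$-cells $\chi$ and ${}_g\chi$ really make the displayed cells cocartesian when they are horizontally composed on both sides at once. That is, the two-sided cocartesian cell must be shown to factor as a left-then-right composite that is compatible with the cleavage of Proposition~\ref{prop:choice}. I would first prove it for one side, showing that $\chi_{\iota_A}\odot N$ is cocartesian over $(\iota_A,\id)$ for any $N$; this is exactly the displayed cocartesian cell of Section~\ref{section:fb}. I would then apply the symmetric statement and compose, using the fact that composites of cocartesian cells over $(\iota_A,\id)$ and $(\id,\iota_C)$ are cocartesian over $(\iota_A,\iota_C)$.
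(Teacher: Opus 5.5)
Your proposal is correct and follows the paper's proof. Both realize the one-sided extensions as $A_{\iota_A}\odot(\mathunderscore)$ and $(\mathunderscore)\odot{}_{\iota_C}C$ via $\lambda_!$ and $\rho_!$, pass through a chain of coherence isomorphisms, and then check compatibility with the cocartesian cells; the only difference is that the paper routes through $A_{\iota_A}\odot{}_{\iota_C}C$ by stripping and reinserting the units, while you use the associator directly.

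One small citation slip: write $\cct\odot\cct=\phi^{-1}\circ\cct$ for your isomorphism $\phi$. The final claim then follows from closure of cocartesian cells under composition with the globular isomorphism $\phi^{-1}$, not from fact~\ref{fact2}, which concludes that $\beta$ is cocartesian from $\beta\circ\alpha$ and $\alpha$, whereas here the unknown cell is the one applied first.
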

\begin{proof}
Consider the following chain of natural isomorphisms:
\begin{align*}
{\iota_A}_! U_I \odot U_I {\iota_C}_! &\cong (A_{\iota_A} \odot U_I) \odot (U_I \odot {}_{\iota_C}C)\\
&\cong  A_{\iota_A} \odot {}_{\iota_C}C\\
&\cong A_{\iota_A} \odot ((U_I \odot U_I) \odot {}_{\iota_C}C)\\
&\cong {\iota_A}_! (U_I \odot U_I) {\iota_C}_!.
\end{align*}
It is then elementary to build a commutative diagram involving the above $2$-cells and the cocartesian arrows $\lambda_!$ and $\rho_!$ defined in Section~\ref{subsection:fb}.
\end{proof}

\begin{proposition}\label{prop:algebra}
For all objects $A$, the ring homomorphism $$r : \End(U_I) \to \End({\iota_A}_!U_I)$$ from Proposition~\ref{prop:pointwise_gabriel} induces an $\End(U_I)$-algebra structure on $\End({\iota_A}_!U_I)$.
\end{proposition}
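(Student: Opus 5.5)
To obtain the $\End(U_I)$-algebra structure, I will show that $r$ lands in the center of $\End({\iota_A}_!U_I)$. Since $\End(U_I)$ is commutative by Proposition~\ref{prop:endui_commut}, this is exactly the condition of Remark~\ref{remark:alg1}: $r(p)\circ\alpha=\alpha\circ r(p)$ for all $p\in\End(U_I)$ and $\alpha\in\End({\iota_A}_!U_I)$. Here $r(p)={\iota_A}_!p$ (with ${\iota_C}_!$ on the right, $C$ being the right coefficient, e.g.\ $C=I$).

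The strategy is to realize both composites as horizontal composites, mirroring the Eckmann–Hilton argument of Proposition~\ref{prop:endui_commut}. Using the special-object description ${\iota_A}_!(\mathunderscore){\iota_C}_!\cong A_{\iota_A}\odot(\mathunderscore)\odot{}_{\iota_C}C$ and the isomorphisms of Lemma~\ref{lemma:simple_natiso}, I will write ${\iota_A}_!U_I{\iota_C}_!\cong {\iota_A}_!U_I\odot U_I{\iota_C}_!$. Under this identification, ${\iota_A}_!p$ should correspond to $\mathrm{id}\odot p$, acting on the middle $U_I$ factor. To check this I will use the uniqueness of factorization through the cocartesian cell $U_I\odot U_I\to {\iota_A}_!(U_I\odot U_I){\iota_C}_!$ of Lemma~\ref{lemma:simple_natiso}, together with Lemma~\ref{lemma:reql}, which says $\mathfrak l_{U_I}=\mathfrak r_{U_I}$, so that $p\odot \mathrm{id}$ and $\mathrm{id}\odot p$ transported along $\mathfrak l$ both equal $p$.

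For an arbitrary $\alpha$, I need it to act on the ``outer'' factors so that it commutes with $\mathrm{id}\odot p$ by the interchange law. The key tool will be the cocartesian property. Both $\alpha\circ{\iota_A}_!p$ and ${\iota_A}_!p\circ\alpha$ are globular, so they are determined by their precomposition with the cocartesian cell $c:U_I\to{\iota_A}_!U_I$ (cocartesian cells are epic, fact~\ref{fact1}). The first gives $\alpha\circ c\circ p$. For the second, I will write $\alpha\circ c=\alpha\circ c\circ\mathfrak l\circ\mathfrak l^{-1}$ and decompose $U_I\cong U_I\odot U_I$. I then compare $(\alpha\circ c)\odot p$ against $p\odot(\alpha\circ c)$ via naturality of the unitors along the non-globular cells. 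The goal is the identity ${\iota_A}_!p\circ\alpha\circ c=\alpha\circ c\circ p$, which follows from naturality of $\mathfrak l$ with respect to $p$ once ${\iota_A}_!p$ is identified with ``$p$ acting through $U_I$''.

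The main obstacle is this last identification: showing that ${\iota_A}_!p$ acts on every $2$-cell $U_I\to{\iota_A}_!U_I$ by precomposition with $p$, uniformly, and not only on $c$. I would first try to express an arbitrary cell $x:U_I\Rightarrow{\iota_A}_!U_I$ over $(\iota_A,\iota_C)$ as $x\cong \mathfrak l\circ(U_I\odot x)\circ\mathfrak l^{-1}$ and then use the interchange law $(\mathrm{id}\odot x)\circ(p\odot\mathrm{id})=(p\odot\mathrm{id})\circ(\mathrm{id}\odot x)$, with the coherence of Lemma~\ref{lemma:simple_natiso} supplying the transport. Once centrality is established, $\End({\iota_A}_!U_I)$ is an $\End(U_I)$-algebra via $r$, as explained in Remark~\ref{remark:alg1}.
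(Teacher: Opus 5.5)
Your overall plan matches the paper's: show ${\iota_A}_!p$ is central, use Eckmann–Hilton to get $p\odot\id_{U_I}=\id_{U_I}\odot p$, transport this via Lemma~\ref{lemma:simple_natiso}, then apply the interchange law. The gap is in the step you single out as the main obstacle, and your proposed way of closing it does not work as written.

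Precomposing with $c$ is legitimate, since cocartesian cells are epic. But it forces you to prove ${\iota_A}_!p\circ x=x\circ p$ for the non-globular cell $x=\alpha\circ c\colon U_I\Rightarrow{\iota_A}_!U_I$. This cell lies over $\iota_A$ on the left, so left whiskering by a unit must be $U(\iota_A)\odot x\colon U_I\odot U_I\Rightarrow U_A\odot{\iota_A}_!U_I$, not $\id_{U_I}\odot x$. Naturality of $\mathfrak l$ then reads $x\circ\mathfrak l_{U_I}=\mathfrak l_{{\iota_A}_!U_I}\circ(U(\iota_A)\odot x)$. Interchanging with $p\odot\id_{U_I}$ only gives $(U(\iota_A)\circ p)\odot x$. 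To move $p$ to the codomain you would need some $q\in\End(U_A)$ with $U(\iota_A)\circ p=q\circ U(\iota_A)$, and nothing in the hypotheses provides one. So your interchange identity $(\id\odot x)\circ(p\odot\id)=(p\odot\id)\circ(\id\odot x)$ is ill-typed, and the obstacle stays open.

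The fix is to put the free unit factor on the right, where the vertical boundary is $\id_I$. This is exactly where the right coefficient being $I$ is used. For ${\iota_A}_!U_I{\iota_C}_!$ with $C\neq I$ no such factor exists, which is why the paper remarks that the reasoning does not extend; you should drop the general-$C$ framing.

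Once the unit factor is on the right, the detour through $c$ and non-globular cells is unnecessary:
\begin{itemize}
\item Conjugate by $\mathfrak r_{{\iota_A}_!U_I}$. Naturality for globular cells turns $\alpha$ into $\alpha\odot\id_{U_I}$ and ${\iota_A}_!p$ into ${\iota_A}_!p\odot\id_{U_I}$.
\item Show ${\iota_A}_!p\odot\id_{U_I}=\id_{{\iota_A}_!U_I}\odot p$ by precomposing both sides with $c\odot\id_{U_I}$. This cell is cocartesian, hence epic, by Lemma~\ref{lemma:simple_natiso} with $C=I$, and then $p\odot\id_{U_I}=\id_{U_I}\odot p$ gives the equality.
\item Finish with the interchange law $(\alpha\odot\id_{U_I})\circ(\id_{{\iota_A}_!U_I}\odot p)=(\id_{{\iota_A}_!U_I}\odot p)\circ(\alpha\odot\id_{U_I})$.
\end{itemize}
This is precisely the paper's argument.
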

\begin{proof}
Recall that $r$ is defined by $p \mapsto {\iota_A}_! p$. The action induced by $r$ is defined as:
\begin{align*}
   \End(U_I) \times \End({\iota_A}_!U_I) &\to \End({\iota_A}_!U_I)\\
   (p, \alpha) &\mapsto \alpha \circ {\iota_A}_!p.
\end{align*}
By Proposition~\ref{prop:endui_commut}, $\End(U_I)$ is commutative. It remains to check that the image of $r$ is contained in the center of $\End({\iota_A}_!U_I)$, i.e, for all elements $p \in \End(U_I)$ and $\alpha \in \End({\iota_A}_!U_I)$, we have to show $\alpha \circ {\iota_A}_! p = {\iota_A}_! p \circ \alpha$.
Conjugating by the natural isomorphism $\mathfrak l$, it suffices to show that  
$(\alpha \odot \id_{U_I}) \circ ({\iota_A}_!p \odot \id_{U_I}) = ({\iota_A}_!p \odot \id_{U_I}) \circ (\alpha \odot \id_{U_I})$.
Let us first show the equality ${\iota_A}_!p \odot \id_{U_I} = {\iota_A}_!\id_{U_I} \odot p$. By Lemma~\ref{lemma:simple_natiso}, it suffices to show ${\iota_A}_!(p \odot \id_{U_I}) = {\iota_A}_!(\id_{U_I} \odot p)$, which holds by Proposition~\ref{prop:endui_commut}.
Finally, consider the following chain of equalities:
\begin{align*}
    (\alpha \odot \id_{U_I}) \circ ({\iota_A}_!p \odot \id_{U_I}) &= (\alpha \odot \id_{U_I}) \circ ({\iota_A}_!\id_{U_I} \odot p)\\
    &= (\alpha \odot \id_{U_I}) \circ (\id_{{\iota_A}_!U_I} \odot p)\\
    &= (\id_{{\iota_A}_!U_I} \odot p)  \circ (\alpha \odot \id_{U_I})\\
    &= ({\iota_A}_!\id_{U_I} \odot p) \circ (\alpha \odot \id_{U_I})\\
    &= ({\iota_A}_!p \odot \id_{U_I}) \circ (\alpha \odot \id_{U_I}). \qedhere
\end{align*}
\end{proof}

\begin{remark}
The same reasoning does not adapt to show that for any objects $A$ and $B$, the ring $\End({\iota_A}!U_I{\iota_B}!)$ is an $\End(U_I)$-algebra. Nevertheless, Proposition~\ref{prop:algebra} suffices for our purposes.
\end{remark}

At this stage, Proposition~\ref{prop:algebra} and Remarks~\ref{remark:alg1} and~\ref{remark:alg2} apply, allowing us to refine our construction:  The functor $G: \AAA_1 \to {}_{\Z} \mmod$ from Proposition~\ref{prop:general_functoriality} is promoted to a functor into modules over $\End(U_I)$-algebras, and the functor $R: \AAA_0 \times \AAA_0 \to \ring$ from Proposition~\ref{prop:initial_faithful} is promoted to a functor into $\End(U_I)$-algebras.
Therefore, from now one we construct a lax framed functor into the abelian framed bicategory $\BimodUI$ rather than into $\BimodZ$. In particular, we define the following vertical functor $F_0$.

\begin{definition}
Let $F_0: \AAA_0 \to \BimodUI_0$ be the functor $R(\mathunderscore, I)$, sending an object $A$ in $\AAA$ to the $\End(U_I)$-algebra $\End({\iota_A}_!U_I)$.
\end{definition}

\begin{example}
Consider the module-like abelian framed bicategory $\Bimod$. The ring $\End(U_I)$ is isomorphic to $R$, and the functor $F_0: {}_R\alg \to {}_{\End(U_I)}\alg$ sends an $R$-algebra $A$ to the $\End(U_I)$-algebra $\End(A\otimes_{R} U_R) \cong A$, and similarly for morphisms.
\end{example}

\subsubsection{The horizontal functor \texorpdfstring{$F_1$}{F1}}

Let us now define the horizontal functor $F_1: \AAA_1 \to \BimodUI_1$ by applying the two steps~\ref{step:1} and~\ref{step:2}. We start by showing that for all objects $A$, $F_0(A)^{\mathrm{op}}$ can be expressed as $R(I, A)$. Note that this is \emph{not} the same as showing $U_I {\iota_A}_! \cong {\iota_A}_!U_I$ for all objects $A$. In particular, module-like categories are not automatically involutive in the sense of~\cite[Def. 10.1]{shulman2007framed}.

\begin{proposition}\label{prop:op}
For all vertical arrows $f: A \to B$, there is a ring isomorphism $\swap_f: \End(f_!U_A) \to \End(U_Af_!)^{\mathrm{op}} $. When $A = I$, these assemble into a natural isomorphism of $\End(U_I)$-algebras $\swap_M: R(M, I)^{\mathrm{op}} \Rightarrow R(I, M)$.
\end{proposition}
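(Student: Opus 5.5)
We construct $\swap_f$ by combining horizontal composition with the unitors, using a construction in the spirit of Proposition~\ref{prop:endui_commut}. The key observation is that $f_!U_A$ and $U_Af_!$ are, by the special-object description of Section~\ref{subsection:fb}, isomorphic to $B_f \odot U_A \cong B_f$ and $U_A \odot {}_fB \cong {}_fB$ respectively. Here we use the natural isomorphisms $\lambda_!$ and $\rho_!$. So the task reduces to exhibiting a ring anti-isomorphism $\End(B_f) \cong \End({}_fB)$. For this we use that $B_f$ and ${}_fB$ form a dual pair (the companion/conjoint pair of~\cite[Theorem 4.1]{shulman2007framed}), with unit $U_A \to {}_fB \odot B_f$ and counit $B_f \odot {}_fB \to U_B$ built from the cells $\chi_f$, ${}_f\chi$ and the cartesian cells.

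The map is the mate construction. For $\alpha \in \End(B_f)$, define $\swap_f(\alpha)$ as the composite
\[
{}_fB \cong U_A \odot {}_fB \to {}_fB \odot B_f \odot {}_fB \xrightarrow{\id \odot \alpha \odot \id} {}_fB \odot B_f \odot {}_fB \to {}_fB \odot U_B \cong {}_fB.
\]
The triangle identities then give three facts: $\swap_f(\id)=\id$; $\swap_f(\beta\circ\alpha)=\swap_f(\alpha)\circ\swap_f(\beta)$, which is contravariance of mates; and the inverse is the symmetric mate construction. Additivity follows from additivity of $\odot$ (Definition~\ref{def:afb}). Hence $\swap_f$ is a ring isomorphism $\End(f_!U_A)\to\End(U_Af_!)^{\mathrm{op}}$. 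Transporting along $\lambda_!,\rho_!$, this takes care of the normal cleavage from Proposition~\ref{prop:choice}, since for $A=I$ we have $f_!U_I = {\iota_M}_!U_I$ on the nose.

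For $A=I$ and $f=\iota_M$, three things remain to be checked. First, $\swap$ is an $\End(U_I)$-algebra map. By Proposition~\ref{prop:algebra}, the structure map on $R(M,I)$ is $p\mapsto {\iota_M}_!p$, and the analogous one on $R(I,M)$ is $p\mapsto p\,{\iota_M}_!$. So we must show that $\swap({\iota_M}_!p) = p\,{\iota_M}_!$. This follows from Lemma~\ref{lemma:simple_natiso}-type identifications together with Proposition~\ref{prop:endui_commut}, which lets us slide $p$ from one $U_I$ factor to the other in $U_I\odot U_I$. Second, we need naturality in $M$: for $g: M\to N$, the square involving $R(g,\id_I)=G(\hat g)$ and $R(\id_I,g)$ must commute. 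Here $\hat g$ is cocartesian, and mates are compatible with pasting cocartesian cells. So we plan to reduce, via the dual of Lemma~\ref{lemma:warmup}, to the uniqueness of factorizations through cocartesian cells. Third, at the last stage, we check that the resulting endomorphisms agree after precomposition with the cocartesian cell $U_I \to {\iota_N}_!U_I$, which is epic by fact~\ref{fact1}.

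We expect the naturality step to be the main obstacle. The isomorphisms $\varphi$ defining $\hat g$ interact with the unit and counit of the dual pair only up to coherence, and the cleavage is not strictly functorial: $f^*(Yg^*)\neq f^*Yg^*$ in general. We will first try to express both composites as the factorization of one and the same cell out of $U_I$, then conclude by uniqueness, using fact~\ref{fact1} to cancel the cocartesian cells.
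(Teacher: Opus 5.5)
Your construction of $\swap_f$ is correct. You take mates for the dual pair $({}_fB,B_f)$, after identifying $f_!U_A\cong B_f$ and $U_Af_!\cong{}_fB$ via $\lambda_!,\rho_!$ and the unitors. This is a cleaner route to the first assertion than the paper's. The paper transports the two Kleisli multiplications into $\A(U_A,f^*U_Bf^*)$ and needs Figure~\ref{fig:diag1} plus Lemma~\ref{lemma:reql} to see that they are opposite; mate calculus gives anti-multiplicativity and invertibility at once. Your algebra-compatibility step is also right in substance: the mate of the whiskering of $B_f$ by $p\in\End(U_I)$ is the whiskering of ${}_fB$ by $p$. What it actually uses is naturality of $\lambda_!,\rho_!$ and $\id\odot p=p\odot\id$ on $U_I\odot U_I$, not Lemma~\ref{lemma:simple_natiso}.

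The gap is naturality in $M$, which you only announce. The claim that mates are compatible with pasting cocartesian cells is not available in the form you need. Mate calculus lives in the horizontal bicategory, while $\widehat g\colon{\iota_M}_!U_I\to{\iota_N}_!U_I$ and $\widehat{\id_I,g}$ are non-globular cells. You also never say which cell out of $U_I$ both sides should factor. The missing idea is an external characterization of $\swap_M$.

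Let $c_M\colon{\iota_M}_!U_I\to U_M$, over $(\id_M,\iota_M)$, be the factorization of $U(\iota_M)$ through $U_I\xrightarrow{\cct}{\iota_M}_!U_I$, and let $c'_M\colon U_I{\iota_M}_!\to U_M$ be the symmetric cell. Put $\Phi_M(\alpha)=c_M\circ\alpha\circ\cct$ and $\Psi_M(\beta)=c'_M\circ\beta\circ\cct$. By the (co)cartesian universal properties, both are bijections onto $\AAA_{\iota_M,\iota_M}(U_I,U_M)$. You need the lemma $\Psi_M(\swap_M(\alpha))=\Phi_M(\alpha)$. For your mate definition it follows by writing the unit and counit of $({}_fB,B_f)$ through ${}_f\chi$, $\chi_f$ and the cartesian cells, then using interchange, naturality of the unitors, and Lemma~\ref{lemma:reql}.

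Given this lemma, naturality follows in a few lines, with $\alpha^N=R(g,\id_I)(\alpha)$ and $\beta^{I,N}=R(\id_I,g)(\beta)$:
\begin{itemize}
\item Precomposing with the epic cell $U_I\to{\iota_M}_!U_I$ shows $c_N\circ\widehat g=U(g)\circ c_M$, since both sides restrict to $U(\iota_N)$.
\item Hence $\Phi_N(\alpha^N)=U(g)\circ\Phi_M(\alpha)$, and symmetrically $\Psi_N(\beta^{I,N})=U(g)\circ\Psi_M(\beta)$.
\item Injectivity of $\Psi_N$ then gives $\swap_N(\alpha^N)=\swap_M(\alpha)^{I,N}$.
\end{itemize}
The same lemma also gives algebra compatibility immediately, since $\Phi_M({\iota_M}_!p)=U(\iota_M)\circ p=\Psi_M(p\,{\iota_M}_!)$. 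The paper builds its $\swap$ through the middle term $\A(U_I,\iota_M^*U_M\iota_M^*)$, i.e.\ essentially as $\Psi_M^{-1}\circ\Phi_M$, so there this characterization holds by construction. Your mate definition does not give it for free.
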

\begin{proof}
Recall the notation from the end of Section~\ref{subsection:fb}. For any vertical arrow $f: A \to B$,
the isomorphisms $\lambda^*, \lambda_!, \rho^*$ and $\rho_!$ induce two isomorphisms of additive group $\sigma: f^*f_!U_A \to f^* U_Bf^*$ and $\sigma': U_Af_! f^* \to f^*U_Bf^*$. The former is defined as $\varphi \circ f^*\mathfrak r_A \circ f^*\lambda_!^{-1}$ where $\varphi$ is the unique isomorphism $f^*B_f \to f^*U_Bf^*$ commuting with the cartesian cells $f^*B_f \to B_f = U_bf^* \leftarrow f^*U_Bf^*$ (see Diagram~\ref{diag:identity}). The latter is defined symmetrically. 
Then, consider the group isomorphism $\swap_f: \A(f_!U_A, f_! U_A) \to \A(U_Af_!, U_Af_!)$ induced by the adjunction $f_! \dashv f^*$ and by the isomorphisms $\sigma$ and $\sigma'$:
\begin{align}
    \A(f_!U_A, f_! U_A) &\cong  \A(U_A, f^*f_! U_A) \label{eq:comp1}\\
    &\cong \A(U_A, f^*U_Bf^*)\label{eq:comp2} \\
    & \cong \A(U_A, U_Af_!f^*) \cong \A(U_Af_!, U_Af_!).\label{eq:comp4}
\end{align}
The abelian groups $\A(f_!U_A, f_!U_A)$ and $\A(U_Af_!, U_Af_!)$ carry ring structures with multiplication given by composition and units the identity morphisms. We show that $\swap_f$ is actually a ring isomorphism $\End(f_!U_A) \to \End(U_Af_!)^{\mathrm{\mathrm{op}}}$; i.e., it preserves identities and reverses composition.

First note that each group in the chain of isomorphisms is a ring. In $\A(U_A, f^*f_! U_A)$ (resp. $\A(U_A, U_Af_!f^*)$), multiplication is composition in the Kleisli category of the monad $(T=f^*f_!(\mathunderscore), \eta, \mu)$ (resp. $(M, \eta', \mu')$) of the adjunction $f_!(\mathunderscore) \dashv f^*(\mathunderscore)$ (resp. $(\mathunderscore)f_! \dashv (\mathunderscore)f^*$), and the unit is the unit of the adjunction. 
Recall that $\eta$ is the unit of the adjunction, and $\mu_M$ is $f^*\epsilon_{f^*M}$ with $\epsilon$ the counit of the adjunction. 
Finally, the group $\A(U_A, f^*U_Bf^*)$ inherits two ring structures from the two previous Kleisli compositions. 
 
Then, we show that multiplication is preserved by steps~\ref{eq:comp1} and~\ref{eq:comp4}. This boils down to the following commutative diagram for all $\alpha, \beta \in \A(f_!U_A, f_! U_A)$ (the case of Equation~\eqref{eq:comp4} is similar):
\[\begin{tikzcd}[column sep=3em, row sep=normal]
	&&&& {TU_A} \\
	{U_A} & {TU_A} & {TU_A} &[6pt] {TU_A} &&[-6pt] {T^2U_A} \\
	&&&& {T^2U_A}
	\arrow["{\eta_{U_A}}", from=2-1, to=2-2]
	\arrow["{f^*\alpha}", from=2-2, to=2-3]
	\arrow["{f^*\beta}", curve={height=-10pt}, from=2-3, to=1-5]
	\arrow["{\id_{U_A}}"{description}, from=2-3, to=2-4]
	\arrow["{T\eta_{U_A}}"', curve={height=10pt}, from=2-3, to=3-5]
	\arrow["{f^*\beta}"{description}, from=2-4, to=1-5]
	\arrow["{\mu_{U_A}}"{description}, from=2-6, to=1-5]
	\arrow["{\mu_{U_A}}"{description}, from=3-5, to=2-4]
	\arrow["{Tf^*\beta}"{description}, from=3-5, to=2-6]
\end{tikzcd}\]
where the rightmost square commutes by functoriality of $f^*(\mathunderscore)$ applied to the naturality of the counit $\epsilon: f_!f^*(\mathunderscore) \to \Id$. 
Moreover, steps~\ref{eq:comp1} and~\ref{eq:comp4} each send the identity morphism to the unit of the corresponding adjunction.

Then, for all elements $\alpha,\beta \in \A(U_A, f^*U_Bf^*)$, the composition $\beta \circ_T \alpha$ induced by $T$ is $\sigma \circ (\mu_{U_A} \circ T (\sigma^{-1} \circ \beta) \circ \sigma^{-1} \circ \alpha)$, and the induced unit $1_T$ is $\sigma \circ \eta_{U_A}$. Similarly, $M$ yields a composition $\beta \circ_M \alpha$ and a unit $1_M$. Let us show that $1_T=1_M$, by showing that $1_T$ and $1_M$ both are the factorization if $Uf$ through the cartesian cell $f^*U_Bf^* \to U_B$. For $1_T$,  it suffices that the following diagram commutes:
\begin{equation}
    \label{diag:identity}
    \begin{tikzcd}[column sep=scriptsize]
	&& {f^*f_!U_A} & {f^*(B_f \odot U_A)} \\
	&& {f_!U_A} & {B_f \odot U_A} & {f^*B_f} \\
	&& {U_A\odot U_A} && {B_f = U_Bf^*} & {f^*U_Bf^*} \\
	{U_A} &&&&& {U_B}
	\arrow["{f^*\lambda_!^{-1}}", from=1-3, to=1-4]
	\arrow["\ct"{description}, from=1-3, to=2-3]
	\arrow["\ct"{description}, from=1-4, to=2-4]
	\arrow["{f^* \mathfrak r_{B_f}}", from=1-4, to=2-5]
	\arrow["{\lambda_!^{-1}}", from=2-3, to=2-4]
	\arrow["{\mathfrak r_{B_f}}", from=2-4, to=3-5]
	\arrow["\ct"{description}, from=2-5, to=3-5]
	\arrow["{\exists ! \varphi}", from=2-5, to=3-6]
	\arrow["{\chi_f \odot \id_{U_A}}"{description}, from=3-3, to=2-4]
	\arrow["\ct"{description}, from=3-5, to=4-6]
	\arrow["\ct"{description}, from=3-6, to=3-5]
	\arrow["\ct"{description}, from=3-6, to=4-6]
	\arrow["{\eta_{U_A}}", from=4-1, to=1-3]
	\arrow["\cct"{description}, from=4-1, to=2-3]
	\arrow["{\mathfrak r^{-1}_{U_A}}"{description}, from=4-1, to=3-3]
	\arrow["{\chi_f}"{description}, from=4-1, to=3-5]
	\arrow["Uf"', from=4-1, to=4-6]
\end{tikzcd}
\end{equation}
Symmetrically, one shows that $1_M$ is also the factorization of $Uf$. 

Using similar ideas, we show $\beta \circ_T \alpha = m \circ (\alpha \odot \beta) \circ \mathfrak r^{-1}_{U_A}$ and $\beta \circ_M \alpha = m \circ (\beta \odot \alpha) \circ \mathfrak l^{-1}_{A}$, where $m$ is the factorization depicted in the following diagram. Lemma~\ref{lemma:reql} will then complete the argument.
\[\begin{tikzcd}
	{f^*U_Bf^* \odot f^*U_Bf^*} & {f^*U_Bf^*} \\
	{U_B \odot U_B} & {U_B}
	\arrow["{\exists! m}", from=1-1, to=1-2]
	\arrow["{\ct \odot \ct}"{description}, from=1-1, to=2-1]
	\arrow["\ct"{description}, from=1-2, to=2-2]
	\arrow["{\mathfrak l_{U_B}^{-1}}", from=2-1, to=2-2]
\end{tikzcd}\]
For $\beta \circ_T \alpha$ ((The proof for $\beta \circ_M \alpha$ is similar), consider the commutative diagram in Figure~\ref{fig:diag1}, in which cell $A$ commutes as shown in the following diagram:
\[\begin{tikzcd}
	{U_Bf^* \odot f^*f_!U_A} && {U_B\odot f_!U_A} \\
	\\
	& {U_A \odot f^*f_!U_A} \\
	& {f^*f_!U_A} \\
	{f_!(f^*f_!U_A)} && {f_!U_A}
	\arrow["{\ct \odot \ct}", from=1-1, to=1-3]
	\arrow["{{\lambda_!}_{f^*f_!U_A}}"', from=1-1, to=5-1]
	\arrow["{\mathfrak l_{f_!U_A}}", from=1-3, to=5-3]
	\arrow["{\chi_f\odot \id_{f^*f_!U_A}}"{description}, from=3-2, to=1-1]
	\arrow["{Uf \odot \ct}"{description}, from=3-2, to=1-3]
	\arrow["{\mathfrak l_{f^*f_!U_A}}", from=3-2, to=4-2]
	\arrow["\cct", from=4-2, to=5-1]
	\arrow["\ct", from=4-2, to=5-3]
	\arrow["{\epsilon_{f_!U_A}}", from=5-1, to=5-3]
\end{tikzcd}\]
\begin{figure}
    \centering
\[
\rotatebox{90}{%
\begin{tikzcd}[ampersand replacement=\&, column sep=.55em, row sep=3.4em]
	{U_A} \& {f^*U_Bf^*} \&\& {f^*f_!U_A} \& {f^*f_!(f^*U_Bf^*)} \& {f^*f_!(f^*f_!U_A)} \\
	{U_A \odot U_A} \&\& {U_Bf^*} \& {f_!U_A} \\
	\& {f^*U_Bf^*\odot U_A} \& {U_Bf^*\odot U_A} \& {f_!(f^*U_Bf^*)} \& {f_!(f^*f_!U_A)} \\
	{f^*U_Bf^* \odot f^*U_Bf^*} \&\& {U_Bf^* \odot f^*U_Bf^*} \&\& {U_Bf^* \odot f^*f_!U_A} \\
	\& {U_B \odot U_B} \& {U_B\odot U_Bf^*} \& {U_B\odot (U_Bf^* \odot U_A)} \& {U_B\odot f_!U_A} \\
	\& {U_B} \& {U_Bf^*} \& {U_Bf^* \odot U_A} \& {f_!U_A} \\
	{f^*U_Bf^*} \&\&\&\&\& {f^*f_!U_A}
	\arrow["\alpha", from=1-1, to=1-2]
	\arrow["{\mathfrak r_{U_A}^{-1}}"', from=1-1, to=2-1]
	\arrow["{\sigma^{-1}}", from=1-2, to=1-4]
	\arrow["\ct"{description}, from=1-2, to=2-3]
	\arrow["{\mathfrak r_{f^*U_Bf^*}^{-1}}"{description}, from=1-2, to=3-2]
	\arrow["{f^*f_!\beta}", from=1-4, to=1-5]
	\arrow["\ct"{description}, from=1-4, to=2-4]
	\arrow["{f^*f_!\sigma^{-1}}", from=1-5, to=1-6]
	\arrow["\ct"{description}, from=1-5, to=3-4]
	\arrow["\ct"{description}, from=1-6, to=3-5]
	\arrow["{\mu_{U_A}}", from=1-6, to=7-6]
	\arrow["{\alpha \odot \id_{U_A}}"{description}, from=2-1, to=3-2]
	\arrow["{\alpha \odot \beta}"', from=2-1, to=4-1]
	\arrow["{\mathfrak r_{U_Bf^*}^{-1}}"', from=2-3, to=3-3]
	\arrow["{\lambda_!^{-1}}"', from=2-4, to=3-3]
	\arrow["{f_!\beta}"', from=2-4, to=3-4]
	\arrow["{\ct\odot \id_{U_A}}", from=3-2, to=3-3]
	\arrow["{\id_{f^*U_Bf^*} \odot \beta}"{description}, from=3-2, to=4-1]
	\arrow["{\id_{U_Bf^*} \odot \beta}"', from=3-3, to=4-3]
	\arrow["{f_!\sigma^{-1}}", from=3-4, to=3-5]
	\arrow["{{\lambda_!^{-1}}_{f^*U_Bf^*}}"{description}, from=3-4, to=4-3]
	\arrow["{{\lambda_!^{-1}}_{f^*f_!U_A}}"', from=3-5, to=4-5]
	\arrow[""{name=0, anchor=center, inner sep=0}, "{\epsilon_{f_!U_A}}", curve={height=-60pt}, from=3-5, to=6-5]
	\arrow["{\ct\odot \id_{f^*U_Bf^*}}"{description}, from=4-1, to=4-3]
	\arrow["{\ct \odot \ct}"{description}, from=4-1, to=5-2]
	\arrow["m"', from=4-1, to=7-1]
	\arrow["{\ct \odot \ct}"{description}, from=4-3, to=5-2]
	\arrow["{U_Bf^* \odot \sigma}"', from=4-5, to=4-3]
	\arrow["{\ct \odot \ct}"', from=4-5, to=5-5]
	\arrow["{ \mathfrak l_{U_B}}"', from=5-2, to=6-2]
	\arrow["{\id_{U_B} \odot  \ct}", from=5-3, to=5-2]
	\arrow["{ \mathfrak l_{U_Bf^*}}"', from=5-3, to=6-3]
	\arrow["{\id_{U_B} \odot \mathfrak r_{U_Bf^*}}", from=5-4, to=5-3]
	\arrow["{ \mathfrak l_{(U_Bf^* \odot U_A)}}", from=5-4, to=6-4]
	\arrow["{\id_{U_B} \odot \lambda_!^{-1}}", from=5-5, to=5-4]
	\arrow["{ \mathfrak l_{f_!U_A}}"', from=5-5, to=6-5]
	\arrow["\ct", from=6-3, to=6-2]
	\arrow["{\mathfrak r_{U_Bf^*}}", from=6-4, to=6-3]
	\arrow["{\lambda_!^{-1}}", from=6-5, to=6-4]
	\arrow["\ct"{description}, from=7-1, to=6-2]
	\arrow["\ct"{description}, from=7-6, to=6-5]
	\arrow["\sigma", from=7-6, to=7-1]
	\arrow["A", shift right=5, draw=none, from=0, to=5-5]
\end{tikzcd}
}
\]
    \caption{Proof of $\beta \circ_T \alpha = m \circ (\alpha \odot \beta) \circ \mathfrak r^{-1}_{U_A}$.}\label{fig:diag1}
\end{figure}
We have just shown that $\swap_f$ is a ring isomorphism. 
Now, if $A=I$, $f$ must be of the form $\iota_B : I \to B$. We denote $\sigma$ by $\sigma_B$, $\sigma'$ by $\sigma'_B$, $\swap_{\iota_B}$ by $\swap_B$, and ${\sigma'_B}^{-1} \circ \sigma_B$ by $\kappa_B$.
Let us start by showing that $\swap_B$ is an $\End(U_I)$-algebra isomorphism, i.e., that the above chain of isomorphisms preserves the action of $\End(U_I)$. For all elements $r \in \End(U_I)$ and $\beta \in \A({\iota_B}_!, {\iota_B}_!)$, the following equation (and its symmetric) holds:
\begin{align*}
    \iota_B^*(r \act \beta) \circ \eta &=  \iota_B^*\beta \circ \iota_B^*{\iota_B}_! r \circ\eta\\
    &= \iota_B^*r \circ \eta \circ r\\
    &= r \act (\iota_B^*r \circ \eta).
\end{align*}
Finally, let us show that the isomorphisms $\swap_B$ for all objects $B$ assemble into a natural isomorphism. Recall the notations of Proposition~\ref{prop:general_functoriality}. For simplicity, we write $(\mathunderscore)^{C,D}$ for the $\End(U_I)$-algebra homomorphism $r_{f,g}$ for all vertical arrows $f: A \to C$ and $g: B \to D$ when there in no ambiguity. We also denote $(\mathunderscore)^{C,I}$ by $(\mathunderscore)^{C}$ for all vertical arrows $f: A \to C$.
We now have to show that for all vertical arrows $f: A \to B$ and elements $\alpha \in R(A,I)$, $\sigma_A(\alpha)^{I,B} = \sigma_B(\alpha^B)$ where for all vertical objects $C$, $\sigma_C$ denote the isomorphism $R(C,I) \to R(I,C)^{\mathrm{op}}$ we have just constructed.
By the uniqueness property of $\sigma_A(\alpha)^{I,B}$, it suffices to show that the following diagram commutes:
\[\begin{tikzcd}
	{U_I{\iota_A}_!} & {U_I{\iota_A}_!} \\
	{U_I{\iota_B}_!} & {U_I{\iota_B}_!}
	\arrow["{\sigma_A(\alpha)}", from=1-1, to=1-2]
	\arrow["{\widehat {I,f}}"', from=1-1, to=2-1]
	\arrow["{\widehat {I,f}}", from=1-2, to=2-2]
	\arrow["{\sigma_B(\alpha^B)}", from=2-1, to=2-2]
\end{tikzcd}\]
Unfolding the definition of $\sigma_A(\alpha)$ and $\sigma_B(\alpha^B)$ and the fact that cocartesian cells are epic~\ref{fact1}, it suffices to show the commutativity of the following diagram:
\[\begin{tikzcd}
	{U_I\iota_{A_!}} & {U_I} & {\iota_A^*{\iota_A}_!U_I} & { \iota_A^*{\iota_A}_!U_I} & {U_I{\iota_A}_!\iota_A^*} & {U_I\iota_{A_!}} \\
	& {U_I} \\
	{U_I\iota_{B_!}} & {U_I} & {\iota_B^*{\iota_B}_!U_I} & {\iota_B^*{\iota_B}_!U_I} & {U_I\iota_!\iota^*} & {U_I\iota_{B_!}}
	\arrow["{\sigma_A(\alpha)}"{description}, curve={height=-20pt}, from=1-1, to=1-6]
	\arrow["{\widehat {I,f}}"', from=1-1, to=3-1]
	\arrow[from=1-2, to=1-1]
	\arrow["{\eta_{U_A}}"', from=1-2, to=1-3]
	\arrow["{\iota^*\alpha}"', from=1-3, to=1-4]
	\arrow["{\kappa_A}"', from=1-4, to=1-5]
	\arrow["\ct"', from=1-5, to=1-6]
	\arrow["{\widehat {I,f}}", from=1-6, to=3-6]
	\arrow["{\id_{U_I}}", from=2-2, to=1-2]
	\arrow["{\id_{U_I}}"', from=2-2, to=3-2]
	\arrow["{\sigma_B(\alpha^B)}"{description}, curve={height=20pt}, from=3-1, to=3-6]
	\arrow[from=3-2, to=3-1]
	\arrow["{\eta_{U_B}}", from=3-2, to=3-3]
	\arrow["{\iota_B^* (\alpha^B)}", from=3-3, to=3-4]
	\arrow["{\kappa_B}", from=3-4, to=3-5]
	\arrow["\ct", from=3-5, to=3-6]
\end{tikzcd}\]
where the leftmost diagram commutes by definition of $\widehat {I,f}$, and the top and bottom diagrams by definition of $\sigma_A$ and $\sigma_B$. The commutativity of the middle diagram follows from that of the following diagram:
\[\begin{tikzcd}[column sep=small]
	& {\iota_A^*{\iota_A}_!U_I} & {\iota_A^*{\iota_A}_!U_I} && {{\iota_A}^*U_A{\iota_A}^* } && {U_I{\iota_A}_!\iota_A^*} \\
	& {{\iota_A}_!U_I} & {{\iota_A}_!U_I} & {U_A\iota_A^*} & {U_A} & {\iota_A^*U_A} & {U_I\iota_{A_!}} \\
	{U_I} &&& A && B \\
	& {{\iota_B}_!U_I} & {{\iota_B}_!U_I} & {U_B\iota_B^*} & {U_B} & {\iota_B^*U_B} & {U_I\iota_{B_!}} \\
	& {\iota_B^*{\iota_B}_!U_I} & {\iota_B^*{\iota_B}_!U_I} && {{\iota_B}^*U_B{\iota_B}^* } && {U_I{\iota_B}_!\iota_B^*}
	\arrow["{\iota^*\alpha}", from=1-2, to=1-3]
	\arrow["\ct", from=1-2, to=2-2]
	\arrow["{\sigma_A}", from=1-3, to=1-5]
	\arrow["\ct", from=1-3, to=2-3]
	\arrow["{\sigma'_A}", from=1-5, to=1-7]
	\arrow["\ct"', from=1-5, to=2-4]
	\arrow["\ct", from=1-5, to=2-6]
	\arrow["\ct", from=1-7, to=2-7]
	\arrow["\alpha", from=2-2, to=2-3]
	\arrow["{\widehat {f,I}}", from=2-2, to=4-2]
	\arrow["\mathfrak r_{U_A} \circ \lambda_!^{-1}"', from=2-3, to=2-4]
	\arrow["{\widehat {f,I}}", from=2-3, to=4-3]
	\arrow["\ct"', from=2-4, to=2-5]
	\arrow["Uf"{description}, from=2-5, to=4-5]
	\arrow["\ct", from=2-6, to=2-5]
	\arrow["\mathfrak l_{U_A} \circ \rho_!^{-1}"', tail reversed, no head, from=2-6, to=2-7]
	\arrow["{\widehat {I,f}}", from=2-7, to=4-7]
	\arrow["{\eta_{U_A}}", from=3-1, to=1-2]
	\arrow["\cct"', from=3-1, to=2-2]
	\arrow["\cct", from=3-1, to=4-2]
	\arrow["{\eta_{U_B}}"', from=3-1, to=5-2]
	\arrow["{\alpha^B}", from=4-2, to=4-3]
	\arrow["\mathfrak r_{U_B} \circ \lambda_!^{-1}", from=4-3, to=4-4]
	\arrow["\ct", from=4-4, to=4-5]
	\arrow["\ct"', from=4-6, to=4-5]
	\arrow["\mathfrak l_{U_B} \circ \rho_!^{-1}", tail reversed, no head, from=4-6, to=4-7]
	\arrow["\ct"', from=5-2, to=4-2]
	\arrow["{\iota_B^*( \alpha^B)}"', from=5-2, to=5-3]
	\arrow["\ct"', from=5-3, to=4-3]
	\arrow["{\sigma_B}"', from=5-3, to=5-5]
	\arrow["\ct", from=5-5, to=4-4]
	\arrow["\ct"', from=5-5, to=4-6]
	\arrow["{\sigma'_B}"', from=5-5, to=5-7]
	\arrow["\ct"', from=5-7, to=4-7]
\end{tikzcd}\]
where the diagrams $A$ and $B$ commute: for $A$, by definition of $\widehat {f,I}$, it suffices to check that the following diagram commutes (the proof that $B$ commutes is symmetric): 
\[\begin{tikzcd}
	{{\iota_A}_!U_I} & {U_A{\iota_A}^*} & {U_A} \\
	{U_I} \\
	{{\iota_B}_!U_I} & {U_B{\iota_B}^*} & {U_B}
	\arrow["{\mathfrak r_{U_A} \circ \lambda^{-1}_!}", from=1-1, to=1-2]
	\arrow["\ct", from=1-2, to=1-3]
	\arrow["Uf", from=1-3, to=3-3]
	\arrow["\cct", from=2-1, to=1-1]
	\arrow["{U(\iota_A)}"{description}, from=2-1, to=1-3]
	\arrow["\cct"', from=2-1, to=3-1]
	\arrow["{U(\iota_B)}"{description}, from=2-1, to=3-3]
	\arrow["{\mathfrak r_{U_B} \circ \lambda^{-1}_!}"', from=3-1, to=3-2]
	\arrow["\ct"', from=3-2, to=3-3]
\end{tikzcd}\]
where the top and bottom triangles commute because of the commutativity of the following diagram for all objects $C$:
\[\begin{tikzcd}[baseline=(\tikzcdmatrixname-\the\pgfmatrixcurrentrow-1.base)]
	{U_I} & {U_I \odot U_I} & {U_I} \\
	{ {\iota_C}_!U_I} & {U_C \iota_C^* \odot U_I} & {U_C\iota_C^*} & {U_C}
	\arrow["{\mathfrak l_{U_I}^{-1} = \mathfrak r_{U_I}^{-1}}"', from=1-1, to=1-2]
	\arrow["{\id_{U_I} }", curve={height=-18pt}, from=1-1, to=1-3]
	\arrow["\cct"', from=1-1, to=2-1]
	\arrow["{\mathfrak r_{U_I}}"', from=1-2, to=1-3]
	\arrow["{\chi_{\iota_C}\odot\id_{U_I}}"{description}, from=1-2, to=2-2]
	\arrow["{\chi_{\iota_C}}"{description}, from=1-3, to=2-3]
	\arrow["{U(\iota_C)}", from=1-3, to=2-4]
	\arrow["{{\lambda^{-1}_!}_{U_C}}"', from=2-1, to=2-2]
	\arrow["{\mathfrak r_{U_C\iota_C^*}}"', from=2-2, to=2-3]
	\arrow["\ct"', from=2-3, to=2-4]
\end{tikzcd}\qedhere\]
\end{proof}
\begin{example}
    In $\BimodZ$, for all ring homomorphisms $f: A \to B$, we immediately have $\End(f_!U_A) \cong \End(U_Bf^*) \cong B$ and $\End(U_Af_!) \cong \End(f^*U_B) \cong B^{\mathrm{op}}$. 
\end{example}
Before using Proposition~\ref{prop:op} to define a natural transformation $$F_0(A) \otimes_{\End(U_I)} F_0(B)^{\mathrm{op}} \to R(A,B),$$ we introduce the following useful $2$-cells, also involved in Section~\ref{section:compatibility}. As in Proposition~\ref{prop:op}, the following results could be stated more generally, but without the naturality and $\End(U_I)$-algebra properties.
\begin{definition}\label{def:ii}
For all objects $A,B, C$, consider the cocartesian $2$-cells $U_I \xrightarrow{\cct} {\iota_A}_!U_I{\iota_B}_!$ and $U_I \xrightarrow{\mathfrak l^{-1}} U_I \odot U_I  \xrightarrow{\cct \odot \cct} {\iota_A}_! U_I \odot U_I{\iota_B}_!$. 
By the uniqueness property of cocartesian cells, there exists a unique globular isomorphic $2$-cell ${i_1}_{A,B}$ making the following diagram commute:
\[\begin{tikzcd}
	{U_I} & {U_I \odot U_I} \\
	{{\iota_A}_!U_I{\iota_C}_!} & {{\iota_A}_!U_I \odot U_I{\iota_C}_!} & {{\iota_A}_!U_I{\iota_B}_! \odot {\iota_B}_!U_I{\iota_C}_!}
	\arrow["{\mathfrak l_{U_I}^{-1}}", from=1-1, to=1-2]
	\arrow["\cct"', from=1-1, to=2-1]
	\arrow["{\cct \odot \cct}"{description}, from=1-2, to=2-2]
	\arrow["{\cct \odot \cct}", from=1-2, to=2-3]
	\arrow["{\exists ! {i_1}_{A,C}}"', from=2-1, to=2-2]
	\arrow["{\cct \odot \cct}"', from=2-2, to=2-3]
\end{tikzcd}\]
Then define the cocartesian $2$-cell ${i_2}_{A,B,C} : {\iota_A}_!U_I{\iota_C}_! \to {\iota_A}_!U_I{\iota_B}_! \odot {\iota_B}_!U_I{\iota_C}_!$ as  $(\cct \odot \cct) \circ {i_1}_{A,C}$.
\end{definition}
\begin{proposition}\label{prop:tensorop}
There is a natural transformation of rings: $$\psi: R(\mathunderscore,I) \otimes_{\End(P)} R(\mathunderscore, I)^{\mathrm{op}} \Rightarrow R(\mathunderscore, \mathunderscore).$$ 
\end{proposition}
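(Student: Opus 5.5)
We must build, for each pair $(A,B)$, a ring homomorphism
$$\psi_{A,B}: \End({\iota_A}_!U_I)\otimes_{\End(U_I)}\End({\iota_B}_!U_I)^{\mathrm{op}} \to \End({\iota_A}_!U_I{\iota_B}_!)$$
and prove it natural in $A$ and $B$. Here $P=U_I$.

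First we replace the second factor. By Proposition~\ref{prop:op}, $\swap_B$ identifies $R(B,I)^{\mathrm{op}}$ with $R(I,B)=\End(U_I{\iota_B}_!)$ as $\End(U_I)$-algebras, naturally in $B$. So it suffices to build a natural ring homomorphism
$$\End({\iota_A}_!U_I)\otimes_{\End(U_I)}\End(U_I{\iota_B}_!)\to \End({\iota_A}_!U_I{\iota_B}_!),$$
which we then precompose with $\id\otimes\swap_B$.

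\emph{Definition of the map.} By Lemma~\ref{lemma:simple_natiso}, and the analogue of Definition~\ref{def:ii} with $B=I$, we have a globular isomorphism
$$j_{A,B}: {\iota_A}_!U_I{\iota_B}_! \cong {\iota_A}_!U_I\odot U_I{\iota_B}_!,$$
compatible with the cocartesian cells out of $U_I$ and $U_I\odot U_I$. For $\alpha\in\End({\iota_A}_!U_I)$ and $\beta\in\End(U_I{\iota_B}_!)$ set
$$\psi(\alpha\otimes\beta)=j_{A,B}^{-1}\circ(\alpha\odot\beta)\circ j_{A,B}.$$

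\emph{Well-definedness.} Biadditivity is immediate, since $\odot$ is additive in each variable (Definition~\ref{def:afb}). Multiplicativity follows from the interchange law $(\alpha\odot\beta)\circ(\alpha'\odot\beta')=(\alpha\alpha')\odot(\beta\beta')$, and units are preserved since $\id\odot\id=\id$. The factors commute in the image because $(\alpha\odot\id)(\id\odot\beta)=(\id\odot\beta)(\alpha\odot\id)$, so the map out of the tensor product of rings is a ring map. The remaining point is that $\psi$ is balanced over $\End(U_I)$, i.e.
$$(\alpha\circ{\iota_A}_!p)\odot\beta=\alpha\odot(\beta\circ p{\iota_B}_!).$$
This reduces to ${\iota_A}_!p\odot\id=\id\odot p\,{\iota_B}_!$, which I would prove exactly as in Proposition~\ref{prop:algebra}: via Lemma~\ref{lemma:simple_natiso} it becomes ${\iota_A}_!(p\odot\id_{U_I}){\iota_B}_!={\iota_A}_!(\id_{U_I}\odot p){\iota_B}_!$, which holds by the Eckmann–Hilton identity of Proposition~\ref{prop:endui_commut}. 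The same computation with the action transported through $\swap_B$ shows the $\End(U_I)$-actions agree, using that $\swap_B$ is an algebra map.

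\emph{Naturality, the main obstacle.} Take $f:A\to C$ and $g:B\to D$. We must show
$$\widehat{f,g}$$
intertwines $\psi_{A,B}(\alpha\otimes\beta)$ with $\psi_{C,D}(\alpha^{C}\otimes\beta^{D})$, where the superscripts are the maps $r_f$ of Proposition~\ref{prop:general_functoriality}. Since cocartesian cells are epic (fact~\ref{fact1}), it suffices to check equality after precomposing with the cocartesian cell $U_I\to{\iota_A}_!U_I{\iota_B}_!$.

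The plan is to show that $\widehat{f,I}\odot\widehat{I,g}$ is cocartesian and equals $j_{C,D}\circ\widehat{f,g}\circ j_{A,B}^{-1}$. This uses fact~\ref{fact2}, the uniqueness in Definition~\ref{def:ii}, and the fact that horizontal composites of cocartesian cells out of units are cocartesian (the $\lambda_!,\rho_!$ isomorphisms). Naturality then follows from the defining squares $\widehat{f,I}\circ\alpha=\alpha^C\circ\widehat{f,I}$ and $\widehat{I,g}\circ\beta=\beta^D\circ\widehat{I,g}$, together with functoriality of $\odot$. Naturality of $\swap$ (Proposition~\ref{prop:op}) handles the $\mathrm{op}$ factor.

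I expect the bulk of the work to be the diagram chase identifying $\widehat{f,g}$ with $\widehat{f,I}\odot\widehat{I,g}$ up to the $j$'s. I would carry it out with the cleavage of Proposition~\ref{prop:choice} to control the strictness issues.
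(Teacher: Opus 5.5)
Your proposal is correct and follows essentially the same route as the paper. You transport the second factor through $\swap_B$, compose horizontally and conjugate by the isomorphism ${i_1}_{A,B}$. You get balancedness from ${\iota_A}_!p\odot\id=\id\odot p\,{\iota_B}_!$ via Lemma~\ref{lemma:simple_natiso} and Proposition~\ref{prop:endui_commut}. You get naturality from $\widehat{f,g}=\widehat{f,\id_I}\odot\widehat{\id_I,g}$ (up to the ${i_1}$'s) together with naturality of $\swap$.

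The separate cocartesianness argument and the special cleavage of Proposition~\ref{prop:choice} are not needed. That identity already follows from uniqueness of factorization through the cocartesian cell $U_I\to{\iota_A}_!U_I{\iota_B}_!$ and functoriality of $\odot$.
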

\begin{proof}
    By Definition~\ref{def:afb}, horizontal composition $\odot$ induces additive functors between local categories, hence for any objects $A$ and $B$, the map: $$\odot: \End({\iota_A}_!U_I) \times \End(U_I{\iota_B}_!) \xrightarrow{} \End({\iota_A}_!U_I{\iota_B}_!)$$ is bilinear and preserves composition.
     We show it also preserves the action of $\End(P)$. For all elements $p \in \End(P)$, $\alpha \in \End({\iota_A}_!U_I) $ and $\beta \in \End(U_I{\iota_B}_!)$, we have  $$(p \act \alpha) \odot \beta = (\alpha \odot \beta) \circ ({\iota_A}_!p \odot \id_{U_I{\iota_B}_!}).$$ However, by functoriality of extension of scalars, Lemma~\ref{lemma:simple_natiso} and Proposition~\ref{prop:endui_commut}, we have:
    \begin{align*}
        {\iota_A}_!p \odot \id_{U_I{\iota_B}_!} &= {\iota_A}_!p \odot \id_{U_I}{\iota_B}_!\\ &= {\iota_A}_! \id_{U_I} \odot p{\iota_B}_!\\
        &=  \id_{{\iota_A}_!U_I} \odot p{\iota_B}_!.
    \end{align*}
    Thus horizontal composition yields an $\End(U_I)$-algebra homomorphism: $$\End({\iota_A}_!U_I) \otimes_{\End(P)} \End(U_I{\iota_B}_!) \xrightarrow{\odot} \End({\iota_A}_!U_I{\iota_B}_!).$$
    Let us now define $\psi_{A,B}$ as the composite: 
    \begin{align*}
    &\End({\iota_A}_!U_I) \otimes_{\End(P)} \End({\iota_B}_!U_I)^{\mathrm{op}}\\ &\downarrow{\cong}\\
    &\End({\iota_A}_!U_I) \otimes_{\End(P)} \End(U_I{\iota_B}_!)\\ 
    &\downarrow{\odot}\\ 
    &\End({\iota_A}_!U_I\odot U_I{\iota_B}_!)\\
    &\downarrow {\cong}\\ 
    &\End({\iota_A}_!U_I{\iota_B}_!) 
    \end{align*}
    where the last isomorphism is ${i_1}_{A,B} \circ (\mathunderscore) \circ {i_1}_{A,B}^{-1}$. 
    The transformation $\psi_{A,B}$ is the vertical compositions of two natural transformations: $R(\mathunderscore,I) \otimes_{\End(P)} R(\mathunderscore, I)^{\mathrm{op}} \Rightarrow R(\mathunderscore, I) \otimes_{\End(P)} R(I, \mathunderscore)$ and $R(\mathunderscore, I) \otimes_{\End(P)} R(I, \mathunderscore) \Rightarrow R(\mathunderscore, \mathunderscore)$. The first is natural by Proposition~\ref{prop:op}. For the second, with the notations of the proof of Proposition~\ref{prop:op}, we have to show $(\alpha \odot\beta)^{A,B} = \alpha^{A,I} \odot \beta^{I,B}$ for all compatible vertical objects $A,B$ and globular $2$-cells $\alpha$ and $\beta$. This follows from the identities $\widehat {f,g} = \widehat {f,\id_I} \odot \widehat {\id_I, g}$ for all compatible vertical arrows $f$ and $g$, which hold by the uniqueness property of the $\widehat {f,g}$ along with the functoriality of $\odot$.
\end{proof}
\begin{example}
    In $\BimodZ$, $\psi$ is a natural isomorphism, which amounts to the identity transformation $A \otimes_{\Z} B \Rightarrow A \otimes_{\Z} B$.
\end{example}
\begin{proposition}
For all categories $\C$, natural isomorphism $\eta: R' \Rightarrow R$ of functors $R',R: \C \to \ring$, morphism $f: A \to B$ in $\C$, left $RA$-modules $M$ and $RB$-modules $N$, and left $Rf$-linear module homomorphism $\alpha: M \to N$, restriction of scalars along $\eta$ makes $M$ a left $R'A$-module, $N$ a left $R'B$-modules, and $\alpha$ left $R'f$-linear. Moreover, this construction is functorial. 
\end{proposition}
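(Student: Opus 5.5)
The plan is to unfold restriction of scalars along ring homomorphisms and use naturality of $\eta$ to match the linearity conditions. First I fix the definitions. A left $RA$-module $M$ becomes a left $R'A$-module by setting $r' \act m := \eta_A(r') \act m$ for $r' \in R'A$. The module axioms hold because $\eta_A$ is a ring homomorphism: it preserves $1$, sums and products, so $(r'_1 r'_2)\act m = \eta_A(r'_1)\eta_A(r'_2)\act m$. The abelian group structure of $M$ does not change. The same applies to $N$ via $\eta_B$.

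Next I check that $\alpha$ is $R'f$-linear. We know $\alpha(r \act m) = Rf(r) \act \alpha(m)$ for $r \in RA$. Take $r' \in R'A$. Then
\[
\alpha(r' \act m) = \alpha(\eta_A(r')\act m) = Rf(\eta_A(r')) \act \alpha(m) = \eta_B(R'f(r')) \act \alpha(m) = R'f(r') \act \alpha(m).
\]
The third equality is exactly the naturality square $Rf \circ \eta_A = \eta_B \circ R'f$. The last equality is the definition of the restricted action on $N$. This step is the only real content of the proof, and it needs only naturality. Invertibility of $\eta$ is not used here; it matters only if one also wants the construction to be an isomorphism of categories with inverse given by restriction along $\eta^{-1}$.

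For functoriality, I consider the category whose objects are pairs $(A, M)$ with $M$ a left $RA$-module and whose morphisms are pairs $(f,\alpha)$ with $\alpha$ being $Rf$-linear. This is the category of elements-type category in which $G$ lands, and there is a similar one for $R'$. I define $\eta^*$ on it by $(A,M)\mapsto (A,\eta_A^*M)$ and $(f,\alpha)\mapsto(f,\alpha)$. On underlying maps this is the identity, so it preserves identities and composites. The only thing to verify is well-definedness on morphisms, which is the computation above.

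I do not expect any real obstacle. The one point needing care is the bookkeeping of which ring acts on which module, together with the order of composition in the naturality square.
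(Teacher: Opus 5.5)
Your proof is correct and follows the paper's argument exactly. The action is defined through $\eta_A$ and $\eta_B$, and $R'f$-linearity of $\alpha$ comes from the single chain of equalities using the naturality square $Rf\circ\eta_A=\eta_B\circ R'f$; functoriality is immediate because the underlying maps are unchanged. Your remark that invertibility of $\eta$ is not needed for this step is accurate, and the paper's computation does not use it either.
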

\begin{proof}
    Restriction of scalars along $\eta_A$ makes $M$ a left $R'A$-module with the action of $R'A$ defined for all elements $a\in R'A$ and $m \in M$, by $a \act m = \eta_A(a) \act_M m$. 
     Similarly,  $N$ becomes a left $R'B$-module via restriction along $\eta_B$.
    Then, for all elements $a$ in $R'A$ and $m$ in $M$:
    \begin{align*}
        \alpha(a \act m) &= \alpha(\eta_A(A) \act_M m)\\
        &= Rf(\eta_A(a)) \act_N \alpha(m)\\
        &= \eta_B(R'f(a)) \act_N \alpha(m)\\
        &= R'f(a) \act \alpha(m).
    \end{align*}
     Functoriality is straightforward to verify.
\end{proof}
We can now define the horizontal functor $F_1$, valued in bimodules over $\End(U_I)$-algebras.
\begin{definition}
Let $F_1$ be the restriction of scalars along $\psi$ of the functor $G$, i.e., $F_1$ sends a $1$-cells $M: A \tobar B$ to the left $F_0(A) \otimes_{\End(Z)} F_0(B)^{\mathrm{op}}$-module over rings $G(M)$. By Proposition~\ref{prop:algebra}, the rings $F_0(A)$ and $F_0(B)$ are $\End(U_I)$-algebras, and $F_1(M)$ inherits the structure of a left $F_0(A) \otimes_{\End(Z)} F_0(B)^{\mathrm{op}}$ module over $\End(U_I)$-algebras, or equivalently, of an $F_0(A)$-$F_0(B)$-bimodule over $\End(U_I)$-algebras. 
\end{definition}
By definition of $F_0$ and $F_1$, we have $L \circ F_1 = F_0 \circ L$ and $R \circ F_1 = F_0 \circ R$. To show that $F_0$ and $F_1$ assemble into a framed functor $\AAA \to \BimodZ$, it remains to exhibit natural transformations $F_\odot : F_1(M) \odot F_1(N) \to F_1(M \odot N)$ and $F_U : U_{F_0(A)} \to F_1(U_A)$, satisfying the usual axioms for $2$-functors.
\subsubsection{Compatibility conditions}\label{section:compatibility}
We define the natural transformations $F_\odot$ and $F_U$. Since the projective compact generator used to define the functors $F_0$ and $F_1$ is $U_I$, we can use the natural isomorphism $\mathfrak l_{U_I}: U_I \odot U_I \to U_I$ and the $2$-cell $U(\iota_A): U_I \to U_A$ for all objects $A$. 
\begin{definition}
    For all $1$-cells $M: A \tobar B$ and $N: B \tobar C$,
    let ${F_\odot} _{M,N}: F_1(M) \otimes_{\Z} F_1(N) \to F_1(M \odot N)$ be the abelian group homomorphism defined on pure tensors $(m,n) \in F_1(M) \otimes_{\Z} F_1(N) $ by $(m\odot n )\circ {i_2}_{A,B,C}$, extended linearly using additivity of $\odot$ and composition in an abelian category.
\end{definition}
\begin{proposition}
The abelian group homomorphism ${F_\odot} _{M,N}$ assemble into a natural transformation.
\end{proposition}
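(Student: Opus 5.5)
We must show naturality of $F_\odot$ in $(M,N)$ with respect to $2$-cells of $\AAA_1\times_{\AAA_0}\AAA_1$. Concretely, take $2$-cells $\alpha: M \cc{f}{g} M'$ and $\beta: N \cc{g}{h} N'$, with $M: A\tobar B$, $N: B \tobar C$, $M': A'\tobar B'$, $N': B'\tobar C'$. We must check that for all pure tensors $m\otimes n$,
\[
F_1(\alpha\odot\beta)\bigl((m\odot n)\circ {i_2}_{A,B,C}\bigr) = \bigl(F_1(\alpha)(m)\odot F_1(\beta)(n)\bigr)\circ {i_2}_{A',B',C'} .
\]
Linearity then extends this to all of $F_1(M)\otimes F_1(N)$. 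Along the way we should also verify that $F_\odot$ is well defined on the tensor product over the middle algebra $F_0(B)$, which I expect to follow from the definition of $\psi$ via $\odot$ and ${i_1}$.

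The key tool is the characterisation from Proposition~\ref{prop:general_functoriality}: $G(\alpha)(m)$ is the unique factorization of $\alpha\circ m$ through the cocartesian cell $\widehat{f,g}: {\iota_A}_!U_I{\iota_B}_! \to {\iota_{A'}}_!U_I{\iota_{B'}}_!$. Hence the left-hand side is the unique $2$-cell $x$ with
\[
x\circ\widehat{f,h} = (\alpha\odot\beta)\circ(m\odot n)\circ{i_2}_{A,B,C}.
\]
So it suffices to show that the right-hand side satisfies the same equation. Since cocartesian cells are epic (fact~\ref{fact1}), we may precompose both sides with the cocartesian cell $U_I\to{\iota_A}_!U_I{\iota_C}_!$ and compare the resulting $2$-cells out of $U_I$.

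The comparison then reduces to a square relating ${i_2}$ and the hat cells:
\[
(\widehat{f,g}\odot\widehat{g,h})\circ {i_2}_{A,B,C} = {i_2}_{A',B',C'}\circ \widehat{f,h}.
\]
Granting this square, the right-hand side precomposed with $\widehat{f,h}$ becomes $(F_1(\alpha)(m)\odot F_1(\beta)(n))\circ(\widehat{f,g}\odot\widehat{g,h})\circ{i_2}_{A,B,C}$. By functoriality of $\odot$ and the defining equations of $F_1(\alpha)(m)$ and $F_1(\beta)(n)$, this equals $(\alpha\odot\beta)\circ(m\odot n)\circ{i_2}_{A,B,C}$, as required.

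To prove the square, I would precompose both sides with the cocartesian cell $U_I\to{\iota_A}_!U_I{\iota_C}_!$. By the definition of ${i_1}$, ${i_2}$ and of $\widehat{\ ,\ }$ (each hat cell commutes with the cocartesian cells out of $U_I$), both sides become $(\cct\odot\cct)\circ\mathfrak l^{-1}_{U_I}: U_I \to {\iota_{A'}}_!U_I{\iota_{B'}}_!\odot{\iota_{B'}}_!U_I{\iota_{C'}}_!$. Epicness then concludes.

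I expect the main obstacle to be the bookkeeping of the two-sided hat cells $\widehat{f,g}$. The proof of Proposition~\ref{prop:general_functoriality} only treated the one-sided case, so the pasting identity $\widehat{f,g}\circ\cct=\cct$ for pairs must be justified in the two-sided version via the cleavage of Proposition~\ref{prop:choice}. Similarly, ${i_2}$ must be shown compatible with changing the middle object $B$ to $B'$, which uses the identity $\widehat{f,g}=\widehat{f,\id_I}\odot\widehat{\id_I,g}$ established in the proof of Proposition~\ref{prop:tensorop}.
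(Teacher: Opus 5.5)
Your proof is correct and follows the paper's argument. Naturality reduces, via the uniqueness of factorization of $G(\alpha)(m)$ and $G(\beta)(n)$ through the cocartesian cells $\widehat{f,g}$, $\widehat{g,h}$ together with the interchange law, to the square $(\widehat{f,g}\odot\widehat{g,h})\circ{i_2}_{A,B,C}={i_2}_{A',B',C'}\circ\widehat{f,h}$, and the paper also proves that square by precomposing with the epic cocartesian cell $U_I\to{\iota_A}_!U_I{\iota_C}_!$.

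Two side remarks. The obstacles you anticipate in your last paragraph do not arise: only the defining equations $\widehat{f,g}\circ\cct=\cct$ are used, not $\widehat{f,g}=\widehat{f,\id_I}\odot\widehat{\id_I,g}$. Also, balancedness over $F_0(B)$ is not part of this statement; the paper proves it separately, and it takes a substantial diagram chase through the construction of $\swap$, not just the definition of $\psi$.
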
 
\begin{proof} 
    It suffices for all $1$-cells $M': A' \tobar B'$, $N': B' \tobar C' $, $2$-cells $\alpha: M \ccs{f}{g} M'$ and $\beta: N \ccs{g}{h} N'$, and pure tensor $(m,n) \in F_1(M) \otimes_{\Z} F_1(N)$ to show $G(\alpha \odot \beta)({F_\odot}_{M,N}(m,n)) = (G(\alpha)(m) \odot G(\beta)(n)) \circ {i_2}_{A',B',C'}$. This follows from commutativity of the diagram:
\[\begin{tikzcd}
	{{\iota_A}_!U_I{\iota_C}_!} & {{\iota_A}_!U_I{\iota_B}_! \odot {\iota_B}_!U_I{\iota_C}_!} & {M \odot N} \\
	{{\iota_A'}_!U_I{\iota_C'}_!} & {{\iota_A'}_!U_I{\iota_B'}_! \odot {\iota_B'}_!U_I{\iota_C'}_!} & {M' \odot N'}
	\arrow["{{i_2}_{A,B,C}}", from=1-1, to=1-2]
	\arrow["{\widehat{f,h}}"', from=1-1, to=2-1]
	\arrow["{m \odot n}", from=1-2, to=1-3]
	\arrow["{\widehat{f,g} \odot \widehat{g,h}}"{description}, from=1-2, to=2-2]
	\arrow["{\alpha \odot \beta}", from=1-3, to=2-3]
	\arrow["{{i_2}_{A',B',C'}}"', shift right=2, draw=none, from=2-1, to=2-2]
	\arrow[from=2-1, to=2-2]
	\arrow["{G(\alpha)(m) \odot G(\beta)(n)}"', shift right=2, draw=none, from=2-2, to=2-3]
	\arrow[from=2-2, to=2-3]
\end{tikzcd}\]
where the rightmost cell commutes by definition of $G(\alpha)(m)$ and $G(\beta)(n)$. To see that the left square commutes, precompose by the epic cocartesian $2$-cell $U_I \xrightarrow{\cct} {\iota_A}_!U_I{\iota_C}_!$. By Definition~\ref{def:ii} of $i_2$, and by definition of $\widehat{f,h}$, the commutativity of the following diagram suffices:
\[\begin{tikzcd}[row sep=1em]
	&& {{{\iota_A}_!U_I{\iota_B}_! \odot {\iota_B}_!U_I{\iota_C}_!}} \\
	{U_I} & {U_I \odot U_I} \\
	&& {{{\iota_A'}_!U_I{\iota_B'}_! \odot {\iota_B'}_!U_I{\iota_C'}_!}}
	\arrow["{\widehat{f,g} \odot \widehat{g,h}}", from=1-3, to=3-3]
	\arrow["{\mathfrak l_{U_I}^{-1}}", from=2-1, to=2-2]
	\arrow["\cct",from=2-2, to=1-3]
	\arrow["\cct", from=2-2, to=3-3]
\end{tikzcd}\]
where the triangle commutes by definition of $\widehat{f,g}$ and $\widehat{g,h}$. 
\end{proof}
\begin{proposition}
    For all $1$-cells $M: A \tobar B$ and $N: B \tobar C$, ${F_\odot}_{M,N}$ is actually a $F_0(A)$-$F_0(C)$-bimodule homomorphism $F_1(M) \otimes_{F_0(B)} F_1(N) \to F_1(M \odot N)$.
\end{proposition}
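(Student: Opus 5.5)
Three things need checking for the map ${F_\odot}_{M,N}$, defined on pure tensors by $(m,n)\mapsto (m\odot n)\circ {i_2}_{A,B,C}$. It must be $F_0(A)$-linear on the left and $F_0(C)$-linear on the right. It must be $F_0(B)$-balanced, so that it descends from $F_1(M)\otimes_{\Z}F_1(N)$ to $F_1(M)\otimes_{F_0(B)}F_1(N)$. Finally, everything should be compatible with the $\End(U_I)$-algebra structures, which will follow from the first two points.

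Recall how the actions arise: through $\psi$, an element $a\in F_0(A)=\End({\iota_A}_!U_I)$ acts on $m\in G(M)=\A({\iota_A}_!U_I{\iota_B}_!,M)$ by precomposition with ${i_1}_{A,B}\circ(a\odot \id_{U_I{\iota_B}_!})\circ {i_1}_{A,B}^{-1}$. The right action of $b\in F_0(B)$ is precomposition with the analogous cell built from $\id\odot\swap_B(b)$.

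For left linearity, I would show
\[ {i_2}_{A,B,C}\circ \bigl({i_1}_{A,C}(a\odot\id){i_1}_{A,C}^{-1}\bigr) = \bigl({i_1}_{A,B}(a\odot \id){i_1}_{A,B}^{-1}\odot \id_{{\iota_B}_!U_I{\iota_C}_!}\bigr)\circ {i_2}_{A,B,C}. \]
Precomposing with $m\odot n$ then gives $a\cdot(m\odot n)\circ i_2 = ((a\cdot m)\odot n)\circ i_2$ by functoriality of $\odot$. Both sides of the identity are maps out of ${\iota_A}_!U_I{\iota_C}_!$. The plan is to unfold $i_1$ and $i_2$ using Definition~\ref{def:ii} and the associator, which reduces the identity to ${\iota_A}_!U_I\odot U_I{\iota_C}_!\to {\iota_A}_!U_I\odot U_I{\iota_B}_!\odot{\iota_B}_!U_I\odot U_I{\iota_C}_!$. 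There $a$ acts only on the first factor, and both composites are $a\odot\id\odot\id\odot\id$ composed with $\id\odot(\cct\odot\cct)\circ(\text{unit/associator isos})\odot\id$. Interchange law plus naturality of $\mathfrak a,\mathfrak l$ should settle it. Right linearity is symmetric.

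Balancedness is the main obstacle. For $b\in F_0(B)$ we need $((m\cdot b)\odot n)\circ i_2=(m\odot(b\cdot n))\circ i_2$. Here $b$ acts on $m$ through $\swap_B(b)\in\End(U_I{\iota_B}_!)$, placed on the right factor, and on $n$ directly as $b\in\End({\iota_B}_!U_I)$, placed on the left factor. So the key identity is
\[ (\id\odot\swap_B(b))\odot\id \;=\; \id\odot(b\odot\id) \]
after precomposition with the middle part ${\iota_B}_!\odot{\iota_B}_!$ of $i_2$, i.e.\ on $U_I{\iota_B}_!\odot{\iota_B}_!U_I$ precomposed with the cocartesian $\cct\odot\cct$ from $U_I\odot U_I$.

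I would use that cocartesian cells are epic (fact~\ref{fact1}) and Lemma~\ref{lemma:simple_natiso}, which identifies $U_I{\iota_B}_!\odot{\iota_B}_!U_I$ with a quotient of $U_I{\iota_B}_!\odot U_B\odot {\iota_B}_!U_I$ via the special objects ${}_{\iota_B}B$ and $B_{\iota_B}$. Under these identifications both sides should become the action of the same endomorphism of $U_B$ restricted along $\iota_B$. Concretely, I would unwind $\swap_B$ from Proposition~\ref{prop:op}, via $\sigma_B$, $\sigma'_B$ and the common factorization $1_T=1_M$ of $U\iota_B$. The hope is that $\swap_B$ is characterized exactly by the equation $\sigma'_B\circ\swap_B(b)$-transported $=\sigma_B\circ b$-transported inside $\iota_B^*U_B\iota_B^*$. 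That equation is precisely the balancing condition once one composes along $\lambda_!,\rho_!$.

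If that direct unwinding is too heavy, the fallback is to prove balancedness only for $m,n$ of the form of cocartesian generators, then extend. This uses that $U_I$ is a projective generator and that $F_1$ is locally an equivalence by Proposition~\ref{prop:pointwise_gabriel}, together with additivity of $\odot$.
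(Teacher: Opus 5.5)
Your plan follows the paper's proof. Linearity over $F_0(A)$ and $F_0(C)$ comes from the compatibility of $\psi$ with $i_1$, $i_2$ and the cocartesian cells (via interchange). Balancedness is reduced, after precomposing with the epic cocartesian cell $U_I\to{\iota_A}_!U_I{\iota_C}_!$, to comparing $(\id\odot\beta)\circ(\cct\odot\cct)\circ\mathfrak l^{-1}_{U_I}$ with $(\swap_B(\beta)\odot\id)\circ(\cct\odot\cct)\circ\mathfrak l^{-1}_{U_I}$. The paper identifies these with ${\iota_B}^*\beta\circ\eta$ and $\kappa_B^{-1}\circ\swap_B(\beta){\iota_B}^*\circ\eta'$, then matches them using $1_T=1_M$ (i.e.\ $\kappa_B^{-1}\circ\eta'=\eta$) and the defining property of $\swap_B$ --- the ingredients you name.

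Two small corrections. The identification you need is $U_I{\iota_B}_!\odot{\iota_B}_!U_I\cong{\iota_B}^*{\iota_B}_!U_I\cong{\iota_B}^*U_B{\iota_B}^*$ via the special objects, not Lemma~\ref{lemma:simple_natiso}, which concerns ${\iota_A}_!U_I\odot U_I{\iota_C}_!$. Also, $\cct\odot\cct\colon U_I\odot U_I\to U_I{\iota_B}_!\odot{\iota_B}_!U_I$ lies over identities and is not cocartesian, though your reduction only uses it in the sufficient direction.
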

\begin{proof}
We have to show that ${F_\odot}_{M,N}$ respects the actions of $F_0(A)$ and $F_0(C)$, and the actions of $F_0(B)$. For the latter, let us show that for all elements $m\in F_1(M), n \in F_1(N)$ and $\beta \in F_0(B)$, we have ${F_\odot}_{M,N}((m, \beta \act n)) = {F_\odot}_{M,N}((m \act \beta, n))$. Unfolding the definitions, we have to show that the following diagram commutes: 
\[\begin{tikzcd}[column sep=scriptsize]
	{{\iota_A}_! U_I{\iota_C}_! } & {{\iota_A}_! U_I {\iota_B}_! \odot {\iota_B}_!U_I {\iota_C}_! } & {{\iota_A}_! U_I {\iota_B}_! \odot {\iota_B}_!U_I {\iota_C}_! } & {M \odot N}
	\arrow["{i_2}_{A,B,C}", from=1-1, to=1-2]
	\arrow["{\id \odot \psi_{B,C}(\beta, \id_{{\iota_C}_!U_I})}", curve={height=-12pt}, from=1-2, to=1-3]
	\arrow["{\psi_{A,B}(\id_{{\iota_A}_!U_I}, \beta^{\mathrm{op}}) \odot \id}"', curve={height=12pt}, from=1-2, to=1-3]
	\arrow["{m \odot n}", from=1-3, to=1-4]
\end{tikzcd}\]
This boils down to the commutativity of:
\[\begin{tikzcd}
	{U_I} & {U_I \odot U_I} & {U_I {\iota_B}_! \odot {\iota_B}_!U_I} & {U_I {\iota_B}_! \odot {\iota_B}_!U_I}
	\arrow["{\mathfrak l_I^{-1}}", from=1-1, to=1-2]
	\arrow["\cct \odot \cct", from=1-2, to=1-3]
	\arrow["{\id \odot \beta}", curve={height=-12pt}, from=1-3, to=1-4]
	\arrow["{\beta^{\mathrm{op}} \odot \id}"', curve={height=12pt}, from=1-3, to=1-4]
\end{tikzcd}.\]
To prove its commutativity, it suffices to show that the following two diagrams commute:
\begin{equation}
\begin{tikzcd}[column sep=scriptsize]
	{U_I} & {U_I \odot U_I} && {U_I {\iota_B}_! \odot {\iota_B}_!U_I} \\
	& {{\iota_B}_!U_I} & {U_B \odot {\iota_B}_!U_I } \\
	& {{\iota_B}_!U_I} & {U_B \odot {\iota_B}_!U_I } \\
	{{\iota_B}^*{\iota_B}_!U_I} & {{\iota_B}^*{\iota_B}_!U_I} && {U_I {\iota_B}_! \odot {\iota_B}_!U_I}
	\arrow["{\mathfrak l_{U_I}^{-1}}", from=1-1, to=1-2]
	\arrow["\cct"{description}, from=1-1, to=2-2]
	\arrow["\eta"', from=1-1, to=4-1]
	\arrow["{\cct \odot \cct}", from=1-2, to=1-4]
	\arrow["{\id_{U_I} \odot \cct }"{description}, from=1-2, to=2-3]
	\arrow["{\id_{U_I {\iota_B}_!} \odot \beta}"{description}, from=1-4, to=4-4]
	\arrow["{\mathfrak l_{{\iota_B}_!U_I}^{-1}}"', from=2-2, to=2-3]
	\arrow["\beta", from=2-2, to=3-2]
	\arrow["{\cct \odot \id_{{\iota_B}_!U_I }}"{description}, from=2-3, to=1-4]
	\arrow["{\id_{U_B} \odot \beta}", from=2-3, to=3-3]
	\arrow["{\mathfrak l_{{\iota_B}_!U_I}^{-1}}"', from=3-2, to=3-3]
	\arrow["{\cct \odot \id_{{\iota_B}_!U_I }}"{description}, from=3-3, to=4-4]
	\arrow["\ct"{description}, from=4-1, to=2-2]
	\arrow["{{\iota_B}^* \beta}", from=4-1, to=4-2]
	\arrow["\ct"{description}, from=4-2, to=3-2]
	\arrow["\cong", from=4-2, to=4-4]
\end{tikzcd}\label{diagram:pair1}
\end{equation}
\begin{equation}\begin{tikzcd}[column sep=scriptsize]
	{U_I} & {U_I \odot U_I} && {U_I {\iota_B}_! \odot {\iota_B}_!U_I} \\
	&& {\text{Symmetric of~}\ref{diagram:pair1}} \\
	& {U_I{\iota_B}_!{\iota_B}^*} & {U_I{\iota_B}_!{\iota_B}^*} \\
	{{\iota_B}^*{\iota_B}_!U_I} && {{\iota_B}^*{\iota_B}_!U_I} & {U_I {\iota_B}_! \odot {\iota_B}_!U_I}
	\arrow["{\mathfrak l_{U_I}^{-1}}", from=1-1, to=1-2]
	\arrow["{\eta'}"{description}, from=1-1, to=3-2]
	\arrow["\eta"', from=1-1, to=4-1]
	\arrow["\cct \odot \cct", from=1-2, to=1-4]
	\arrow["{\beta^{\mathrm{op}} \odot \id_{{\iota_B}_!U_I}}"{description}, from=1-4, to=4-4]
	\arrow["{\beta^{\mathrm{op}}{\iota_B}^*}", from=3-2, to=3-3]
	\arrow["{\kappa^{-1}}"{description}, from=3-2, to=4-1]
	\arrow["{\kappa^{-1}}", from=3-3, to=4-3]
	\arrow["\cong", from=3-3, to=4-4]
	\arrow["{{\iota_B}^* \beta}", from=4-1, to=4-3]
	\arrow["\cong", from=4-3, to=4-4]
\end{tikzcd}\label{diagram:pair2}
\end{equation}

It is elementary to establish the commutativity of the former, and symmetrically of the top right cell of the latter. Likewise, the commutativity of the bottom right cell is elementary. Let us show that the remaining two cells commute. For the leftmost one, consider the following commutative diagram: 
\[\begin{tikzcd}
	&& {U_I} \\
	\\
	{U_I{\iota_B}_!\iota_B^*} && {\iota_B^*U_B\iota_B^*} && {\iota_B^*{\iota_B}_!U}
	\arrow["{\eta'}"', from=1-3, to=3-1]
	\arrow["{\fact{U(\iota_B)}}"{description}, from=1-3, to=3-3]
	\arrow["\eta", from=1-3, to=3-5]
	\arrow["{\sigma_B'}", from=3-1, to=3-3]
	\arrow["{\sigma_B^{-1}}", from=3-3, to=3-5]
\end{tikzcd}\]
For the bottom middle cell, apply the fact that cartesian cells are monic to the following commutative diagram. For the sake of readability, we write $(\mathunderscore)*$ for restriction $(\mathunderscore){\iota_B}^*$ on the right along ${\iota_B}$. Similarly, we write $(\mathunderscore)!$ for extension along ${\iota_B}$, and we use the symmetric notations for change of coefficients on the left:
\[\begin{tikzcd}
	&&& {(*!U)!*} & {(*!U)!*} & {U!*!*} & {U!*} \\
	{U_I} & {U!*} & {U!} & {(*!U)!} & {(*!U)!} & {U!*!} & {U!} \\
	&&& {*!U} & {*!U} & {U!*}
	\arrow["{(*b)!*}", from=1-4, to=1-5]
	\arrow[from=1-4, to=2-4]
	\arrow["{\kappa!*}", from=1-5, to=1-6]
	\arrow[from=1-5, to=2-5]
	\arrow["{\epsilon'*}", from=1-6, to=1-7]
	\arrow[from=1-6, to=2-6]
	\arrow[from=1-7, to=2-7]
	\arrow["{\eta'}", from=2-1, to=2-2]
	\arrow["{\eta!*}", from=2-2, to=1-4]
	\arrow[from=2-2, to=2-3]
	\arrow["{\kappa_B^{-1}}"', from=2-2, to=3-4]
	\arrow["{\eta!}", from=2-3, to=2-4]
	\arrow["{(*b)!}", from=2-4, to=2-5]
	\arrow["{\kappa!}", from=2-5, to=2-6]
	\arrow["\epsilon'", from=2-6, to=2-7]
	\arrow[from=3-4, to=2-4]
	\arrow["{*b}", from=3-4, to=3-5]
	\arrow[from=3-5, to=2-5]
	\arrow["\kappa_B", from=3-5, to=3-6]
	\arrow[from=3-6, to=2-6]
	\arrow[from=3-6, to=2-7]
\end{tikzcd}\]
where the bottom left cell commutes when precomposed by $\eta'$, as seen in the following diagram:
\[\begin{tikzcd}
	{U_I} & {U!*} && {*U_B*} && {*!U} \\
	&& {U!*!} \\
	&&& {(*U_B*)!} \\
	& {U!} &&&& {(*!U)!}
	\arrow["{\eta'}", from=1-1, to=1-2]
	\arrow["{\sigma_B'}", from=1-2, to=1-4]
	\arrow["\cct", from=1-2, to=2-3]
	\arrow["\ct"', from=1-2, to=4-2]
	\arrow["{\sigma_B^{-1}}", from=1-4, to=1-6]
	\arrow["\cct", from=1-4, to=3-4]
	\arrow["\cct", from=1-6, to=4-6]
	\arrow["{\sigma_B'!}"{description}, from=2-3, to=3-4]
	\arrow["{\sigma_B^{-1}!}"{description}, from=3-4, to=4-6]
	\arrow["{\eta!}"{description}, from=4-2, to=2-3]
	\arrow["{\fact{U\iota_B}!}"{description}, from=4-2, to=3-4]
	\arrow["{\eta!}"', from=4-2, to=4-6]
\end{tikzcd}\]
where the leftmost cell precomposed by $\eta'$ commutes, as seen in the following diagram:
\[\begin{tikzcd}
	& {U!*} \\[8pt]
	& U \\[-8pt]
	{U!} && {U!*!}
	\arrow["\ct"', from=1-2, to=3-1]
	\arrow["\cct", from=1-2, to=3-3]
	\arrow["\eta'"{description}, from=2-2, to=1-2]
	\arrow["\cct", from=2-2, to=3-1]
	\arrow["{\eta'!}", from=3-1, to=3-3]
\end{tikzcd}\]

Let us now show that ${F_\odot}_{M,N}$ respects the actions of $F_0(A)$ and $F_0(C)$, by showing that for all elements $m\in F_1(M), n \in F_1(N)$, $\alpha \in F_0(A)$ and $\gamma \in F_0(C)$, we have ${F_\odot}_{M,N}((\alpha \act m, n \act \gamma)) = \alpha \act {F_\odot}_{M,N}((m, n)) \act \gamma$. Unfolding the definitions, we have to show that the following diagram commutes: 
\[\begin{tikzcd}[column sep=3.5em, row sep=3em]
	{{\iota_A}_!U_I {\iota_C}_!} &&& {{\iota_A}_!U_I {\iota_C}_!} \\
	& |[xshift=-1.7em,overlay]| {{\iota_A}_!U_I \odot U_I{\iota_C}_!} &&& |[xshift=-1.7em,yshift=-1.4em,overlay]| {M \odot N} \\
	&& |[xshift=1.7em,overlay]| {{\iota_A}_!U_I \odot U_I{\iota_C}_!} \\
	{{\iota_A}_!U_I {\iota_B}_! \odot {\iota_B}_! U_I {\iota_C}_!} &&& {{\iota_A}_!U_I {\iota_B}_! \odot {\iota_B}_! U_I {\iota_C}_!}
	\arrow["{\psi_{A,C}(\alpha, \gamma)}", from=1-1, to=1-4]
	\arrow["{{i_1}_{A,C}}"{description}, from=1-1, to=2-2]
	\arrow["{{i_2}_{A,B,C}}"{description}, from=1-1, to=4-1]
	\arrow["{{F_1}_{M,N}(m,n)}", from=1-4, to=2-5]
	\arrow["{{i_2}_{A,B,C}}"{description}, from=1-4, to=4-4]
	\arrow["{\alpha \odot \swap_C(\gamma^{\mathrm{op}})}"{description}, from=2-2, to=3-3]
	\arrow["{\cct \odot \cct}"{description}, from=2-2, to=4-1]
	\arrow["{{i_1}_{A,C}^{-1}}"{description}, from=3-3, to=1-4]
	\arrow["{\cct \odot \cct}"{description}, from=3-3, to=4-4]
	\arrow["{\psi_{A,B}(\alpha, \id) \odot \psi_{B,C}(\id, \gamma)}"', shift right=2, draw=none, from=4-1, to=4-4]
	\arrow[from=4-1, to=4-4]
	\arrow["{m \odot n}"{description}, from=4-4, to=2-5]
\end{tikzcd}\]
where the bottom middle diagram commutes by commutativity of the following ones (and their symmetric variants):
\[\begin{tikzcd}
	{{\iota_A}_!U_I } &&& {{\iota_A}_!U_I \odot U_I} \\
	& {U_I} & {U_I \odot U_I} \\
	{{\iota_A}_!U_I{\iota_B}_!} &&& {{\iota_A}_!U_I \odot U_I{\iota_B}_!}
	\arrow["{\mathfrak r^{-1}_{{\iota_A}_! U_I}}", from=1-1, to=1-4]
	\arrow["\cct"', from=1-1, to=3-1]
	\arrow["{\id \odot \cct}", from=1-4, to=3-4]
	\arrow["\cct"{description}, from=2-2, to=1-1]
	\arrow["{\mathfrak l_{U_I}^{-1}}", from=2-2, to=2-3]
	\arrow["\cct"{description}, from=2-2, to=3-1]
	\arrow["{\cct \odot \id}"{description}, from=2-3, to=1-4]
	\arrow["{\cct \odot \cct}"{description}, from=2-3, to=3-4]
	\arrow["{{i_1}_{A,B}}", from=3-1, to=3-4]
\end{tikzcd}\]
\[\begin{tikzcd}[baseline=(\tikzcdmatrixname-\the\pgfmatrixcurrentrow-1.base)]
	{{\iota_A}_!U_I } &&& {{\iota_A}_!U_I } \\
	& {{\iota_A}_!U_I \odot U_I} & {{\iota_A}_!U_I \odot U_I} \\
	& {{\iota_A}_!U_I \odot U_I{\iota_B}_!} & {{\iota_A}_!U_I \odot U_I{\iota_B}_!} \\
	{{\iota_A}_!U_I{\iota_B}_!} &&& {{\iota_A}_!U_I{\iota_B}_!}
	\arrow["{\alpha }", from=1-1, to=1-4]
	\arrow["{\mathfrak r^{-1}_{{\iota_A}_!U_I}}"{description}, from=1-1, to=2-2]
	\arrow["\cct"', from=1-1, to=4-1]
	\arrow["\cct", from=1-4, to=4-4]
	\arrow["{\alpha \odot \id_{U_I}}", from=2-2, to=2-3]
	\arrow["{\id_{{\iota_A}_!U_I} \odot \cct}"', from=2-2, to=3-2]
	\arrow["{\mathfrak r_{{\iota_A}_!U_I}}"{description}, from=2-3, to=1-4]
	\arrow["{\id_{{\iota_A}_!U_I} \odot \cct}", from=2-3, to=3-3]
	\arrow["{\alpha \odot \id_{U_I{\iota_B}_!}}", from=3-2, to=3-3]
	\arrow["{{i_1}_{A,B}^{-1}}"{description}, from=3-3, to=4-4]
	\arrow["{{i_1}_{A,B}}"{description}, from=4-1, to=3-2]
	\arrow["{\psi(\alpha, \id_{{\iota_B}_!U_I})}", from=4-1, to=4-4]
\end{tikzcd}\label{diagram:psiid}\qedhere
\]
 \end{proof}
\begin{definition} 
For all $1$-cells $A$, let ${F_U}_A: F_0(A) \to F_1(U_A)$ be the map defined on elements $\alpha : {\iota_A}_!U_I \to {\iota_A}_!U_I \in F_0(A) = R(A,I)$ by $\fact{U\iota_A} \circ R(\id_A, \iota_A)(\alpha)$, where $\fact{U\iota_A}$ is the factorization of $U(\iota_A)$ through the cocartesian $2$-cell $U_I \xrightarrow{\cct} {\iota_A}_!U_I{\iota_A}_!$. Since composition in an abelian category is additive and $R(\id_A, \iota_A)$ is a ring homomorphism, ${F_U}_A$ is a group homomorphism.
\end{definition}
\begin{proposition}
The abelian group homomorphisms ${F_U}_A$ assemble into a natural homomorphism.
 \end{proposition}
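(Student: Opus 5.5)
We must show that for every vertical arrow $f: A \to B$ the square
\[\begin{tikzcd}
	{F_0(A)} & {F_1(U_A)} \\
	{F_0(B)} & {F_1(U_B)}
	\arrow["{{F_U}_A}", from=1-1, to=1-2]
	\arrow["{F_0(f)}"', from=1-1, to=2-1]
	\arrow["{F_1(U f)}", from=1-2, to=2-2]
	\arrow["{{F_U}_B}"', from=2-1, to=2-2]
\end{tikzcd}\]
commutes. Here $F_0(f) = R(f,\id_I) = G(\widehat{f,\id_I})$, and $F_1(Uf) = G(Uf)$ is computed as in Proposition~\ref{prop:general_functoriality}: for $m \in G(U_A)$, $G(Uf)(m)$ is the unique $2$-cell ${\iota_B}_!U_I{\iota_B}_! \to U_B$ such that $G(Uf)(m) \circ \widehat{f,f} = Uf \circ m$. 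Fix $\alpha \in F_0(A)$. The plan is to show that both $G(Uf)({F_U}_A(\alpha))$ and ${F_U}_B(F_0(f)(\alpha))$ satisfy this characterization. Since $\widehat{f,f}$ is cocartesian, hence epic (fact~\ref{fact1}), this reduces to the single identity
\[ {F_U}_B(R(f,\id_I)(\alpha)) \circ \widehat{f,f} = Uf \circ {F_U}_A(\alpha). \]

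Unfolding ${F_U}$, the right-hand side is $Uf \circ \fact{U\iota_A} \circ R(\id_A,\iota_A)(\alpha)$, and the left-hand side is $\fact{U\iota_B} \circ R(\id_B,\iota_B)(R(f,\id_I)(\alpha)) \circ \widehat{f,f}$. I would argue in three steps.

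\textbf{Step 1.} Show $Uf \circ \fact{U\iota_A} = \fact{U\iota_B} \circ \widehat{f,f}$. Precompose both sides with the epic cocartesian cell $U_I \to {\iota_A}_!U_I{\iota_A}_!$. The left side becomes $Uf \circ U\iota_A = U(f\iota_A) = U\iota_B$, by functoriality of $U$ and initiality of $I$. The right side becomes $\fact{U\iota_B} \circ (U_I \xrightarrow{\cct} {\iota_B}_!U_I{\iota_B}_!) = U\iota_B$, by the defining triangle of $\widehat{f,f}$.

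\textbf{Step 2.} Show the naturality identity $R(\id_B,\iota_B)(R(f,\id_I)(\alpha)) \circ \widehat{f,f} = \widehat{f,f} \circ R(\id_A,\iota_A)(\alpha)$. By functoriality of $R$ (Proposition~\ref{prop:initial_faithful}), $R(\id_B,\iota_B) \circ R(f,\id_I) = R(f,\iota_B) = R(f,f) \circ R(\id_A,\iota_A)$, using $f\iota_A = \iota_B$. The required equality is then exactly the defining property of $R(f,f) = G(\widehat{f,f})$ on endomorphisms, namely $r_{f,f}(a)\circ \widehat{f,f} = \widehat{f,f}\circ a$, coming from diagram~\eqref{diag:threeinone}. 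Combining Steps 1 and 2 gives the displayed identity.

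\textbf{Step 3.} Check that ${F_U}$ is compatible with the module structures, which the naturality statement also implicitly requires. This follows because $R(\id_A,\iota_A)$ is a ring homomorphism and because of the $\psi$-action defined earlier, so the components are additive.

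The main obstacle is bookkeeping in Step 2. The paper's $\widehat{f,g}$ are defined only up to the chosen cleavage of Proposition~\ref{prop:choice}, so the equalities $\widehat{f,f}\circ\widehat{\id_A,\iota_A} = \widehat{f,\iota_B}$ (and the variant in the other order) must be verified. I would do this via the uniqueness property of cocartesian factorizations, precomposing with $U_I \to {\iota_A}_!U_I{\iota_A}_!$ as in Proposition~\ref{prop:general_functoriality}, before invoking functoriality of $G$.
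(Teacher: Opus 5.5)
Your Steps 1 and 2 are correct and follow the paper's proof. The paper likewise uses uniqueness of factorization through the epic cocartesian cell $\widehat{f,f}$ to reduce naturality to two squares: $Uf\circ\fact{U\iota_A} = \fact{U\iota_B}\circ\widehat{f,f}$, checked after precomposing with $U_I \to {\iota_A}_!U_I{\iota_A}_!$, and $R(\id_B,\iota_B)(R(f,\id_I)(\alpha))\circ\widehat{f,f} = \widehat{f,f}\circ R(\id_A,\iota_A)(\alpha)$, which it verifies with an unfolded cube diagram equivalent to your functoriality-of-$R$ argument.

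Step 3 is not needed for this statement. Bimodule-linearity of ${F_U}_A$ is the separate next proposition, and it requires far more than additivity (for instance $R(\id_A,\iota_A)(\alpha) = \psi_{A,A}(\alpha,\id)$ and a compatibility with $\swap$), so your one-line justification would not establish it.
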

 \begin{proof}
Naturality means that for all objects $A$ and $B$, vertical arrows $f: A \to B$, and elements $\alpha \in F_0(A)$, we have the identity: $$F_1(U_f)(\fact{U_{\iota_A}} \circ R(\id_A, \iota_A)(\alpha) = \fact{U_{\iota_A}} \circ R(\id_A, \iota_A)(F_0(f)(\alpha)).$$
By the uniqueness property in the definition of $F_1(U_f)(\fact{U_{\iota_A}} \circ R(\id_A, \iota_A)(\alpha)$, this follows from the commutativity of the following diagram: 
\[\begin{tikzcd}
	{{\iota_A}_!U_I{\iota_A}_!} && {{\iota_A}_!U_I{\iota_A}_!} && {U_A} \\
	{{\iota_B}_!U_I{\iota_B}_!} && {{\iota_B}_!U_I{\iota_B}_!} && {U_B}
	\arrow["{R(\id_A,\iota_A)(\alpha)}", from=1-1, to=1-3]
	\arrow["{\widehat {f, f}}"', from=1-1, to=2-1]
	\arrow["{\fact{U\iota_A}}", from=1-3, to=1-5]
	\arrow["{\widehat {f, f}}", from=1-3, to=2-3]
	\arrow["{U(f)}", from=1-5, to=2-5]
	\arrow["{R(\id_B,\iota_B)(R(f,\id_I)(\alpha))}"', shift right=2, draw=none, from=2-1, to=2-3]
	\arrow[from=2-1, to=2-3]
	\arrow["{\fact{U\iota_B}}"', from=2-3, to=2-5]
\end{tikzcd}\]
where the leftmost square commutes since cocartesian $2$-cells are epic, and by the following diagram where the triangle and all the faces of the square, except the bottom one, commute:
\[\begin{tikzcd}[column sep= .7em, row sep=2em]
	& {{\iota_A}_!U_I} && {{\iota_A}_!U_I} \\
	{U_I} && {{\iota_B}_!U_I} && {{\iota_B}_!U_I} \\
	& {{\iota_A}_!U_I{\iota_A}_!} && {{\iota_A}_!U_I{\iota_A}_!} \\
	&& {{\iota_B}_!U_I{\iota_B}_!} && {{\iota_B}_!U_I{\iota_B}_!}
	\arrow["\alpha", from=1-2, to=1-4]
	\arrow["{\widehat {f, \id_I}}"{description}, from=1-2, to=2-3]
	\arrow["{\widehat {\id_A,\iota_A}}"{description}, from=1-2, to=3-2]
	\arrow["{\widehat {f, \id_I}}", from=1-4, to=2-5]
	\arrow["{\widehat {\id_A,\iota_A}}"{description, pos=0.2}, from=1-4, to=3-4]
	\arrow["{ \cct}"{description}, from=2-1, to=1-2]
	\arrow["\cct"{description}, from=2-1, to=3-2]
	\arrow["{R(f,\id_I)(\alpha)}"{description}, from=2-3, to=2-5]
	\arrow["{\widehat {\id_B,\iota_B}}", from=2-5, to=4-5]
	\arrow["{R(\id_A,\iota_A)(\alpha)}"{description}, from=3-2, to=3-4]
	\arrow["{\widehat {f, f}}"', from=3-2, to=4-3]
	\arrow["{\widehat {f, f}}"{description}, from=3-4, to=4-5]
    \arrow["{\widehat {\id_B,\iota_B}}"{description, pos=0.2}, from=2-3, to=4-3]
	\arrow["{R(\id_B,\iota_B)(R(f,\id_I)(\alpha))}"', shift right=2, draw=none, from=4-3, to=4-5]
	\arrow[from=4-3, to=4-5]
\end{tikzcd}\]
and where the rightmost square commutes since cocartesian $2$-cells are epic, and by the following commutative diagram:
\[\begin{tikzcd}[baseline=(\tikzcdmatrixname-\the\pgfmatrixcurrentrow-1.base)]
	{{\iota_A}_!U_I{\iota_A}_!} && {U_A} \\
	& {U_I} \\
	{{\iota_B}_!U_I{\iota_B}_!} && {U_B}
	\arrow["{\fact{U\iota_A}}", from=1-1, to=1-3]
	\arrow["{\widehat {f, f}}", from=1-1, to=3-1]
	\arrow["{U(f)}", from=1-3, to=3-3]
	\arrow["\cct"{description}, from=2-2, to=1-1]
	\arrow["{U\iota_A}"{description}, from=2-2, to=1-3]
	\arrow["\cct"{description}, from=2-2, to=3-1]
	\arrow["{U\iota_B}", from=2-2, to=3-3]
	\arrow["{\fact{U\iota_B}}"', from=3-1, to=3-3]
\end{tikzcd}\qedhere\]
\end{proof}

\begin{proposition} For all objects $A$, ${F_U}_A$ is actually a $F_0(A)$-$F_0(A)$-bimodule of $\End(U_I)$-algebras homomorphism.
\end{proposition}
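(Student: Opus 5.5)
Since ${F_U}_A$ is already an additive group homomorphism, I need to check three things: compatibility with the left action of $F_0(A)$, compatibility with the right action of $F_0(A)$, and compatibility with the $\End(U_I)$-action. The last follows from the first two, because both algebra structures are obtained by restriction along the homomorphisms $r$ of Proposition~\ref{prop:algebra}, so it is enough to treat the bimodule actions.

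On $U_{F_0(A)}$ the actions are multiplication in $F_0(A)=\End({\iota_A}_!U_I)$. On $F_1(U_A)=G(U_A)$ they come from restriction along $\psi_{A,A}$, acting by precomposition with $\psi_{A,A}(a,\id)$, respectively $\psi_{A,A}(\id,b^{\mathrm{op}})$. So the two identities to prove are
\[
\fact{U\iota_A}\circ R(\id_A,\iota_A)(a\circ\alpha)=\fact{U\iota_A}\circ R(\id_A,\iota_A)(\alpha)\circ\psi_{A,A}(a,\id)
\]
and the analogous identity on the right.

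Since $R(\id_A,\iota_A)$ is a ring homomorphism, the left identity reduces to showing that $R(\id_A,\iota_A)(a)=\psi_{A,A}(a,\id)$ in $\End({\iota_A}_!U_I{\iota_A}_!)$, with the products taken in the right order (I will need to be careful with the $\mathrm{op}$ conventions of $\End$). To get this, I precompose both sides with the cocartesian cell $U_I\to{\iota_A}_!U_I$, which is epic by fact~\ref{fact1}. By definition, $R(\id_A,\iota_A)(a)=G(\widehat{\id_A,\iota_A})(a)$ is the unique factorization of $\widehat{\id_A,\iota_A}\circ a$ through $\widehat{\id_A,\iota_A}$. The right-hand side, by construction of $\psi$ through ${i_1}_{A,A}$, is conjugate to $a\odot\id_{U_I{\iota_A}_!}$. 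Its agreement with the left-hand side is exactly the second diagram displayed in the proof that ${F_\odot}_{M,N}$ is a bimodule map (the one establishing $\psi(\alpha,\id)$ in terms of $\alpha\odot\id$), combined with the identity $\widehat{f,g}=\widehat{f,\id_I}\odot\widehat{\id_I,g}$ from the proof of Proposition~\ref{prop:tensorop}.

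The right action is the main obstacle, because it passes through $\swap_A$ of Proposition~\ref{prop:op}. Here I have to show that
\[
R(\id_A,\iota_A)(\alpha)\circ\psi_{A,A}(\id,b^{\mathrm{op}})
\]
and $R(\id_A,\iota_A)(\alpha\circ b)$ agree after postcomposition with $\fact{U\iota_A}$; it is not enough to compare them as elements of $\End$. My plan is to use that both $\fact{U\iota_A}\circ\psi(\id,b^{\mathrm{op}})$ and $\fact{U\iota_A}\circ R(\id_A,\iota_A)(b)$ factor $U(\iota_A)$-type cells. Precomposing with $U_I\to{\iota_A}_!U_I{\iota_A}_!$, I reduce to comparing $U_I\to U_A$ cells. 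Then I unfold $\swap_A$ through $\sigma_A,\sigma'_A$ and the units $\eta,\eta'$, and use the triangle relating $\eta$, $\eta'$ and $\fact{U(\iota_A)}$ proved in the $F_\odot$ argument (diagram~\eqref{diagram:pair2} and its leftmost cell), together with the fact that $1_T=1_M$ is the factorization of $U\iota_A$ (diagram~\eqref{diag:identity}). These should identify both composites with $U(\iota_A)\circ(\text{the same element})$. The Eckmann–Hilton identity of Proposition~\ref{prop:endui_commut} and Lemma~\ref{lemma:simple_natiso} will handle moving $\End(U_I)$-scalars between the two tensor factors, which finishes the algebra compatibility.
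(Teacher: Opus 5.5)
Your route is the paper's. The left action reduces to $R(\id_A,\iota_A)(a)=\psi_{A,A}(a,\id)$ by uniqueness of factorization through $\widehat{\id_A,\iota_A}$. For the right action, your key identity $\fact{U\iota_A}\circ\psi_{A,A}(\id,b^{\mathrm{op}})=\fact{U\iota_A}\circ R(\id_A,\iota_A)(b)$ is exactly Equation~\eqref{eq:Rop} combined with the mirror identity $R(\iota_A,\id_A)(b^{\mathrm{op}})=\psi_{A,A}(\id,b^{\mathrm{op}})$. As in the paper, it is proved by precomposing with the epic cocartesian cell out of $U_I$ and unfolding $\swap_A$ through $\eta$, $\sigma_A$, $\sigma'_A$ and diagram~\eqref{diag:identity}.

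There is, however, a missing step between this key identity and what you need. In $\fact{U\iota_A}\circ R(\id_A,\iota_A)(\alpha)\circ\psi_{A,A}(\id,b^{\mathrm{op}})$, the cell $R(\id_A,\iota_A)(\alpha)$ sits between $\fact{U\iota_A}$ and $\psi_{A,A}(\id,b^{\mathrm{op}})$, so the key identity (which concerns $b$ alone) cannot be applied as it stands. You must first commute the two cells. $R(\id_A,\iota_A)(\alpha)=\psi_{A,A}(\alpha,\id)$ and $\psi_{A,A}(\id,b^{\mathrm{op}})$ are the conjugates by ${i_1}_{A,A}$ of $\alpha\odot\id$ and $\id\odot\swap_A(b)$, which commute by the interchange law. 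Then, with literal composition,
\begin{align*}
\fact{U\iota_A}\circ R(\id_A,\iota_A)(\alpha)\circ\psi_{A,A}(\id,b^{\mathrm{op}})
&=\fact{U\iota_A}\circ\psi_{A,A}(\id,b^{\mathrm{op}})\circ R(\id_A,\iota_A)(\alpha)\\
&=\fact{U\iota_A}\circ R(\id_A,\iota_A)(b)\circ R(\id_A,\iota_A)(\alpha)={F_U}_A(b\circ\alpha).
\end{align*}
Here $b\circ\alpha$ is the product $\alpha\cdot b$ in $F_0(A)=\Hom({\iota_A}_!U_I,{\iota_A}_!U_I)^{\mathrm{op}}$, so your $\alpha\circ b$ must be read as the product of $\End$, not as composition. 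The paper avoids the commutation altogether. It applies Equation~\eqref{eq:Rop} twice, to $b\circ\alpha$ and to $\alpha$, using that $\swap_A$ reverses composition and that $R(\iota_A,\id_A)$ is multiplicative.

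Second, the common value of the two $U_I\to U_A$ cells is not $U(\iota_A)$ composed with a scalar. In $\Bimod$, with $b$ given by right multiplication by $a_0\in A$, both composites are $r\mapsto ra_0$. The correct common value is $\ct\circ\sigma_A\circ\iota_A^*b\circ\eta_{U_I}$. This is the cartesian cell $\iota_A^*U_A\iota_A^*\to U_A$ applied to the image of $b$ in the middle ring $\A(U_I,\iota_A^*U_A\iota_A^*)$ of the $\swap$ chain. By diagram~\eqref{diag:identity}, the unit of that ring is the factorization of $U(\iota_A)$ through that cartesian cell.

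To reach this common value from both sides, you also need that $\widehat{\id_A,\iota_A}$ and $\widehat{\iota_A,\id_A}$ are the canonical cocartesian cells into ${\iota_A}_!U_I{\iota_A}_!$. This holds for the cleavage of Proposition~\ref{prop:choice}. With these repairs the argument closes. Your observation that $\End(U_I)$-linearity follows from the two $F_0(A)$-linearities is correct, so the final Eckmann--Hilton step is unnecessary.
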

\begin{proof}
    We have to show that ${F_U}_A$ respects the two actions of $F_0(A)$ on the left and on the right. For the left action, we show for all elements $\alpha, a \in F_0(A)$ that ${F_U}_A(a \act \alpha ) = a \act {F_U}_A(\alpha)$, i.e., ${F_U}_A(\alpha \circ a) = {F_U}_A(\alpha) \circ \psi_{A,A}(a, \id_{{\iota_A}_!U_I})$. Since $R(\id_A, \iota_A)$ is a ring homomorphism, it preserves composition, and it suffices to show that $R(\id_A, \iota_A)(\alpha)$ equals $\psi_{A,A}(\alpha, \id_{{\iota_A}_!U_I})$ for all $\alpha \in F_0(A)$. We start by giving an alternative formula for the $2$-cell $\widehat{\id_A, \iota_A}$, using its uniqueness property. Consider the following commutative diagram: 
\[\begin{tikzcd}[]
	&& {{\iota_A}_!U_I} \\
	&& {{\iota_A}_!U_I \odot U_I} \\
	{U_I} & {U_I \odot U_I} & {{\iota_A}_!U_I \odot U_I{\iota_A}_!} \\
	\\
	&& {{\iota_A}_!U_I{\iota_A}_!}
	\arrow["{\mathfrak l_{{\iota_A}_!U_I}^{-1}}", from=1-3, to=2-3]
	\arrow["{\id \odot \cct}", from=2-3, to=3-3]
	\arrow["\cct", from=3-1, to=1-3]
	\arrow["{\mathfrak l_{U_I}^{-1}}"{description}, from=3-1, to=3-2]
	\arrow["\cct"', from=3-1, to=5-3]
	\arrow["{\cct \odot \id_{U_I}}"{description}, from=3-2, to=2-3]
	\arrow["{\cct \odot \cct}"', from=3-2, to=3-3]
	\arrow["{{i_1}_{A,A}^{-1}}", from=3-3, to=5-3]
\end{tikzcd}\]

We then use this alternative expression to prove that: $$R(\id_A, \iota_A)(\alpha) = \psi_{A,A}(\alpha, \id_{{\iota_A}_!U_I})$$ using the uniqueness property of $R(\id_A, \iota_A)(\alpha)$ and the commutativity of the following diagram:
\[\begin{tikzcd}[sep=small]
	&& {{\iota_A}_!U_I} &&& {{\iota_A}_!U_I} \\
	&&& {{\iota_A}_!U_I\odot U_I} & {{\iota_A}_!U_I\odot U_I} \\
	{U_I} \\
	&&& {{\iota_A}_!U_I\odot U_I{\iota_A}_!} & {{\iota_A}_!U_I\odot U_I{\iota_A}_!} \\
	&& {{\iota_A}_!U_I{\iota_A}_!} &&& {{\iota_A}_!U_I{\iota_A}_!}
	\arrow["\alpha", from=1-3, to=1-6]
	\arrow["{\mathfrak l_{{\iota_A}_!U_I}^{-1}}"', from=1-3, to=2-4]
	\arrow["{\widehat{\id_A, \iota_A}}"', from=1-3, to=5-3]
	\arrow["{\mathfrak l_{{\iota_A}_!U_I}^{-1}}", from=1-6, to=2-5]
	\arrow["{\widehat{\id_A, \iota_A}}"{description}, from=1-6, to=5-6]
	\arrow["{\alpha \odot \id_{U_I}}", from=2-4, to=2-5]
	\arrow["{\id_{{\iota_A}_!U_I} \odot \cct}"', from=2-4, to=4-4]
	\arrow["{\id_{{\iota_A}_!U_I} \odot \cct}", from=2-5, to=4-5]
	\arrow["\cct", from=3-1, to=1-3]
	\arrow["\cct"', from=3-1, to=5-3]
	\arrow["{\alpha \odot \id_{U_I{\iota_A}_!}}"', shift right=2, draw=none, from=4-4, to=4-5]
	\arrow[from=4-4, to=4-5]
	\arrow["{{i_1}_{A,A}^{-1}}"', from=4-4, to=5-3]
	\arrow["{{i_1}_{A,A}^{-1}}", from=4-5, to=5-6]
	\arrow["{\psi_{A,A}(\alpha, \id_{{\iota_A}_!U_I})}"', from=5-3, to=5-6]
\end{tikzcd}\]

For the right action, the reasoning is similar. Using the symmetric of the above argument, one shows  $R( \iota_A, \id_A)(\alpha^{\mathrm{op}}) = \psi_{A,A}(\id_{{\iota_A}_!U_I}, \alpha^{\mathrm{op}})$ for all elements $\alpha \in F_0(A)$. Then, it suffices to show for all $\alpha \in F_0(A)$:
\begin{equation}
 \fact{U\iota_A} \circ R(\id_A, \iota_A)(\alpha) = \fact{U\iota_A} \circ R( \iota_A, \id_A)(\alpha^{\mathrm{op}})\label{eq:Rop}
\end{equation}
in order to get the following equality for all elements $\alpha, a \in F_0(A)$:
\begin{align*}
    {F_U}_A(a \circ \alpha) &= \fact{U\iota_A} \circ R(\id_A, \iota_A)(a \circ \alpha)\\
    &= \fact{U\iota_A} \circ R( \iota_A, \id_A)(\alpha^{\mathrm{op}} \circ a^{\mathrm{op}})\\
    &= \fact{U\iota_A} \circ R( \iota_A, \id_A)(\alpha^{\mathrm{op}}) \circ R( \iota_A, \id_A)(a^{\mathrm{op}})\\
    &= \fact{U\iota_A} \circ R( \id_A, \iota_A)(\alpha) \circ R( \iota_A, \id_A)(a^{\mathrm{op}})\\
    &= {F_U}_A(\alpha) \circ \psi_{A,A}(\id_{{\iota_A}_!U_I}, a^{\mathrm{op}})
\end{align*}

Let us now show Equation~\ref{eq:Rop} for all $\alpha \in F_0(A)$. First, notice that by the uniqueness property of the $2$-cell $\widehat{\id_A, \iota_A}$ and by the definition of our canonical cocartesian $2$-cells, $\widehat{\id_A, \iota_A}$ also equals ${\iota_A}_!U_I \xrightarrow{\cct} {\iota_A}_!U_I {\iota_A}_!$. Then, the result follows from the commutativity of the following diagram: 
\[\begin{tikzcd}[column sep=small]
	& {U_I{\iota_A}_!} &&&& {U_I{\iota_A}_!} \\
	{U_I} & {{\iota_A}^*{\iota_A}_!U_I} & {{\iota_A}^*{\iota_A}_!U_I} & {{\iota_A}^*U_A{\iota_A}^*} & {U_I{\iota_A}_!{\iota_A}^*} \\
	& {{\iota_A}_!U_I} & {{\iota_A}_!U_I} \\
	{{\iota_A}_!U_I{\iota_A}_!} &&&&& {{\iota_A}_!U_I{\iota_A}_!} \\
	&&&&& {U_A}
	\arrow["{\alpha^{\mathrm{op}}}", from=1-2, to=1-6]
	\arrow["\cct", from=1-6, to=4-6]
	\arrow["\cct", from=2-1, to=1-2]
	\arrow["\eta"{description}, from=2-1, to=2-2]
	\arrow["\cct"{description}, from=2-1, to=3-2]
	\arrow["\cct"', from=2-1, to=4-1]
	\arrow["{{\iota_A}^*\alpha}", from=2-2, to=2-3]
	\arrow["\ct"{description}, from=2-2, to=3-2]
	\arrow["{\sigma_A}", from=2-3, to=2-4]
	\arrow["\ct"{description}, from=2-3, to=3-3]
	\arrow["{\sigma_A'^{-1}}", from=2-4, to=2-5]
	\arrow["{ \ct}", from=2-4, to=4-6]
	\arrow["\ct", from=2-5, to=1-6]
	\arrow["\alpha", from=3-2, to=3-3]
	\arrow["\cct"{description}, from=3-2, to=4-1]
	\arrow["\cct", from=3-3, to=4-6]
	\arrow["{R(id_A, \iota_A)(\alpha)}", from=4-1, to=4-6]
	\arrow["{\fact{U\iota_A}}"', from=4-6, to=5-6]
\end{tikzcd}\]
where the top cell commutes by definition of $\alpha^{\mathrm{op}}$, the bottom cell by definition of $R(\id_A, \iota_A)(\alpha)$, and the two middle rightmost cells by commutativity of the following diagram (and its symmetric): 
\[\begin{tikzcd}
	{{\iota_A}^*U_A{\iota_A}^*} \\
	\\
	{{\iota_A}^*{\iota_A}_!U_I} && {U_A{\iota_A}^*} && {U_A} \\
	& {A_{\iota_A} \odot U_I} & {{\iota_A}_!U_I{\iota_A}_!} \\
	{{\iota_A}_!U_I}
	\arrow["{\sigma_A^{-1}}"', from=1-1, to=3-1]
	\arrow["\ct"{description}, from=1-1, to=3-3]
	\arrow["\ct", from=1-1, to=3-5]
	\arrow["\ct"', from=3-1, to=5-1]
	\arrow["\ct"{description}, from=3-3, to=3-5]
	\arrow["{\mathfrak r_{A_{\iota_A}}}", from=4-2, to=3-3]
	\arrow["{\fact{U\iota_A}}"', from=4-3, to=3-5]
	\arrow["{\lambda_!^{-1}}", from=5-1, to=4-2]
	\arrow["\cct"', from=5-1, to=4-3]
\end{tikzcd}\]
where the leftmost triangle commutes by definition of $\sigma_A$ (see Diagram~\ref{diag:identity}), and the bottom right cell by the commutativity of the following diagram and the fact that cocartesian $2$-cells are epic:
\[\begin{tikzcd}[baseline=(\tikzcdmatrixname-\the\pgfmatrixcurrentrow-1.base)]
	{{\iota_A}_!U_I} &&&&&&& {{\iota_A}_!U_I{\iota_A}_!} \\
	\\
	{A_{\iota_A} \odot U_I} && {U_I \odot U_I} && {U_I} \\
	\\
	{A_{\iota_A} =U_A*} &&&&&&& {U_A}
	\arrow["\cct", from=1-1, to=1-8]
	\arrow["{\lambda_!^{-1}}"', from=1-1, to=3-1]
	\arrow["{\fact{U\iota_A}}", from=1-8, to=5-8]
	\arrow["{\mathfrak r_{A_{\iota_A}}}"', from=3-1, to=5-1]
	\arrow["{\chi_{\iota_A} \odot \id_{U_I}}"{description}, from=3-3, to=3-1]
	\arrow["\cct"', from=3-5, to=1-1]
	\arrow["\cct", from=3-5, to=1-8]
	\arrow["{\mathfrak r^{-1}}"{description}, from=3-5, to=3-3]
	\arrow["{\chi_{\iota_A}}", from=3-5, to=5-1]
	\arrow["{U\iota_A}"', from=3-5, to=5-8]
	\arrow["\ct"', from=5-1, to=5-8]
\end{tikzcd}\qedhere\]
\end{proof}

Finally, it only remains to show that the natural transformations $F_\odot$ and $F_U$ satisfy the usual axioms for $2$-functors, i.e. that for all $1$-cells $M: A \tobar B, N: B \tobar C$ and $P: C \tobar D$, the following diagram commutes:
\[\begin{tikzcd}[]
	\substack{{F_1(M) \otimes_{F_0(B)}} \\ {(F_1(N) \otimes_{F_0(C)} F_1(P))}} & \substack{{(F_1(M) \otimes_{F_0(B)} F_1(N))} \\ {\otimes_{F_0(C)} F_1(P)}} \\
	{F_1(M) \otimes_{F_0(B)} F_1(N \odot P)} & {F_1(M \odot N) \otimes_{F_0(C)} F_1(P)} \\
	{F_1(M \odot (N \odot P)) } & {F_1((M \odot N) \odot P)) }
	\arrow["\cong", from=1-1, to=1-2]
	\arrow["{F_1(M) \otimes_{F_0(B)} {F_\odot}_{N,P}}"{description}, from=1-1, to=2-1]
	\arrow["{{F_\odot}_{M,N} \otimes_{F_0(C)} F_1(P) }"{description}, from=1-2, to=2-2]
	\arrow["{{F_\odot}_{M,N\odot P}}"', from=2-1, to=3-1]
	\arrow["{{F_\odot}_{M\odot N, P}}", from=2-2, to=3-2]
	\arrow["{F_1(\mathfrak a_{M,N,P})}"', from=3-1, to=3-2]
\end{tikzcd}\]
and that the following $2$-cells are both equal to $\id_{F_1(M)}$: 
\[\begin{tikzcd}
	{F_0(A)} && {F_0(B)} \\
	{F_0(A)} & {F_0(B)} & {F_0(B)} \\
	{F_0(A)} & {F_0(B)} & {F_0(B)} \\
	{F_0(A)} && {F_0(B)} \\
	{F_0(A)} && {F_0(B)}
	\arrow[""{name=0, anchor=center, inner sep=0}, "{F_1(M)}"{inner sep=.8ex}, "\shortmid"{marking}, from=1-1, to=1-3]
	\arrow[equals, from=1-1, to=2-1]
	\arrow[equals, from=1-3, to=2-3]
	\arrow[""{name=1, anchor=center, inner sep=0}, "{F_1(M)}"{inner sep=.8ex}, "\shortmid"{marking}, from=2-1, to=2-2]
	\arrow[equals, from=2-1, to=3-1]
	\arrow[""{name=2, anchor=center, inner sep=0}, "{U_{F_0(B)}}"{inner sep=.8ex}, "\shortmid"{marking}, from=2-2, to=2-3]
	\arrow[equals, from=2-2, to=3-2]
	\arrow[equals, from=2-3, to=3-3]
	\arrow[""{name=3, anchor=center, inner sep=0}, "{F_1(M)}"'{inner sep=.8ex}, "\shortmid"{marking}, from=3-1, to=3-2]
	\arrow[equals, from=3-1, to=4-1]
	\arrow[""{name=4, anchor=center, inner sep=0}, "{F_1(U_B)}"'{inner sep=.8ex}, "\shortmid"{marking}, from=3-2, to=3-3]
	\arrow[equals, from=3-3, to=4-3]
	\arrow[""{name=5, anchor=center, inner sep=0}, "{F_1(M \odot U_B)}"'{inner sep=.8ex}, "\shortmid"{marking}, from=4-1, to=4-3]
	\arrow[equals, from=4-1, to=5-1]
	\arrow[equals, from=4-3, to=5-3]
	\arrow[""{name=6, anchor=center, inner sep=0}, "{F_1(M)}"'{inner sep=.8ex}, "\shortmid"{marking}, from=5-1, to=5-3]
	\arrow["{\mathfrak r^{-1}_{F_1(M)}}", between={0.3}{0.8}, Rightarrow, from=0, to=2-2]
	\arrow["\id", between={0.3}{0.7}, Rightarrow, from=1, to=3]
	\arrow["{{F_U}_B}", between={0.3}{0.7}, Rightarrow, from=2, to=4]
	\arrow["{{F_\odot}_{M, U_B}}", between={0.1}{0.7}, Rightarrow, from=3-2, to=5]
	\arrow["{F_1(\mathfrak r_M)}", between={0.4}{0.8}, Rightarrow, from=5, to=6]
\end{tikzcd}~
\begin{tikzcd}
	{F_0(B)} && {F_0(C)} \\
	{F_0(B)} & {F_0(B)} & {F_0(C)} \\
	{F_0(B)} & {F_0(B)} & {F_0(C)} \\
	{F_0(B)} && {F_0(C)} \\
	{F_0(B)} && {F_0(C)}
	\arrow[""{name=0, anchor=center, inner sep=0}, "{F_1(N)}"{inner sep=.8ex}, "\shortmid"{marking}, from=1-1, to=1-3]
	\arrow[equals, from=1-1, to=2-1]
	\arrow[equals, from=1-3, to=2-3]
	\arrow[""{name=1, anchor=center, inner sep=0}, "{U_{F_0(B)}}"{inner sep=.8ex}, "\shortmid"{marking}, from=2-1, to=2-2]
	\arrow[equals, from=2-1, to=3-1]
	\arrow[""{name=2, anchor=center, inner sep=0}, "{F_1(N)}"{inner sep=.8ex}, "\shortmid"{marking}, from=2-2, to=2-3]
	\arrow[equals, from=2-2, to=3-2]
	\arrow[equals, from=2-3, to=3-3]
	\arrow[""{name=3, anchor=center, inner sep=0}, "{F_1(U_B)}"'{inner sep=.8ex}, "\shortmid"{marking}, from=3-1, to=3-2]
	\arrow[equals, from=3-1, to=4-1]
	\arrow[""{name=4, anchor=center, inner sep=0}, "{F_1(N)}"'{inner sep=.8ex}, "\shortmid"{marking}, from=3-2, to=3-3]
	\arrow[equals, from=3-3, to=4-3]
	\arrow[""{name=5, anchor=center, inner sep=0}, "{F_1(U_B \odot N)}"'{inner sep=.8ex}, "\shortmid"{marking}, from=4-1, to=4-3]
	\arrow[equals, from=4-1, to=5-1]
	\arrow[equals, from=4-3, to=5-3]
	\arrow[""{name=6, anchor=center, inner sep=0}, "{F_1(N)}"'{inner sep=.8ex}, "\shortmid"{marking}, from=5-1, to=5-3]
	\arrow["{\mathfrak l^{-1}_{F_1(N)}}", between={0.3}{0.8}, Rightarrow, from=0, to=2-2]
	\arrow["{{F_U}_B}", between={0.3}{0.7}, Rightarrow, from=1, to=3]
	\arrow["\id", between={0.3}{0.7}, Rightarrow, from=2, to=4]
	\arrow["{{F_\odot}_{U_B, N}}", between={0.1}{0.7}, Rightarrow, from=3-2, to=5]
	\arrow["{F_1(\mathfrak l_N)}", between={0.4}{0.8}, Rightarrow, from=5, to=6]
\end{tikzcd}\]

The commutativity of the first diagram amounts to the commutativity of the following diagram for all elements $m \in F_1(M)$, $n \in F_1(N)$ and $p \in F_1(P)$:
\[\begin{tikzcd}[column sep=-1.1em]
	& {{\iota_A}_!U_I{\iota_A}_!} \\
	{{\iota_A}_!U_I{\iota_B}_! \odot {\iota_B}_!U_I{\iota_D}_!} & {U_I} & {{\iota_A}_!U_I{\iota_C}_! \odot {\iota_C}_!U_I{\iota_D}_!} \\
	& {U_I \odot U_I} \\
	{U_I \odot (U_I\odot U_I)} && {(U_I \odot U_I)\odot U_I} \\
	{{\iota_A}_!U_I{\iota_B}_! \odot ({\iota_B}_!U_I{\iota_C}_! \odot {\iota_C}_!U_I{\iota_D}_!}) && {({\iota_A}_!U_I{\iota_B}_! \odot {\iota_B}_!U_I{\iota_C}_!) \odot {\iota_C}_!U_I{\iota_D}_!} \\
	{M\odot(N \odot P)} && {M\odot(N \odot P)}
	\arrow["{{i_2}_{A,B,D}}"', from=1-2, to=2-1]
	\arrow["{{i_2}_{A,C,D}}", from=1-2, to=2-3]
	\arrow["{\id_{{\iota_A}_!U_I{\iota_B}_!} \odot {i_2}_{B,C,D}}"{description, pos=0.4}, shift right=9, curve={height=20pt}, from=2-1, to=5-1]
	\arrow["\cct"{description}, from=2-2, to=1-2]
	\arrow["{ \mathfrak l_{U_I}^{-1}}"{description}, from=2-2, to=3-2]
	\arrow["{{i_2}_{A,B,C}\odot \id_{ {\iota_C}_!U_I{\iota_D}_!}}"{description, pos=0.4}, shift left=9, curve={height=-20pt}, from=2-3, to=5-3]
	\arrow["{\cct\odot\cct}"', from=3-2, to=2-1]
	\arrow["{\cct\odot\cct}", from=3-2, to=2-3]
	\arrow["{  \id_{U_I} \odot \mathfrak l_{U_I}^{-1}}"', from=3-2, to=4-1]
	\arrow["{  \mathfrak l_{U_I}^{-1} \odot \id_{U_I}}", from=3-2, to=4-3]
	\arrow["{\mathfrak a_{U_I,U_I,U_I}}"', from=4-1, to=4-3]
	\arrow["{\cct\odot(\cct\odot \cct)}"{description}, from=4-1, to=5-1]
	\arrow["{(\cct\odot\cct)\odot \cct}"{description}, from=4-3, to=5-3]
	\arrow["{m \odot (n \odot p)}"{description}, from=5-1, to=6-1]
	\arrow["{(m \odot n) \odot p}"{description}, from=5-3, to=6-3]
	\arrow["{\mathfrak a_{M,N,P}}"', from=6-1, to=6-3]
\end{tikzcd}\]

Similarly, for the leftmost equality (the proof of right one is symmetric), consider for all elements $m \in F_1(M)$ 
the following commutative diagram:
\[\begin{tikzcd}
	{U_I} &&& {{\iota_A}_!U_I{\iota_B}_!} \\
	& {U_I\odot U_I} & {{\iota_A}_!U_I{\iota_B}_! \odot U_I} \\
	{{\iota_A}_!U_I{\iota_B}_!} & {{\iota_A}_!U_I{\iota_B}_! \odot {\iota_B}_!U_I{\iota_B}_!} & {M \odot U_B} & M
	\arrow["\cct", from=1-1, to=1-4]
	\arrow["{\mathfrak r_{U_I}^{-1}}"{description}, from=1-1, to=2-2]
	\arrow["\cct"', from=1-1, to=3-1]
	\arrow["m", from=1-4, to=3-4]
	\arrow["{\cct \odot \id_{U_I}}", from=2-2, to=2-3]
	\arrow["{\cct \odot \cct}"', from=2-2, to=3-2]
	\arrow["{\mathfrak r_{{\iota_A}_!U_I{\iota_B}_!}}"{description}, from=2-3, to=1-4]
	\arrow["{m \odot U(\iota_B)}", from=2-3, to=3-3]
	\arrow["{{i_2}_{A,B,B}}"', from=3-1, to=3-2]
	\arrow["{m \odot \fact{U\iota_B}}"', from=3-2, to=3-3]
	\arrow["{\mathfrak r_M}"', from=3-3, to=3-4]
\end{tikzcd}\]

We have just shown the following theorem.

\begin{theorem}
\label{thm:embedding}
Let $\AAA$ be a module-like abelian framed bicategory. Then, there is a framed lax functor $F : \AAA \to \BimodUI$ such that $F_1$ is locally an equivalence of categories. 
\end{theorem}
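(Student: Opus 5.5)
The plan is to assemble the pieces built in Sections~\ref{section:gt_pointwise} and~\ref{section:framed_embedding}. We take $P = U_I$, which is a compact projective generator of $\A(I,I)$ by Definition~\ref{def:modulelike}. The vertical functor is $F_0 = R(\mathunderscore, I)$, and it lands in $\End(U_I)$-algebras by Propositions~\ref{prop:endui_commut} and~\ref{prop:algebra}. The horizontal functor $F_1$ is the restriction of $G$ from Proposition~\ref{prop:general_functoriality} along the natural ring map $\psi$ of Proposition~\ref{prop:tensorop}.

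First we check that $F_1$ is functorial with values in $\BimodUI_1$ and satisfies $L\circ F_1 = F_0\circ L$ and $R \circ F_1 = F_0 \circ R$. This follows from the functoriality of $G$, the naturality of $\psi$, and the proposition on restricting along natural transformations of ring-valued functors. Next we record the lax structure cells: $F_\odot$ is built from $i_2$, and $F_U$ from $\fact{U\iota_A}$. Both have already been shown to be natural and to be bimodule maps over the relevant $\End(U_I)$-algebras. The coherence axioms follow from the associativity diagram and the two unit diagrams displayed just before the statement. These reduce, by precomposing with the epic cocartesian cell $U_I \to {\iota_A}_!U_I{\iota_D}_!$ (fact~\ref{fact1}), to naturality of $\mathfrak a$, $\mathfrak l$ and $\mathfrak r$ together with Lemma~\ref{lemma:reql}. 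Compatibility with the framing, meaning that $F$ sends cartesian cells to cartesian cells up to the usual comparison, is supplied by Proposition~\ref{prop:initial_faithful}: restriction along $R(f,g)$ corresponds to restriction along $(f,g)$.

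For the local equivalence, fix objects $A,B$. By Theorem~\ref{th:preservation_triple} applied to the adjoint triple $({\iota_A}_!(\mathunderscore){\iota_B}_! \dashv \iota_A^*(\mathunderscore)\iota_B^* \dashv \text{coextension})$, using closedness and the faithfulness of restrictions, ${\iota_A}_!U_I{\iota_B}_!$ is a compact projective generator of the cocomplete abelian category $\A(A,B)$. Gabriel's Theorem~\ref{th:gabriel} then shows that $G$ restricted to $\A(A,B)$, namely $\A({\iota_A}_!U_I{\iota_B}_!, \mathunderscore)$, is an equivalence onto ${}_{R(A,B)}\mmod$. It remains to see that restriction along $\psi_{A,B}$ does not spoil this. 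We will argue that $\psi_{A,B}$ is an isomorphism. Both sides are endomorphism rings of generators compatible with the extension from $\A(I,I)$. The natural route is to identify ${\iota_A}_!U_I{\iota_B}_! \cong {\iota_A}_!U_I \odot U_I{\iota_B}_!$ via $i_1$, and then use the closed structure to compute its endomorphisms as a tensor product over $\End(U_I)$, using compactness and projectivity of $U_I$.

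The main obstacle is this last step. Showing $\psi_{A,B}$ is an isomorphism, or more weakly that restriction along it is an equivalence of module categories, is not automatic from the earlier propositions. If the general isomorphism resists proof, the fallback is to claim only that $F_1$ is locally fully faithful. This part survives because restriction of scalars along any ring map is faithful and, on the essential image coming from $\A(A,B)$, full by Gabriel. Essential surjectivity would then be stated onto the image of that restriction.
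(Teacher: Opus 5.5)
Your construction of $F_0$, $F_1=\psi^*G$, $F_\odot$ and $F_U$, and your reduction of the coherence axioms to the diagrams preceding the statement, coincide with the paper's Section~\ref{section:framed_embedding}. The framing condition needs no separate input from Proposition~\ref{prop:initial_faithful}: a lax double functor between framed bicategories preserves cartesian cells automatically \cite[Proposition 6.8]{shulman2007framed}.

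The genuine gap is the local equivalence, exactly at the point you flagged. On $\A(A,B)$ one has $F_1=\psi_{A,B}^*\circ G$, and Gabriel makes $G$ an equivalence onto ${}_{R(A,B)}\mmod$. So $F_1$ is locally an equivalence onto $F_0(A)$-$F_0(B)$-bimodules, i.e.\ onto modules over $S_{A,B}=F_0(A)\otimes_{\End(U_I)}F_0(B)^{\mathrm{op}}$, if and only if $\psi_{A,B}^*$ is an equivalence. That forces $\psi_{A,B}$ to be an isomorphism: the quasi-inverse would be the left adjoint $R(A,B)\otimes_{S_{A,B}}-$, whose unit at $S_{A,B}$ is $\psi_{A,B}$. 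You leave this unproved.

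Your fallback also fails. Restriction along a ring map is full exactly when the map is a ring epimorphism; for instance, complex conjugation is $\R$-linear but not $\CC$-linear. Since $G$ is essentially surjective, local fullness of $F_1$ is therefore equivalent to $\psi_{A,B}$ being an epimorphism, which is no easier. ``Full by Gabriel'' only gives fullness onto $R(A,B)$-linear maps. Be aware that the paper does not supply this step either: it only shows that $\psi$ is a natural ring homomorphism, checks invertibility just for $\BimodZ$, and then concludes ``we have just shown''. So you have located a real omission.

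The missing idea is that, under Gabriel's equivalence, $\odot$ on $\A(I,I)$ is $\otimes_k$ with $k=\End(U_I)$. Here is how to use it.

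\emph{Reduction.} Put $M={\iota_A}_!U_I$, $N=U_I{\iota_B}_!$, $X=\iota_A^*M$ and $Y=N\iota_B^*$, the latter two in $\A(I,I)$. Since $\swap$ and conjugation by $i_1$ are bijective, it suffices that $\odot\colon\End(M)\otimes_k\End(N)\to\End(M\odot N)$ is bijective.
Transposing along cocartesian cells gives $k$-linear bijections $\End(M)\cong\A(U_I,X)$, $\alpha\mapsto\hat\alpha$, and $\End(N)\cong\A(U_I,Y)$. The cell $U_I\odot U_I\xrightarrow{\cct\odot\cct}M\odot N$ is cocartesian (Lemma~\ref{lemma:simple_natiso}) and $X\odot Y\xrightarrow{\ct\odot\ct}M\odot N$ is cartesian, so also $\End(M\odot N)\cong\A(U_I,X\odot Y)$. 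By interchange, $\alpha\odot\beta$ then corresponds to $(\hat\alpha\odot\hat\beta)\circ\mathfrak l_{U_I}^{-1}$.

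\emph{Key map.} It therefore suffices that $\mu_{X,Y}\colon\A(U_I,X)\otimes_k\A(U_I,Y)\to\A(U_I,X\odot Y)$, $x\otimes y\mapsto(x\odot y)\circ\mathfrak l_{U_I}^{-1}$, is bijective.
It is $k$-balanced because $\mathfrak l_{U_I}(p\odot\id)\mathfrak l_{U_I}^{-1}=p=\mathfrak l_{U_I}(\id\odot p)\mathfrak l_{U_I}^{-1}$ (Proposition~\ref{prop:endui_commut}). For $X=U_I$, naturality of $\mathfrak l$ and the same identity identify it with $\A(U_I,\mathfrak l_Y^{-1})$. It is natural in $X$. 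Both sides are right exact and coproduct-preserving in $X$, because $-\odot Y$ is a left adjoint (closedness) and $\A(U_I,-)$ is exact and preserves coproducts ($U_I$ compact projective).

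\emph{Conclusion.} Present $X$ as the cokernel of a map between coproducts of copies of $U_I$ and apply the five lemma; this shows $\mu_{X,Y}$ is an isomorphism (an Eilenberg--Watts argument). Hence $\psi_{A,B}$ is a ring isomorphism, $\psi_{A,B}^*$ is an isomorphism of categories, and $F_1$ is locally an equivalence. This also provides the local full faithfulness used in the subsequent corollary.
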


By~\cite[Proposition 6.8]{shulman2007framed}, $F$ preserves cartesian $2$-cells up to isomorphism. In particular, restriction functors in $\AAA$ and $\BimodUI$ are compatible, as in Proposition~\ref{prop:pointwise_gabriel}.
This implies that horizontally, $F$ is fully faithful:

\begin{corollary}
    The framed lax functor $F$ is horizontally full and faithful.
\end{corollary}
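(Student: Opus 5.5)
The goal is to show that for all $1$-cells $M, N : A \tobar B$ the map $F_1 : \A(M,N) \to \BimodUI(F_1M, F_1N)$ on globular $2$-cells is bijective, and then to extend this to arbitrary $2$-cells $M \cc{f}{g} N$ with $M: A\tobar B$, $N: C \tobar D$, comparing with $F_1$-images of $2$-cells $F_1M \cc{F_0f}{F_0g} F_1N$.

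\textbf{Globular case.} Here we use Theorem~\ref{thm:embedding}, together with the fact that $F_1$ restricted to $\A(A,B)$ is, by construction, $G = \A({\iota_A}_!U_I{\iota_B}_!, \mathunderscore)$ followed by restriction of scalars along $\psi_{A,B}$. By Gabriel's theorem (Theorem~\ref{th:gabriel}), $G$ is an equivalence onto left $R(A,B)$-modules; this uses Theorem~\ref{th:preservation_triple} to see that ${\iota_A}_!U_I{\iota_B}_!$ is a compact projective generator. Restriction along a ring homomorphism is always faithful. Fullness needs care: an $F_0(A)$-$F_0(B)$-bimodule map between $\psi^*G(M)$ and $\psi^*G(N)$ should already be $R(A,B)$-linear. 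The statement of Theorem~\ref{thm:embedding} asserts that $F_1$ is locally an equivalence, so I will take local full faithfulness directly from it rather than reprove it.

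\textbf{Non-globular case.} Take a $2$-cell $\alpha : M \cc{f}{g} N$. It factors uniquely as a cartesian cell $f^*Ng^* \to N$ precomposed with a globular cell $\fact\alpha : M \to f^*Ng^*$. In $\BimodUI$, any $2$-cell $F_1M \cc{F_0f}{F_0g} F_1N$ likewise factors uniquely through the cartesian cell $(F_0f)^*F_1N(F_0g)^* \to F_1N$. By \cite[Proposition 6.8]{shulman2007framed}, $F$ preserves cartesian $2$-cells up to isomorphism: $F_1$ of the cartesian cell $f^*Ng^*\to N$ is cartesian, so $F_1(f^*Ng^*) \cong (F_0f)^*F_1N(F_0g)^*$ compatibly with the cartesian cells. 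This is concretely the compatibility $r^*Y\cong f^*Yg^*$ of Propositions~\ref{prop:pointwise_gabriel} and~\ref{prop:initial_faithful}. Composing the two unique-factorization bijections with the local bijection $\A(M, f^*Ng^*) \cong \BimodUI(F_1M, F_1(f^*Ng^*))$ then yields the bijection on $2$-cells over $(f,g)$. One still has to check that this composite really is $\alpha \mapsto F_1\alpha$, which follows from functoriality of $F_1$ applied to $\alpha = \cart\circ\fact\alpha$.

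\textbf{Main obstacle.} I expect the hardest step to be justifying that $F_1$ sends cartesian cells to cartesian cells, that is, the applicability of Shulman's Proposition~6.8. That proposition is stated for framed functors, which are lax functors whose structure maps respect units. Should it not apply verbatim, the fallback is to check cartesianness directly: show that $G$ of the cartesian cell, after restriction along $\psi$, is an isomorphism onto the restricted bimodule. This would use the explicit isomorphism $r^*\Hom({\iota_C}_!P{\iota_D}_!, Y)\cong \Hom({\iota_A}_!P{\iota_B}_!, f^*Yg^*)$ from Proposition~\ref{prop:initial_faithful}.
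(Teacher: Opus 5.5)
Your proposal is correct and is essentially the paper's own argument. You factor $\alpha$ through the cartesian cell $f^*Ng^*\to N$, invoke \cite[Proposition 6.8]{shulman2007framed} to get that $F_1$ of this cell is cartesian, and conclude from uniqueness of cartesian factorizations together with the local full faithfulness asserted in Theorem~\ref{thm:embedding}. The two points you flag are the applicability of Shulman's result to a merely lax $F$, and local fullness when $\psi_{A,B}$ is not shown to be an isomorphism; the paper's proof relies on both in exactly the same way, and you state the composition order $\alpha = c\circ\fact{\alpha}$ more carefully than the paper does.
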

\begin{proof}
   For any pair of vertical arrows $f,g$ and $1$-cells $M,N$ in $\AAA$, let us show that $F$ induces a bijection from the set $\AAA_{f,g}(M,N)$ of $2$-cells $M \cc{f}{g} N$ in $\AAA$ to the set $\BimodUI_{F_0(f),F_0(g)}(F_1M,F_1N)$, denoted by $T$. Let $c$ and $c'$ denote the cartesian $2$-cells $M \xrightarrow{\ct} f^*Ng^*$ and $F_1(M) \xrightarrow{\ct} F_0(f)^*F_1(N)F_0(g)^*$,
   respectively. Since $F$ preserves cartesian cells, $F_1(c) \cong c'$ and $F_1(c)$ is cartesian. Then, for all 2-cells $\alpha$ in $\AAA_{f,g}(M,N)$, we have:
   \begin{align*}
   F_1(\alpha) &= F_1(\fact{\alpha} \circ c)\\
               &= F_1(\fact{\alpha}) \circ F_1(c).
   \end{align*}
    By the uniqueness of factorization through cartesian maps, and since $F_1$ is locally fully faithful (as an equivalence of categories), the function $\alpha \mapsto F_1(\fact{\alpha}) \circ F_1(c)$ is a bijection.
\end{proof}

For $F$ to also preserve cocartesian $2$-cells, $F$ would have to be strong, i.e., one would have to show that the natural transformations $F_U$ and $F_\odot$ are isomorphic.

\begin{example}
    In $\Bimod$, $F_\odot$ and $F_U$ amount to the identity natural transformations $M \odot_B N \Rightarrow M \odot_B N$ and $U_A \Rightarrow U_A$, respectively. In particular they are natural isomorphisms.  
\end{example}

\begin{remark}
The development of Theorem~\ref{thm:embedding} in Sections~\ref{section:gt_pointwise} and~\ref{section:framed_embedding} relies on Gabriel's theorem and Theorem~\ref{th:preservation_triple}, which shows that certain adjoint triples preserve compact projective generators. However, it may be possible to use \FM{}'s theorem instead. Recall the proof sketch of Theorem~\ref{th:freyd-mitchell}. By Lemma~\ref{lemma:ind_pseudofunc}, the $\Pro$ construction is pseudofunctorial and preserves faithful functors, and the adjoint triple of Theorem~\ref{th:preservation_triple} preserves strong generators when the middle functor is faithful. Therefore, as in Proposition~\ref{prop:pointwise_gabriel}, \FM{}'s theorem can be applied pointwise:

\begin{proposition}\label{prop:pointwise_fm}
    Let $\AAA$ be a closed abelian framed bicategory, and let $f: A \to C$ and $g: B \to D$ be vertical arrows such that the local categories $\A(A,B)$ and $\A(C,D)$ are small and such that the restriction along $(f,g)$ is faithful. Then there exists a ring homomorphism $r: R \to R'$ and exact fully faithful functors $\A(A,B) \to {}_R\mmod$ and $\A(C,D) \to {}_{R'}\mmod$. This construction is pseudofunctorial.
\end{proposition}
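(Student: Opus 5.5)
The plan mirrors the proof of Proposition~\ref{prop:pointwise_gabriel}, with the Pro-completion supplying the cocomplete ambient category. Since $\AAA$ is closed, the extension functor $f_!(\mathunderscore)g_!$, the restriction functor $f^*(\mathunderscore)g^*$ and the coextension functor $f_*(\mathunderscore)g_*$ form an adjoint triple $\A(A,B) \rightleftarrows \A(C,D)$. By Corollary~\ref{cor:exactness}, the restriction functor is exact. By Lemma~\ref{lemma:ind_pseudofunc}, applied to opposite categories (so that it covers $\Pro$), and by the preservation of adjoints under (ind/pro)-completion, this triple extends to an adjoint triple between $\Pro(\A(A,B))$ and $\Pro(\A(C,D))$. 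The extended restriction functor is still faithful, and these two categories are abelian and cocomplete. As in the proof sketch of Theorem~\ref{th:freyd-mitchell}, $\Pro(\A(A,B))$ has a projective strong generator $P$ of the set $S$ of objects of $\A(A,B)$, and we set $R = \End(P)$. Then $\phi_P$ restricted to $\A(A,B)$ is exact and fully faithful into ${}_R\mmod$.

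Next I would apply Theorem~\ref{th:preservation_triple} to the extended triple. It says $\Pro(f_!)P$ is projective, and that it is a strong generator of the preimage of $S$ under the extended restriction. Every object $Y \in \A(C,D)$ lies in this preimage, because its restriction $f^*Yg^*$ lies in $\A(A,B)$. Hence every object of $\A(C,D)$ is a quotient of the projective object $P' := \Pro(f_!)P$. By \cite[Lemma 9.6.9]{kashiwara2006categories}, $\phi_{P'}$ is then fully faithful on $\A(C,D)$, and it is exact since $P'$ is projective. Set $R' = \End(P')$ and define $r: R \to R'$ by $\alpha \mapsto \Pro(f_!)\alpha$. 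This is a ring homomorphism by functoriality and additivity, exactly as in Proposition~\ref{prop:pointwise_gabriel}. The same unit-naturality computation as there shows $r^*\Hom(P',Y) \cong \Hom(P, f^*Yg^*)$ compatibly with the $R$-actions.

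Pseudofunctoriality follows from pseudofunctoriality of change of coefficients, from the pseudofunctoriality of $\Pro$ in Lemma~\ref{lemma:ind_pseudofunc}, and from stability of faithful functors under composition. The main obstacle is transporting the adjunctions and exactness to the Pro level. Concretely, I must check that $\Pro$ of the exact restriction functor is still exact and faithful, and that the embedding $\A(C,D) \to \Pro(\A(C,D))$ commutes with restriction, so that the preimage statement really captures all of $\A(C,D)$. I would handle this using the commuting square of Lemma~\ref{lemma:ind_pseudofunc} together with the fact that exactness in $\Pro$ is computed levelwise on filtered systems.
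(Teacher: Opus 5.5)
Your route is the paper's own. You extend the adjoint triple $f_!(-)g_! \dashv f^*(-)g^* \dashv f_*(-)g_*$ to Pro-completions, push the Freyd--Mitchell object along $\Pro(f_!)$ using Theorem~\ref{th:preservation_triple}, and conclude pointwise with \cite[Lemma 9.6.9]{kashiwara2006categories}.

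The key step, however, is missing a hypothesis and is false as stated. That step reads ``every object of $\A(C,D)$ is a quotient of the projective $P'$, hence $\phi_{P'}$ is fully faithful on $\A(C,D)$''. To prove fullness you take an epimorphism $p\colon P'\to Y_1$ whose kernel $i\colon K\to P'$ is computed in $\Pro(\A(C,D))$, so $K$ need not lie in $\A(C,D)$. Given an $\End(P')$-linear $\phi$, linearity only yields $\phi(p)\circ i\circ u=0$ for all $u\colon P'\to K$. To conclude $\phi(p)\circ i=0$ you need $\Hom(P',-)$ to be faithful on the ambient category.

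Here is a concrete failure. Take representations of the quiver $1\to 2$ over a field $k$. The projective $P_1=(k\xrightarrow{\id}k)$ maps onto itself and onto the simple $S_1=(k\to 0)$. Yet $\Hom(S_1,P_1)=0$, while $\Hom_{\End(P_1)}(\Hom(P_1,S_1),\Hom(P_1,P_1))\cong k$.

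This is why the Freyd--Mitchell sketch takes $P$ to be a projective \emph{generator} of $\Pro(\A)$ that also maps onto every object of $\A$. When you call $P$ merely a ``projective strong generator of $S$'', you drop the generator property, and you never establish it for $P'$.

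The repair uses only tools you already invoke:
\begin{enumerate}
\item Choose $P$ exactly as in the sketch, so it is a projective generator of $\Pro(\A(A,B))$.
\item The extended restriction is faithful, so the generator clause of Theorem~\ref{th:preservation_triple} makes $P'=\Pro(f_!)P$ a generator of $\Pro(\A(C,D))$. Concretely, $\Hom(P',-)\cong\Hom(P,\Pro(f^*)(-))$ is faithful as a composite of faithful functors.
\end{enumerate}
With this, Lemma~9.6.9 applies and the rest of your argument goes through.

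Two side remarks. The exactness of $\Pro(f^*)$ that you list as the main obstacle is automatic, since $\Pro(f^*)$ has both adjoints. Also, your unit-naturality computation uses nothing beyond the adjunction $\Pro(f_!)\dashv\Pro(f^*)$. It therefore does give $r^*\phi_{P'}(Y)\cong\phi_P(f^*Yg^*)$ for $Y$ in $\A(C,D)$, which is a compatibility the paper leaves open.
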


It still remains to determine whether compatibility conditions for restriction functors can be obtained, as in Proposition~\ref{prop:pointwise_gabriel}, and more generally, whether \FM{}'s theorem can be used to construct a framed functor as in Theorem~\ref{thm:embedding}.
\end{remark}

\begin{remark}\label{remark:pare}
    This work bears some resemblance to the embedding theorem of~\cite{pare2011yoneda}. In that paper, Paré develops a Yoneda theory, including a Yoneda embedding, for a weaker version of double categories: virtual double categories in the sense of~\cite{cruttwell2009unified}, where composition might only be defined up to isomorphism. 
    In contrast, we devote considerable space to ensuring that our embedding is strictly functorial, because the target of our embedding is a concrete category of modules.
    Nevertheless, the Yoneda embedding is strong and horizontally, is dense in addition to being fully faithful.
\end{remark}


\section*{CRediT authorship contribution statement}

\look{Augustin Albert:} Conceptualization, Writing – original draft. \look{J\'er\'emy Dubut:} Conceptualization, Writing – review and editing, Supervision. \look {Eric Goubault:} Conceptualization, Writing – review and editing, Supervision.

\section*{Declaration of competing interest}

The authors declare that they have no known competing financial interests or personal relationships that
could have appeared to influence the work reported in this paper.

\section*{Acknowledgments}

The authors thank Eliot M\'edioni and Steve Oudot, for numerous interactions during the preparation of this work.

\bibliography{refs}

@misc{goubault2026homological,
  author       = {Goubault, Eric and Médioni, Eliot},
  title        = {Homological Algebra over Non-Unital Rings and Algebras, with Applications to $(\infty, 1)$-Categories},
  note         = {In preparation, preprint available upon request},
  year         = {2026}
}

@article{pare2011yoneda,
  title={Yoneda theory for double categories},
  author={Par{\'e}, Robert},
  journal={Theory and Applications of Categories},
  volume={25},
  number={17},
  pages={436--489},
  year={2011}
}

@book{etingof2015tensor,
  title={Tensor categories},
  author={Etingof, Pavel and Gelaki, Shlomo and Nikshych, Dmitri and Ostrik, Victor},
  volume={205},
  year={2015},
  publisher={American Mathematical Soc.}
}

@webpage{qchu2015generators,
title={Generators},
author={Yuan, Qiaochu},
year={2015},
url={https://qchu.wordpress.com/2015/05/17/generators/},
urldate={2025-09-21}
}

@book{borceux1994handbook,
  title={Handbook of categorical algebra: Basic category theory},
  author={Borceux, Francis},
  volume={1},
  year={1994},
  publisher={Cambridge University Press}
}

@book{rivano1972categories,
  title={Cat{\'e}gories tannakiennes},
  author={Rivano, N Saavedra},
  journal={Lecture Notes in Mathematics},
  volume={265},
  year={1972},
  adress={Berlin-New York},
  publisher={Springer-Verlag}
}

@article{cualuguareanu2010modules,
  title={Modules with Abelian endomorphism rings},
  author={C{\u{a}}lug{\u{a}}reanu, Grigore and Schultz, Phill},
  journal={Bulletin of the Australian Mathematical Society},
  volume={82},
  number={1},
  pages={99--112},
  year={2010},
  publisher={Cambridge University Press}
}

@book{borceux2004mal,
  title={Mal'cev, protomodular, homological and semi-abelian categories},
  author={Borceux, Francis and Bourn, Dominique},
  volume={566},
  year={2004},
  publisher={Springer Science \& Business Media}
}

@misc{van2006homology,
      title={Homology and homotopy in semi-abelian categories}, 
      author={Tim Van der Linden},
      year={2006},
      eprint={math/0607100},
      archivePrefix={arXiv},
      primaryClass={math.CT},
      url={https://arxiv.org/abs/math/0607100}, 
}

@article{ehresmann1963categories,
author = {Ehresmann, Charles},
journal = {Annales scientifiques de l'École Normale Supérieure},
language = {fre},
number = {4},
pages = {349-426},
publisher = {Elsevier},
title = {Catégories structurées},
url = {http://eudml.org/doc/81794},
volume = {80},
year = {1963},
}

@article{kelly1964maclane,
  title         = {On MacLane's conditions for coherence of natural associativities, commutativities, etc.},
  author        = {Kelly, Gregory Maxwell},
  year          = {1964},
  journal       = {Journal of Algebra},
  publisher     = {Elsevier},
  volume        = {1},
  number        = {4},
  pages         = {397--402},
}

@article{mitchell1964full,
  title         = {The full imbedding theorem},
  author        = {Mitchell, Barry},
  year          = {1964},
  journal       = {American Journal of Mathematics},
  publisher     = {Jstor},
  volume        = {86},
  number        = {3},
  pages         = {619--637},
}

@article{burroni1971t,
  title         = {$ T $-cat{\'e}gories (cat{\'e}gories dans un triple)},
  author        = {Burroni, Albert},
  year          = {1971},
  journal       = {Cahiers de topologie et g{\'e}om{\'e}trie diff{\'e}rentielle},
  volume        = {12},
  number        = {3},
  pages         = {215--321},
}

@article{gabriel1972unzerlegbare,
  title         = {Unzerlegbare darstellungen I},
  author        = {Gabriel, Peter},
  year          = {1972},
  journal       = {Manuscripta mathematica},
  publisher     = {Springer},
  volume        = {6},
  number        = {1},
  pages         = {71--103},
}

@article{wood1982abstract,
  title         = {Abstract pro arrows I},
  author        = {Wood, Richard J},
  year          = {1982},
  journal       = {Cahiers de topologie et g{\'e}om{\'e}trie diff{\'e}rentielle},
  volume        = {23},
  number        = {3},
  pages         = {279--290},
}

@article{wood1985proarrows,
  title         = {Proarrows ii},
  author        = {Wood, Richard J},
  year          = {1985},
  journal       = {Cahiers de topologie et g{\'e}om{\'e}trie diff{\'e}rentielle cat{\'e}goriques},
  volume        = {26},
  number        = {2},
  pages         = {135--168},
}

@book{weibel1994introduction,
language = {eng},
publisher = {Cambridge University Press},
series = {Cambridge studies in advanced mathematics ; no.38},
title = {An introduction to homological algebra },
address = {Cambridge},
booktitle = {An introduction to homological algebra},
isbn = {0521559871},
author = {Weibel, Charles A.},
year = {1995},
}

@book{selick1997introduction,
  title         = {Introduction to homotopy theory},
  author        = {Selick, Paul},
  year          = {1997},
  publisher     = {American Mathematical Soc.},
  volume        = {9},
}

@article{grandis1999limits,
  title         = {Limits in double categories},
  author        = {Grandis, Marco and Par{\'e}, Robert},
  year          = {1999},
  journal       = {Cahiers de topologie et g{\'e}om{\'e}trie diff{\'e}rentielle cat{\'e}goriques},
  volume        = {40},
  number        = {3},
  pages         = {162--220},
}

@article{fahrenberg2004directed,
  title         = {Directed homology},
  author        = {Fahrenberg, Ulrich},
  year          = {2004},
  journal       = {Electronic Notes in Theoretical Computer Science},
  publisher     = {Elsevier},
  volume        = {100},
  pages         = {111--125},
}

@mastersthesis{fluch2004kunneth,
  title         = {The Kunneth Formula in Abelian Categories},
  author        = {Fluch, Martin G},
  year          = {2004},
  school        = {University of Helsinki},
}

@article{grandis2004,
  title         = {Inequilogical spaces, directed homology and noncommutative geometry.},
  author        = {Grandis, Marco},
  year          = {2004},
  journal       = {Homology, Homotopy and Applications},
  publisher     = {International Press, Somerville},
  volume        = {6},
  number        = {1},
  pages         = {413--437},
  url           = {http://eudml.org/doc/51971},
  language      = {eng},
}

@book{kashiwara2006categories,
  title         = {Categories and sheaves},
  author        = {Kashiwara, Masaki and Schapira, Pierre},
  year          = {2006},
  publisher     = {Springer},
  place         = {Berlin},
}

@article{shulman2007framed,
  title         = {Framed bicategories and monoidal fibrations},
  author        = {Shulman, Michael A},
  year          = {2008},
  journal       = {Theory and Applications of Categories},
volume = {20},
number = {18},
pages = {650--738},
}

@article{ponto2014linearity,
  title={The linearity of traces in monoidal categories and bicategories},
  author={Ponto, Kate and Shulman, Michael},
   year          = {2016},
  journal       = {Theory and Applications of Categories},
volume = {31},
number = {23},
pages = {594--689},
}

@misc{dupont2008abelian,
  title         = {Abelian categories in dimension 2},
  author        = {Mathieu Dupont},
  year          = {2008},
  url           = {https://arxiv.org/abs/0809.1760},
  eprint        = {0809.1760},
  archiveprefix = {arXiv},
  primaryclass  = {math.CT},
}

@article{cruttwell2009unified,
author = {Cruttwell, G. and Shulman, Michael},
year = {2009},
month = {07},
pages = {},
title = {A unified framework for generalized multicategories},
volume = {24},
journal = {Theory and Applications of Categories [electronic only]},
doi = {10.70930/tac/mxocppsl}
}

@article{nakaoka2011comparison,
  title         = {Comparison of the definitions of abelian 2-categories},
  author        = {Nakaoka, Hiroyuki},
  year          = {2011},
  journal       = {Tsukuba Journal of Mathematics},
  publisher     = {Department of Mathematics, University of Tsukuba},
  volume        = {34},
  number        = {2},
  pages         = {173--182},
}

@book{dold2012lectures,
  title         = {Lectures on algebraic topology},
  author        = {Dold, Albrecht},
  year          = {2012},
  publisher     = {Springer Science \& Business Media},
  volume        = {200},
}

@inproceedings{dubut2015natural,
  title         = {Natural homology},
  author        = {Dubut, J{\'e}r{\'e}my and Goubault, Eric and Goubault-Larrecq, Jean},
  year          = {2015},
  booktitle     = {Automata, Languages, and Programming: 42nd International Colloquium, ICALP 2015, Kyoto, Japan, July 6-10, 2015, Proceedings, Part II 42},
  pages         = {171--183},
  organization  = {Springer},
}

@article{dubut2017directed,
  title={Directed homology theories and Eilenberg-Steenrod axioms},
  author={Dubut, J{\'e}r{\'e}my and Goubault, Eric and Goubault-Larrecq, Jean},
  journal={Applied Categorical Structures},
  volume={25},
  number={5},
  pages={775--807},
  year={2017},
  publisher={Springer}
}

@article{ziemianski2020spaces,
  title         = {Spaces of directed paths on pre-cubical sets II},
  author        = {Ziemia{\'n}ski, Krzysztof},
  year          = {2020},
  journal       = {Journal of Applied and Computational Topology},
  publisher     = {Springer},
  volume        = {4},
  number        = {1},
  pages         = {45--78},
}

@article{eilenberg1953acyclic,
  title={Acyclic models},
  author={Eilenberg, Samuel and MacLane, Saunders},
  journal={American journal of mathematics},
  volume={75},
  number={1},
  pages={189--199},
  year={1953},
  publisher={JSTOR}
}

@article{goubault2024semi,
  title         = {A semi-abelian approach to directed homology},
  author        = {Goubault, Eric},
  year          = {2024},
  journal       = {Journal of Applied and Computational Topology},
  publisher     = {Springer},
  volume        = {8},
  number        = {2},
  pages         = {271--299},
}

@book{kashiwara2013sheaves,
  title={Sheaves on Manifolds: With a Short History.{\guillemotleft}Les d{\'e}buts de la th{\'e}orie des faisceaux{\guillemotright}. By Christian Houzel},
  author={Kashiwara, Masaki and Schapira, Pierre},
  volume={292},
  year={2013},
  publisher={Springer Science \& Business Media}
}

@misc{goubault2025homotopy,
  title         = {Directed homotopy modules},
  author        = {Eric Goubault},
  year          = {2025},
  url           = {https://arxiv.org/abs/2504.13838},
  eprint        = {2504.13838},
  archiveprefix = {arXiv},
  primaryclass  = {math.AT},
}

@article{goubault2025directed,
  title         = {Directed homology and persistence modules},
  author        = {Goubault, Eric},
  year          = {2025},
  journal       = {Journal of Applied and Computational Topology},
  publisher     = {Springer},
  volume        = {9},
  number        = {1},
  pages         = {3},
}

@phdthesis{goubault1995geometrie,
    author = {Goubault, Eric},
    title = {G\'{e}om\'{e}trie du parall\'{e}lisme},
    school = {Ecole Polytechnique},
    year = {1995}
}

@article{patchkoria2006,
    author = {Patchkoria, Alex},
    title = {On exactness of long exact sequences of homology semimodules},
    journal = {Journal of Homotopy and Related structures},
    volume = {1},
    number = {1},
    pages = {229--243},
    year = {2006}
}

@phdthesis{dubut2017,
    author = {Dubut, J\'{e}r\'{e}my},
    title = {Directed homotopy and homology theories for geometric models of true concurrency},
    school = {Universit\'{e} Paris-Saclay},
    year = {2017}
}

@inproceedings{dubut2016directed,
    author = {Dubut, J\'{e}r\'{e}my and Goubault, Eric and Goubault-Larrecq Jean},
    title = {{The Directed Homotopy Hypothesis}},
    booktitle = {Proceedings of the 25th Annual EACSL Conference on Computer Science Logic (CSL’16)},
    series = {Leibniz International Proceedings in Informatics},
    pages = {1--16},
    volume = {62},
    year = {2016}
}

\appendix

\renewcommand{\thesection}{\Alph{section}}

\section{The acyclic model theorem}\label{section:amt}

We recall the proof of the well-known acyclic model theorem~\cite{eilenberg1953acyclic} for functors valued in chain complexes of $R$-modules. We follow the presentation of~\cite{dold2012lectures}.

\begin{definition}
A \look{category with models} $(\C, \mathcal{M})$ is a category $\C$ with a distinguished set of objects $\mathcal M$.
\end{definition}

\begin{definition}[\cite{dold2012lectures}]
A functor $F: \C \to \Ch({}_R\mmod)$ is \look{free} on models $\mathcal{M}$ if there exists a set $A$ and elements $M_\alpha \in \mathcal M$, $\alpha \in A$ such that for all $i \geq 0$ there are subsets $B_\alpha^i \subseteq F_i(M_\alpha)$, $\alpha$ in $A$, with the property that for every object $X$ in $\C$, the following set is a basis of $F_i(X)$:
$$\bigcup_{\alpha \in A} \bigcup_{f: M_\alpha \to X} F_i(f)(B_\alpha^i).$$
\end{definition}
\begin{definition}[\cite{dold2012lectures}]
A functor $F: \C \to \Ch({}_R\mmod)$ is \look{acyclic} on models $\mathcal{M}$ if for every object $M \in \mathcal M$ and every $i \geq 1$, $H_i(F(M)) \cong 0$; i.e., the homology groups are trivial except possibly in degree $0$.
\end{definition}
The acyclic model theorem is proved in~\cite{dold2012lectures} for abelian groups, and its proof generalizes to modules over commutative rings.
\begin{theorem}[{\cite{dold2012lectures}}]
Let $F, G: \C \to \Ch({}_R\mmod)$ be two functors. Suppose $F$ is free on models $\mathcal{M}$ and $G$ is acyclic on $\mathcal{M}$. Then, for every natural transformation $\overline{\tau}_0: H_0(F) \rightarrow H_0(G)$, there exists a natural transformation $\tau: F \rightarrow G$ that induces it on homology.
Furthermore, given two natural transformations $F \rightarrow G$ inducing the same map $H_0(F) \rightarrow H_0(G)$, there exists a natural chain homotopy between them.
\end{theorem}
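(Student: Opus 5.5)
We construct $\tau$ degree by degree, defining each $\tau_i$ on the model basis elements $B^i_\alpha \subseteq F_i(M_\alpha)$ and extending by naturality. Freeness makes such an extension unique and well defined: for every object $X$, a basis of $F_i(X)$ is given by the elements $F_i(f)(b)$ with $f: M_\alpha \to X$ and $b \in B^i_\alpha$, so we set $\tau_i(F_i(f)(b)) = G_i(f)(\tau_i(b))$ and extend $R$-linearly. Naturality with respect to $h: X \to Y$ holds on basis elements because $F_i(h)F_i(f)(b) = F_i(hf)(b)$ is again a basis element of $F_i(Y)$. Hence naturality is automatic, and the only thing left to arrange is compatibility with the differentials on the model elements.

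\textbf{Degree $0$.} For each $b \in B^0_\alpha$ we pick a representative in $G_0(M_\alpha)$ of the class $\overline\tau_0([b]) \in H_0(G(M_\alpha))$. Here we use positivity of the grading: $H_0(G) = G_0/\partial G_1$, so every class has a representative. With this choice, the induced map on $H_0$ agrees with $\overline\tau_0$ on the classes of basis elements. By naturality of both sides, it then agrees on all of $H_0(F(X))$, since these classes generate.

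\textbf{Degree $i \geq 1$.} Assume $\tau_0,\dots,\tau_{i-1}$ are natural chain maps up to degree $i-1$, and take $b \in B^i_\alpha$. We must find $\tau_i(b)$ with $\partial \tau_i(b) = \tau_{i-1}(\partial b)$. The element $c = \tau_{i-1}(\partial b) \in G_{i-1}(M_\alpha)$ is a cycle: for $i\ge 2$ we have $\partial c = \tau_{i-2}(\partial\partial b) = 0$. For $i=1$, $c$ lies in $G_0$ and we must show that $c$ is a boundary. This follows because $[\tau_0(\partial b)] = \overline\tau_0([\partial b]) = 0$, as $\partial b$ is a boundary in $F_0$. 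For $i \geq 2$, acyclicity of $G$ on $\mathcal M$ ($H_{i-1}(G(M_\alpha)) = 0$) makes $c$ a boundary. In either case we choose a preimage $\tau_i(b)$. The chain map identity on a general basis element $F_i(f)(b)$ then follows from naturality of $\partial$ and of $\tau_{i-1}$.

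\textbf{Uniqueness up to homotopy.} Given $\tau,\tau'$ inducing $\overline\tau_0$, set $\delta = \tau - \tau'$ and build natural maps $s_i: F_i \to G_{i+1}$ with $\partial s_i + s_{i-1}\partial = \delta_i$, again defined on model basis elements and extended by naturality.
\begin{itemize}
\item In degree $0$, $\delta_0(b)$ has zero class in $H_0(G(M_\alpha))$, so $\delta_0(b) = \partial s_0(b)$ for some choice of $s_0(b)$.
\item In degree $i \geq 1$, the element $\delta_i(b) - s_{i-1}(\partial b)$ is a cycle, by the inductive identity and because $\delta$ is a chain map. It lies in degree $i\ge1$, so acyclicity makes it a boundary, and we take $s_i(b)$ to be a preimage.
\end{itemize}
The main (mild) obstacle is the bookkeeping in the low degrees. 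Acyclicity only covers $H_{\geq 1}$, so the first steps ($\tau_1$ and $s_0$) must instead use the hypothesis on $\overline\tau_0$ and the positive grading. We also rely throughout on freeness giving unique natural extensions. No choices are needed beyond those on basis elements, and for general $R$ we only need that $G$ takes values in $R$-modules, so the proof for abelian groups in~\cite{dold2012lectures} carries over verbatim.
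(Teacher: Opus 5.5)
Your proposal is correct and follows the same inductive construction as the paper's proof: choose values on the model basis elements $B^i_\alpha$, extend by naturality using freeness, lift via acyclicity, and build the natural homotopy $s_i$ the same way. Your degree-one step is more careful than the paper's sketch, which invokes acyclicity for every $n>0$. Acyclicity says nothing about $H_0$, so for $\tau_1$ and for $s_0$ one really needs, as you do, that $[\tau_0(\partial b)]=\overline\tau_0([\partial b])=0$, i.e.\ the compatibility of $\tau_0$ (respectively $\delta_0$) with the given map on $H_0$.
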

\begin{proof}[Proof idea]
The construction proceeds inductively. For the first part, given $\overline{\tau}_0: H_0(F) \rightarrow H_0(G)$, we define $\tau_0: F_0 \to G_0$. Since $F$ is free, it suffices to define $\tau_0$ on basis elements $m \in B_\alpha^0$. For each such $m$, choose $\tau_0(m)$ to be any element of $G_0(M_\alpha)$ whose homology class equals $\overline{\tau}_0(\overline{m})$, where $\overline{m}$ is the homology class of $m$. Extend $\tau_0$ linearly and naturally.
Assume inductively that $\tau_k: F_k \to G_k$ has been defined for $k < n$ ($n > 0$) and that $\partial \tau_{k} = \tau_{k-1} \partial$ for all relevant $k$. To define $\tau_n$ on a basis element $m \in B_\alpha^n$, observe that
\[
\partial \tau_{n-1}(\partial m) = \tau_{n-2}(\partial^2 m) = 0.
\]
Since $G$ is acyclic, there exists an element $e \in G_n(M_\alpha)$ with $\partial e = \tau_{n-1}(\partial m)$. Set $\tau_n(m) = e$. This ensures $\partial \tau_n(m) = \tau_{n-1}(\partial m)$. Extending linearly and naturally yields $\tau_n: F_n \to G_n$ satisfying the chain map condition.
The second part follows a similar inductive construction of a natural homotopy.
\end{proof}
As an immediate consequence, if $\tau, \tau': F \rightarrow G$ are two natural transformations constructed by the acyclic models theorem, then they induce the same map $\overline \tau$ in homology. Moreover, if $F$ and $G$ are both free and acyclic on some models $\mathcal{M}$, then a natural isomorphism $H_0(F) \cong H_0(G)$ induces a natural chain homotopy equivalence $F \simeq G$.
\section{Algebraic theorems}\label{annex:algebraic}
We recall the main algebraic theorems used in this document.
\begin{theorem}[Algebraic \MV{} theorem~{\cite[Lemma 4.1.8]{selick1997introduction}}]
\[
\begin{tikzcd}[column sep=scriptsize]
\cdots \arrow[r] & A_n \arrow[r, "i"] \arrow[d, "\alpha"] & B_n \arrow[r, "j"] \arrow[d, "\beta"] & C_n \arrow[r, "\partial"] \arrow[d, "\gamma"] & A_{n-1} \arrow[r, "i"] \arrow[d, "\alpha"] & B_{n-1} \arrow[r, "j"] \arrow[d, "\beta"] & C_{n-1} \arrow[r] \arrow[d, "\gamma"] & \cdots \\
\cdots \arrow[r] & A'_n \arrow[r, "i'"] & B'_n \arrow[r, "j'"] & C'_n \arrow[r, "\partial"] & A'_{n-1} \arrow[r, "i'"] & B'_{n-1} \arrow[r, "j'"] & C'_{n-1} \arrow[r] & \cdots
\end{tikzcd}
\]
Let the above diagram be a commutative diagram with exact rows and suppose that $\gamma : C_n \to C'_n$ is an isomorphism for all $n$. Then there is an induced long exact sequence
\[
\cdots \to A_n \to B_n \oplus A'_n \to B'_n \xrightarrow{\Delta} A_{n-1} \to B_{n-1} \oplus A'_{n-1} \to B'_{n-1} \to \cdots .
\]
\end{theorem}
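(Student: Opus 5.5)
We prove this by an explicit diagram chase. Since all objects and maps in the ladder lie in some abelian category, we first reduce to modules. Take the small abelian full subcategory generated by the finitely many objects involved (or all $A_n,B_n,\dots$, which are countably many). By the \FM{} theorem (Theorem~\ref{th:freyd-mitchell}), it embeds exactly and fully faithfully into ${}_R\mmod$. Exactness is preserved and reflected by such an embedding, so we may chase elements.

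Define the three maps of the sequence as follows:
\[ \phi_n : A_n \to B_n \oplus A'_n,\quad a \mapsto (i(a), -\alpha(a)), \]
\[ \psi_n : B_n \oplus A'_n \to B'_n,\quad (b,a') \mapsto \beta(b) + i'(a'), \]
\[ \Delta_n = \partial \circ \gamma_n^{-1} \circ j' : B'_n \to A_{n-1}, \]
where this $\partial$ is the connecting map of the top row. The sign in $\phi_n$ makes $\psi_n\phi_n = \beta i - i'\alpha = 0$ by commutativity of the left square. The remaining composites vanish as follows. First, $\Delta_n\psi_n(b,a') = \partial\gamma^{-1}(j'\beta b + j'i'a') = \partial\gamma^{-1}\gamma j(b) = \partial j(b) = 0$. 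Second, $\phi_{n-1}\Delta_n = (i\partial\gamma^{-1}j', -\alpha\partial\gamma^{-1}j')$. Here $i\partial = 0$, and $\alpha\partial = \partial'\gamma$ by the third square, so the second component is $-\partial' j' = 0$.

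Exactness is then proved at the three positions by chases. At $B_n\oplus A'_n$: suppose $\beta b = -i'a'$. Then $\gamma j b = j'\beta b = 0$, so $jb=0$ because $\gamma$ is an isomorphism. Hence $b = i(a)$ for some $a \in A_n$. Then $i'(a' + \alpha a) = i'a' + \beta b = 0$, so $a' + \alpha a = \partial'(c')$ for some $c' \in C'_{n+1}$. Put $c = \gamma^{-1}c'$ and replace $a$ by $a - \partial c$. This does not change $i(a)$, and it gives $\alpha(a - \partial c) = \alpha a - \partial'\gamma c = -a'$, so $(b,a') = \phi_n(a - \partial c)$. At $B'_n$: if $\Delta b' = 0$, then $\gamma^{-1}j'b' = j(b)$ for some $b\in B_n$. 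Hence $j'(b' - \beta b) = 0$, so $b' - \beta b = i'a'$ and $b' = \psi_n(b,a')$. At $A_{n-1}$: if $i(a)=0$ and $\alpha(a)=0$, write $a = \partial c$. Then $\partial'\gamma c = \alpha a = 0$, so $\gamma c = j'(b')$ and $a = \partial\gamma^{-1}j'b' = \Delta b'$.

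The main obstacle is the chase at $B_n\oplus A'_n$. There the preimage $a$ must be corrected by a boundary so that it matches $a'$, and this is exactly where invertibility of $\gamma$ in two consecutive degrees is used. Everything else is routine, and the sign convention only needs to be fixed once at the start.
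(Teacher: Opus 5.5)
Your proof is correct. The three composites vanish, and the three chases establish exactness at every position; the one at $B_n\oplus A'_n$ uses injectivity of $\gamma_n$ and surjectivity of $\gamma_{n+1}$.

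The paper gives no proof of its own; it only cites Selick's Lemma~4.1.8, and your argument is the standard Barratt--Whitehead chase behind that citation. What your version adds is the reduction through a small full abelian subcategory with exact inclusion, followed by the \FM{} embedding. This step is what justifies using the lemma where the paper actually uses it, in the proof of Theorem~\ref{th:mayer_vietoris}, inside the local abelian categories $\A(L(X),R(X))$ rather than for modules. It relies on these local categories being locally small, which the paper assumes, so that the subcategory generated by the countably many objects of the ladder is small.
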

\begin{theorem}[{Algebraic Künneth theorem for right exact functors~\cite[Theorem 3.19]{fluch2004kunneth}}]\label{th:algebraic_kunneth}
Let $\mathcal A_1$, $\mathcal A_2$ and $\mathcal A$ be abelian categories and $t: \mathcal A_1 \times \mathcal A_2 \to \mathcal A$ an additive right exact functor, covariant in the first and contravariant in the second variable. Assume that $\mathcal A_1$ has enough projectives, $\mathcal A_2$ has enough injectives and $A$ has coproducts. Let $X_1$ be a chain complex in $A_1$ and $X_2$ a chain complex in $A_2$. Let $t_1$ denote the first left derived functor of $t$. 
Consider the following condition, denoted C1:
The homomorphisms
    \begin{align*}
    \alpha_1: t(B(X_1), H(X_2)) \to Ht(B(X_1), X_2)\\
    \alpha_2:  t(Z(X_1), H(X_2)) \to Ht(Z(X_1), X_2)
    \end{align*}
are isomorphisms, and we have
$t_1(B(X_1), X_2) = 0 = t_1(Z(X_1), H(X_2))$.
Then, if condition C1 holds, there exists a homomorphism $\beta$ of degree $-1$ such that the following sequence is exact.
$$0 \to t(H(X_1), H(X_2)) \xrightarrow{\alpha} Ht(X_1, X_2) \xrightarrow{\beta} t_1(H(X_1), H(X_2)) \to 0$$
\end{theorem}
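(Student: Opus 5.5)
The plan is to run the classical proof of the Künneth formula in an arbitrary abelian setting. The idea is to split $X_1$ into its cycles and boundaries, apply $t(\mathunderscore, X_2)$, and read off the result from two long exact sequences. Throughout, $Z = Z(X_1)$ and $B = B(X_1)$ are regarded as chain complexes with zero differential, and $t(X_1,X_2)$ denotes the total complex, whose degree-$n$ term is the coproduct over $p+q=n$ of the $t((X_1)_p,(X_2)_q)$. This is where the hypothesis that $\mathcal A$ has coproducts is used. Derived functors $t_1$ are computed in the first variable using projective resolutions in $\mathcal A_1$; the injectives of $\mathcal A_2$ serve to make $t_1$ balanced and well behaved in the contravariant variable.

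First, I will write the degreewise short exact sequence of complexes $0 \to Z \to X_1 \xrightarrow{\partial} B[-1] \to 0$. Applying $t(\mathunderscore,X_2)$ degreewise and using right exactness together with the long exact sequence of $t_1$, the hypothesis $t_1(B,X_2)=0$ from C1 gives exactness on the left. So $0 \to t(Z,X_2) \to t(X_1,X_2) \to t(B,X_2)[-1] \to 0$ is a short exact sequence of complexes. It yields a long exact homology sequence
\[ \cdots \to Ht(B,X_2)_{n} \xrightarrow{\delta} Ht(Z,X_2)_n \to Ht(X_1,X_2)_n \to Ht(B,X_2)_{n-1} \xrightarrow{\delta} Ht(Z,X_2)_{n-1} \to \cdots \]
Next, I will use the isomorphisms $\alpha_1$ and $\alpha_2$ of C1 to replace $Ht(B,X_2)$ by $t(B,H(X_2))$ and $Ht(Z,X_2)$ by $t(Z,H(X_2))$. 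A diagram chase through the definition of the connecting morphism should then show that, under these identifications, $\delta$ is (up to sign) $t(i,H(X_2))$, where $i : B \hookrightarrow Z$ is the inclusion. The long exact sequence thereby collapses to short exact sequences
\[ 0 \to \operatorname{coker} t(i,H(X_2))_n \to Ht(X_1,X_2)_n \to \ker t(i,H(X_2))_{n-1} \to 0 .\]

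Finally, I will apply $t(\mathunderscore,H(X_2))$ to $0 \to B \xrightarrow{i} Z \to H(X_1) \to 0$. The resulting derived long exact sequence is
\[ t_1(Z,H(X_2)) \to t_1(H(X_1),H(X_2)) \to t(B,H(X_2)) \xrightarrow{t(i,H(X_2))} t(Z,H(X_2)) \to t(H(X_1),H(X_2)) \to 0 .\]
Since $t_1(Z,H(X_2))=0$ by C1, this identifies the cokernel of $t(i,H(X_2))$ with $t(H(X_1),H(X_2))$ and its kernel with $t_1(H(X_1),H(X_2))$. Splicing these identifications into the short exact sequences of the previous step gives the claimed sequence. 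The map $\alpha$ is the composite $t(H(X_1),H(X_2)) \cong \operatorname{coker} \to Ht(X_1,X_2)$, and $\beta$, of degree $-1$, is the composite of $Ht(X_1,X_2)_n \to \ker_{n-1}$ with $\ker_{n-1} \cong t_1$.

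The step I expect to be the main obstacle is identifying the connecting morphism $\delta$ with $t(i,H(X_2))$ compatibly with $\alpha_1$ and $\alpha_2$. This requires working with the explicit total complex, its coproduct decomposition, and the Koszul signs, without elements. I would first check naturality of $\alpha_1$ and $\alpha_2$ in the first variable. I would then verify the identification on each summand $t(B_p, X_2)$, where the differential reduces to $\pm\, t(B_p,\partial)$, so that the chase happens one coproduct component at a time.
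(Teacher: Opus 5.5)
The paper does not prove this statement. It is quoted from Fluch's thesis, so there is no in-paper argument to compare against. Your route is the classical one, and it is exactly what condition C1 is built for:
\begin{itemize}
\item $t_1(B(X_1),X_2)=0$ gives left exactness of the totalized sequence;
\item $\alpha_1,\alpha_2$ turn its homology sequence into one about $t(i,H(X_2))$;
\item $t_1(Z(X_1),H(X_2))=0$ identifies $\ker t(i,H(X_2))$ with $t_1(H(X_1),H(X_2))$.
\end{itemize}

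\textbf{The connecting morphism.} The step you flag does go through, and the reason should be made explicit, because it is where C1 is used. Let $W$ be the object through which $\alpha_1$ is defined: $Z(X_2)$ for a covariant second variable, $X_2/B(X_2)$ in the contravariant case of the statement.
\begin{itemize}
\item Since $\alpha_1$ is surjective, every class of $Ht(B,X_2)$ comes from $t(B,W)$.
\item Lift such an element along the epimorphism $t(X_1,W)\to t(B[-1],W)$. The lift is annihilated by the $X_2$-part of the total differential.
\item So only $t(d_{X_1},1)=t(\iota\circ i\circ\partial,1)$ contributes, and a pseudo-element chase gives $\delta\circ\alpha_1=\pm\,\alpha_2\circ t(i,H(X_2))$.
\end{itemize}
For an arbitrary cycle of $t(B,X_2)$, the lift picks up an extra term from the differential of $X_2$. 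So do the verification on $t(B,H(X_2))$, after transporting along $\alpha_1$, rather than summand by summand on $Ht(B,X_2)$.

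\textbf{The totalization, which fails as written.} With $t$ contravariant in the second variable, $t((X_1)_p,\partial)$ raises the $X_2$-index. So the total complex must be graded by $p-q$: with $p+q$ the two differentials move the total degree in opposite directions. Each degree is then an infinite coproduct, which is presumably why coproducts in $\mathcal A$ are assumed.

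In an abelian category, coproducts are right exact but need not preserve monomorphisms or kernels. So ``exact on every summand, hence exact after summing'' is not available. Two of your steps rely on it:
\begin{itemize}
\item the left exactness of $t(Z,X_2)\to t(X_1,X_2)$;
\item the identification of $\ker t(i,H(X_2))$ with $t_1(H(X_1),H(X_2))$.
\end{itemize}
The cokernel identification and the right exactness do not rely on it.

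You can repair this in either of two ways:
\begin{itemize}
\item assume exact coproducts; or
\item apply the derived long exact sequences to the totalized right exact functors $t(-,X_2)$ and $t(-,H(X_2))$ on graded objects of $\mathcal A_1$, which has enough projectives, and read the $t_1$'s in C1 as their derived functors.
\end{itemize}
In the paper's own application (nonnegatively graded complexes, a tensor product covariant in both variables) the sums are finite, so the issue does not arise there.

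\textbf{Balance.} Nothing in your argument needs $t_1$ to be balanced: every derived sequence you use is in the first variable. Balance would also require exactness hypotheses that the statement does not provide, so drop that claim.
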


\begin{proposition}[{\cite[Proposition 4.1]{fluch2004kunneth}}]\label{prop:sufficient_kunneth}
    Let $t$ be right exact and assume that $\mathcal A_1$ has projectives and $\mathcal A_2$ has injectives. Let $X_1$ be a chain complex in $\mathcal A_1$ and $X_2$ a chain complex in $\mathcal A_2$ such that
    \begin{align*}
        t_1 (B(X_1 ), B(X_2 )) = 0 = t_1 (B(X_1 ), H(X_2 ))\\
t_1 (Z(X_1 ), B(X_2 )) = 0 = t_1 (Z(X_1 ), H(X_2 ))
    \end{align*}
Then condition C1 is satisfied.
\end{proposition}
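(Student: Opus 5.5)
The plan is to derive everything from two short exact sequences of graded objects attached to $X_2$, by running long exact sequences of $t$ and $t_1$ in the second variable. These are
\[
0 \to B(X_2) \to Z(X_2) \to H(X_2) \to 0, \qquad 0 \to Z(X_2) \to X_2 \to B(X_2)[-1] \to 0 .
\]
The second one is degreewise $0 \to Z_n \to X_{2,n} \xrightarrow{\partial} B_{n-1} \to 0$. The first variable $P$ will always be $B(X_1)$ or $Z(X_1)$. These are graded objects with zero differential, so $t(P, X_2)$ is a direct sum of shifted copies of $t(P_j, X_2)$, and its differential is induced by $\partial$ of $X_2$ alone. Since $t$ is additive and $\mathcal A$ has coproducts, it suffices to argue degreewise, for a single object $P_j$ in place of $P$.

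For a fixed $P$, the derived functors of $t(P,\mathunderscore)$ give long exact sequences, computed with the injectives of $\mathcal A_2$, with the variance as stated in Theorem~\ref{th:algebraic_kunneth}. In the right exact range these read $\cdots \to t_1(P,C) \to t(P,A) \to t(P,B) \to t(P,C) \to 0$ for $0\to A\to B\to C\to 0$, read in the appropriate direction. I first prove the vanishing part of C1.

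Apply $t_1(B(X_1),\mathunderscore)$ to the first sequence. The hypotheses $t_1(B(X_1),B(X_2))=0=t_1(B(X_1),H(X_2))$ squeeze the middle term, so $t_1(B(X_1),Z(X_2))=0$. Applying the same functor to the second sequence, the vanishing of $t_1(B(X_1),Z(X_2))$ and $t_1(B(X_1),B(X_2))$ then gives $t_1(B(X_1),X_2)=0$. The other required vanishing, $t_1(Z(X_1),H(X_2))=0$, is a hypothesis.

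Next I show that $\alpha_1$ and $\alpha_2$ are isomorphisms, treating $P=B(X_1)$ and $P=Z(X_1)$ uniformly. Apply $t(P,\mathunderscore)$ to the second sequence. Right exactness together with $t_1(P,B(X_2))=0$ gives a short exact sequence
\[
0\to t(P,Z(X_2))\to t(P,X_2)\to t(P,B(X_2))\to 0 .
\]
Applying $t(P,\mathunderscore)$ to the first sequence, the vanishing $t_1(P,H(X_2))=0$ gives a short exact sequence
\[
0\to t(P,B(X_2))\to t(P,Z(X_2))\to t(P,H(X_2))\to 0 .
\]
The differential of $t(P,X_2)$ is the composite $t(P,X_{2,n})\twoheadrightarrow t(P,B_{n-1})\hookrightarrow t(P,Z_{n-1})\hookrightarrow t(P,X_{2,n-1})$. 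From the two injectivities, its kernel is $t(P,Z(X_2))$ and its image is the image of $t(P,B(X_2))$. Therefore $Ht(P,X_2)\cong t(P,Z(X_2))/t(P,B(X_2))\cong t(P,H(X_2))$.

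It remains to check that this isomorphism is exactly $\alpha_1$ (for $P=B(X_1)$) or $\alpha_2$ (for $P=Z(X_1)$). Both are induced by $Z(X_2)\hookrightarrow X_2$ and $Z(X_2)\twoheadrightarrow H(X_2)$, so this is a naturality check.

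The main obstacle is bookkeeping rather than substance. One must fix the variance convention in the second variable so that the long exact sequences run in the direction used above, and check that the degree shift $B(X_2)[-1]$ and the sign conventions of $t$ on complexes do not affect the identification of cycles and boundaries. I would settle the variance first, on a single short exact sequence, and then carry it through both arguments.
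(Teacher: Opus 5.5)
The paper gives no proof of this statement; it quotes it from Fluch's thesis. Your proposal therefore has to be judged on its own. As written, it is a correct and complete argument when $t$ is \emph{covariant} in the second variable. The squeeze giving $t_1(B(X_1),Z(X_2))=0$ and then $t_1(B(X_1),X_2)=0$ is fine. So are the two short exact sequences, the epi--mono--mono factorisation of the differential of $t(P,X_2)$, and the identification with $\alpha_1,\alpha_2$. This covariant case is the one the paper actually applies, to the external tensor product in Section~\ref{sec:Kunneth}.

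The step you deferred is where the proposal, as written, goes wrong. Through Theorem~\ref{th:algebraic_kunneth}, the statement fixes $t$ to be \emph{contravariant} in the second variable, so there is no convention left to choose. For $0\to A\to B'\to C\to 0$ in $\mathcal A_2$, the sequence reads $t_1(P,A)\to t(P,C)\to t(P,B')\to t(P,A)\to 0$.

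Your vanishing argument survives, since it only uses exactness at the middle term. The isomorphism step does not, for two reasons.
\begin{enumerate}
\item From $0\to Z(X_2)\to X_2\to B(X_2)[-1]\to 0$ you now get $0\to t(P,B(X_2)[-1])\to t(P,X_2)\to t(P,Z(X_2))\to 0$. This requires $t_1(P,Z(X_2))=0$ rather than $t_1(P,B(X_2))=0$. For $P=Z(X_1)$ you never derived it, although it follows by the same squeeze.
\item The maps induced by $Z(X_2)\to X_2$ and $Z(X_2)\to H(X_2)$ both land in $t(P,Z(X_2))$. So they cannot be what produces $\alpha\colon t(P,H(X_2))\to Ht(P,X_2)$.
\end{enumerate}

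The clean repair is to dualise your two sequences, replacing cycles by the cokernel $C(X_2)=X_2/B(X_2)$:
\[
0\to B(X_2)\to X_2\to C(X_2)\to 0,\qquad 0\to H(X_2)\to C(X_2)\to B(X_2)[-1]\to 0 .
\]
The contravariant long exact sequences then give:
\begin{enumerate}
\item $0\to t(P,C(X_2))\to t(P,X_2)\to t(P,B(X_2))\to 0$, using $t_1(P,B(X_2))=0$;
\item $0\to t(P,B(X_2)[-1])\to t(P,C(X_2))\to t(P,H(X_2))\to 0$, using $t_1(P,H(X_2))=0$.
\end{enumerate}
The differential $t(P,X_{2,n-1})\to t(P,X_{2,n})$ factors as an epimorphism onto $t(P,B_{n-1}(X_2))$, followed by the monomorphisms into $t(P,C_n(X_2))$ and $t(P,X_{2,n})$. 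The map $\alpha$ is then $t(P,C(X_2))\to Ht(P,X_2)$ factored through $t(P,H(X_2))$. The vanishing part goes through $t_1(P,C(X_2))=0$ in place of $t_1(P,Z(X_2))=0$. Your argument now runs verbatim with exactly the same four hypotheses. This is why the hypotheses and condition C1 only involve the self-dual objects $B(X_2)$ and $H(X_2)$ in the second variable.

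One smaller caveat concerns your reduction to a single $P_j$. It is harmless for the vanishing statements, since a coproduct vanishes iff its summands do. For $\alpha$, however, it uses that homology commutes with the coproduct over $j$, i.e.\ that coproducts of monomorphisms in $\mathcal A$ remain monic. This is automatic when each total degree is a finite sum, e.g.\ nonnegatively graded complexes with total degree $i+j$. With the contravariant grading $i-j$, every degree is an infinite coproduct. Mere existence of coproducts in $\mathcal A$ does not give exactness there, so exact coproducts should be assumed in that case.
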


\end{document}